\theoremstyle{plain}
\newtheorem{theorem}{Theorem}
\newtheorem{lemma}[theorem]{Lemma}
\newtheorem{proposition}[theorem]{Proposition}
\newtheorem{corollary}[theorem]{Corollary}
\newtheorem{claim}[theorem]{Claim}
\newtheorem{case}[theorem]{Case}
\newtheorem{subcase}[theorem]{Subcase}
\theoremstyle{remark}
\newtheorem{remark}[theorem]{Remark}
\newtheorem*{question}{Question}
\theoremstyle{definition}
\newcommand{\pimi}[1][]{\ifthenelse{\equal{#1}{}}{\pi^{\circ}}{\pi_{#1}^{\circ}}}
\newcommand{\sigm}[1][]{\ifthenelse{\equal{#1}{}}{\sigma^{\circ}}{\sigma_{#1}^{\circ}}}
\newcommand{\rhom}[1][]{\ifthenelse{\equal{#1}{}}{\rho^{\circ}}{\rho_{#1}^{\circ}}}
\DeclareMathOperator{\WC}{WC}
\DeclareMathOperator{\WP}{WP}
\DeclareMathOperator{\Loz}{Loz}
\DeclareMathOperator{\supp}{supp}
\DeclareMathOperator{\PG}{PG}
\DeclareMathOperator{\epi}{epi}
\DeclareMathOperator*{\argmin}{arg\,min}
\DeclareMathOperator*{\Med}{Med}
\DeclareMathOperator{\lMed}{\Med_{\mathrm{loc}}}
\DeclareMathOperator{\lMedG}{\Med^{\mathrm{loc}}_G}
\DeclareMathOperator{\lpMedG}{\Med_{\mathrm{loc}}^{p}}
\begin{document}

\centerline{\Large\bf Graphs with $G^p$-connected medians}

\vspace{10mm}
\centerline{Laurine B\'en\'eteau, J\' er\'emie Chalopin, Victor Chepoi, Yann Vax\`es}
\medskip
\begin{small}
\medskip
\centerline{Laboratoire d'Informatique et Syst\`emes, Aix-Marseille Universit\'e and CNRS,}
\centerline{Facult\'e des Sciences de Luminy, F-13288 Marseille Cedex 9, France}
\centerline{\texttt{\{laurine.beneteau, jeremie.chalopin, victor.chepoi, yann.vaxes\}@lis-lab.fr}}
\end{small}

\bigskip\bigskip\noindent
{\footnotesize {\bf Abstract.} 	 The median of a graph $G$ with weighted vertices is the
set of all vertices $x$  minimizing the sum of weighted distances from $x$ to the vertices of $G$.
For any integer $p\ge 2$, we characterize the graphs in which, with respect to any non-negative weights,
median sets always induce connected subgraphs in the $p$th power $G^p$ of $G$. This extends some
characterizations of graphs with connected medians (case $p=1$) provided by Bandelt and Chepoi (2002).
The characteristic conditions can be tested in polynomial time for any $p$. We also show that several
important classes of graphs in metric graph theory, including bridged graphs (and thus chordal graphs), graphs with convex balls,
bucolic graphs, and bipartite absolute retracts, have $G^2$-connected medians. Extending the result of Bandelt and Chepoi that
basis graphs of matroids are graphs with connected medians, we characterize the
isometric subgraphs of Johnson graphs and of halved-cubes with connected medians.
}

\section{Introduction} The median problem (also called the Fermat-Torricelli problem or the
Weber problem) is one of the oldest optimization problems in Euclidean
geometry~\cite{LoMoWe}.  The \emph{median problem} can be defined for
any metric space $(X,d)$.  A {\it weight function}
is any mapping $\pi: X\rightarrow {\mathbb{R}}^+$ with finite support. The \emph{median function}
on a point $x$ of $X$  is given by $F_{\pi}(x)=\sum_{u: \pi(u)>0} \pi(u)d(u,x).$
A point $x$ minimizing $F_{\pi}(x)$ is called a {\it median}  of $X$
with respect to $\pi$, and the set of all medians is the {\it median set}
$\Med(\pi).$ By a {\it local median} one means a point $x$ such that
$F_{\pi}(x)$ does not exceed $F_{\pi} (y)$ for any $y$ in a neighborhood of $x$.
Denote by $\lMed(\pi)$ the set of all local medians.

Any connected unweighted graph $G=(V,E)$ endowed with its standard
graph-distance can be viewed as  a (discrete) metric space.
The median problem in graphs arises with one of the basic models
in network location theory~\cite{Hakimi,TaFrLa}, with majority consensus
in classification and data analysis \cite{BaBa,BaJa,BaLeMo,BaMo, McMoPo},
and with closeness centrality in network analysis~\cite{Bav,Beau,Sabi}.
The median problem also occurs in social group choice as a judgement aggregation rule. A \emph{judgment
aggregation rule}  takes the individual opinions of a collection of voters over a set of interconnected issues
and yields a logically consistent collective decision. The \emph{median rule} is a judgement aggregation rule
that selects the decision which minimizes the average distance to the individual opinions of the voters (where the distance between
two opinions is the number of issues on which they disagree). In the special case of
preference aggregation, the median rule is called the \emph{Kemeny rule} or
the \emph{Kemeny median}~\cite{Ke,KeSn}.
By the classical Arrow's impossibility
theorem, there is no consensus function satisfying natural
``fairness'' axioms. It is also well-known that the majority rule
leads to Condorcet's paradox, i.e., to the existence of cycles in the
majority relation. In this respect, the Kemeny median~\cite{Ke,KeSn}
is an important consensus function and satisfies several desirable fairness properties. It
corresponds to the median
problem in the graphs of permutahedra. Recently, Nehring and Pivato \cite{NePi} proved, under appropriate
regularity conditions, that the general median rule is the unique judgement aggregation rule which
satisfies three axioms: \emph{Ensemble Supermajority Efficiency, Reinforcement, and Continuity}.

Instead of considering rankings as individual opinions, Balinski and Laraki \cite{BaLa1,BaLa2} described
and successfully used in practice a
multi-criterion majority rule based on grades, which they  called \emph{majority judgement}. In this judgement aggregation rule,
voters grade the candidates with regard to their suitability; multiple candidates may be given the same grade by a voter.
Then the candidates are ranked according to their median grades and the candidate with the highest median rating is elected.
Nehring and Pivato \cite{NePi} showed that the majority judgement is equivalent to the weighted median rule in the
uniform decision model, which they describe.

The median problem in Euclidean spaces can be solved numerically by Weiszfeld's
algorithm~\cite{Weis} and its convergent modifications (see e.g.\
\cite{Os}). The convergence of such algorithms is based on the convexity
of the median function $F_{\pi}(x)$, yielding that local medians are medians ($\Med(\pi)=\lMed(\pi)$) and that median sets
$\Med(\pi)$ are convex. In fact, $F_{\pi}$ is convex in all metric spaces in which
the distance function $d$ is convex. This is the case of CAT(0) geodesic metric
spaces \cite{BrHa} and of all normed spaces. Analogously to the Euclidean and hyperbolic spaces, in a
CAT(0) space any pair of points $x,y$ can be connected by a unique geodesic (shortest path).
In a normed space however a pair of points $x,y$ may be connected by several geodesics  and the linear
segment $[x,y]$ is one of them. The distance and the median functions are convex on $[x,y]$ but
are not necessarily convex on all $(x,y)$-geodesics. In view of this, a function
$f: X\rightarrow {\mathbb R}$ on a metric space $(X,d)$ can be called \emph{weakly convex} if $f$
is convex on \emph{some} $(x,y)$-geodesic $g(x,y)$ for any pair of points $x,y$ of $X$ and \emph{convex}
if $f$ is convex on \emph{any} $(x,y)$-geodesic $g(x,y)$. The convexity of $f$ implies that
its set of minima is a convex set and the weak convexity of $f$ implies that its set of minima is an isometric subset of $X$. Consequently,  the median functions and the median sets in  CAT(0) spaces are convex while the median functions in normed
spaces are weakly convex and the median sets are isometric. Peakless functions, introduced by Busemann \cite{Busemann} and investigated in \cite{BuPh}
in the setting of geodesic metric spaces, represent a generalization of convex functions: for $z=\lambda\cdot x+(1-\lambda)\cdot y$,
the quantitative inequality $f(z)\le \lambda\cdot f(x)+(1-\lambda)\cdot f(y)$ is replaced by the qualitative inequality
$f(z)\le \max \{ f(x), f(y)\}$, where equality holds only if $f(x)=f(z)=f(y)$. Weakly peakless functions can be defined similarly
to weakly convex functions and they are also unimodal.

Returning to the discrete setting of graphs, first note that the distance, the eccentricity, and  the median functions in trees are convex~\cite{TaFrLa}.
For general graphs, the median function is no longer convex or weakly convex and the median sets may have an arbitrary structure.  Slater \cite{Sl} proved
that every (possibly disconnected) graph can be realized as a subgraph induced by the median set. Dankelman and Sabidussi \cite{DaSa} proved that any connected graph can be
realized as a median set, inducing an isometric subgraph of the host graph. On the other hand, it was shown by Bandelt and Chepoi \cite{BaCh_median} that the
graphs in which all median sets are connected are the same as the graphs in
which all median sets are isometric. More importantly, it was proved in \cite{BaCh_median} that
this class of graphs is the same as the class of graphs in which  $\Med(\pi
)=\lMed(\pi)$  holds for all weight functions $\pi$
(i.e., for which all median functions are unimodal)  and is the same as the class of graphs in which all median functions are weakly convex/peakless.
It is shown in \cite{BaCh_median} that graphs with connected medians are meshed.
All  these conditions can be tested in polynomial time \cite{BaCh_median}. Finally, the theorem of \cite{BaCh_median} provides a
local-to-global characterization of graphs with connected medians, allowing to show that median and weakly median graphs, Helly graphs, and basis graphs of matroids
and of even $\Delta$-matroids are graphs with connected medians.
The fact that the median function of median graphs is weakly convex was established in \cite{SoCh_Weber} (it was previously shown in \cite{BaBa} that the median
function on median graphs is unimodal). We used these results to compute medians in median graphs in linear time \cite{BeChChVa}. Using the unimodality of
the median function on Helly graphs \cite{BaCh_median}, Ducoffe \cite{Du_Helly}  presented an algorithm with complexity $\tilde{O}(m\sqrt{n})$ for computing medians of
Helly graphs with $n$ vertices and $m$ edges. Chepoi et al. \cite{ChFaVa} used unimodality of the median fuction to design a linear time algorithm for computing medians
in trigraphs (planar bridged triangulations). Finally, Bandelt and Chepoi \cite{BaCh_median} used the unimodality of the median function in basis graphs of matroids
to reduce the computation of medians to a problem solvable by the greedy algorithm. The unimodality was also investigated for eccentricity functions.
Dragan \cite{Dragan,Dr_helly} proved that the graphs in which the eccentricity functions are unimodal are exactly the Helly graphs.
More recently, the  unimodality of generalized eccentricity functions on trees have been investigated in  \cite{GuHiTo}.

\subsection*{Our results}
In this paper, we extend a part of the results of \cite{BaCh_median}, in which we replace weakly peakless/convex functions and connectivity/isometricity of the median sets by step-$p$ (or $p$-)weakly peakless/convex functions and step-$p$ connectivity/isometricity. Namely, for any integer $p\ge 2$, we characterize the graphs $G$ in which all median sets induce connected subgraphs in the $p$th power $G^p$ of $G$.
We show that those are exactly the graphs in which all median functions are $p$-weakly peakless/convex (such functions on $G$ are unimodal on $G^p$).
To obtain these results, we prove a local-to-global characterization of general $p$-weakly peakless/convex functions, which may be of independent interest. A consequence of this equivalence is that
testing if a graph $G$ has $G^p$-connected medians can de done in polynomial time via linear programming. In fact, the algorithmic problem reduces to one of checking the unsolvability of a system of linear inequalities.
Our results also allow to considerably extend the list of classes of graphs in which the median function is unimodal (in $G$ or $G^2$). Completing  the list of classes of  graphs with connected medians established in \cite{BaCh_median}, we show that several other important classes of graphs in metric graph theory have $G^2$-connected medians: these are  bridged and weakly bridged graphs (and thus chordal graphs), graphs with convex balls, bucolic graphs, bipartite absolute retracts, and benzenoids. We also characterize the isometric subgraphs of Johnson graphs and halved-cubes with connected medians. In case of Johnson graphs, we show that those are exactly the meshed isometric subgraphs of Johnson graphs. We hope that similarly to median, Helly, and basis graphs of matroids, our unimodality results can be used to design fast algorithms for computing medians in some of these classes of graphs.

\section{Definitions}

\subsection{Graphs}
A \emph{graph} $G=(V,E)$ consists of a set of vertices $V:=V(G)$ and a set of edges $E:=E(G)\subseteq V\times V$. All graphs considered in this paper are
undirected, connected, and contain no multiple edges, neither loops. All of our non-algorithmic  results hold for infinite graphs. 
We write $u \sim v$ if $u, v \in V$ are adjacent and $u\nsim v$ if $u,v$ are not adjacent. For a subset
$A\subseteq V,$ the subgraph of $G=(V,E)$  {\it induced by} $A$
is the graph $G(A)=(A,E')$ such that $uv\in E'$ if and only if $uv\in E$. The \emph{length}
of a $(u,v)$-path $(u=u_0,u_1,\ldots,u_{k-1},u_k=v)$  is the number $k$ of
edges in this path. A \emph{shortest $(u,v)$-path} (or a
$(u,v)$-\emph{geodesic})
is a $(u,v)$-path with a minimum number of edges.  The \emph{distance} $d_G(u,v)$ between two vertices $u$ and $v$ of a graph $G$ is the length of a $(u,v)$-geodesic.
If there is no ambiguity, we will denote $d(u,v)=d_G(u,v)$.
The \emph{interval} $I(u,v)$ between two vertices
$u$ and $v$ is the set of all vertices on $(u,v)$-geodesics, i.e.  $I(u,v)=\{w\in V: d(u,w)+d(w,v)=d(u,v)\}$. We will denote by
$I^{\circ}(u,v)=I(u,v)\setminus \{ u,v\}$ the ``interior'' of the interval $I(u,v)$. If $d(u,v)=k$, then $I(u,v)$ is called a \emph{k-interval}. A graph $G$ is \emph{thick} if each pair of vertices at distance 2 is
included in a cycle of length 4.
For a vertex $v$ of $G$ and an integer $r\ge 1$, we will denote  by $B_r(v)$
the \emph{ball} in $G$
(and the subgraph induced by this ball)  of radius $r$ centered at  $v$, i.e.,
$B_r(v)=\{ x\in V: d(v,x)\le r\}.$ More generally, the $r$--{\it ball  around a set} $A\subseteq V$
is the set (or the subgraph induced by) $B_r(A)=\{ v\in V: d(v,A)\le r\},$ where $d(v,A)=\mbox{min} \{ d(v,x): x\in A\}$.
As usual, $N(v)=B_1(v)\setminus\{ v\}$ denotes the set of neighbors of a vertex $v$ in $G$.
For an integer $p\ge 1$, the \emph{$p$th power} of a graph $G=(V,E)$ is a graph $G^p$ having
the same vertex-set $V$ as $G$ and two vertices $u$ and $v$ are adjacent in $G^p$ if and only if
$d_G(u,v)\le p$.

A graph $G=(V,E)$
is {\it isometrically embeddable} into a graph $G'=(W,F)$
if there exists a mapping $\varphi : V\rightarrow W$ such that $d_{G'}(\varphi (u),\varphi(v))=d_G(u,v)$ for $u,v\in V$.
We call a subgraph $H$  of $G$ an \emph{isometric subgraph}  if $d_H(u,v)=d_G(u,v)$
for each pair of vertices $u,v$ of $H$. The induced subgraph $H$ of $G$
(or the corresponding vertex set of $H$) is called {\it  convex}
if it includes the interval of $G$ between any pair of its
vertices. The smallest convex subgraph containing a given subgraph $S$
is called the {\it convex hull} of $S$ and is denoted by conv$(S)$.
An induced subgraph $H$ (or the corresponding vertex set of $H$)
of a graph $G$
is {\it gated}  if for every vertex $x$ outside $H$ there
exists a vertex $x'$ in $H$ (the {\it  gate} of $x$)
such that  $x'\in I(x,y)$ for any $y$ of $H$. Gated sets are convex and
the intersection of two gated sets is gated. A graph $G$ is a {\it gated amalgam} of two
graphs $G_1$ and $G_2$ if $G_1$ and $G_2$ are (isomorphic to) two intersecting
gated subgraphs of $G$ whose union is all of $G.$ Let $G_{i}$, $i \in \Lambda$ be an arbitrary family of graphs. The
\emph{Cartesian product} $\prod_{i \in \Lambda} G_{i}$ is a graph
whose vertices are all functions $x: i \mapsto x_{i}$, $x_{i} \in
V(G_{i})$.  Two vertices $x,y$ are adjacent if there exists an index
$j \in \Lambda$ such that $x_{j} y_{j} \in E(G_{j})$ and $x_{i} =
y_{i}$ for all $i \neq j$. Two edges $xy$ and $x'y'$ of $G$ are called \emph{parallel} if there exists an index
$j \in \Lambda$ such that $x_{j} y_{j}=x'_jy'_j \in E(G_{j})$.
Note that a Cartesian product of infinitely
many nontrivial graphs is disconnected (where connectivity between two nodes is defined, as usual,
by the existence of a finite path joining them). Therefore, in this case the
connected components of the Cartesian product are called {\it weak
  Cartesian products}.  A \emph{retraction} $f$ from a graph $H$
to a subgraph $G$ is a mapping $f$ of the vertex set $V(H)$ of $H$
onto the vertex set $V(G)$ of $G$ such that for every edge $uv$ of $H$
the image $f(u)f(v)$ is an edge in $G$ and $f(w)=w$ for all vertices
of $G$. A graph is an {\it absolute retract} exactly when it is a retract of any larger
graph into which it embeds isometrically.

As usual, $C_n$ is the  \emph{cycle} on $n$ vertices, $K_n$ is the complete graph on $n$ vertices, and
$K_{n,m}$ is the complete bipartite graph with $n$ and $m$ vertices on each
side.  $C_3=K_3$ is called a \emph{triangle}, $C_4$ is called a \emph{square}, $C_5$ is called a \emph{pentagon},
and $C_6$ is called a \emph{hexagon}.
By $W_n$ we denote
the \emph{wheel} consisting of $C_n$ and a central vertex adjacent to all vertices of
$C_n$. By $W^-_n$ we mean the wheel $W_n$ minus an edge connecting the central vertex to a vertex of $C_n$.
Analogously $K^-_4$ and $K^-_{3,3}$ are the graphs obtained from $K_4$ and $K_{3,3}$ by removing one edge. A {\it propeller} is the graph $K_5-K_3$, i.e.,
the graph consisting of three triangles glued along a common edge.
An $n$--{\it octahedron} $K_{n\times 2}$ (or, a {\it hyperoctahedron}, for short) is the complete graph $K_{2n}$
on $2n$ vertices minus a perfect matching. Any induced subgraph of $K_{n\times 2}$ is called a {\it subhyperoctahedron}.
A {\it hypercube} $H(X)$ is a graph having the finite subsets of $X$ as vertices
and two  such sets $A,B$ are adjacent in $H(X)$
if and only if $|A\triangle B|=1$ (where the {\it symmetric difference} of two
sets $A$ and $B$ is written and defined by $A\triangle B=(A\setminus
B)\cup (B\setminus A)$). A {\it halved-cube} $\frac{1}{2}H(X)$ has the vertices
of a hypercube $H(X)$ corresponding to finite subsets of $X$ of even cardinality
as vertices and two  such vertices are adjacent in $\frac{1}{2}H(X)$  if and only if their  distance
in $H(X)$ is 2 (analogously one can define a half-cube on finite subsets of odd cardinality).  For a positive integer $k$, the {\it Johnson graph}  $J(X,k)$ has the subsets of $X$ of size $k$ as vertices and
two such vertices are adjacent in $J(X,k)$ if and only if their  distance in $H(X)$ is $2$.
All Johnson graphs $J(X,k)$ with even $k$ are isometric subgraphs of the half-cube $\frac{1}{2}H(X)$.  If $X$ is finite and $|X|=n$,
then the hypercube, the half-cube, and the Johnson graphs are usually denoted by $H_n, \frac{1}{2}H_n,$ and $J(n,k),$
respectively. One can easily show that $\frac{1}{2}H_n$ is isomorphic to the square $H^2_{n-1}$ of $H_{n-1}$.

\subsection{Functions} In this paper, we consider real-valued functions of one variable $f: {\mathbb R}\rightarrow {\mathbb R}$ and real-valued functions
$f:V\rightarrow {\mathbb R}$ defined on the vertex-set $V$  of a graph $G=(V,E)$. We refer to the second type of functions as
\emph{functions defined on a graph} $G$. The \emph{epigraph} of a function $f: {\mathbb R}\rightarrow {\mathbb R}$ is the set
$\epi(f)=\{( x,\mu): \mu\ge f(x)\}\subset{\mathbb R}^2$. For $\alpha\in {\mathbb R}$, the \emph{level set} of a function $f: V\rightarrow {\mathbb R}$
is the set $L_{\preceq}(f,\alpha)=\{ v\in V: f(v)\le \alpha\}\subset V$. The \emph{minimum set} (or the \emph{set of global minima})
of $f$ is the set $\argmin(f)$ of all vertices of $G$  minimizing the function $f$ on $G$. By a {\it local minimum} one means a vertex $v$ of $G$
such that $f(v)$ does not exceed $f(w)$ for any $w$ in the neighborhood $B_1(v)$ of $v$. A function $f$ is called {\it unimodal} on $G$ if every
local minimum of $f$ is global, that is, if the inequality $f(v)\le f(y)$ holds for all neighbors $y$ of a vertex $v$ of $G$ then
it holds for all vertices $y$ of $G.$

Recall that a function $f: [a,b]\rightarrow {\mathbb R}$ of one variable is \emph{convex} if for
any $\lambda\in [0,1]$  and $x=\lambda\cdot a+(1-\lambda)\cdot b$, the inequality $f(x)\le \lambda\cdot f(a)+(1-\lambda)\cdot f(b)$ holds.
It is well known that $f$ is convex if and only if its epigraph $\epi(f)$ is a convex set of ${\mathbb R}^2$.
A function $f[a,b]\rightarrow {\mathbb R}$ is \emph{peakless} \cite[p. 109]{Busemann} if  $a',c',b'\in [a,b]$  with $a'<c'<b'$ implies
$f(c')\le \max\{ f(a'), f(b')\}$ and equality holds only if $f(a')=f(c')=f(b')$.

A {\it profile} (or a {\it weight function})  is any mapping $\pi: V\rightarrow
{\mathbb{R}^+\cup \{ 0\}}$ with finite support $\supp(\pi)=\{ v\in V: \pi(v)>0\}$. A profile $\pi$ is an {\it integer profile} if $\pi(v)\in {\mathbb{N}}$ for any $v\in V$.
The
\emph{median function} on
a vertex $v$ of $V$  is given by $F_{\pi}(v)=\sum_{u: \pi(u)>0} \pi(u)d(u,v).$
A vertex $v$ minimizing $F_{\pi}(v)$ is called a {\it median}  of $G$
with respect to the profile $\pi$.  The set of all medians is the {\it median set}
$\Med(\pi)$, i.e., $\Med(\pi)=\argmin(F_{\pi})$. By a {\it local median} we mean any local minimum of  $F_{\pi}$ and
we denote by  $\lMed(\pi)$ the set of all local medians of $G$. The simplest median function on a graph is obtained
when considering the profile $\pi(u)=1$ for a fixed vertex $u\in V$ and $\pi(x)=0$ for $x\ne u$.
This particular median function is called the \emph{radius (\mbox{or} distance) function} $r_u(v)=d(v,u)$ with respect to $u$.

\subsection{Weakly modular and meshed graphs}\label{wm-classes} In this subsection, we define weakly modular and meshed graphs.
We also briefly review median, weakly median, and Helly graphs, three
subclasses of weakly modular graphs  with connected medians.
Other subclasses of weakly modular and meshed graphs, which are relevant to the
results of this paper, are defined in Section~\ref{classes}.
For a more extensive discussion of all those subclasses, see the survey
\cite{BaCh_survey} and the paper \cite{CCHO}.

Vertices $v_1,v_2,v_3$ of a graph $G$ form a {\it metric triangle}
$v_1v_2v_3$ if the intervals $I(v_1,v_2), I(v_2,v_3),$ and
$I(v_3,v_1)$ pairwise intersect only in the common end-vertices, i.e.,
$I(v_i, v_j) \cap I(v_i,v_k) = \{v_i\}$ for any $1 \leq i, j, k \leq 3$.
If $d(v_1,v_2)=d(v_2,v_3)=d(v_3,v_1)=k,$ then this metric triangle is
called {\it equilateral} of {\it size} $k.$ An equilateral metric triangle
$v_1v_2v_3$ of size $k$ is called \emph{strongly equilateral} if $d(v_i,x)=k$ for any vertex $x\in I(v_j,v_k)$, where $\{ i,j,k\}=\{ 1,2,3\}$.
A metric triangle
$v_1v_2v_3$ of $G$ is a {\it quasi-median} of the triplet $x,y,z$
if the following metric equalities are satisfied:
\begin{align*}
d(x,y)&=d(x,v_1)+d(v_1,v_2)+d(v_2,y),\\
d(y,z)&=d(y,v_2)+d(v_2,v_3)+d(v_3,z),\\
d(z,x)&=d(z,v_3)+d(v_3,v_1)+d(v_1,x).
\end{align*}

If $v_1=v_2=v_3=v$ (i.e., $v_1v_2v_3$ is an equilateral metric triangle of size 0), then the vertex $v$ is called a {\em median} of $x,y,z$.
A median may not exist and may not be unique.
On the other hand, a quasi-median of every $x,y,z$ always exists:
first select any vertex $v_1$ from $I(x,y)\cap I(x,z)$ at maximal
distance to $x,$ then select a vertex $v_2$ from $I(y,v_1)\cap
I(y,z)$ at maximal distance to $y,$ and finally select any vertex
$v_3$ from $I(z,v_1)\cap I(z,v_2)$ at maximal distance to $z.$

A graph $G$ is \emph{weakly modular}  \cite{BaCh_helly,Ch_metric} if for any vertex $u$
its distance function $d$ satisfies the following triangle and quadrangle conditions
(see Fig.~\ref{fig-conditions}):
\begin{itemize}
\item
\emph{Triangle condition} TC($u$):  for any two vertices $v,w$ with
$1=d(v,w)<d(u,v)=d(u,w)$ there exists a common neighbor $x$ of $v$
and $w$ such that $d(u,x)=d(u,v)-1.$
\item
\emph{Quadrangle condition} QC($u$): for any three vertices $v,w,z$ with
$d(v,z)=d(w,z)=1$ and  $2=d(v,w)\leq d(u,v)=d(u,w)=d(u,z)-1,$ there
exists a common neighbor $x$ of $v$ and $w$ such that
$d(u,x)=d(u,v)-1.$
\end{itemize}

\begin{figure}[h]
\begin{center}
\includegraphics{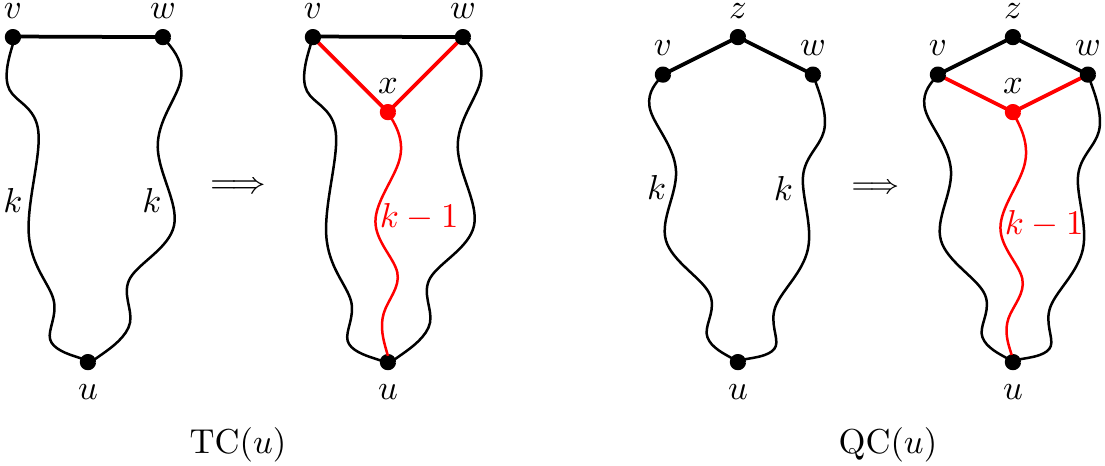}
\end{center}
\caption{Triangle and quadrangle conditions}\label{fig-conditions}
\end{figure}

All metric triangles of weakly modular graphs are equilateral. In fact,  a graph $G$
is weakly modular if and only if  any metric triangle $v_1v_2v_3$ of $G$ is strongly
equilateral \cite{Ch_metric}.

A graph $G=(V,E)$ is called {\it meshed} \cite{BaCh_median} if for any
three vertices $u,v,w$ with $d(v,w)=2,$ there exists a common
neighbor $x$ of $v$ and $w$ such that $2d(u,x)\le d(u,v)+d(u,w).$
Metric triangles of meshed graphs are equilateral \cite{BaCh_median}.
All weakly modular graphs are meshed; for a  proof, see \cite[Lemma 2.22]{CCHO}.
Also all graphs with connected medians are meshed \cite{BaCh_median}.

A graph $G$  is called {\it median} if  $|I(u,v)\cap I(v,w)\cap I(w,u)|=1$ for
every triplet $u,v,w$ of vertices,
i.e., every triplet of vertices has a unique median. Median graphs constitutes the most important  main class of graphs in metric graph theory.
They can be characterized in several different ways and they play an important role in geometric group theory, concurrency, and combinatorics.
Two characterizations of median graphs motivated the characterization of some classes of graphs occurring in this paper. First, median graphs are exactly the
retracts of hypercubes \cite{Ba_retracts} and second, finite median graphs are exactly the graphs obtained from hypercubes (Cartesian products of edges)
by successive gated amalgams. Median graphs are graphs with connected medians \cite{BaBa,SoCh_Weber} and this property together with the majority rule were used in our linear-time
algorithm \cite{BeChChVa} for computing medians in median graphs. For properties and characterizations of median graphs, see the survey \cite{BaCh_survey}.
More generally, a weakly modular graph $G$ is \emph{weakly median} \cite{BaCh_weak} if each triplet of vertices of $G$ has a unique quasi-median.
{\it Quasi-median graphs} \cite{BaMuWi} are the $K^-_4$ and $K_{2,3}$--free weakly modular
graphs. Quasi-median graphs are weakly median.  It was shown in \cite{BaCh_median} that weakly median graphs have connected medians.

A graph $G$ is called {\it modular} if $I(u,v)\cap I(v,w)\cap I(w,u)\ne
\varnothing$ for every triplet $u,v,w$ of vertices, i.e., every triplet of
vertices admits
a (not necessarily unique) median. Clearly  median graphs are modular. Modular graphs are exactly the weakly modular graphs in which all metric triangles of $G$
have size 0.  A graph $G$ is called {\it pseudo-modular} if
any three pairwise intersecting balls of $G$ have a nonempty
intersection \cite{BaMu_pmg}. This condition easily implies both the triangle
and quadrangle conditions, and thus pseudo-modular graphs are weakly modular. In fact,
pseudo-modular graphs are quite specific weakly modular graphs: from the definition
also follows that all metric triangles of pseudo-modular graphs have size 0 or 1, i.e., each metric triangle is either a single vertex or
is a triangle of $G$.

A graph $G$ is a {\it (finitely) Helly graph} if the family
of balls of $G$ has the Helly property, that is, every
finite collection of pairwise intersecting balls of $G$ has a
nonempty intersection.  From the definition it immediately
follows that Helly graphs are pseudo-modular.   Helly graphs are the
discrete analogues of
hyperconvex spaces: namely, the requirement that radii of balls are
from the nonnegative reals is modified by replacing the reals by the
integers.  In perfect analogy with hyperconvexity, there is a close
relationship between Helly graphs and absolute retracts: absolute retracts and
Helly graphs are the same \cite{HeRi}.  In particular, for any graph
$G$ there exists a smallest Helly graph comprising $G$ as an isometric
subgraph. As shown in \cite{BaCh_median}, Helly graphs are graphs with
connected medians.

\section{$p$-Weakly peakless and $p$-weakly convex functions on graphs} In this section, first we introduce geodesic strings in graphs and
convex/peakless functions on geodesic strings.
This allows us to define in a unified way the weakly peakless/convex functions on graphs \cite{BaCh_median} and their generalization, the so-called (step-$p$)
$p$-weakly peakless/convex functions. The $p$-weakly peakless/convex functions are functions for which the peaklessness/convexity conditions are satisfied
only for pairs of vertices at distance strictly larger than $p$. We provide   local-to-global characterizations of $p$-weakly peakless/convex functions on graphs.
These characterizations are similar to the characterizations of weakly peakless/convex functions \cite{BaCh_median,Ch_dpo}, however the proofs are more involved.
Finally, we prove that $p$-weakly peakless/convex functions on a graph $G$ are unimodal in the $p$th power $G^p$ of $G$.

\subsection{Convex and peakless functions on geodesic strings}
A \emph{geodesic string}  of a graph $G=(V,E)$ is a finite sequence of vertices $P=(u=w_0,w_1,\ldots ,w_n=v)$
included in a $(u,v)$-geodesic of $G$, i.e., for any $0<i<j<n$ we have $d(u,w_j)=d(u,w_i)+d(w_i,w_j)$ and $d(w_i,v)=d(w_i,w_j)+d(w_j,v)$.
Any geodesic is a geodesic string. Let $f: V\rightarrow {\mathbb R}$ be a function on $G$ and  let $P$  be a geodesic string. We denote by $f_P$
the piecewise-linear function on the segment $[0,d(u,v)]$ obtained by
considering $(0,f(w_0)),(d(u,w_1),f(w_1)),\ldots
,(d(u,w_{n-1}),f(w_{n-1})),(d(u,v),f(w_n))$
as points in the plane ${\mathbb R}^2$ and connecting  the consecutive points by segments.
We say that  a function $f$ is \emph{convex}
on a  geodesic  string $P$ if for all $i<j<k$,
\[f(w_j)\leq \frac{d(w_k,w_j)}{d(w_i,w_k)}\cdot f(w_i)+\frac{d(w_i,w_j)}{d(w_i,w_k)}\cdot f(w_k).\]
Equivalently, $f$ is convex on $P$ if  $f_P$ is convex.  We say that  $f$
is \emph{peakless} on a geodesic string $P$  if $0\leq i<j<k\leq n$ implies $f(w_j)\leq \max\{ f(w_i),f(w_k)\}$
and equality holds only if $f(w_i)=f(w_j)=f(w_k).$ Equivalently, $f$ is peakless on $P$ if  the piecewise-linear function $f_P$ is peakless.
We continue with a simple but useful local-to-global characterization of functions which are convex or peakless on geodesic strings.

\begin{lemma}\label{geodesic-string-convex}
  A function $f$ on a graph $G$ is convex on a geodesic string $P=(u=w_0,w_1,\ldots ,w_n=v)$
if and only if $f$ is locally-convex on $P$, i.e., for any $i=1,\ldots,n-1$, we
have \[f(w_i)\leq \frac{d(w_{i+1},w_i)}{d(w_{i-1},w_{i+1})}\cdot
f(w_{i-1})+\frac{d(w_{i-1},w_i)}{d(w_{i-1},w_{i+1})}\cdot f(w_{i+1}).\]
If $P$ is a geodesic, then $f$ is convex on $P$ if and only if $f(w_i)\leq \frac{1}{2}(f(w_{i-1})+f(w_{i+1}))$ for any $1\le i\le n-1$.
\end{lemma}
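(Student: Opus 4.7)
The plan is to reduce the lemma to the classical fact that a piecewise linear function on an interval of $\mathbb R$ is convex if and only if it is locally convex at each of its breakpoints. The bridge between the graph side and the real side is the map $w_i\mapsto x_i:=d(u,w_i)$. Since $P$ is a geodesic string, distances are additive along it: for any $i<j<k$ one has $d(w_i,w_k)=d(w_i,w_j)+d(w_j,w_k)=x_k-x_i$. Thus $0=x_0<x_1<\cdots<x_n=d(u,v)$, and the auxiliary function $f_P$ is exactly the piecewise linear interpolant of the points $(x_i,f(w_i))$.

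The forward direction is immediate: if $f$ is convex on $P$, then applying the defining inequality to the particular triple $(i-1,i,i+1)$ and using $d(w_{i-1},w_{i+1})=d(w_{i-1},w_i)+d(w_i,w_{i+1})$ yields precisely the displayed local inequality, so I would dispose of it in one sentence.

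For the converse, I would rewrite the local convexity inequality in the slope form. Letting $s_i:=(f(w_{i+1})-f(w_i))/(x_{i+1}-x_i)$ and setting $a=d(w_{i-1},w_i)$, $b=d(w_i,w_{i+1})$, a one-line manipulation of $(a+b)f(w_i)\le bf(w_{i-1})+af(w_{i+1})$ shows that the local convexity at $w_i$ is equivalent to $s_{i-1}\le s_i$. Hence the hypothesis says exactly that the slopes of the linear pieces of $f_P$ form a non-decreasing sequence. From this I would conclude, by a simple induction on $k-i$ (or a direct telescoping of chord-slopes), that for any $i<j<k$ the chord slope $(f(w_j)-f(w_i))/(x_j-x_i)$ is at most the chord slope $(f(w_k)-f(w_j))/(x_k-x_j)$; rearranging this and using $x_k-x_i=(x_j-x_i)+(x_k-x_j)$ gives exactly
\[
f(w_j)\le\frac{x_k-x_j}{x_k-x_i}f(w_i)+\frac{x_j-x_i}{x_k-x_i}f(w_k)=\frac{d(w_j,w_k)}{d(w_i,w_k)}f(w_i)+\frac{d(w_i,w_j)}{d(w_i,w_k)}f(w_k),
\]
which is the global convexity of $f$ on $P$.

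The last sentence of the lemma is then just specialization: when $P$ is a geodesic in $G$, consecutive vertices are adjacent, so $d(w_{i-1},w_i)=d(w_i,w_{i+1})=1$, $d(w_{i-1},w_{i+1})=2$, and the local inequality collapses to $f(w_i)\le\tfrac12(f(w_{i-1})+f(w_{i+1}))$. The only mildly obstructive step is the bookkeeping of the ratios $d(w_i,w_j)/d(w_i,w_k)$, but once the additivity of distances along the geodesic string is used to identify them with the differences $x_j-x_i$, the whole lemma is elementary one-dimensional convex analysis.
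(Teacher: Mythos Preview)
Your proposal is correct and follows essentially the same approach as the paper: both reduce the lemma to the classical fact that a piecewise linear function on a real interval is convex if and only if it is locally convex at each breakpoint, via the identification $w_i\mapsto d(u,w_i)$. The paper phrases this through the convexity of the epigraph of $f_P$ (each breakpoint must be a convex vertex of $\epi(f_P)$), while you use the equivalent slope-monotonicity formulation $s_{i-1}\le s_i$; the content is identical.
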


\begin{proof} By definition, $f$ is convex if an only if the piecewise-linear function $f_P$ is convex and, therefore, if and only if the epigraph $\epi(f_P)$ of $f_P$ is convex. The latter condition  is equivalent to the requirement that each  point $(i,f(w_i)), i=1,\ldots,n-1$ of ${\mathbb R}^2$ is a convex vertex
of $\epi(f_P)$, i.e., to the condition $f(w_i)\leq \frac{d(w_{i+1},w_i)}{d(w_{i-1},w_{i+1})}\cdot f(w_{i-1})+\frac{d(w_{i-1},w_i)}{d(w_{i-1},w_{i+1})}\cdot f(w_{i+1}).$
If $P$ is a geodesic of $G$, then $d(w_{i-1},w_{i+1})=2$ for any $i=1,\ldots,n-1$ and the previous inequality reduces to the inequality $f(w_i)\leq \frac{1}{2}(f(w_{i-1})+f(w_{i+1}))$.
\end{proof}

\begin{lemma}\label{geodesic-string-peakless} For a  function $f$ on  a graph $G$ and a geodesic string $P=(u=w_0,w_1,\ldots ,w_n=v)$ of $G$, the following conditions are equivalent:
\begin{itemize}
\item[(i)] $f$ is peakless on $P$;
\item[(ii)] $f$ is locally-peakless on $P$, i.e., for any $i=1,\ldots,n-1$, $f(w_i)\leq  \max \{ f(w_{i-1}),f(w_{i+1})\}$ and equality holds only if $f(w_{i-1})=f(w_i)=f(w_{i+1})$;
\item[(iii)] $P$ contains two (not necessarily distinct) vertices $w_{\ell},w_m$ such  that $f(w_0)>f(w_1)>\cdots>f(w_{\ell})=f(w_{\ell+1})=\cdots=f(w_m)<f(w_{m+1})<\cdots<f(w_n).$
\end{itemize}
\end{lemma}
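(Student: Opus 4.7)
My plan is to prove the equivalence in the order (i) $\Rightarrow$ (ii) $\Rightarrow$ (iii) $\Rightarrow$ (i). The first implication is immediate: local peaklessness is just the peaklessness condition applied to consecutive triples $(w_{i-1}, w_i, w_{i+1})$. For (iii) $\Rightarrow$ (i), I would fix arbitrary indices $0 \le i < j < k \le n$ and do a case analysis on the position of $j$ relative to the plateau $[\ell, m]$: if $j < \ell$, the strict decrease of $f$ on $[0,\ell]$ gives $f(w_j) < f(w_i)$ and no equality issue arises; if $j > m$, the argument is symmetric; and if $\ell \le j \le m$, then $f(w_j) = c := f(w_\ell) = \cdots = f(w_m)$ is the global minimum of $f$ on $P$, so $f(w_j) \le \max\{f(w_i), f(w_k)\}$ is automatic, with equality forcing $f(w_i) = f(w_k) = c$, which, given $i < j < k$, means all three indices lie in $[\ell, m]$ and all three values equal $c$, satisfying the equality clause of peaklessness.

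The substantive implication is (ii) $\Rightarrow$ (iii). I would set $c := \min_{0 \le i \le n} f(w_i)$, $\ell := \min\{i : f(w_i) = c\}$, and $m := \max\{i : f(w_i) = c\}$, and then establish three claims: (a) $f \equiv c$ on the plateau $[\ell,m]$, (b) $f$ is strictly decreasing on $[0,\ell]$, and (c) $f$ is strictly increasing on $[m,n]$. Claim (b) I would prove by backward induction on $i$ from $\ell-1$ down to $0$: the base case $f(w_{\ell-1}) > c = f(w_\ell)$ holds by the minimality of $\ell$; in the inductive step, given the hypothesis $f(w_{i+1}) > f(w_{i+2})$, local peaklessness at $w_{i+1}$ yields $\max\{f(w_i), f(w_{i+2})\} \ge f(w_{i+1}) > f(w_{i+2})$, so the maximum must be $f(w_i)$ and hence $f(w_{i+1}) \le f(w_i)$, while the equality clause combined with $f(w_{i+1}) > f(w_{i+2})$ rules out $f(w_i) = f(w_{i+1})$. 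Part (c) is symmetric.

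I expect the main obstacle to be claim (a), constancy on the plateau. Suppose for contradiction that some index $k$ with $\ell < k < m$ satisfies $f(w_k) > c$, and choose such a $k$ maximizing $f$ on $[\ell,m]$; write $M := f(w_k) > c$. Since $f(w_{k-1}), f(w_{k+1}) \le M = f(w_k)$, local peaklessness at $w_k$ must hold with equality, and the equality clause then forces $f(w_{k-1}) = f(w_k) = f(w_{k+1}) = M$. But now $w_{k-1}$ also attains the maximum $M$ on $[\ell,m]$, so the same argument applied at $w_{k-1}$ (valid so long as $k-1 > \ell$) propagates the equality to $f(w_{k-2}) = M$; iterating leftwards eventually yields $f(w_\ell) = M > c$, contradicting the definitions of $c$ and $\ell$. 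Once this iterated invocation of the equality clause is handled carefully, the remainder of the argument is routine bookkeeping.
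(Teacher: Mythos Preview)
Your proof is correct and follows essentially the same route as the paper: both define $\ell$ and $m$ as the leftmost and rightmost global minima of $f$ on $P$, establish constancy on $[\ell,m]$ and strict monotonicity outside, and run the implications (i)$\Rightarrow$(ii)$\Rightarrow$(iii)$\Rightarrow$(i). The only cosmetic difference is in the plateau argument---the paper follows an increasing run from $w_\ell$ until it hits a local peak, whereas you pick a maximum on $[\ell,m]$ and propagate the equality clause leftwards to $w_\ell$---but the two are equally direct.
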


\begin{proof} The implications (i)$\Rightarrow$(ii) and (iii)$\Rightarrow$(i) are trivial. To prove that (ii)$\Rightarrow$(iii),
suppose that $f$ is locally-peakless on $P$. Let $w_{\ell}$ and $w_m$ be respectively the minima closest to $u$ and to $v$  of the function $f$ on $P$.
First we assert that $f(w_{\ell})=f(w_{\ell+1})=\cdots=f(w_m)$. If this is not true, from the choice of $w_{\ell}$ and $w_m$ it follows that we can find two indices $\ell\le i<j<m$ such that
$f(w_{\ell})=\cdots = f(w_i)<f(w_{i+1})<\cdots <f(w_{j-1})<f(w_j)\ge
f(w_{j+1})$.
Consequently, $f$ is not locally-peakless on the triplet
$(w_{j-1},w_j,w_{j+1})$. This contradiction shows
that $f(w_{\ell})=f(w_{\ell+1})=\cdots=f(w_m)<f(w_{m+1})$. Now we show that $f(w_m)<f(w_{m+1})<\cdots<f(w_n)$ (the proof of the inequalities $f(w_0)>f(w_1)>\cdots>f(w_{\ell})$ is similar).
Suppose that $f(w_{m})<f(w_{m+1})<\cdots<f(w_{j-1})<f(w_j)$ and that $f(w_j)\ge f(w_{j+1})$. Consequently, $f(w_j)\ge \max\{f(w_{j-1}),f(w_{j+1})\}$ and since $f(w_{j-1})<f(w_{j})$, we do not have $f(w_{j-1}) = f(w_j) = f(w_{j+1})$ and thus we obtain a contradiction with the assumption that $f$ is locally-peakless on $P$. This contradiction shows that $f$ satisfies the monotonicity property (iii), concluding the proof.
\end{proof}

\subsection{Weakly convex and weakly peakless functions}
A function $f$ on a graph $G=(V,E)$ is {\it weakly convex} if  for any $u,v\in V$  $f$ is convex on some $(u,v)$-geodesic.
Analogously, $f$ is \emph{weakly peakless} if  $f$ is peakless on some $(u,v)$-geodesic for any $u,v\in V$.
Weakly convex functions were introduced in  \cite{ArSe} under the name ``$r$--convex functions''
via condition (ii) of Lemma \ref{weakly-convex} below; see also \cite{LeSeSo}. Weakly peakless functions were introduced and studied in
\cite{BaCh_median,Ch_dpo} under the name ``pseudopeakless functions''. 

Let $f$ be a function on  $G$ and $u,v\in V$ with $u\nsim v$.
We consider the following conditions on $f$:
\begin{enumerate}[$\WC(u,v)$:]
\item there exists $w\in I^{\circ}(u,v)$ such that
$f(w)\leq \frac{d(v,w)}{d(u,v)}\cdot f(u)+\frac{d(u,w)}{d(u,v)}\cdot f(v).$
\end{enumerate}

\begin{enumerate}[$\WP(u,v)$:]
\item there exists $w \in I^{\circ}(u,v)$ such that
  $f(w) \leq \max \left\{f(u),f(v)\right\}$, and  equality holds
  only if $f(u) = f(w) =f(v)$.
\end{enumerate}

\begin{enumerate}[$\Loz(u,v)$:]
\item there exist  $w,w' \in I^{\circ}(u,v)$ (not necessarily distinct) so that
  $f(w)+f(w') \leq  f(u)+f(v)$.
\end{enumerate}
A function $f$ on $G$ is a {\it lozenge function} if  $f$ satisfies $\Loz(u,v)$ for any $u,v\in V, u\nsim v$.

\begin{remark}  Lozenge functions on graphs were introduced and investigated in the context of median functions in \cite{BaCh_median}. They
represent a generalization of $L^{\#}$-convex functions \cite{FaTa} and $N$-convex functions \cite{HiIk} on particular classes of graphs,
both investigated in the theory of discrete convexity \cite{Hirai,Murota}.
\end{remark}

We continue by recalling the following  local-to-global characterizations  of weakly convex and weakly
peakless functions on graphs (which essentially follow from Lemmas \ref{geodesic-string-convex} and \ref{geodesic-string-peakless}).

\begin{lemma}[{\cite[Lemma 1]{BaCh_median}}]\label{weakly-convex} For a  function $f$ on a graph $G$ the following conditions are equivalent:

\begin{enumerate}
\item[(i)] $f$ is weakly convex;
\item[(ii)] $f$ satisfies  $\WC(u,v)$ for any $u,v\in V$ with $u\nsim v$;
\item[(iii)] $f$ satisfies $\WC(u,v)$ for any $u,v\in V$ with $d(u,v)=2$, i.e., any two vertices $u$ and $v$
at distance 2 have a common neighbour $w$ with $2f(w)\leq f(u)+f(v)$.
\end{enumerate}
\end{lemma}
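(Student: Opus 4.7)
The easy implications (i)$\Rightarrow$(ii) and (ii)$\Rightarrow$(iii) are essentially free. For (i)$\Rightarrow$(ii), given non-adjacent $u,v$ at distance $n\ge 2$ and any convex $(u,v)$-geodesic $(u=w_0,\ldots,w_n=v)$ provided by (i), I would take $w:=w_1\in I^{\circ}(u,v)$: the convexity inequality at the triple $(0,1,n)$ reads $f(w_1)\le \frac{n-1}{n}f(u)+\frac{1}{n}f(v)$, which is exactly $\WC(u,v)$. For (ii)$\Rightarrow$(iii), $d(u,v)=2$ already forces $u\nsim v$.

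The substantial direction is (iii)$\Rightarrow$(i). The plan is to combine the local-to-global criterion of Lemma \ref{geodesic-string-convex} with a minimum-sum argument on $(u,v)$-geodesics. By that lemma, it suffices to produce, for each $u,v\in V$, a $(u,v)$-geodesic $P=(u=w_0,w_1,\ldots,w_n=v)$ satisfying the local inequality $2f(w_i)\le f(w_{i-1})+f(w_{i+1})$ at every interior index $i$. Assuming $G$ finite, I would pick any $(u,v)$-geodesic minimizing $\sigma(P):=\sum_{i=1}^{n-1}f(w_i)$ and claim that this $P$ is locally convex.

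Suppose not: some index $i$ satisfies $f(w_i)>\tfrac12(f(w_{i-1})+f(w_{i+1}))$. Since $d(w_{i-1},w_{i+1})=2$, condition (iii) supplies a common neighbor $w'$ of $w_{i-1}$ and $w_{i+1}$ with $2f(w')\le f(w_{i-1})+f(w_{i+1})<2f(w_i)$, hence $f(w')<f(w_i)$. The distance sandwich $d(u,w_{i-1})=i-1$, $d(u,w_{i+1})=i+1$ forces $d(u,w')=i$ and $d(w',v)=n-i$, so $w'$ differs from every $w_j$ with $j\ne i$ (different distance-labels from $u$) and from $w_i$ itself (strict $f$-inequality). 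Replacing $w_i$ by $w'$ in $P$ therefore yields another $(u,v)$-geodesic with strictly smaller $\sigma$-value, contradicting the minimality of $P$. Lemma \ref{geodesic-string-convex} then delivers convexity of $f$ on $P$.

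The main obstacle, and essentially the only one, is the infinite-graph case, where $\sigma$ may fail to attain its infimum on the (possibly infinite) family of $(u,v)$-geodesics. This is the step I expect to require the most care; it can be handled either by a Zorn-style argument on the pointwise-ordered collection of $(u,v)$-geodesics, or equivalently by inducting on $d(u,v)$ and interleaving the local swap above with the induction hypothesis, always using (iii) exactly as in the finite-case swap to propagate convexity one edge at a time.
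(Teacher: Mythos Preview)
Your argument is correct and matches the paper's own approach: the paper likewise proves (iii)$\Rightarrow$(i) by choosing a $(u,v)$-geodesic minimizing $\sum_i f(w_i)$ and invoking Lemma~\ref{geodesic-string-convex}, and your swap argument makes explicit exactly why that minimum-sum geodesic must be locally convex. Your flagging of the infinite-graph case is apt (the paper's sketch glosses over it), and the induction on $d(u,v)$ you propose as a remedy is precisely the strategy the paper later uses in the proof of (ii)$\Rightarrow$(i) of Theorem~\ref{wc-ltg} for general $p$.
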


\begin{lemma}[{\cite[Remark 1]{BaCh_median}}]\label{weakly-peakless} For a function $f$ defined on  a graph $G$ the following conditions are equivalent:

\begin{enumerate}
\item[(i)] $f$ is weakly peakless;
\item[(ii)] $f$ satisfies $\WP(u,v)$ for any $u,v\in V$ with $u\nsim v$;
\item[(iii)] $f$ satisfies $\WP(u,v)$ for any $u,v\in V$ with $d(u,v)=2$;
\item[(iv)] the composition $\alpha\circ f$
is weakly convex for some strictly isotone transformation $\alpha$ of the
reals ($\alpha (r)<\alpha (s)$ for any two reals $r,s$ such that $r<s$).
\end{enumerate}
Any weakly peakless function is unimodal.
\end{lemma}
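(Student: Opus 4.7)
The implications (i) $\Rightarrow$ (ii) $\Rightarrow$ (iii) are essentially immediate: for (i) $\Rightarrow$ (ii), given $u \nsim v$, take any peakless $(u,v)$-geodesic granted by (i); any interior vertex $w$ of this geodesic satisfies the peakless condition on the triple $(u, w, v)$, which is exactly $\WP(u,v)$. The implication (ii) $\Rightarrow$ (iii) is trivial since $d(u,v) = 2$ forces $u \nsim v$.

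The decisive step is (iii) $\Rightarrow$ (iv). By Lemma~\ref{weakly-convex}(iii), it suffices to produce a strictly isotone $\alpha: \mathbb{R} \to \mathbb{R}$ such that for every $u, v$ with $d(u,v) = 2$ some common neighbor $w$ satisfies $2 \alpha(f(w)) \leq \alpha(f(u)) + \alpha(f(v))$. Take $w$ to be the vertex granted by $\WP(u, v)$ and perform a case analysis on the values $f(u), f(w), f(v)$: the cases $f(u) = f(v)$ and $f(w) \leq \min\{f(u), f(v)\}$ are handled by any strictly isotone $\alpha$, while the remaining case (with $f(u), f(w), f(v)$ strictly ordered) needs $\alpha(c) - \alpha(b) \geq \alpha(b) - \alpha(a)$ for any $a < b < c$ in the range of $f$. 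Enumerating the range as $r_1 < r_2 < \cdots$ and setting $\alpha(r_i) = 2^i$ (then extending arbitrarily to a strictly increasing function on $\mathbb{R}$) makes the required inequality hold by super-exponential growth.

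For (iv) $\Rightarrow$ (i), weak convexity of $\alpha \circ f$ produces, for any $u, v$, a $(u,v)$-geodesic $P$ on which $\alpha \circ f$ is convex; by Lemma~\ref{geodesic-string-convex} the piecewise-linear function $(\alpha \circ f)_P$ is convex on $[0, d(u,v)]$, and a convex function on an interval is peakless in the sense of the paper. Strict isotonicity of $\alpha$ then transfers this peaklessness back to $f$ on $P$, establishing (i). For the unimodality claim, suppose $v$ is a local minimum and there exists $u$ with $f(u) < f(v)$. By (i) pick a $(u,v)$-geodesic $P = (u = w_0, \ldots, w_n = v)$ on which $f$ is peakless; Lemma~\ref{geodesic-string-peakless}(iii) yields a decrease-plateau-increase profile along $P$, and since $f(w_n) > f(w_0)$ the increasing part is nonempty, so $f(w_{n-1}) < f(w_n) = f(v)$, contradicting local minimality.

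The main obstacle is the construction of $\alpha$ in (iii) $\Rightarrow$ (iv): the super-exponential enumeration trick works cleanly whenever the range of $f$ can be linearly ordered as a sequence (which covers all median functions arising from weight functions of finite support on a countable graph), but a general real-valued $f$ requires one to verify that an enumeration can be chosen or to argue directly using a sufficiently steep strictly convex $\alpha$ together with a local, pair-by-pair rescaling.
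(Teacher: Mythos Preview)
Your cycle through (iv) is a genuine detour from the paper's approach, and the gap you flag in (iii)$\Rightarrow$(iv) is real, not merely cosmetic. The paper proves (iii)$\Rightarrow$(i) \emph{directly}: pick a $(u,v)$-geodesic $P=(w_0,\ldots,w_n)$ minimizing $\sum_i f(w_i)$; if $f$ failed local peaklessness at some $w_i$, then by $\WP(w_{i-1},w_{i+1})$ there would exist a common neighbour $w'$ with $f(w')<f(w_i)$, and swapping $w_i$ for $w'$ would lower the sum. Hence $f$ is locally peakless on $P$, and Lemma~\ref{geodesic-string-peakless} upgrades this to peakless. This argument never touches $\alpha$ and therefore sidesteps the range-enumeration problem entirely; the equivalence with (iv) is then a separate (and, in the paper, unproved here) observation.

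Your construction of $\alpha$ via $\alpha(r_i)=2^i$ really does fail outside the well-ordered case: already for a countable range order-isomorphic to $\mathbb{Q}$ there is no increasing enumeration, and your fallback suggestion of ``a sufficiently steep strictly convex $\alpha$'' cannot work uniformly either, since for triples $a<b<c$ with $b$ arbitrarily close to $c$ but $a$ fixed, the requirement $\alpha(c)-\alpha(b)\ge\alpha(b)-\alpha(a)$ forces $\alpha$ to be discontinuous at $c$ from the left---and if uncountably many such $c$ arise, no strictly isotone $\alpha$ exists. So your proof as written establishes (i)$\Leftrightarrow$(ii)$\Leftrightarrow$(iii) only for functions with discretely-ordered range. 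The remedy is simply to prove (iii)$\Rightarrow$(i) directly via the minimum-sum geodesic (as the paper does), treat (iv)$\Rightarrow$(i) as you already have, and handle the remaining direction to (iv) separately if needed. Your arguments for (iv)$\Rightarrow$(i) and for unimodality are correct.
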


The idea of the proof that (iii) implies (i) in Lemma \ref{weakly-convex} is as follows. According to \cite{BaCh_median},  for
nonadjacent vertices $u$ and $v$ consider a shortest $(u,v)$-path $P=(u=w_0,w_1,\ldots ,w_{n-1},w_n=v)$
such that $\sum_{i=0}^n f(w_i)$ is as
small as possible. From the choice of $P$ and condition (iii) it follows that $f(w_i)\leq \frac{1}{2}f(w_{i-1})+\frac{1}{2}f(w_{i+1})$ for each $1<i<n$.
Thus, $f$ is locally-convex on $P$ and by Lemma \ref{geodesic-string-convex} $f$ is convex on $P$.
Analogously, to prove that (iii) implies (i) in  Lemma \ref{weakly-peakless}, it suffices to note that $f$ is
locally-peakless on $P$ and to use Lemma \ref{geodesic-string-peakless}.

\begin{lemma}\label{lozenge} Each of the conditions $\WC(u,v)$ and $\Loz(u,v)$ implies  $\WP(u,v)$, thus weakly convex and
lozenge functions are weakly peakless.
\end{lemma}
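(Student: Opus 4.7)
The plan is to verify both implications $\WC(u,v)\Rightarrow \WP(u,v)$ and $\Loz(u,v)\Rightarrow \WP(u,v)$ directly from the definitions, and then to deduce the ``thus'' clause by invoking Lemma \ref{weakly-peakless}(ii), which reduces weak peaklessness to the pointwise conditions $\WP(u,v)$ over nonadjacent pairs.

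For $\WC(u,v)\Rightarrow \WP(u,v)$, I will take the witness $w\in I^{\circ}(u,v)$ supplied by $\WC(u,v)$ and set $\lambda=d(v,w)/d(u,v)$, noting $1-\lambda=d(u,w)/d(u,v)$. Because $u\nsim v$ and $w$ lies strictly inside $I(u,v)$, both $\lambda$ and $1-\lambda$ are strictly positive. Thus
\[
f(w)\le \lambda f(u)+(1-\lambda)f(v)\le \max\{f(u),f(v)\}.
\]
If the rightmost inequality is an equality, then the convex combination attains the maximum with strictly positive coefficients, which forces $f(u)=f(v)$, and hence $f(u)=f(w)=f(v)$. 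So the same $w$ witnesses $\WP(u,v)$.

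For $\Loz(u,v)\Rightarrow \WP(u,v)$, I will take the two witnesses $w,w'\in I^{\circ}(u,v)$ of $\Loz(u,v)$ and, without loss of generality, assume $f(w)\le f(w')$. Then
\[
2f(w)\le f(w)+f(w')\le f(u)+f(v)\le 2\max\{f(u),f(v)\},
\]
so $f(w)\le \max\{f(u),f(v)\}$. If equality holds on the far right, the middle inequality yields $f(u)+f(v)=2\max\{f(u),f(v)\}$, forcing $f(u)=f(v)$, whence $f(w)=f(u)=f(v)$. Hence $w$ witnesses $\WP(u,v)$.

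The ``thus'' statement is then immediate: a weakly convex $f$ satisfies $\WC(u,v)$ for every nonadjacent pair by Lemma \ref{weakly-convex}(ii), so by the first implication above $f$ satisfies $\WP(u,v)$ for every nonadjacent pair, and Lemma \ref{weakly-peakless}(ii) yields that $f$ is weakly peakless; the argument for lozenge functions is identical via the second implication. There is no real obstacle in this proof; the only mild subtlety is remembering to exploit the strict inclusion $w\in I^{\circ}(u,v)$ (giving strict positivity of the convex weights) in the $\WC$ case, and to select the witness of smaller $f$-value in the $\Loz$ case in order to handle the equality clause of $\WP$.
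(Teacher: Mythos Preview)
Your proof is correct and follows essentially the same approach as the paper: the paper dismisses $\WC(u,v)\Rightarrow\WP(u,v)$ as trivial and handles $\Loz(u,v)\Rightarrow\WP(u,v)$ by the same min/max comparison and equality-chasing that you carry out. Your write-up is simply more explicit, particularly in spelling out the equality case via the chain $2f(w)\le f(w)+f(w')\le f(u)+f(v)\le 2\max\{f(u),f(v)\}$, which is a clean way to phrase what the paper does in words.
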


\begin{proof} That $\WC(u,v)$ implies $\WP(u,v)$ is trivial. Suppose
  $f$ satisfies $\Loz(u,v)$ and $w,w' \in I^{\circ}(u,v)$ are two
  vertices such that $f(w)+f(w') \leq f(u)+f(v)$. Then clearly,
  $\min \{f(w),f(w')\}\le \max\{ f(u),f(v)\}$. Let $f(w)\le f(w')$ and
  $f(u)\le f(v)$. If $f(w)=f(v)$, since $f(w)+f(w') \leq f(u)+f(v)$,
  we conclude that $f(w')\le f(u)$ and thus
  $f(w) = f(w') = f(u) = f(v)$. \end{proof}

\begin{remark}  The motivation of using weakly peakless and lozenge functions is that often
it is easier to establish  $\WP(u,v)$ or $\Loz(u,v)$ than $\WC(u,v)$. On the other hand, the proof of
 \cite{BaCh_median}  that graphs with connected medians can be recognized in polynomial time uses $\WC(u,v)$.
\end{remark}

We also recall the following characterization of graphs with connected medians:

\begin{proposition}[{\cite[Proposition 1]{BaCh_median}}]\label{prop-cmed}
  For a graph $G$, the following conditions are equivalent:
  \begin{enumerate}[(1)]
  \item\label{th-cmed-1} $\lMedG(\pi) = \Med(\pi)$ for any profile $\pi$;
\item\label{th-cmed-2} $F_{\pi}$ is weakly convex  for any profile $\pi$;
  \item\label{th-cmed-3} $F_{\pi}$ is weakly peakless   for any profile $\pi$;
  \item\label{th-cmed-4} all level sets
    $L_{\preceq}(F_{\pi},\alpha)=\left\{x : F_{\pi}(x) \leq \alpha\right\}$ are isometric;
  \item\label{th-cmed-5} all median sets $\Med(\pi)$ are isometric;
  \item\label{th-cmed-6} all median sets $\Med(\pi)$ are connected.
  \end{enumerate}
\end{proposition}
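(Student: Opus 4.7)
The plan is to prove all six equivalences via the cycle $(1)\Rightarrow(6)\Rightarrow(2)\Rightarrow(3)\Rightarrow(1)$ together with the side chain $(3)\Rightarrow(4)\Rightarrow(5)\Rightarrow(6)$. Most of the steps will be routine consequences of earlier lemmas: $(2)\Rightarrow(3)$ is Lemma~\ref{lozenge}; $(3)\Rightarrow(1)$ is the last sentence of Lemma~\ref{weakly-peakless}; $(4)\Rightarrow(5)$ is immediate because $\Med(\pi)=L_{\preceq}(F_\pi,\min F_\pi)$; and $(5)\Rightarrow(6)$ is immediate because isometric subgraphs are connected. For $(3)\Rightarrow(4)$, I would fix $u,v$ in a level set $L_{\preceq}(F_\pi,\alpha)$, use weak peaklessness to pick a $(u,v)$-geodesic $P$ on which $F_\pi$ is peakless, and invoke Lemma~\ref{geodesic-string-peakless}(iii) to conclude $F_\pi(w_i)\le \max\{F_\pi(u),F_\pi(v)\}\le \alpha$ for every vertex $w_i$ of $P$; hence $P$ sits inside the level set and the latter is isometric. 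This leaves the two hard contrapositives $(6)\Rightarrow(2)$ and $(1)\Rightarrow(6)$.

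For $(6)\Rightarrow(2)$: assume $F_\pi$ is not weakly convex, so by Lemma~\ref{weakly-convex}(iii) there exist $u,v$ with $d(u,v)=2$ and $2F_\pi(w)>F_\pi(u)+F_\pi(v)$ for every common neighbor $w$. Setting $a=F_\pi(u)\le F_\pi(v)=b$, I would consider the profile
\[\pi'\;=\;\pi\;+\;M\,\delta_u\;+\;\bigl(M+\tfrac{b-a}{2}\bigr)\,\delta_v\]
for a large parameter $M$. A direct calculation will give $F_{\pi'}(u)=F_{\pi'}(v)=2M+b$ and $F_{\pi'}(w)-F_{\pi'}(u)=F_\pi(w)-\tfrac{a+b}{2}>0$ for every common neighbor $w$, with a uniform positive gap since $F_\pi$ takes only finitely many values on common neighbors (the variation is controlled by $\pm\pi(x)$-sums over the finite set $\supp(\pi)$). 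For $y\notin I(u,v)$ the bound $d(y,u)+d(y,v)\ge 3$ together with $F_\pi(y)\ge 0$ forces $F_{\pi'}(y)\ge 3M$, which exceeds $2M+b$ once $M>b$. Hence $\Med(\pi')=\{u,v\}$, a disconnected set since $u\nsim v$, contradicting (6).

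For $(1)\Rightarrow(6)$: suppose $\Med(\pi)$ has two distinct components $A$ and $B$, pick any $b\in B$, and select $a\in A$ minimizing $d(\cdot,b)$ over $A$. I would then set $\pi':=\pi+\epsilon\,\delta_b$ for small $\epsilon>0$. Because $d(a,b)\ge 1$, one has $F_{\pi'}(a)>F_{\pi'}(b)$ and hence $a\notin\Med(\pi')$. To see $a$ remains a local minimum, take a neighbor $y$ of $a$: vertices of $A$ and $B$ are never $G$-adjacent (else they would lie in the same component of $\Med(\pi)$), so $y\in\Med(\pi)$ forces $y\in A$, and then $F_\pi(y)=F_\pi(a)$ together with $d(y,b)\ge d(a,b)$ (choice of $a$) gives $F_{\pi'}(y)\ge F_{\pi'}(a)$. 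If instead $y\notin\Med(\pi)$ then $F_\pi(y)-F_\pi(a)>0$ is a $\pm\pi(x)$-sum over $\supp(\pi)$ and therefore lies in a finite set, so it admits a uniform lower bound $\eta>0$; choosing $\epsilon<\eta$ absorbs the at-most-$\epsilon$ contribution of the added term. Thus $a\in\lMedG(\pi')\setminus\Med(\pi')$, contradicting (1).

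The main obstacle will be the weight-balancing in the two contrapositive constructions: pinning down $M$ and $\epsilon$ (and the tie-breaking parameter $(b-a)/2$ in the first construction) so that the modified profile has \emph{exactly} the claimed $\Med$ and $\lMedG$ structure, with no spurious extra global minima and no spoiled local minima. The crucial observation, which also makes the argument valid for infinite graphs, is that the strict inequalities obtained from the failure of $\WC(u,v)$ and from $y\notin\Med(\pi)$ yield \emph{uniform} positive gaps, since the relevant differences of $F_\pi$ only take finitely many values coming from the finite support of $\pi$.
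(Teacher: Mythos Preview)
Your proposal is correct. The paper itself does not prove Proposition~\ref{prop-cmed} (it is quoted from \cite{BaCh_median}); instead it proves the generalization Theorem~\ref{th-cmed-p}, and your argument is essentially the $p=1$ specialization of that proof. Your construction for $(6)\Rightarrow(2)$ is Lemma~\ref{nonweaklyconvex} up to cosmetic rescaling (you add $M\delta_u+(M+\tfrac{b-a}{2})\delta_v$ to $\pi$, the paper first multiplies $\pi$ by $k$ and then adds $\mu\delta_u+(\mu+\epsilon)\delta_v$).

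The only genuine divergence is the closing implication. The paper proves $(1)\Rightarrow(2)$: it reuses the profile $\pi^+$ from Lemma~\ref{nonweaklyconvex} with $\Med(\pi^+)=\{u,v\}$ and $d(u,v)=2$, then adds a little extra weight at $u$ so that $v$ becomes a local but non-global minimum. You instead prove $(1)\Rightarrow(6)$ directly: from any disconnected $\Med(\pi)$ you pick a closest pair $(a,b)$ across two components and perturb by $\epsilon\delta_b$. Your route is slightly more self-contained (it does not invoke the weak-convexity construction a second time), while the paper's route reuses a single two-point profile throughout; both are short and valid. Your observation that the needed strict gaps are \emph{uniform} because $F_\pi(y)-F_\pi(a)$ for $y\sim a$ lies in the finite set $\{\sum_{x\in\supp(\pi)}\sigma_x\pi(x):\sigma_x\in\{-1,0,1\}\}$ is exactly right and is what makes your $(1)\Rightarrow(6)$ work even on infinite graphs; note however that this uniformity is only needed there, not in $(6)\Rightarrow(2)$.
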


\begin{remark}\label{connected->meshed} Meshed graphs are characterized by the weak convexity
of all radius functions $r_u$, $u\in V$. Since radius functions  are particular median functions, the
graphs with connected medians are meshed.
\end{remark}

To formulate the main result of \cite{BaCh_median}, we need the following notions.  For two vertices $u,v$, let $J(u,v)=\{z'\in V : I(z',u)\cap I(z',v)=\{z'\}\}$.
Let also $M(u,v)=\{ z'\in J(u,v): d(u,z')=d(v,z')\}$ and $J^{\circ}(u,v)=J(u,v)\setminus M(u,v)$.

Intuitively, the set $J(u,v)$ can be interpreted in the following way. For any vertex $z$, any quasi-median of the triplet $z,u,v$ is a metric triangle $z'u'v'$ with
the side $I(u',v')$ included in $I(u,v)$ and the opposite vertex $z'$ in $I(z,u)\cap I(z,v)$. Then the set $J(u,v)$ is exactly the set of vertices $z'$ of all
quasi-medians $z'u'v'$ for all vertices $z$ of $G$. If $z\in I(u,v)$, then obviously $z'=z=u'=v'$, whence $I(u,v)\subseteq J(u,v)$. The sets $J(u,v)$ have a more precise
form for vertices $u$ and $v$ in meshed graphs. In such a graph, any quasi-median $z'u'v'$ is an equilateral metric triangle. Since $2=d(u,v)=d(u,u')+d(u',v')+d(v',v)$, each such
metric triangle $z'u'v'$ either has size 0, 1, or 2. Then $M(u,v)$ consists of all $z'$ such that $z'u'v'$ has size 0 or 2, i.e., of vertices of $I^\circ(u,v)$ and
all vertices $z'$ at distance 2 from $u$ and  $v$ and not having any neighbor in $I^{\circ}(u,v)$. Analogously, $J^{\circ}(u,v)=J(u,v)\setminus M(u,v)$ consists of all vertices $z'$
such that $z'u'v'$ has size 1. Since $d(u,v)=2$, this is possible only if either $u'=u$, $v'\in I^\circ(u,v)$, and $z'\sim u,v'$ or $v'=v$, $u'\in I^\circ(u,v)$, and $z'\sim u',v$.

We conclude this section by recalling the main result of \cite{BaCh_median}.

\begin{theorem}[{\cite[Theorem]{BaCh_median}}]\label{th-cmed}
A graph  $G$ is a graph with connected medians if and only if $G$ is meshed and for any vertices $u$ and $v$ at distance 2
there exist a nonempty subset $S$ of $I^{\circ}(u,v)$ and  a weight function $\eta$ with nonempty support included in $S$
having the following two properties:

\begin{itemize}
\item[$(\alpha)$] every vertex $s \in S$ has a companion $t\in S$ (not necessarily distinct
from $s$) such that $d(s,x)+d(t,x)\leq d(u,x)+d(v,x) \mbox{ for  all } x\in M(u,v),$
and $\eta(s)=\eta(t)$ whenever $d(s,t)=2;$

\item[$(\beta)$] the joint weight of the neighbours of any $x\in J^{\circ}(u,v)$ from
$S$ is always at least half the total weight of $S:$
$\sum_{s\in S\cap N(x)}\eta (s)\ge \frac{1}{2}\sum_{s\in S} \eta(s) \mbox{ for all } x\in J^{\circ}(u,v).$
\end{itemize}
\end{theorem}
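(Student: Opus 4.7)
The overall plan is to chain the equivalences of Proposition~\ref{prop-cmed} with Lemma~\ref{weakly-convex}(iii) to reduce ``connected medians'' to a $2$-local convexity inequality at each pair $u,v$ with $d(u,v)=2$, apply LP duality to produce a dual certificate $(S,\eta)$ for each such pair, and finally translate that certificate into the combinatorial conditions $(\alpha)$ and $(\beta)$. The meshedness of $G$ comes for free from Remark~\ref{connected->meshed}, since radius functions are particular median functions and their weak convexity is equivalent to meshedness.

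By Proposition~\ref{prop-cmed} and Lemma~\ref{weakly-convex}(iii), $G$ has connected medians if and only if, for every pair $u,v$ with $d(u,v)=2$ and every profile $\pi\ge 0$, some $w\in I^{\circ}(u,v)$ satisfies $\sum_z\pi(z)\bigl(2d(z,w)-d(z,u)-d(z,v)\bigr)\le 0$. Setting $a_w(z)=2d(z,w)-d(z,u)-d(z,v)$, this says the strict system $\{\sum_z\pi(z)a_w(z)>0:\ w\in I^{\circ}(u,v)\}$ admits no solution $\pi\ge 0$; by a Gordan/Farkas theorem of alternatives (equivalently, by the minimax theorem for the finite matrix game with payoffs $(a_w(z))$) this is equivalent to the existence of $\eta(w)\ge 0$, not all zero and supported on some $S\subseteq I^{\circ}(u,v)$, with
\[
(\ast)\qquad \sum_{s\in S}\eta(s)\bigl(2d(z,s)-d(z,u)-d(z,v)\bigr)\;\le\; 0\qquad\text{for every }z\in V.
\]

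To translate $(\ast)$ into $(\alpha)$ and $(\beta)$, I would first reduce from arbitrary $z\in V$ to $z\in J(u,v)$: picking $z'\in I(z,u)\cap I(z,v)$ at maximal distance from $z$ forces $z'\in J(u,v)$ (any $y\in I(z',u)\cap I(z',v)$ with $y\ne z'$ would lie in $I(z,u)\cap I(z,v)$ at strictly larger distance from $z$), and the identities $d(z,u)=d(z,z')+d(z',u)$, $d(z,v)=d(z,z')+d(z',v)$ together with $d(z,s)\le d(z,z')+d(z',s)$ yield $(\ast)_{z'}\le 0\Rightarrow(\ast)_z\le 0$. Since $J(u,v)=M(u,v)\sqcup J^{\circ}(u,v)$, it suffices to impose $(\ast)$ on these two pieces. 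On $J^{\circ}(u,v)$ one has $d(z,u)+d(z,v)=3$ for $z\ne u,v$ (the boundary values $z=u,v$ give $0\le 0$ automatically) and $d(s,z)\in\{1,2\}$ for $s\in S\subseteq I^{\circ}(u,v)$, so $(\ast)_z$ reduces by direct substitution to $\sum_{s\in S\cap N(z)}\eta(s)\ge\tfrac12\sum_s\eta(s)$, which is exactly~$(\beta)$.

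The main obstacle is matching the remaining family $\{(\ast)_x\}_{x\in M(u,v)}$ to the companion structure~$(\alpha)$. The plan is to view those inequalities as the non-emptiness of the polytope of doubly stochastic couplings of $(\eta,\eta)$ whose support lies in the symmetric relation $R=\{(s,t)\in S\times S:\ d(s,x)+d(t,x)\le d(u,x)+d(v,x)\ \text{for every }x\in M(u,v)\}$, and then to extract an involutive companion map $t\colon S\to S$ with $(s,t(s))\in R$ via a Birkhoff--von Neumann/extreme-point argument on that polytope. The constraint ``$\eta(s)=\eta(t)$ whenever $d(s,t)=2$'' should emerge from a case analysis: since every $s\in S\subseteq I^{\circ}(u,v)$ is a common neighbour of $u,v$, an enumeration of the admissible values of $(d(s,x),d(t,x))$ for pairs with $d(s,t)\le 1$ shows that the aggregated inequality $2\eta(s)d(s,x)+2\eta(t)d(t,x)\le(\eta(s)+\eta(t))(d(u,x)+d(v,x))$ holds for every $x\in M(u,v)$ by local geometry alone, whereas pairs at distance exactly~$2$ are rigid and force the symmetrising equality of weights. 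The converse ``$(\alpha)$ and $(\beta)$ imply $(\ast)$'' is then obtained by summing the companion inequality against $\eta(s)$ on each half and reducing to arbitrary $z$ via the quasi-median argument of the previous paragraph.
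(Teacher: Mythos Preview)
The paper does not prove this theorem: it is quoted verbatim from \cite{BaCh_median} without reproducing the argument, so there is no in-paper proof to compare your proposal against. I will therefore evaluate the proposal on its own merits.

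Your reduction via Proposition~\ref{prop-cmed} and Lemma~\ref{weakly-convex}(iii), the restriction of the profile to $J(u,v)$ via the quasi-median argument (this is exactly Proposition~\ref{J(uv)} for $p=1$), and the LP/minimax alternative yielding the aggregated certificate~$(\ast)$ are all correct. Your derivation of~$(\beta)$ from~$(\ast)$ on $J^{\circ}(u,v)$ is also correct: for $z\in J^{\circ}(u,v)$ one has $d(z,u)+d(z,v)=3$ and $d(z,s)\in\{1,2\}$ for each $s\in I^{\circ}(u,v)$, and the resulting inequality is precisely~$(\beta)$.

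The genuine gap is the passage from~$(\ast)$ on $M(u,v)$ to the companion structure~$(\alpha)$. Your plan asserts that~$(\ast)$ is equivalent to the non-emptiness of the polytope of doubly stochastic couplings of $(\eta,\eta)$ supported on the relation $R$, but you give no argument for this equivalence, and it is not at all clear: $(\ast)$ is a single averaged inequality per $x\in M(u,v)$, while the existence of an $R$-supported coupling is a Hall-type feasibility condition that does not follow from averages. Even granting the coupling, Birkhoff--von~Neumann gives permutation matrices only when the marginals are uniform; for general $\eta$ the extreme points of the transportation polytope are not permutations, so extracting an involutive companion map $s\mapsto t(s)$ from an extreme point is not justified. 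Finally, your treatment of the side condition ``$\eta(s)=\eta(t)$ whenever $d(s,t)=2$'' is only a heuristic (``pairs at distance~$2$ are rigid''), and conversely, for the direction $(\alpha)\Rightarrow(\ast)$ you implicitly use $\eta(s)=\eta(t(s))$ for \emph{all} companions, whereas~$(\alpha)$ only guarantees it when $d(s,t)=2$. Both directions of the $(\alpha)\Leftrightarrow(\ast)|_{M(u,v)}$ equivalence therefore need substantial additional work, likely using meshedness and the precise structure of $M(u,v)$ (every $x\in M(u,v)$ either lies in $I^{\circ}(u,v)$ or has $d(x,u)=d(x,v)=2$ with no neighbour in $I^{\circ}(u,v)$) rather than abstract LP arguments alone.
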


\subsection{$p$-Weakly peakless and $p$-weakly convex functions}

We consider a generalization of weakly convex, weakly peakless, and lozenge  functions by requiring that the
conditions $\WC(u,v), \WP(u,v)$, and $\Loz(u,v)$ are satisfied only for pairs of vertices at distance strictly larger than $p$.

Let $G=(V,E)$ be a graph and $p$ be a positive integer. A \emph{$p$-geodesic} (or a \emph{step-$p$ geodesic})
between two vertices $u,v$  is a geodesic string $P=(u=w_0,w_1,\ldots,w_{n-1},w_n=v)$ of $G$ such that
$d_G(w_i,w_{i+1})\le p$ for any $i=0,\ldots,n-1$.  A \emph{taut $p$-geodesic} is a $p$-geodesic such that $p+1<d_G(w_{i-1},w_{i+1})\le 2p$ for any $i=1,\ldots,n-1$.
The definition implies that any $p$-geodesic  is a path of the graph $G^p$ and
that any taut $p$-geodesic is an induced path of $G^p$. Notice however that a
$p$-geodesic
is not necessarily a geodesic of $G^p$. Conversely, not every $(u,v)$-geodesic $Q$ of $G^p$ is a $p$-geodesic of $G$ because $Q$ is not necessarily contained in the interval $I(u,v)$.
We say that a subset of vertices $S$ of $G$ is {\it $p$-step isometric} (\emph{$p$-isometric} for short) if for any two vertices $u,v\in S$
with $d(u,v)\ge p+1$, there exists a vertex $w\in I^{\circ}(u,v)\cap S$. We say that a subset of vertices
$S$ of $G$ is {\it $p$-step connected} (or {\it connected in $G^p$}) if for any two vertices $u,v\in S$ there exist vertices
$u_0:=u,u_1,\ldots,u_k,u_{k+1}:=v$ of $S$ such that $d(u_i,u_{i+1})\le p$ for
any $i=0,\ldots,k$, or, equivalently, if $S$ induces
a connected subgraph of $G^p$.

A function $f$ on $G$ is {\it $p$-weakly peakless}
(respectively, {\it $p$-weakly convex}) if any pair of vertices $u,v$ of $G$
can be connected by a $p$-geodesic
along which $f$ is peakless (respectively, convex). In view of Lemmas \ref{weakly-convex}
and \ref{weakly-peakless}, the $1$-weakly peakless (respectively, $1$-weakly convex) functions are exactly
the weakly peakless (respectively, weakly convex functions). Again, $p$-weakly convex functions are $p$-weakly peakless.

A function $f$ is \emph{locally-$p$-weakly peakless} (respectively, \emph{locally-$p$-weakly convex}) on $G$
if it satisfies $\WP(u,v)$ (respectively, $\WC(u,v)$) for any $u,v$ such that $p+1 \leq d(u,v) \leq 2p$.

Next, we establish a local-to-global characterization of $p$-weakly peakless function, which is an analog  of Lemma \ref{weakly-peakless}.
We also show that $p$-weakly peakless functions are unimodal in $G^p$.

\begin{theorem}\label{wp-ltg} For a function $f$ on a graph $G=(V,E)$ the following conditions are equivalent:
\begin{itemize}
\item[(i)] $f$ is $p$-weakly peakless;
\item[(ii)] $f$ satisfies the property $\WP(u,v)$ for any pair of vertices $u,v$ such that $d(u,v)\ge p+1$;
\item[(iii)] $f$ is locally-$p$-weakly peakless.
\end{itemize}
Any $p$-weakly peakless function $f$ on a graph $G=(V,E)$ is unimodal on $G^p$.
\end{theorem}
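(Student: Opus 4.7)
The plan is to prove the cycle (i) $\Rightarrow$ (ii) $\Rightarrow$ (iii) $\Rightarrow$ (i) and then deduce unimodality on $G^p$ from (i). For (i) $\Rightarrow$ (ii), take $u,v$ with $d(u,v)\ge p+1$ and a $p$-geodesic $P=(u=w_0,\ldots,w_n=v)$ along which $f$ is peakless; since $d(u,v)>p$ one has $n\ge 2$, so $w_1\in I^{\circ}(u,v)$, and applying peaklessness on $P$ to the triple $(w_0,w_1,w_n)$ furnishes the vertex required by $\WP(u,v)$. The implication (ii) $\Rightarrow$ (iii) is immediate by restriction to distances in $[p+1,2p]$.

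For (iii) $\Rightarrow$ (i), I will mirror the strategy recalled after Lemma \ref{weakly-peakless}. Fix $u,v$ with $d(u,v)\ge p+1$, and among all $p$-geodesics from $u$ to $v$ pick $P=(u=w_0,\ldots,w_n=v)$ that minimizes $(n,\sum_i f(w_i))$ in lexicographic order. Minimality of $n$ forces $d(w_{i-1},w_{i+1})\ge p+1$ for every interior $i$ (otherwise $w_i$ could be removed, contradicting the minimality of $n$), and the triangle inequality yields $d(w_{i-1},w_{i+1})\le 2p$; in particular (iii) applies to each consecutive pair $(w_{i-1},w_{i+1})$. The plan is to show that $f$ is locally peakless on $P$ and then invoke Lemma \ref{geodesic-string-peakless} to upgrade this to peaklessness of $f$ on $P$, giving (i).

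The contradiction step is the crux. If local peaklessness fails at some $w_i$, a short case analysis on whether $f(w_i)>\max\{f(w_{i-1}),f(w_{i+1})\}$ or $f(w_i)=\max$ with the equality clause violated shows that $\WP(w_{i-1},w_{i+1})$ from (iii) produces $z\in I^{\circ}(w_{i-1},w_{i+1})$ with $f(z)<f(w_i)$ strictly. When $d(w_{i-1},z)\le p$ and $d(z,w_{i+1})\le p$, swapping $w_i$ for $z$ yields a $p$-geodesic of the same length with strictly smaller $\sum f$, contradicting minimality of $P$. The main obstacle, which is absent for $p=1$ where both $d(w_{i-1},z)$ and $d(z,w_{i+1})$ are automatically $1$, is that $z$ may lie at distance exceeding $p$ from one of $w_{i-1},w_{i+1}$; to handle it, I would iterate (iii) on the offending pair, say $(w_{i-1},z)$ when $d(w_{i-1},z)>p$, each application yielding $z'\in I^{\circ}(w_{i-1},z)\subseteq I^{\circ}(w_{i-1},w_{i+1})$ with $f(z')\le\max\{f(w_{i-1}),f(z)\}\le\max\{f(w_{i-1}),f(w_{i+1})\}<f(w_i)$ and with $d(w_{i-1},z')<d(w_{i-1},z)$. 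A finiteness/monotonicity analysis, alternating between applications of (iii) to $(w_{i-1},\cdot)$ and $(\cdot,w_{i+1})$ as needed, produces a witness at admissible distances from both $w_{i-1}$ and $w_{i+1}$, completing the contradiction; making this iteration argument clean is the main technical hurdle.

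For the unimodality of $f$ on $G^p$: let $v$ be a local minimum of $f$ in $G^p$ and suppose, for contradiction, that some $v^*$ satisfies $f(v^*)<f(v)$. If $d(v,v^*)\le p$ then $v^*$ is a $G^p$-neighbour of $v$, immediately contradicting local minimality. Otherwise $d(v,v^*)\ge p+1$; by (i), there is a $p$-geodesic $P=(v=w_0,\ldots,w_n=v^*)$ on which $f$ is peakless. Lemma \ref{geodesic-string-peakless} gives the valley form $f(w_0)>f(w_1)>\cdots>f(w_\ell)=\cdots=f(w_m)<\cdots<f(w_n)$ with strict inequalities outside the flat part. Since $f(w_n)=f(v^*)<f(v)=f(w_0)$, the index $\ell$ must satisfy $\ell\ge 1$, so $f(w_1)<f(w_0)=f(v)$; because $d_G(v,w_1)\le p$, $w_1$ is a $G^p$-neighbour of $v$, again contradicting the local minimality of $v$.
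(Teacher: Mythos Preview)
Your arguments for (i)$\Rightarrow$(ii), (ii)$\Rightarrow$(iii), and for unimodality on $G^p$ are correct. The gap is in (iii)$\Rightarrow$(i): both the choice of a lexicographically minimal $(n,\sum_i f(w_i))$ $p$-geodesic and the proposed ``alternating'' iteration fail in general.

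Here is a concrete obstruction. Take $p=2$ and the path-like graph with vertices $u,a,b,b',c,d,e,v$ and edges $u\!-\!a$, $a\!-\!b$, $a\!-\!b'$, $b\!-\!c$, $b'\!-\!c$, $c\!-\!d$, $d\!-\!e$, $e\!-\!v$, so $d(u,v)=6$. Set $f(u)=f(v)=0$, $f(a)=f(e)=-1$, $f(b)=f(b')=2$, $f(c)=-5$, $f(d)=1$. One checks that $\WP(x,y)$ holds for every pair with $3\le d(x,y)\le 4$, so (iii) is satisfied. The only $2$-geodesics of minimal length $n=3$ are $(u,b,d,v)$ and $(u,b',d,v)$; neither is locally peakless at $b$ (resp.\ $b'$) since $f(b)=2>\max\{f(u),f(d)\}=1$. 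Worse, your repair-by-iteration loops: applying $\WP(u,d)$ yields $c$ (with $d(u,c)=3>2$), then $\WP(u,c)$ yields $a$ (with $d(a,d)=3>2$), then $\WP(a,d)$ yields $c$ again, and the process cycles $c\to a\to c\to\cdots$. There is simply no vertex in $I^\circ(u,d)$ at distance $\le 2$ from both $u$ and $d$ with $f$-value below $2$; the required peakless $2$-geodesic is $(u,a,c,e,v)$, which has $n=4$. So minimizing $n$ first is the wrong move, and a local swap cannot rescue it.

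The paper avoids this by not going directly from (iii) to (i). It first proves (ii)$\Rightarrow$(i) by induction on $d(u,v)$: choose $w\in I^\circ(u,v)$ minimizing $f$ (not a taut midpoint), recursively build peakless $p$-geodesics $u\to w$ and $w\to v$, and argue that their concatenation (possibly after deleting one vertex) is peakless. Then (iii)$\Rightarrow$(ii) is handled by a separate descent lemma: if $f$ is locally $p$-weakly peakless and $f(u)<f(v)$ with $d(u,v)>p$, then some $w\in I^\circ(u,v)\cap B_p(v)$ satisfies $f(w)<f(v)$. This lemma is where the real work for $p>1$ sits; it is proved by taking a minimum-distance counterexample and using $\WP$ at distance in $[p+1,2p]$ to derive a contradiction.
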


\begin{proof} The implications (i)$\Rightarrow$(ii) and (ii)$\Rightarrow$(iii) are trivial.
Now we prove (ii)$\Rightarrow$(i). Let $u,v$ be two vertices of $G$ and we proceed by induction on
$d(u,v)$. If $d(u,v)\le p$, then the string $(u,v)$ is a $p$-geodesic along which $f$ is peakless.
Now, let $d(u,v)\ge p+1$. Let $w$ be a vertex minimizing  $f$ on $I^{\circ}(u,v)$ and $w'$ be a vertex of $I^{\circ}(u,v)$
satisfying the condition $\WP(u,v)$. From the choice of the vertices $w$ and $w'$ we conclude that either (1) $f(w)\le f(w')<\max\{ f(u),f(v)\}$ or (2) $f(w)\le f(w')=f(u)=f(v)$.
This implies that either  $f(w)<\max\{ f(u),f(v)\}$ or $f(w)=f(w')=f(u)=f(v)$.
Since $d(u,w)<d(u,v)$ and  $d(w,v)<d(u,v)$, by induction hypothesis the pairs $u,w$ and $w,v$ can be connected by
$p$-geodesics $P'=(u=w_0,w_1,\ldots,w_i=w)$  and $P''=(w=w_i,w_{i+1},\ldots,w_{n-1},w_n=v)$ so that $f$ is
peakless along $P'$ and $P''$.  Concatenating $P'$ and $P''$, we  obtain a $p$-geodesic
$P=(u=w_0,w_1,\ldots,w_i=w,w_{i+1},\ldots,w_{n-1},w_n=v)$. We assert that there exists a $p$-geodesic $P_0$ between
$u$ and $v$ included in $P$ such that $f$ is peakless on $P_0$ (in most of the cases presented below, we have $P_0=P$).

First suppose that $f(w)=f(u)=f(v)$. From the choice of $w$ and of the $p$-geodesics $P'$ and $P''$, we conclude that $f$ is constant on all
vertices of $P$, thus $f$ is peakless on $P$.
Now suppose that $f(w)<\max\{ f(u),f(v)\}$ and let $f(u)\le f(v)$. Since $f(w)<f(v)$, from the choice of $w$ and $P''$ and
by Lemma \ref{geodesic-string-peakless}(iii), we conclude that when moving on  $P''$ from $w$ to $v$, the function $f$ first is constant and then it  strictly
increases: $f(w_i)=\cdots=f(w_m)<f(w_{m+1})<\cdots <f(w_n)$ (it may happen that $m=i$). Analogously, if $f(w)\le f(u)$, while moving from $u$ to $w$, the
function $f$ first strictly decreases and then it is constant: $f(w_0)<f(w_1)<\cdots <f(w_{\ell})=f(w_{\ell+1})=\cdots=f(w_i)$ (it may happen that
$\ell=0$,  $\ell=i$, or $\ell=0=i$). In this case, we conclude that on the $p$-geodesic $P$ the function $f$ satisfies the condition (iii) of
Lemma \ref{geodesic-string-peakless}, thus $f$ is peakless on $P$. Finally, suppose that $f(w)>f(u)$. Since $w$ is a minimum vertex of
$f$ on $I^{\circ}(u,v)$, applying Lemma \ref{geodesic-string-peakless}(iii) to $P'$, we conclude that $P'=(u,w)$ and thus $w=w_1$.
If $d(u,w_m)\le p$, then $P_0=(u=w_0,w_m,\ldots,w_{n-1},w_n=v)$ is a $p$-geodesic between $u$ and $v$ included in $P$ and such that $f$ is strictly increasing
on $P_0$ while moving from $u$ to $v$. consequently, $f$ is peakless on $P_0$. Now suppose that $d(u,w_m)>p$, i.e., $m>i=1$. Applying $\WP(u,w_m)$ we can find a vertex
$w'\in I^{\circ}(u,w_m)\subset I^{\circ}(u,v)$ such that $f(w')<\max\{ f(u),f(w_m)\}$ or $f(w')=f(u)=f(w_m)$. Since $f(w_m)=f(w_1)=f(w)>f(u)$, the second
possibility cannot occur. But the first possibility implies that $f(w')<f(w_m)=f(w)$, contrary to the minimality choice of $w$.
This shows that the case $d(u,w_m)>p$ is impossible. This concludes that $f$ is peakless on a $p$-geodesic $P_0\subseteq P$. 
The proof of the implication  (iii)$\Rightarrow$(ii) is based on the following lemma:

\begin{lemma}\label{lemma-Gp-cmed-2}
  Let $f$ be a locally $p$-weakly peakless
  function on $G$.  Then for any $u, v \in V$ with $f(u)<f(v)$ and
  $d(u,v)>p$, there exists $w \in I^{\circ}(u,v)$ with $d(v,w)\le p$ such that
  $f(w) < f(v)$.  \end{lemma}

\begin{proof}
Suppose that the property does not hold and pick two vertices $u,v \in V$ at minimum distance $d(u,v)>p$ such that $f(u) < f(v)$
and $f(w) \geq f(v)$ for any $w\in I^{\circ}(u,v)\cap B_p(v)$. Since $f$ is locally-$p$-weakly peakless, necessarily,
$d(u,v) > 2p$. Indeed, if $p+1\le d(u,v)\le 2p$, by  $\WP(u,v)$ there exists a vertex $w_1 \in I^{\circ}(u,v)$ such that
$f(w_1)<f(v)$.  If $d(w_1,v)>p$, applying $\WP(w_1,v)$, we will find a vertex $w_2\in I^{\circ}(w_1,v)$ such that $f(w_2)<f(v)$.
Continuing this way, we will eventually find a sequence of vertices $w_0=u,w_1,\ldots,w_k$ such that $w_i\in I^{\circ}(w_{i-1},v)$,
$f(w_i)<f(v)$ for all $i=1,\ldots,k$, and $d(w_k,v)\le p$. Consequently, we can suppose that $d(u,v)>2p$.

Consider a vertex $w\in I^{\circ}(u,v)\cap B_p(v)$ minimizing the function $f$ in this intersection.
Note that $p<d(u,v)-d(v,w)=d(u,w)<d(u,v)$. Since $f(w) \geq f(v) >f(u)$, by the choice of $u,v$, there exists a
vertex $x' \in I(w,u) \cap B_{p}(w)$ such that $f(x')<f(w)$. Observe that $x' \in I(w,u) \subseteq I(v,u)$ and that
$d(v,x') \leq d(v,w)+d(w,x') \leq 2p$.  Let $x$ be the closest vertex from $v$  in  $I^{\circ}(u,v)$ such that $f(x) < f(w)$. By our
choice of $w$, $x \notin B_p(v)$ and thus $p+1 \leq d(v,x) \leq 2p$. Since $f$ is locally-$p$-weakly peakless, by property $\WP(v,x)$,
there exists a vertex $z \in I^{\circ}(v,x)$ such that $f(z) \leq \max \{f(v),f(x)\}$ and equality holds only if $f(v) = f(x)$. Consequently, either
$f(z)< \max \{f(v),f(x)\} \leq f(w)$ or $f(z)=f(v)=f(x) < f(w)$.  In  both cases, $f(z) <f(w)$ and $d(v,z) < d(v,x)$, contradicting our
choice of $x$.  This ends the proof of the lemma.
\end{proof}

We can now prove (iii)$\Rightarrow$(ii). Let $u,v$ be two vertices of
$G$ with $d(u,v)\ge p+1$.  If $d(u,v)\le 2p$, then we can apply the
condition $\WP(u,v)$, because $f$ is locally-$p$-weakly peakless. Thus
we can suppose that $d(u,v)>2p$.  If $f(u)<f(v)$, by
Lemma~\ref{lemma-Gp-cmed-2}, there exists a vertex
$w\in I^{\circ}(u,v)\cap B_p(v)$ such that $f(w)<f(v)$ and we are
done. Now suppose that $f(u)=f(v)$. Let $w$ be a vertex of
$I^{\circ}(u,v)$ minimizing $f(w)$. If $f(w)\le f(u)=f(v)$, then we
are done. Therefore, suppose that $f(w)>f(u)=f(v)$.  Since
$d(u,v)>2p$, either $d(u,w)>p$ or $d(v,w)>p$, say the first. By
Lemma~\ref{lemma-Gp-cmed-2}, there exists
$z\in I^{\circ}(u,w)\cap B_p(w)$ such that $f(z)<f(w)$.  Since
$z\in I^{\circ}(u,w)\subset I^{\circ}(u,v)$, this contradicts the
choice of the vertex $w$. Consequently, $f$ satisfies the property
$\WP(u,v)$ for all $u,v$ such that $d(u,v) \geq p+1$.

This establishes the equivalence between the conditions (i), (ii), and (iii).
It remains to show that any $p$-weakly peakless function $f$ is unimodal on $G^p$.
Indeed, let $u$ be a global minimum and $v$ a local minimum of $f$ on $G$. Consider a $p$-geodesic
$P$ between $u$ and $v$ along which $f$ is peakless. Let $w$ be the predecessor  of $v$ on $P$. Then
$f(w)\leq \max \{ f(u),f(v)\}=f(v)$. Since $v$ is a local minimum, $f(w)=f(v)$,
whence $f(u)=f(v)=f(w),$ as required. This concludes the proof of the theorem.
\end{proof}

\begin{remark} If
$p+1 \leq d(u,v) \leq 2p$ and $w\in I^{\circ}(u,v)$ satisfies the condition
$\WP(u,v)$, the geodesic string $(u,w,v)$ is not necessarily a  $p$-geodesic. This explain the difficulty in the proof of (iii)$\Rightarrow$(i).
\end{remark}

A function $f$ is called a \emph{$p$-lozenge function} if $f$ satisfies $\Loz(u,v)$ for any vertices $u,v$ with $d(u,v)\ge p+1$.
A function $f$ is  a \emph{locally $p$-lozenge function} if $f$ satisfies $\Loz(u,v)$ for any vertices $u,v$ with $p+1\le d(u,v)\le 2p$.
Since $\Loz(u,v)$ implies $\WP(u,v)$ (Lemma \ref{lozenge}), from Theorem \ref{wp-ltg} we obtain the following corollary:

\begin{corollary}\label{lozenge->wp} Any $p$-lozenge or locally-$p$-lozenge function $f$ on a graph $G=(V,E)$ is $p$-weakly peakless.
\end{corollary}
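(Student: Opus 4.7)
The plan is to derive this as a direct consequence of the two pieces of machinery already in place: Lemma \ref{lozenge}, which says that for every pair $u,v$ the property $\Loz(u,v)$ implies $\WP(u,v)$, and Theorem \ref{wp-ltg}, which characterizes $p$-weakly peakless functions by either the global condition (ii) ``$\WP(u,v)$ for all $u,v$ with $d(u,v)\ge p+1$'' or the local condition (iii) ``$\WP(u,v)$ for all $u,v$ with $p+1\le d(u,v)\le 2p$''. The entire argument then splits into two parallel one-line reductions.

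First, suppose $f$ is $p$-lozenge. Then by definition $\Loz(u,v)$ holds for every pair $u,v$ with $d(u,v)\ge p+1$. Applying Lemma \ref{lozenge} vertex-pair by vertex-pair converts each instance of $\Loz(u,v)$ into $\WP(u,v)$, so $f$ satisfies condition (ii) of Theorem \ref{wp-ltg}. That theorem then gives $p$-weak peaklessness.

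Second, suppose $f$ is merely locally-$p$-lozenge, i.e., $\Loz(u,v)$ holds only for the short range $p+1\le d(u,v)\le 2p$. The same Lemma \ref{lozenge} implication applies on this restricted range and yields $\WP(u,v)$ for all such $u,v$, which is exactly the definition of being locally-$p$-weakly peakless. The implication (iii)$\Rightarrow$(i) of Theorem \ref{wp-ltg} then promotes this to $p$-weak peaklessness.

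There is no genuine obstacle: both cases are immediate compositions of the two cited results, and no new combinatorial argument on $p$-geodesics or on the structure of $I^{\circ}(u,v)$ is needed (all such work was absorbed into the proof of Theorem \ref{wp-ltg}, in particular into Lemma \ref{lemma-Gp-cmed-2}). The only thing to be mildly careful about is keeping the two hypotheses distinct: in the first case the implication is pointwise on pairs at \emph{every} distance $\ge p+1$, whereas in the second case we use only the short-range pairs and then invoke the local-to-global half of Theorem \ref{wp-ltg}.
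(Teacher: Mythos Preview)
Your proposal is correct and follows exactly the paper's approach: the corollary is stated immediately after noting that $\Loz(u,v)$ implies $\WP(u,v)$ (Lemma~\ref{lozenge}), so by Theorem~\ref{wp-ltg} one obtains $p$-weak peaklessness. Your write-up is simply a more explicit unpacking of this one-line derivation, separating the global and local cases and pointing to conditions~(ii) and~(iii) of Theorem~\ref{wp-ltg} respectively.
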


Another consequence of Theorem \ref{wp-ltg} is the following corollary:

\begin{corollary}\label{wp-Gp-level} All level sets  $L_{\preceq}(f,\alpha)$ of a $p$-weakly peakless function $f$ on a graph $G$ are $p$-isometric
(and thus induce connected subgraphs of $G^p$). In particular, $\argmin(f)$ is $p$-isometric.
\end{corollary}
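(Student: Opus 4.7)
The plan is to derive the $p$-isometricity of level sets directly from the characterization in Theorem~\ref{wp-ltg}(ii), and then to obtain $p$-connectedness as a formal consequence of $p$-isometricity.

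First, fix $\alpha\in\mathbb{R}$ and two vertices $u,v\in L_{\preceq}(f,\alpha)$. If $d(u,v)\le p$, there is nothing to check for $p$-isometricity at this pair. Otherwise $d(u,v)\ge p+1$, so by Theorem~\ref{wp-ltg}, the function $f$ satisfies the property $\WP(u,v)$. This yields a vertex $w\in I^{\circ}(u,v)$ with
\[
f(w)\le \max\{f(u),f(v)\}\le \alpha,
\]
and hence $w\in L_{\preceq}(f,\alpha)\cap I^{\circ}(u,v)$. This is exactly the defining property of a $p$-isometric set, so $L_{\preceq}(f,\alpha)$ is $p$-isometric.

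Next I would deduce $p$-connectedness from $p$-isometricity by a short induction on $d(u,v)$, which applies to any $p$-isometric set $S$. The base case $d(u,v)\le p$ makes $u$ and $v$ adjacent in $G^p$, hence connected inside $S$ in $G^p$. For $d(u,v)\ge p+1$, $p$-isometricity supplies $w\in I^{\circ}(u,v)\cap S$; since $d(u,w)<d(u,v)$ and $d(w,v)<d(u,v)$, the induction hypothesis provides $G^p$-paths in $S$ from $u$ to $w$ and from $w$ to $v$, whose concatenation connects $u$ and $v$ in $G^p[S]$. Applying this to $S=L_{\preceq}(f,\alpha)$ shows that every level set induces a connected subgraph of $G^p$.

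Finally, $\argmin(f)=L_{\preceq}(f,\alpha^*)$ where $\alpha^*=\min_{v\in V} f(v)$, so the statement for the minimum set is a special case of what has just been proved. The only non-routine ingredient is the application of Theorem~\ref{wp-ltg}; once one knows that $\WP(u,v)$ holds at all pairs with $d(u,v)\ge p+1$, the rest is a direct unwinding of the definitions. No difficulty is expected beyond correctly invoking the equivalence (i)$\Leftrightarrow$(ii) of Theorem~\ref{wp-ltg}.
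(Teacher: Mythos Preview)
Your proof is correct and is exactly the natural argument the paper leaves implicit: the corollary is stated without proof in the paper, as an immediate consequence of Theorem~\ref{wp-ltg}(ii), and your write-up simply spells out this consequence together with the routine induction showing that $p$-isometric implies $p$-connected. One very minor remark: writing $\alpha^*=\min_{v\in V} f(v)$ tacitly assumes the minimum is attained; if $\argmin(f)=\varnothing$ the conclusion is vacuous, so this is harmless.
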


We leave the next question as an open question:

\begin{question} Do the level sets of any $p$-weakly peakless function induce isometric subgraphs of $G^p$?
\end{question}

The difficulty in dealing with this question comes from the fact that the $p$-geodesics along which a $p$-weakly peakless function $f$ is peakless are not
necessarily geodesics (shortest paths) of the graph $G^p$.

We continue with an analog of Theorem \ref{wp-ltg} for $p$-weakly
convex functions.  Its proof in the ``grandes lignes'' is similar to
the proof of Theorem \ref{wp-ltg}, but it is technically more
involved.

\begin{theorem}\label{wc-ltg} For a function $f$ on a graph $G=(V,E)$ the following conditions are equivalent:
\begin{itemize}
\item[(i)] $f$ is $p$-weakly convex;
\item[(ii)] $f$ satisfies the property $\WC(u,v)$ for any pair of vertices $u,v$ such that $d(u,v)\ge p+1$;
\item[(iii)] $f$ is locally $p$-weakly convex.
\end{itemize}
\end{theorem}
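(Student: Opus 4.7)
The plan is to establish the cycle (i)$\Rightarrow$(ii)$\Rightarrow$(iii)$\Rightarrow$(ii)$\Rightarrow$(i), following the same skeleton as Theorem~\ref{wp-ltg} but with \emph{quantitative} chord inequalities replacing the qualitative peakless conditions. The implication (i)$\Rightarrow$(ii) is immediate: for $d(u,v)\ge p+1$, a $p$-geodesic $P=(u=w_0,w_1,\ldots,w_n=v)$ on which $f$ is convex has $n\ge 2$, and specializing the convexity inequality to $(i,j,k)=(0,1,n)$ exhibits $w_1\in I^{\circ}(u,v)$ as a witness of $\WC(u,v)$; (ii)$\Rightarrow$(iii) is just restriction to $d(u,v)\in[p+1,2p]$. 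The key recurring object is the chord function $\chi_{uv}(w)=\tfrac{d(v,w)}{d(u,v)}f(u)+\tfrac{d(u,w)}{d(u,v)}f(v)$ on $I(u,v)$, which is affine in $d(u,\cdot)$; writing $g_{uv}:=f-\chi_{uv}$ for the chord gap, the affineness of $\chi$ gives, by a short direct computation, the identity
\[g_{uw^{*}}(w')\;=\;g_{uv}(w')\;-\;\tfrac{d(u,w')}{d(u,w^{*})}\,g_{uv}(w^{*})\qquad\text{for every }w'\in I(u,w^{*})\subseteq I(u,v),\]
and its mirror image for $w'\in I(w^{*},v)$.

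For (iii)$\Rightarrow$(ii), I would induct on $d(u,v)$, closely following Lemma~\ref{lemma-Gp-cmed-2}. The base $d(u,v)\in[p+1,2p]$ is (iii). For $d(u,v)>2p$, if $\WC(u,v)$ fails then $g_{uv}>0$ on $I^{\circ}(u,v)$; let $w^{*}$ attain its minimum. Since $d(u,v)>2p$, at least one of $d(u,w^{*})$, $d(w^{*},v)$ is $\ge p+1$, say the first, and the induction hypothesis applied to $(u,w^{*})$ produces $w'\in I^{\circ}(u,w^{*})$ with $g_{uw^{*}}(w')\le 0$. The displayed identity then gives $g_{uv}(w')\le \tfrac{d(u,w')}{d(u,w^{*})}g_{uv}(w^{*})<g_{uv}(w^{*})$ (using $d(u,w')<d(u,w^{*})$ and $g_{uv}(w^{*})>0$), contradicting the minimality of $w^{*}$.

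For (ii)$\Rightarrow$(i), I would again induct on $d(u,v)$, with the base $d(u,v)\le p$ handled by the trivial $p$-geodesic $(u,v)$. For $d(u,v)\ge p+1$, this time pick $w^{*}\in I^{\circ}(u,v)$ to \emph{minimize} $g_{uv}$ (so $g_{uv}(w^{*})\le 0$ by (ii)). The induction hypothesis applied to $(u,w^{*})$ and $(w^{*},v)$ gives $p$-geodesics $P_{1}=(u=y_{0},\ldots,y_{k}=w^{*})$ and $P_{2}=(w^{*}=z_{0},\ldots,z_{m}=v)$ on which $f$ is convex; concatenate them into a $p$-geodesic $P$. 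By Lemma~\ref{geodesic-string-convex}, it remains only to check local convexity at the junction $w^{*}$. The minimality of $w^{*}$ gives $g_{uv}(y_{k-1})\ge g_{uv}(w^{*})$ and $g_{uv}(z_{1})\ge g_{uv}(w^{*})$ (trivially when $y_{k-1}=u$ or $z_{1}=v$, since then $g_{uv}$ vanishes while $g_{uv}(w^{*})\le 0$). Combined with the affineness identity $\chi_{uv}(w^{*})=\tfrac{d(z_{1},w^{*})}{d(y_{k-1},z_{1})}\chi_{uv}(y_{k-1})+\tfrac{d(y_{k-1},w^{*})}{d(y_{k-1},z_{1})}\chi_{uv}(z_{1})$, this yields
\[\tfrac{d(z_{1},w^{*})}{d(y_{k-1},z_{1})}f(y_{k-1})+\tfrac{d(y_{k-1},w^{*})}{d(y_{k-1},z_{1})}f(z_{1})\;\ge\;\chi_{uv}(w^{*})+g_{uv}(w^{*})\;=\;f(w^{*}),\]
which is exactly local convexity at $w^{*}$.

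The only substantive obstacle is this junction step in (ii)$\Rightarrow$(i): the naive concatenation of two convex $p$-geodesics is not convex in general, so one cannot pick an arbitrary witness of $\WC(u,v)$. The device of choosing $w^{*}$ as a minimizer of the chord gap converts the single inequality $g_{uv}(w^{*})\le 0$ into two-sided bounds on $f$ at the $P_{1}$- and $P_{2}$-neighbours of $w^{*}$, and the affineness of $\chi_{uv}$ then upgrades these to local convexity at $w^{*}$. All remaining computations are elementary and closely parallel the argument of Theorem~\ref{wp-ltg}.
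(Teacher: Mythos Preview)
Your proposal is correct. For (ii)$\Rightarrow$(i) you do exactly what the paper does: pick $w^{*}$ minimizing $g_{uv}$ on $I^{\circ}(u,v)$, apply the induction hypothesis on both sides, and concatenate. The paper then checks the inequality $d(x,y)f(w^{*})\le d(y,w^{*})f(x)+d(x,w^{*})f(y)$ for \emph{every} pair $x\in P_{1}$, $y\in P_{2}$ straddling the junction, whereas you invoke Lemma~\ref{geodesic-string-convex} to reduce to the single local check at $w^{*}$; these are the same computation, and yours is the more economical phrasing.

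For (iii)$\Rightarrow$(ii) the approaches genuinely differ. The paper proves an intermediate Lemma~\ref{lemma-Gp-cmed-3} (the convex analogue of Lemma~\ref{lemma-Gp-cmed-2}) asserting the stronger fact that a witness $w$ with $g_{uv}(w)\le 0$ can always be found in $I^{\circ}(u,v)\cap B_{p}(v)$; its proof is by contradiction on a pair $(u,v)$ of minimum distance and involves a chain of inequalities through auxiliary vertices $x',x,z$. Your argument is a direct induction on $d(u,v)$: assuming $g_{uv}>0$ on $I^{\circ}(u,v)$, you take its minimizer $w^{*}$, apply the induction hypothesis on the longer of the two subintervals $I(u,w^{*})$, $I(w^{*},v)$, and then the affine identity for the chord gap immediately produces an interior vertex with strictly smaller $g_{uv}$-value, a contradiction. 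This is shorter and more transparent than the paper's route; what the paper buys with the extra work is the location of the witness within $B_{p}(v)$, which it later reuses (Proposition~\ref{J(uv)}), but which is not needed for the theorem itself.
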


\begin{proof}
The implications (i)$\Rightarrow$(ii) and (ii)$\Rightarrow$(iii) are trivial. To prove (ii)$\Rightarrow$(i),
for a pair $(u,v)$ of vertices of $G$, let $\alpha_{uv}:= (f(v)-f(u))/d(u,v)$.  Then $\alpha_{vu}=-\alpha_{uv}$, $f(v)=f(u)+\alpha_{uv}d(u,v)$, and $f(u)=f(v)-\alpha_{uv}d(u,v).$ For any vertex $t\in I^{\circ}(u,v),$ we denote by $g_{uv}(t)$ the difference between $f(t)$ and $f(u) + \alpha_{uv} d(u,t)=f(v) - \alpha_{uv} d(t,v)$. With these notations, the property WC($u,v$) can be rewritten as follows: \emph{there exists $w \in I^{\circ}(u,v)$ such that $g_{uv}(w)\le 0$.}

The beginning of the proof is analogous to the proof of Theorem \ref{wp-ltg}. The first difference is that we choose a vertex $w\in I^\circ(u,v)$ minimizing $g_{uv}(w)$ instead of minimizing $f(w).$ Then, analogously
to the proof of Theorem \ref{wp-ltg}, we apply induction hypothesis to derive a
$p$-geodesic $P'$ between $u$ and $w$ and a $p$-geodesic $P''$ between $w$ and
$v$ so that $f$ is convex along $P'$ and along $P''.$
We claim that $f$ is also convex on the $p$-geodesic $P$ obtained by concatenating $P'$ and $P''.$ To prove this assertion it suffices to show that for
any pair of  internal vertices $x,y$ of $P$ with $x\in P'$ and $y\in P'',$ and the vertex $w$ defined above,
we have  $d(x,y)f(w)\le d(y,w)f(x)+d(x,w)f(y).$

By the choice of $w$, any convex combination of $g_{uv}(x)$ and $g_{uv}(y)$ is at least $g_{uv}(w).$
Therefore $d(y,w)g_{uv}(x)+d(x,w)g_{uv}(y) \ge d(x,y)g_{uv}(w).$ In this expression we can replace $g_{uv}(x)$ by $f(x)-(f(u)+\alpha_{uv}d(u,x))$ and do the same for
$g_{uv}(y)$ and $g_{uv}(w).$ We obtain:
\begin{align*}
  d(x,y) g_{uv}(w)
  & \le d(w,y) g_{uv}(x) + d(x,w) g_{uv}(y),\\
  d(x,y) \left[f(w) - \left(f(u) + \alpha_{uv} d(u,w)\right)\right]
  & \le   d(w,y) \left[f(x) - (f(u) + \alpha_{uv} d(u,x))\right] \\
  &\quad  + d(x,w) \left[f(y) - (f(u) + \alpha_{uv} d(u,y))\right],\\
  d(x,y) f(w) - d(x,y) f(u) - d(x,y) d(u,w) \alpha_{uv}
  & \le   d(w,y) f(x) + d(x,w) f(y) - \left[d(x,w) + d(w,y)\right] f(u) \\
  &\quad  -\left[d(w,y)d(u,x) +  d(x,w)d(u,y)\right] \alpha_{uv}.\end{align*}
Since $d(x,y)=d(x,w)+d(w,y),$ the terms involving $f(u)$ cancel. Now, consider the terms having $\alpha_{uv}$. Replacing $d(x,y)$ by $d(x,w)+d(w,y),$ $d(u,w)$ by $d(u,x)+d(x,w)$, and $d(u,y)=d(u,x)+d(x,w)+d(w,y),$ it easy to check that  $d(x,y)d(u,w)=d(w,y)d(u,x)+d(x,w)d(u,y),$ and therefore the terms involving $\alpha_{uv}$ also cancel. Consequently, we obtain that $d(x,y)f(w)\le d(w,y)f(x)+d(x,w)f(y),$ showing  that $f$ is indeed convex along $P$.
This concludes the proof of the implication  (ii)$\Rightarrow$(i).

The proof of the implication (iii)$\Rightarrow$(ii) is identical to
the proof of (iii)$\Rightarrow$(ii) in Theorem~\ref{wp-ltg} where
Lemma~\ref{lemma-Gp-cmed-2} is replaced by the following lemma.

\begin{lemma}\label{lemma-Gp-cmed-3}
  Let $f:V\rightarrow {\mathbb R}$ be a locally $p$-weakly convex
  function on $G$.  Then for any $u, v \in V$ with $f(u)\le f(v)$ and
  $d(u,v)>p$, there exists $w \in I^{\circ}(u,v)\cap B_p(v)$ such that
  $g_{uv}(w) \le 0$.  \end{lemma}

\begin{figure}[]
	\begin{center}
		\includegraphics[width = 0.55\textwidth, page=4]{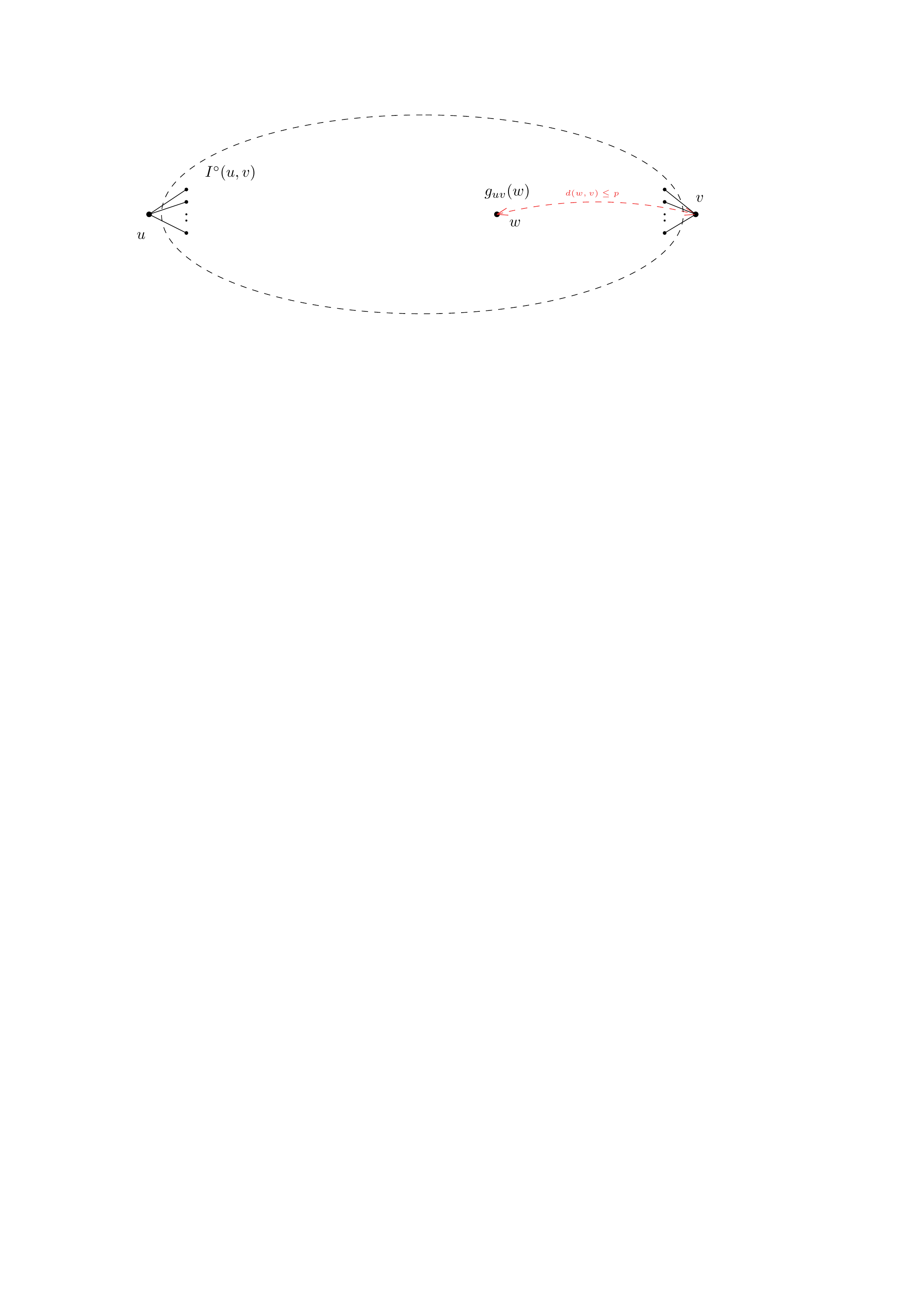}
	\end{center}
	\caption{An illustration of the proof of Lemma
	\ref{lemma-Gp-cmed-3}}\label{fig-Gp-cmed-3}
\end{figure}

\begin{proof}
  The proof is similar to the proof of Lemma \ref{lemma-Gp-cmed-2} but
  it requires some additional computations (an illustration of the
  proof is given in Fig.~\ref{fig-Gp-cmed-3}).  As in Lemma
  \ref{lemma-Gp-cmed-2}, we proceed by contradiction and suppose that
  $u,v\in V$ are two vertices at minimum distance such that
  $g_{uv}(x)>0$ for all $x\in I^{\circ}(u,v) \cap B_p(v)$. For the
  same reasons as in the proof of Lemma~\ref{lemma-Gp-cmed-2}, we can
  assume that $d(u,v)>2p$.  Then, we consider a vertex $w$ of
  $I^{\circ}(u,v)\cap B_p(v)$ minimizing $g_{uv}(w)$ (instead of
  $f(w)$). Observe that $d(u,v) > d(u,w) \geq d(u,v) - d(v,w) > p$.
Since $g_{uv}(w) > 0$, we have $\alpha_{uw} > \alpha_{uv}$. By
  minimality of $d(u,v)$ and since $d(u,w) > p$, there exists a vertex
  $x' \in I^{\circ}(u,w)\cap B_p(w)$ such that
  $f(x')\le f(u)+\alpha_{uw} d(u,x').$ The following sequence of
  inequalities show that $g_{uv}(x')<g_{uv}(w)$.
\begin{align*}
  g_{uv}(x')
  &=f(x')-(f(u)+\alpha_{uv}d(u,x'))\\
  &\le f(u)+\alpha_{uw}d(u,x')-(f(u)+\alpha_{uv}d(u,x')) = (\alpha_{uw}-\alpha_{uv})d(u,x')\\
  &<(\alpha_{uw}-\alpha_{uv})d(u,w) = (f(u)+\alpha_{uw}d(u,w))-(f(u)+\alpha_{uv}d(u,w))\\
  &=f(w)-(f(u)+\alpha_{uv}d(u,w))=g_{uv}(w).
\end{align*}
As in the proof of Lemma \ref{lemma-Gp-cmed-2}, pick a vertex $x$ closest to $v$ in  $I^{\circ}(u,v)$ such that $g_{uv}(x)<g_{uv}(w).$
Since $d(x',v)\le 2p,$ by our choice of $x$ and $w,$ $p+1\le d(x,v)\le d(x',v) \le 2p.$
Since $f$ is locally $p$-weakly convex, there exists a vertex  $z \in I(x,v)$ such that $f(z)\le f(v)-\alpha_{xv}d(z,v).$
Since $\alpha_{xv}\le \alpha_{zv}$ by the choice of $z$ and  $\alpha_{zv}\le \alpha_{uv}$ because $g_{uv}(z)\ge g_{uv}(w)> 0$ by the choice of $x$, we obtain:
\begin{align*}
  g_{uv}(z)
  &= f(z)-(f(v)-\alpha_{uv}d(z,v))\\
  &= f(v)-\alpha_{zv}d(z,v)-(f(v)-\alpha_{uv}d(v,z)) = (\alpha_{uv}-\alpha_{zv})d(z,v) \\
  &\le (\alpha_{uv}-\alpha_{zv})d(x,v) = (f(v)-\alpha_{zv}d(x,v))-(f(v)-\alpha_{uv}d(x,v)) \\
  &\le (f(v)-\alpha_{xv}d(x,v))-(f(v)-\alpha_{uv}d(x,v)) \\
  &=f(x)-(f(v)-\alpha_{uv}d(x,v)) =g_{uv}(x)<g_{uv}(w).
\end{align*}
  The inequality $g_{uv}(z)<g_{uv}(w)$ contradicts the choice of $w$ if $z\in B_p(v)$ and the choice of $x$ otherwise. This contradiction proves the lemma.
\end{proof}

This establishes the equivalence between (i), (ii) and (iii) and finishes the proof.
\end{proof}

\section{Graphs with $G^p$-connected medians}

In this section, we characterize the graphs $G=(V,E)$ such that for any
profile $\pi$, the median set $\Med_G(\pi)$ induces a connected
subgraph in the $p$th power $G^p$ of $G$, thus extending \cite[Proposition 1]{BaCh_median}
for the case $p=1$. Using this result, we show that for any $p\ge 1$, the graphs
with $G^p$-connected medians can be recognized in polynomial time. We also establish
that the class of graphs with $G^p$-connected medians is closed under Cartesian products
and gated amalgams.

\subsection{Characterization of graphs with $G^p$-connected medians}
In this subsection, we characterize the graphs with $G^p$-connected medians, thus generalizing Proposition \ref{prop-cmed}. For each profile $\pi$, the
median function $F_{\pi}(x)=\sum_{v\in V} \pi(v)d_G(x,v)$ and the median set $\Med(\pi)=\argmin(F_{\pi})$ are considered in the graph $G$, i.e.,
with respect to the distance function of $G$. However, we denote by $\lpMedG(\pi)$ the set of all local minima of the function $F_{\pi}$ in the graph $G^p$:
namely, a vertex $x$ belongs to
$\lpMedG(\pi)$ if and only if for any neighbor $y$ of $x$ in $G^p$ (i.e., for any vertex $y$ such that $d_G(x,y)\le p$) we have  $F_{\pi}(x)\le F_{\pi}(y)$.

\begin{theorem}\label{th-cmed-p}
  For a graph $G$ and an integer $p\ge 1$, the following conditions are equivalent:
  \begin{enumerate}[(1)]
  \item\label{th-cmedp-0} $\lpMedG(\pi) = \Med(\pi)$ for any integer profile $\pi$;
  \item\label{th-cmedp-1} $\lpMedG(\pi) = \Med(\pi)$ for any profile $\pi$;
\item\label{th-cmedp-2} $F_{\pi}$ is $p$-weakly convex  for any profile $\pi$;
  \item\label{th-cmedp-3} $F_{\pi}$ is $p$-weakly peakless   for any profile $\pi$;
  \item\label{th-cmedp-4} all level sets
    $L_{\preceq}(F_{\pi},\alpha)=\left\{x : F_{\pi}(x) \leq \alpha\right\}$ of $F_{\pi}$ are $p$-isometric;
  \item\label{th-cmedp-5} all median sets $\Med(\pi)$ are $p$-isometric;
  \item\label{th-cmedp-6} all median sets $\Med(\pi)$ are connected in $G^p$ (i.e., are $p$-connected).
  \end{enumerate}
\end{theorem}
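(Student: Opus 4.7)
The plan is to verify the equivalences via the cycle
\begin{equation*}
(3)\Rightarrow(4)\Rightarrow(5)\Rightarrow(6)\Rightarrow(7),\qquad (4)\Rightarrow(2)\Rightarrow(1),
\end{equation*}
and close up with $(1)\Rightarrow(3)$ and $(7)\Rightarrow(3)$. Most steps are routine. $(3)\Rightarrow(4)$ is immediate since convexity on a geodesic string implies peaklessness on it (as in Lemma~\ref{lozenge}). $(4)\Rightarrow(2)$ is the unimodality on $G^p$ asserted by Theorem~\ref{wp-ltg}, which is just $\lpMedG(\pi)=\Med(\pi)$ in different words, and $(2)\Rightarrow(1)$ is trivial. $(4)\Rightarrow(5)$ is Corollary~\ref{wp-Gp-level}; $(5)\Rightarrow(6)$ is trivial because each median set is the smallest non-empty level set of $F_\pi$; and for $(6)\Rightarrow(7)$, given $u,v\in S$ in a $p$-isometric set $S$ with $d(u,v)>p$, iteratively choosing a vertex $w\in I^\circ(u,v)\cap S$ and recursing on $(u,w)$ and $(w,v)$ yields a $p$-geodesic inside $S$ from $u$ to $v$; this is a path in $G^p$.

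The substantial direction is the contrapositive of $(1)\Rightarrow(3)$, from which $(7)\Rightarrow(3)$ will come for free. Suppose $(3)$ fails. By Theorem~\ref{wc-ltg} there exist a profile $\pi$ and vertices $u,v$ with $k:=d(u,v)\in\{p+1,\ldots,2p\}$ such that
\begin{equation*}
F_\pi(w) \;>\; \tfrac{d(v,w)}{k}F_\pi(u)+\tfrac{d(u,w)}{k}F_\pi(v) \quad\text{for every } w\in I^\circ(u,v).
\end{equation*}
Set $c:=(F_\pi(v)-F_\pi(u))/k$ and, for large $N$ and small $\varepsilon\ge 0$, consider the concentrated-and-tilted profile $\pi'$ obtained from $\pi$ by adding mass $N$ at $u$ and mass $N+c+\varepsilon$ at $v$, so $F_{\pi'}(x)=F_\pi(x)+N\,d(u,x)+(N+c+\varepsilon)\,d(v,x)$ for every $x\in V$. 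For $w\in I(u,v)$, using $d(u,w)+d(v,w)=k$, a direct computation gives $F_{\pi'}(v)=F_{\pi'}(u)-\varepsilon k$ and
\begin{equation*}
F_{\pi'}(w)-F_{\pi'}(u)=\Bigl[F_\pi(w)-\tfrac{d(v,w)}{k}F_\pi(u)-\tfrac{d(u,w)}{k}F_\pi(v)\Bigr]-\varepsilon\,d(u,w),
\end{equation*}
which is strictly positive at $\varepsilon=0$ by the failure of $\WC(u,v)$ and remains so for all $\varepsilon>0$ sufficiently small. For $x\notin I(u,v)$ the excess $s:=d(u,x)+d(v,x)-k\ge 1$ makes $F_{\pi'}(x)-F_{\pi'}(u)\ge Ns - \mathrm{bounded\ terms}$, which is strictly positive for $N$ large.

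Choosing $\varepsilon=0$ then yields $\Med(\pi')=\{u,v\}$, which is not $G^p$-connected because $d(u,v)=k\ge p+1$; this contradicts $(7)$. Choosing $\varepsilon>0$ small instead gives $F_{\pi'}(v)<F_{\pi'}(u)$, so $u\notin\Med(\pi')$, while $u$ remains a strict minimum of $F_{\pi'}$ on $B_p(u)$ (the only equal competitor of $u$ at $\varepsilon=0$ was $v$, and $v\notin B_p(u)$), hence $u\in\lpMedG(\pi')\setminus\Med(\pi')$, contradicting $(1)$. To ensure $\pi'$ is an integer profile (as required by $(1)$), one first perturbs $\pi$ to a nearby rational profile---feasible since the failure of $\WC(u,v)$ is a finite system of strict inequalities, hence an open condition in $\pi$---then $c$ and $\varepsilon$ can be chosen rational, and clearing denominators preserves both $\Med$ and $\lpMedG$. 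The main technical subtlety is that the tilt $\varepsilon>0$ must not destroy $u$'s local-$G^p$-minimum status; this works precisely because at $\varepsilon=0$ the unique tie of $F_{\pi'}$ with $F_{\pi'}(u)$ is at $v$, and $v$ is hidden from $u$ in $G^p$ by the hypothesis $d(u,v)\ge p+1$.
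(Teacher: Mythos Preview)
Your proof is correct and follows essentially the same strategy as the paper's: both hinge on the observation that if $\WC(u,v)$ fails for some profile $\pi$ at a pair $u,v$ with $p+1\le d(u,v)\le 2p$, then loading large mass at $u$ and $v$ (with an appropriate asymmetric tilt) forces the median set to be $\{u,v\}$, and a further small tilt separates $u$ from $\Med$ while keeping it a local minimum in $G^p$. The paper packages the first step as a standalone Lemma (its Lemma~\ref{nonweaklyconvex}, producing $\Med(\pi^+)=\{u,v\}$) and then modifies $\pi^+$ once more to break condition~(\ref{th-cmedp-1}); you combine both moves with your single parameter $\varepsilon$, which is a clean unification. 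For the integer profile condition~(\ref{th-cmedp-0}), the paper takes a different route: rather than perturbing $\pi$ to a rational profile, it shows directly that failure of condition~(\ref{th-cmedp-4}) for an arbitrary profile yields failure for an integer one by scaling and then rounding down. Your rational-perturbation argument is equally valid, but note that your assertion that ``the failure of $\WC(u,v)$ is a finite system of strict inequalities'' is not immediate for infinite graphs (where $I^\circ(u,v)$ can be infinite); it holds because the coefficient vectors $(d(z,w))_{z\in\supp(\pi)}$ together with $d(u,w)$ take only finitely many integer values for $w\in I^\circ(u,v)$, so only finitely many distinct inequalities arise---a point the paper makes explicit in its own rounding argument.
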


\begin{proof}
We start with the following lemma, which can be viewed as an analogue of~\cite[Lemma 2]{BaCh_median} for $p=1$:

\begin{lemma}\label{nonweaklyconvex} If the median function $F_{\pi}$ is not $p$-weakly convex for some profile (respectively, integer profile)  $\pi$ on $G$, then there
exists a profile (respectively, an integer profile) $\pi^+$ and vertices $u,v$ at distance $p+1\le d(u,v)\le 2p$ such that $\Med(\pi^+)=\{ u,v\}.$
  \end{lemma}

\begin{proof}
If $F_{\pi}$ is not $p$-weakly convex, then by Theorem \ref{wc-ltg} $F_{\pi}$ is not locally $p$-weakly convex.
Therefore,  there exist two vertices $u,v$ at distance $p+1\le d(u,v)\le 2p$ such that for any vertex
$w \in I^{\circ}(u,v)$, we have $F_{\pi}(w)>F_{\pi}(u)+\alpha_{uv}d(u,w)$.  Suppose without loss of generality
that $F_{\pi}(v)\ge F_{\pi}(u)$. Let $k:=d(u,v)$, $\epsilon:=F_{\pi}(v)-F_{\pi}(u)$, and $\mu := kF_{\pi}(v)+1$. Clearly, if $\pi$ is an integer profile,
$\epsilon$ is a nonnegative integer and $\mu$ is a positive integer. Define a profile (respectively, an integer profile) $\pi^+$ by setting
\begin{equation*}
    \pi^+(x) =
    \begin{cases}
      k\pi(u)+ \mu & \text{if } x =u,\\
      k\pi(v)+ \mu +\epsilon & \text{if } x =v,\\
      k\pi(x) & \text{otherwise.}
    \end{cases}
  \end{equation*}

  Then, for any vertex $x$, we have
  \[F_{\pi^+}(x)= kF_{\pi}(x)+ (d(u,x)+d(v,x))\mu +
    d(v,x)\epsilon.\]

  Consequently, we have
  \[F_{\pi^+}(u)=kF_{\pi}(u)+k\mu+k\epsilon=k(F_{\pi}(u)+\epsilon)
    +k\mu=kF_{\pi}(v)+k\mu=F_{\pi^+}(v).\]

  For any vertex $x\notin I(u,v)$, $d(u,x)+d(v,x) \geq k+1$ and since
  $\mu > kF_{\pi}(v)$, we have
  \[F_{\pi^+}(x) \geq (k+1)\mu > k\mu + kF_{\pi}(v) =
    F_{\pi^+}(v).\]

  For any vertex $w\in I^{\circ}(u,v)$, we have $d(u,w)+d(v,w) = k,$ and thus
  \[F_{\pi^+}(w)= kF_{\pi}(w)+ k\mu + \epsilon d(w,v).\]

  Since $F_{\pi}$ is not $p$-weakly convex, $F_{\pi}(w) > F_{\pi}(v)-\frac{\epsilon}{k} d(v,w),$
  \[F_{\pi^+}(w)= kF_{\pi}(w)+ k\mu + d(v,w)\epsilon >
    k(F_{\pi}(v)-\frac{\epsilon}{k} d(v,w))+ k\mu +\epsilon d(v,w) = F_{\pi^+}(v).\]

  Therefore the only minima of $F_{\pi^+}$ are $u$ and $v$ and thus
  $\Med(\pi^+) = \{u,v\}$, concluding the proof.
\end{proof}

The implications  (\ref{th-cmedp-2})$\Rightarrow$(\ref{th-cmedp-3})$\Rightarrow$ (\ref{th-cmedp-4})
$\Rightarrow$ (\ref{th-cmedp-5}) $\Rightarrow$ (\ref{th-cmedp-6})
are trivial. The implication (\ref{th-cmedp-2}) $\Rightarrow$
(\ref{th-cmedp-1}) follows from the fact that $p$-weakly convex functions are $p$-weakly peakless and that $p$-weakly peakless
functions are unimodal on $G^p$ (Theorem \ref{wp-ltg}): therefore the set $\lpMedG(\pi)$ of local minima of $F_{\pi}$ in $G^p$ coincides
with the set of global minima of $F_\pi$, yielding $\Med(\pi)=\lpMedG(\pi)$.
The implication (\ref{th-cmedp-6})
$\Rightarrow$ (\ref{th-cmedp-2}) follows from Lemma
\ref{nonweaklyconvex}. Finally, we prove that (\ref{th-cmedp-1})
$\Rightarrow$ (\ref{th-cmedp-2}). Suppose by way of contradiction
that some function $F_{\pi}$ is not $p$-weakly convex. By Lemma
\ref{nonweaklyconvex} there exists a profile $\pi^+$ and vertices
$u,v$ at distance $p+1\le d(u,v)\le 2p$ such that
$\Med(\pi^+)=\{ u,v\}$, i.e., such that
$F_{\pi^+}(x) \geq F_{\pi^+}(u) +1 = F_{\pi^+}(v) +1$ for all
$x \notin\{u,v\}$. Let $k:=d(u,v)$ and consider the profile $\pi'$
defined by $\pi'(u) = k \pi^+(u)+1$ and $\pi'(x) = k \pi^+(x)$ for any
$x \neq u$.
Observe that
$F_{\pi'}(u) = kF_{\pi^+}(u) < kF_{\pi^+}(u) +k = kF_{\pi^+}(v) +k =
F_{\pi'}(v)$. Note also that for any $x \notin \{u,v\}$,
$F_{\pi'}(x) = k F_{\pi^+}(x) + d(x,u) \geq k (F_{\pi^+}(v) + 1) +1
= F_{\pi'}(v)+1$. Therefore both $u$ and $v$ are local minima of
$F_{\pi'}$ in $G^p$, but $v\notin \mbox{Med} (\pi')$.
This establishes the equivalence of conditions (\ref{th-cmedp-1})-(\ref{th-cmedp-6}).

With the same proof, but applying Lemma \ref{nonweaklyconvex} with integer profiles, we conclude that
condition (\ref{th-cmedp-0}) is equivalent to each of the conditions (\ref{th-cmedp-2})-(\ref{th-cmedp-6}) in which we
require that the profile $\pi$ is integer. It remains to show that (\ref{th-cmedp-0}) is equivalent to the conditions (\ref{th-cmedp-1})-(\ref{th-cmedp-6}) for all profiles.
The implication (\ref{th-cmedp-1})$\Rightarrow$(\ref{th-cmedp-0}) is trivial.
To prove (\ref{th-cmedp-0})$\Rightarrow$(\ref{th-cmedp-4}), suppose by way of contradiction  that for some profile $\pi$ and some
$\alpha>0$, the level set  $L_{\preceq}(F_{\pi},\alpha)$ is not $p$-isometric.
Therefore, there exist two vertices $u,v$ with $d(u,v)>p$ such that $F_{\pi}(u)\le \alpha, F_{\pi}(v)\le \alpha$, and $F_{\pi}(x)>\alpha$
for any $x\in I^{\circ}(u,v)$.  For an integer $n>0$, denote by $\pi_n$ the profile obtained by setting $\pi_n(w)=n\cdot \pi(w)$ for all $w\in \supp(\pi)$.
Since $F_{\pi_n}(z)=n\cdot F_{\pi}(z)$ for any vertex $z$ of $G$, we conclude
that $F_{\pi_n}(u)\le n\alpha$, $F_{\pi_n}(v)\le n\alpha$, and
$F_{\pi_n}(x)>n\alpha$
for any $x\in I^{\circ}(u,v)$.

Since  $\supp(\pi)$ is finite and  for any two vertices $w\in \supp(\pi)$ and $x\in I(u,v)$ the distance $d(x,w)$ is upper bounded
by $\max\{d(u,w),
d(v,w)\}+d(u,v)$,  in $I(u,v)$ the median functions $F_{\pi}$
may take only a finite number of distinct values. Let $\Delta$
denote the maximum of this list of values. Then we can find  an integer $n^*>0$
such that the profile $\pi^*=\pi_{n^*}$ has the following property:  if
$y,z\in I(u,v)$ and $F_{\pi}(y)>F_{\pi}(z)$, then
$F_{\pi^*}(y)>F_{\pi^*}(z)+\Delta$. Let $\pi'$ be the integer profile obtained
by
rounding the profile $\pi^*$: namely, let $\pi'(w)=\lfloor \pi^*(w)\rfloor$ for any $w\in \supp(\pi^*)=\supp(\pi)$. Then $\pi'(w)d(z,w)\ge (\pi^*(w)-1)d(z,w)$ for
any $w\in \supp(\pi')=\supp(\pi^*)$ and $z\in V$. Summing up over all vertices of $\supp(\pi')$, we conclude that $F_{\pi'}(z)\ge F_{\pi^*}(z)-\Delta$. Therefore,
if for $y,z\in I(u,v)$ we have  $F_{\pi}(y)>F_{\pi}(z)$, then
$F_{\pi^*}(y)>F_{\pi^*}(z)+\Delta$, and, consequently,
$F_{\pi'}(y)>F_{\pi'}(z)$.
Applying these inequalities to the pairs $\{ u,x\}$ and  $\{ x,v\}$ with $z\in I(u,v)$, we conclude that
$F_{\pi'}(x)>F_{\pi'}(u)$ and $F_{\pi'}(x)>F_{\pi'}(v)$ for any $x\in I^{\circ}(u,v)$. Setting $\alpha'=\max\{ F_{\pi'}(u), F_{\pi'}(v)\}$,
we conclude that the level set  $L_{\preceq}(F_{\pi'},\alpha')$ is not $p$-isometric, contrary to the fact that $\pi'$ is an integer profile and that
(\ref{th-cmedp-0}) is equivalent to (\ref{th-cmedp-6}) for integer profiles.
\end{proof}

It is shown in \cite[Proposition~2 ]{BaCh_median} that $G$ is a graph with connected
medians ($p=1$) if and only if $\WC(u,v)$ holds for all pairs of vertices $u,v$ with $d(u,v)=2$ and any profile
$\pi$ included in $J(u,v)$, $\Med(\pi)=\lMed(\pi)$. We extend this result for all $p$.

\begin{proposition}\label{J(uv)}
A graph $G$ is a graph with $G^p$-connected medians if and only if for any vertices $u,v$
with $p+1\leq d(u,v)\le 2p$ and every weight function $\pi$ with $\supp(\pi)\subseteq J(u,v)$, the median
function $F_{\pi}$ satisfies the condition $\WP(u,v)$ (respectively, $\WC(u,v)$).
\end{proposition}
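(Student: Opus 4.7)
The forward direction I would handle immediately: by Theorem \ref{th-cmed-p} the $G^p$-connectedness of medians yields $p$-weak convexity and $p$-weak peaklessness of every $F_\pi$, which by Theorems \ref{wc-ltg} and \ref{wp-ltg} delivers $\WC(u,v)$ and $\WP(u,v)$ for any $u,v$ with $d(u,v) \ge p+1$, in particular for $p+1 \le d(u,v) \le 2p$ with $\supp(\pi) \subseteq J(u,v)$.

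For the converse, the plan is to combine Theorems \ref{wc-ltg}, \ref{wp-ltg}, and \ref{th-cmed-p}: it suffices to verify that every $F_\pi$ satisfies $\WC(u,v)$ (respectively, $\WP(u,v)$) at every pair $u,v$ with $p+1 \le d(u,v) \le 2p$, and then $G^p$-connected medians follows. To exploit the hypothesis, I would project an arbitrary profile $\pi$ onto $J(u,v)$ via quasi-medians. For each $z \in \supp(\pi)$ fix a quasi-median $z'u'v'$ of the triplet $z,u,v$; as recalled in Section \ref{wm-classes}, this places $z'$ into $I(u,z) \cap I(v,z) \cap J(u,v)$. I then aggregate the weights into a profile $\pi'$ supported in $J(u,v)$ by setting $\pi'(y) := \sum_{z : z' = y}\pi(z)$.

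The two identities $d(u,z) = d(u,z')+d(z',z)$ and $d(v,z) = d(v,z')+d(z',z)$ (because $z'$ lies on $(u,z)$- and $(v,z)$-geodesics), together with the triangle inequality $d(w,z) \le d(w,z')+d(z',z)$ for an arbitrary $w$, sum against $\pi$ to give, with $C := \sum_{z \in \supp(\pi)} \pi(z)\,d(z',z)$,
\begin{equation*}
F_\pi(u) = F_{\pi'}(u)+C, \qquad F_\pi(v) = F_{\pi'}(v)+C, \qquad F_\pi(w) \le F_{\pi'}(w)+C.
\end{equation*}
By hypothesis $F_{\pi'}$ satisfies $\WC(u,v)$ (respectively, $\WP(u,v)$) at some $w \in I^{\circ}(u,v)$, and I would show the same $w$ works for $F_\pi$. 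For $\WC(u,v)$ the transfer is painless: the coefficients $d(v,w)/d(u,v)$ and $d(u,w)/d(u,v)$ sum to $1$, so the additive constant $C$ on the right matches $C$ on the left and the inequality passes directly from $F_{\pi'}$ to $F_\pi$.

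The subtle point I expect to be the main obstacle is the equality clause in $\WP(u,v)$. The inequality $F_\pi(w) \le \max\{F_\pi(u),F_\pi(v)\}$ is immediate from the chain $F_\pi(w) \le F_{\pi'}(w)+C \le \max\{F_{\pi'}(u),F_{\pi'}(v)\}+C$, but if equality $F_\pi(w) = \max\{F_\pi(u),F_\pi(v)\}$ occurs, I have to force $F_\pi(u) = F_\pi(w) = F_\pi(v)$. The resolution is that equality throughout the chain forces $F_{\pi'}(w) = \max\{F_{\pi'}(u),F_{\pi'}(v)\}$; the equality clause of $\WP(u,v)$ for $F_{\pi'}$ then yields $F_{\pi'}(u) = F_{\pi'}(w) = F_{\pi'}(v)$, and adding $C$ closes the argument.
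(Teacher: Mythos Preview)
Your proof is correct and follows essentially the same route as the paper: project the profile onto $J(u,v)$ via the quasi-median vertex $z'\in I(u,z)\cap I(v,z)$ and use the identities $F_\pi(u)=F_{\pi'}(u)+C$, $F_\pi(v)=F_{\pi'}(v)+C$ together with $F_\pi(w)\le F_{\pi'}(w)+C$ to transfer $\WC(u,v)$/$\WP(u,v)$ from $\pi'$ to $\pi$. The only cosmetic difference is that the paper argues by contradiction and moves one vertex at a time into $J(u,v)$ (minimizing $|\supp(\pi)\setminus J(u,v)|$), whereas you do the projection in one step; the underlying computation is identical.
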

\begin{proof} Necessity immediately follows from Theorem \ref{th-cmed-p}.
To prove sufficiency, in view of Theorem \ref{th-cmed-p} assume that  there exists a weight function $\pi$
with finite support for which the median function $F_\pi$ is not $p$-weakly peakless (or $p$-weakly convex).
By Lemmas \ref{lemma-Gp-cmed-2} and \ref{lemma-Gp-cmed-3},
there exist vertices $u,v$ with $p+1 \leq d(u,v)\le 2p$ such that $\WP(u,v)$ (respectively, $\WC(u,v)$) does not hold. We assume that among all profiles for which $\WP(u,v)$ (respectively, $\WC(u,v)$)  does not hold, $\pi$ is a profile minimizing $|\supp(\pi)\setminus J(u,v)|.$
If $\supp(\pi)\subseteq J(u,v)$, then we are done. Now, suppose that there exists a vertex $x\in \supp(\pi)\setminus J(u,v)$.
Let $x'$ be a vertex in $I(u,x)\cap I(v,x)$ at maximal distance from $x$.
We define the following profile $\pi'$:
	\begin{equation*}
	\pi'(y) =
	\begin{cases}
	\pi(y) & \text{if } y \neq x,x'\\
	0 & \text{if } y =x\\
	\pi(x)+\pi(x') & \text{if } y =x'.
	\end{cases}
	\end{equation*}
Since $x'\in I(u,x)\cap I(v,x)$, we obtain $F_{\pi'}(u)=F_\pi(u)+\pi(x)(d(u,x')-d(u,x))=F_\pi(u)-\pi(x)d(x,x')$ and analogously $F_{\pi'}(v) = F_\pi(v) - \pi(x)d(x,x')$.
Analogously,  for any $w\in I^{\circ}(u,v)$, we have $F_{\pi}(w)-F_{\pi'}(w)=(d(w,x)-d(w,x'))\pi(x)$, and by the triangle inequality we obtain that $F_{\pi'}(w)\ge F_{\pi}(w)-d(x,x')\pi(x)$.
Consequently, we deduce that $F_{\pi'}$ also violates the condition $\WP(u,v)$. Since $|\pi'\setminus J(u,v)|<|\pi\setminus J(u,v)|$, we obtain a contradiction with the choice of $\pi$.
Now, we assert that $F_{\pi'}$ also violates the convexity condition  $\WC(u,v)$. Indeed,
\begin{align*}
	d(v,w)F_{\pi'}(u)+d(u,w)F_{\pi'}(v)&=
	d(v,w)(F_{\pi}(u) -\pi(x)d(x,x')) +d(u,w)(F_{\pi}(v)-\pi(x)d(x,x')) \\
	&= d(v,w)F_{\pi}(u) + d(u,w)F_{\pi}(v) - d(x,x')\pi(x)(d(v,w) + d(u,w))\\
	&= d(v,w)F_{\pi}(u) + d(u,w)F_{\pi}(v) - d(x,x')\pi(x)d(u,v).
	\end{align*}
Since $d(u,v)F_{\pi'}(w)=d(u,v)F_\pi(w)+d(u,v)\pi(x)(d(x',w)-d(x,w))$ and $\pi$ violates $\WC(u,v)$, to
show that $F_{\pi'}$ also violates $\WC(u,v)$, we only have to prove that $d(x',w)-d(x,w)\geq -d(x,x')$,
which is true by the triangle inequality.
\end{proof}

When $p=1$, Proposition \ref{J(uv)} can be viewed as a local-to-global characterization because for any $u,v$ with $d(u,v)=2$,
$J(u,v)$ is included in the 2-neighborhood of the interval $I(u,v)$. This is a consequence of the fact that metric triangles of graphs
with connected medians are equilateral \cite[Remark 2]{BaCh_median}. Under the condition that $G$ is a graph with equilateral metric triangles,
Proposition \ref{J(uv)} can be viewed as local-to-global characterization of graphs with $G^p$-connected medians
because of the following observation:

\begin{lemma}\label{equilateral-triangles} If $G=(V,E)$ is a graph with equilateral metric triangles, then for any $p\ge 1$ and $u,v\in V$ with
$p+1\le d(u,v)\le 2p$, $J(u,v)\subseteq B_{2p}(I(u,v))$.
\end{lemma}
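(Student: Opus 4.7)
The plan is to apply the quasi-median construction to the triplet $(z', u, v)$ for an arbitrary $z' \in J(u,v)$, and then exploit the hypothesis that every metric triangle of $G$ is equilateral. The key observation is that the very definition of $J(u,v)$, namely $I(z',u) \cap I(z',v) = \{z'\}$, forces $z'$ itself to be one corner of the quasi-median metric triangle; the opposite side of that triangle then lies on a $(u,v)$-geodesic and has length controlled by $d(u,v) \leq 2p$.

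Concretely, I would first form a quasi-median $v_1 v_2 v_3$ of $z', u, v$ via the greedy procedure recalled in Section~\ref{wm-classes}: pick $v_1 \in I(z',u) \cap I(z',v)$ farthest from $z'$, which by the defining property of $J(u,v)$ must be $v_1 = z'$; then pick $v_2 =: u' \in I(u, z') \cap I(u, v)$ farthest from $u$, so that automatically $u' \in I(u,v)$; then pick $v_3 =: v' \in I(v, z') \cap I(v, u')$ farthest from $v$. By the equilateral-metric-triangles hypothesis, the metric triangle $z' u' v'$ is equilateral of some common side length $k \geq 0$, i.e.\ $d(z',u') = d(z',v') = d(u',v') = k$.

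Next, I would read off the bound on $k$ from the quasi-median side equality for the pair $u,v$:
\[
d(u,v) \;=\; d(u, u') + d(u', v') + d(v', v) \;=\; d(u, u') + k + d(v', v),
\]
which gives $k \leq d(u,v) \leq 2p$ by the hypothesis on $d(u,v)$. Setting $w := u' \in I(u,v)$ then yields $d(z', w) = d(z', u') = k \leq 2p$, hence $z' \in B_{2p}(I(u,v))$, as required.

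I do not foresee a real obstacle: the entire argument hinges on the single identification $v_1 = z'$ in the quasi-median construction, which is exactly what the definition of $J(u,v)$ delivers, together with the equilaterality hypothesis that lets us equate $d(z',u')$ with $d(u',v')$. Without the equilateral-triangle assumption, one would only have $d(z',u') \le d(u',v') + d(v',z')$ with no direct control of the right-hand side by $d(u,v)$, so the hypothesis is genuinely used; but granted it, the proof reduces to substitution in the quasi-median distance identity.
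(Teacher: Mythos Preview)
Your argument is correct and follows essentially the same approach as the paper: construct a quasi-median of $z',u,v$, use the defining property of $J(u,v)$ to force $z'$ to be one corner of the resulting metric triangle, and then invoke equilaterality together with $d(u,v)\le 2p$ to bound the distance from $z'$ to a vertex of $I(u,v)$. The only cosmetic difference is that the paper begins the greedy quasi-median construction from $u$ rather than from $z'$, but the resulting metric triangle and the final bound are identical.
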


\begin{proof} Let $z\in J(u,v)$ and let $x$ be the furthest from $u$ vertex of $I(u,z)\cap I(u,v)$ and let $y$ be a furthest from $v$ vertex of $I(v,x)\cap I(v,z)$.
Then one can easily check that $(xyz)$ is a metric triangle which is a quasi-median of the triplet $u,v,z$. Since $x$ and $y$ belong to a common $(u,v)$-geodesic,
$d(x,y)\le 2p$. Since $(xyz)$ is an equilateral metric triangle, $d(z,x)=d(z,y)=d(x,y)\le 2p$, whence $z\in B_{2p}(I(u,v))$.
\end{proof}

For a graph $G$ we denote by $p(G)$ the smallest integer $p$ such that $G$ is a graph with $G^p$-connected medians ($p(G)=\infty$ if such minimal $p$ does not exist).
We conclude this subsection by showing that for any integer $m\ge 1$ there exists a graph $G_m$ such that $p(G_m)\ge m$.

\begin{proposition}
  Let $C_{n}$ be the cycle of length $n=2k+m$, where $k \geq 2$ and
  $2 \leq m\leq 2k-1$. Then $p(C_n)\ge m$.
\end{proposition}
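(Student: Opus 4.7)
The plan is to exhibit an explicit weight function $\pi$ on $C_n$ whose median set is $\{k,\,k+m\}$; since $d_{C_n}(k,k+m) = m$, these two vertices are non-adjacent in $C_n^{m-1}$, so $\Med(\pi)$ fails to be connected in $C_n^{m-1}$, and by Theorem~\ref{th-cmed-p} this forces $p(C_n) \geq m$. Label the vertices of $C_n$ cyclically as $0,1,\ldots,n-1$ (with edges $\{i,i{+}1 \bmod n\}$) and define
\[
\pi(0) = 1, \qquad \pi(k) = \pi(k+m) = k+1, \qquad \pi(v) = 0 \text{ otherwise}.
\]
The three weighted vertices form an isoceles metric triangle with sides $d(0,k) = k$, $d(0,k+m) = n-(k+m) = k$ (via the ``reverse'' arc), and $d(k,k+m) = m \leq 2k-1 < n/2$. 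Moreover $\pi$ is invariant under the reflection $j \mapsto n-j$ of $C_n$, which fixes $0$ and swaps $k$ with $k+m$; hence $F_\pi(j) = F_\pi(n-j)$ for every $j$, and in particular $F_\pi(k) = F_\pi(k+m)$.

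The bulk of the proof is a direct verification that $F_\pi(k) = F_\pi(k+m)$ is strictly smaller than $F_\pi(v)$ at every other vertex $v$. A short computation gives $F_\pi(0) = 2k(k+1)$ and $F_\pi(k) = k + m(k+1)$, and the inequality $F_\pi(k) < F_\pi(0)$ reduces to $k < (2k-m)(k+1)$, which is clear since $2k-m \geq 1$ and $k+1 > k$. For the remaining vertices, the reflection symmetry lets us restrict to the arcs $j \in \{1,\ldots,k-1\}$ and $j \in \{k+1,\ldots,k+m-1\}$. On each arc the three distances $d(j,0)$, $d(j,k)$, $d(j,k+m)$ admit simple piecewise-linear closed forms in $j$ (the break points occur at $j = \lfloor m/2\rfloor$ or $j = k+\lfloor m/2\rfloor$, where the shortest route from $j$ to one of the weighted vertices switches from going the short way to wrapping around the cycle); $F_\pi(j) - F_\pi(k)$ is therefore piecewise affine in $j$ and on each piece it is strictly positive, using essentially the same arithmetic inequality $k < (2k-m)(k+1)$ already established.

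The main obstacle is bookkeeping rather than ideas: with three weighted vertices on a cycle, each individual distance function is a ``tent'' or a straight line when restricted to an arc, and the analysis splits into a handful of subcases according to which side of the antipode of each weighted vertex the point $j$ lies on. Each subcase, however, reduces to the same underlying inequality, so once the cases are laid out the verifications are routine. Putting these inequalities together yields $\Med(\pi) = \{k,\,k+m\}$, a set of two vertices at cyclic distance $m$, which is therefore disconnected in $C_n^{m-1}$, establishing $p(C_n) \geq m$.
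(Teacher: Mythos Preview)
Your construction is essentially identical to the paper's: the paper places weight $\alpha$ (any $\alpha>k$) on two vertices $u,v$ at distance $m$ and weight $1$ on the vertex $x$ at distance $k$ from both, which is exactly your profile with $u=k$, $v=k+m$, $x=0$, and the specific choice $\alpha=k+1$. The paper then carries out the case analysis you sketch (splitting on whether $z\in I^\circ(u,v)$, or $z\in I(u,x)$ with $u\in I(z,v)$, or $z\in I(u,x)$ with $x\in I(z,v)$); note that only the last of these three cases actually needs your inequality $k<(2k-m)(k+1)$, while the first two reduce to the trivial $i>0$ and $i(2\alpha-1)>0$, so your remark that ``each subcase reduces to the same underlying inequality'' slightly overstates the uniformity, but the verification is indeed routine and your outline is correct.
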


\begin{proof}
  Let $u$ and $v$ be two vertices of $C_n$ at distance $m$ and let $x$
  be the vertex of $C_n$ at distance $k$ from $u$ and $v$.  Let
  $\alpha>k$. Define the profile $\pi$ in the following way: set
  $\pi(u)=\alpha$, $\pi(v)=\alpha$, $\pi(x)=1$, and $\pi(y)=0$ for any
  $y\notin \{u,v,x\}$.  Then $F_{\pi}(u)=F_{\pi}(v) = m\alpha+k$. We
  assert that $F_{\pi}(z)>F_{\pi}(v)=F_{\pi}(u)$ for any vertex
  $z\notin \{u,v\}$, establishing this way that the median set
  $\Med(\pi) = \{u,v\}$ induces a disconnected subgraph of
  $C_n^{m-1}$. Without loss of generality, assume that
  $d(u,z) \leq d(v,z)$ and let $d(u,z) = i \geq 1$. Assume first that
  $z \in I^{\circ}(u,v)$ (since $m<2k$, $I(u,v)$ is the path of $C_n$
  not traversing $x$).  Then $u \in I(z,x)$ and
  $F_{\pi}(z)=i\alpha+(m-i)\alpha+k+i=m\alpha+k+i > m\alpha +k
  =F_{\pi}(v)$. Suppose now that $u \in I(u,x) \setminus \{u\}$ and
  that $u \in I(z,v)$ (i.e., $d(z,v) = m+i \leq 2k-i$). Then,
  $F_\pi(z) = i\alpha + (m+i) \alpha + k-i = m\alpha + k + i(2\alpha
  -1) > m\alpha +k = F_{\pi}(v)$ since $i \geq 1$ and $\alpha >
  k$. Finally, suppose that $u \in I(u,x) \setminus \{u\}$ and that
  $x \in I(z,v)$ (i.e., $d(z,v) = 2k-i \leq m+i$). Then,
  $F_\pi(z) = i\alpha + (2k-i) \alpha + k-i \geq 2k\alpha$. Since
  $2k \geq m+1$ and $\alpha > k$, we have $(2k-m)\alpha > k$ and thus
  $F_\pi(z) \geq 2k\alpha > m\alpha + k = F_{\pi}(v)$.
\end{proof}

\subsection{Recognition of graphs with $G^p$ connected medians}
In view of Theorems \ref{wc-ltg} and \ref{th-cmed-p}, a graph has
$G^p$-connected medians if and only if for each pair of vertices $u,v$ such that $p+1\leq
d(u,v) \leq 2p$, the following system of linear inequalities is unsolvable in
$\pi$:
\[
\begin{array}{l}
D^{uv}\pi<0\mbox{ and } \pi\geq 0 \mbox{ with matrix}\\
D^{uv} = (d(v,w)d(u,x)+d(u,w)d(v,x)-d(u,v)d(w,x))_{w\in I^\circ(u,v), x\in V}.
\end{array}
\]
Since linear programming problems can be solved in polynomial time, we obtain
the following corollary, extending \cite[Corollary 1]{BaCh_median}:

\begin{corollary}
	The problem of deciding whether a graph $G$ has $G^p$-connected medians is
	solvable in polynomial time. In particular, computing $p(G)$ can be done in polynomial time.
\end{corollary}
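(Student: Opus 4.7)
My plan is to reduce the recognition problem to a polynomial number of linear programming feasibility tests, using the local characterization given by Theorems~\ref{wc-ltg} and~\ref{th-cmed-p}. Those theorems say that $G$ has $G^p$-connected medians if and only if, for every profile $\pi$ and every pair $u,v\in V$ with $p+1\le d_G(u,v)\le 2p$, the condition $\WC(u,v)$ holds for $F_{\pi}$. Since $F_{\pi}$ depends linearly on the weights $\pi(x)$, for a fixed pair $u,v$ this is precisely a statement about the linear inequality system $D^{uv}\pi<0$, $\pi\ge 0$ displayed in the excerpt: $G$ fails to have $G^p$-connected medians iff that system has a solution for at least one such pair.

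The first step of the algorithm is to compute the distance matrix of $G$ by running BFS from every vertex, and to enumerate the $O(n^2)$ pairs $(u,v)$ with $p+1\le d_G(u,v)\le 2p$, together with each set $I^{\circ}(u,v)$. For each such pair one builds the matrix $D^{uv}$, which has at most $n$ rows and exactly $n$ columns, all of whose entries are integers bounded by $O(n^2)$ and computable from the distance matrix in polynomial time.

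The second step is to decide feasibility of $D^{uv}\pi<0$, $\pi\ge 0$. Strict linear inequality systems reduce to standard LPs through a slack variable and a bounded normalization: one solves
\[
\min\bigl\{\,t : D^{uv}\pi\le t\cdot\mathbf{1},\ \pi\ge 0,\ \textstyle\sum_{x\in V}\pi(x)\le 1\,\bigr\},
\]
and the original strict system is feasible iff the optimum $t^{*}$ is strictly negative (one direction rescales any strict $\pi^{0}$ into the unit simplex to make $t$ the maximum entry of $D^{uv}\pi^{0}$, which is negative; the other uses any optimal $(\pi^{*},t^{*})$ with $t^{*}<0$ directly). Each of the $O(n^2)$ such LPs has $O(n)$ variables and $O(n)$ constraints with polynomially bounded rational data, hence is solvable in polynomial time by the ellipsoid method or an interior-point method. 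The graph $G$ has $G^p$-connected medians iff every LP in the list returns $t^{*}\ge 0$.

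To compute $p(G)$, I would observe that once $p\ge \mathrm{diam}(G)$ the graph $G^p$ is the complete graph on $V$, so every nonempty subset is $p$-connected and $G$ trivially has $G^p$-connected medians; hence $p(G)\le \mathrm{diam}(G)\le n-1$ whenever $p(G)$ is finite. Running the recognition procedure above for $p=1,2,\ldots,n-1$ and returning the smallest $p$ that succeeds therefore yields $p(G)$ in polynomial time. There is no real combinatorial obstacle, since all of the structural work is carried by Theorems~\ref{wc-ltg} and~\ref{th-cmed-p}; the only point requiring care is the standard passage from strict to non-strict linear inequalities, which is why the slack variable $t$ and the normalization $\sum_{x}\pi(x)\le 1$ are introduced.
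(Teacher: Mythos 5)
Your proposal is correct and follows essentially the same route as the paper: reduce via Theorems~\ref{wc-ltg} and~\ref{th-cmed-p} to checking, for each of the $O(n^2)$ pairs $u,v$ with $p+1\le d(u,v)\le 2p$, the unsolvability of the strict system $D^{uv}\pi<0$, $\pi\ge 0$ by linear programming. The extra details you supply (the slack-variable reduction from strict to non-strict inequalities, and the bound $p(G)\le\mathrm{diam}(G)$ for computing $p(G)$) are correct and consistent with what the paper leaves implicit.
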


In case of graphs with equilateral metric triangles, by Lemma \ref{equilateral-triangles} and
Proposition \ref{J(uv)}, the matrix $D^{uv}$ can be defined locally: instead of considering all vertices $x\in V$
it suffices to consider only the vertices $x\in J(u,v)\subseteq B_{2p}(I(u,v))$.

\subsection{Cartesian products, gated amalgams, and retracts}
The goal of this subsection is to prove the following result:

\begin{proposition}\label{gated-amalgams-products} For an integer $p\ge 1$, the class of
graphs with $G^p$-connected medians is closed under taking Cartesian products, gated amalgams,
and retracts.
\end{proposition}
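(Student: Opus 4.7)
The plan is to treat each of the three closure properties separately, invoking throughout the characterization of Theorem~\ref{th-cmed-p} that $G$ has $G^p$-connected medians iff $\Med_G(\pi)$ is $p$-connected in $G^p$ for every profile $\pi$ with finite support.

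\emph{Cartesian products.} When $G = \prod_{i} G_i$, the decomposition $d_G(x,y) = \sum_i d_{G_i}(x_i,y_i)$ gives $F_\pi^G(x) = \sum_i F_{\pi_i}^{G_i}(x_i)$, where $\pi_i$ is the pushforward of $\pi$ by the $i$-th projection; hence $\Med_G(\pi) = \prod_i \Med_{G_i}(\pi_i)$, and only finitely many coordinates are nontrivial because $\supp(\pi)$ is finite. To connect two medians of $G$, I change one coordinate at a time along a $p$-path inside the corresponding $\Med_{G_i}(\pi_i) \subseteq G_i^p$ supplied by the hypothesis; each single-coordinate step has $G$-length equal to its $G_i$-length and so at most $p$, producing a $p$-path in $G^p$ contained in $\Med_G(\pi)$.

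\emph{Gated amalgams.} Write $G = G_1 \cup G_2$ with $G_0 := G_1 \cap G_2$ gated. For a profile $\pi$, define $\pi_i$ on $G_i$ by moving each weight $\pi(v)$ with $v \in V(G_{3-i}) \setminus V(G_0)$ to the gate of $v$ in $V(G_i)$, which lies in $V(G_0)$. The gate identity $d_G(x,v) = d_G(x,g_i(v)) + d_G(g_i(v),v)$ for $x \in V(G_i)$ gives $F_\pi^G(x) = F_{\pi_i}^{G_i}(x) + C_{3-i}$ on $V(G_i)$. Writing $\alpha_i := \min_{V(G_i)} F_\pi^G$: if $\alpha_1 < \alpha_2$ (or symmetrically), then $\Med_G(\pi) = \Med_{G_1}(\pi_1)$, $p$-connected in $G_1^p$ and hence in $G^p$ since $G_1$ is isometric. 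If $\alpha_1 = \alpha_2$, then $\Med_G(\pi) = \Med_{G_1}(\pi_1) \cup \Med_{G_2}(\pi_2)$; it suffices to exhibit a common median in $V(G_0)$ in order to glue the two $p$-connected pieces in $G^p$.

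To produce this common median, assume without loss of generality that $\pi(V(G_1) \setminus V(G_0)) \ge \pi(V(G_2) \setminus V(G_0))$, pick $y_2 \in \Med_{G_2}(\pi_2)$, and let $g_0$ be the gate of $y_2$ in $V(G_0)$. By a standard property of gated amalgams, $g_0$ is also the gate of $y_2$ in $V(G_1)$, so $d_G(y_2,v) = d_G(y_2,g_0) + d_G(g_0,v)$ for every $v \in V(G_1)$; for $v \in V(G_2) \setminus V(G_0)$ only $d_G(y_2,v) - d_G(g_0,v) \ge -d_G(y_2,g_0)$ holds by the triangle inequality. Summing weighted by $\pi(v)$ yields
\begin{equation*}
F_\pi^G(y_2) - F_\pi^G(g_0) \;\ge\; d_G(y_2, g_0)\bigl[\pi(V(G_1)) - \pi(V(G_2) \setminus V(G_0))\bigr] \;\ge\; 0,
\end{equation*}
by the chosen weight inequality, hence $g_0 \in \Med_G(\pi) \cap V(G_0)$, as needed.

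\emph{Retracts.} Let $f\colon H \to G$ be a retraction onto a subgraph $G$. Then $G$ is isometrically embedded in $H$: any $H$-geodesic between vertices of $V(G)$ is mapped by $f$ to a walk of no greater length in $G$. For a profile $\pi$ on $G$, extend it by zero to $V(H)$; nonexpansiveness of $f$ and $f|_{V(G)} = \mathrm{id}$ give $F_\pi^H(f(x)) \le F_\pi^H(x)$ for every $x$, so $\min_H F_\pi^H = \min_G F_\pi^G$ and $\Med_G(\pi) = f(\Med_H(\pi))$. For $x,y \in \Med_G(\pi) \subseteq \Med_H(\pi)$, the hypothesis on $H$ supplies a $p$-path in $H^p$ through $\Med_H(\pi)$ connecting them; applying $f$ vertex-by-vertex produces a $p$-path in $G^p$ through $\Med_G(\pi)$, since $d_G(f(u),f(v)) = d_H(f(u),f(v)) \le d_H(u,v) \le p$ for consecutive pairs.

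The main obstacle is the gated amalgam case, specifically producing a median in $V(G_0)$ when the two sides balance; the correct choice of heavier side ensures the right sign in the gate-projection inequality. The Cartesian product and retract cases are routine consequences of the distance decomposition and of nonexpansiveness, respectively.
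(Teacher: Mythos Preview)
Your proof is correct and follows essentially the same strategy as the paper: the same product decomposition $\Med_G(\pi)=\prod_i\Med_{G_i}(\pi_i)$ for Cartesian products, the same gate-projection and weight-comparison argument for gated amalgams (the paper's Claim~\ref{gated1} is exactly your inequality $F_\pi(y_2)-F_\pi(g_0)\ge d(y_2,g_0)[\pi(G_1)-\pi(G_2\setminus G_0)]$), and the same nonexpansiveness argument for retracts. The only cosmetic difference is that the paper verifies $p$-isometricity (condition~(\ref{th-cmedp-5}) of Theorem~\ref{th-cmed-p}) by exhibiting a vertex of $I^\circ(u,v)$ in the median set, whereas you verify $p$-connectivity (condition~(\ref{th-cmedp-6})) directly by building $p$-paths; these are equivalent by Theorem~\ref{th-cmed-p}, so this is not a genuine divergence.
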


\begin{proof} In this proof, given a profile $\pi$ on the vertices of a graph $G$, we will denote the median function
$F_{\pi}$ by $F_{G,\pi}$ and the median set by $\Med_{G}(\pi)$. For a subgraph $H$ of $G$, we set $\pi(H)=\sum_{v\in V(H)} \pi(v)$.

To prove the first assertion it suffices to show that the Cartesian product
$G=G_1\square G_2$ of two graphs $G_1$ and $G_2$ with $G^p$-connected medians
is also a graph with $G^p$-connected medians. Let $\pi$ be a profile on $G$.
For a vertex $v_1$ of $G_1$, let $\pi_1(v_1)$ be the sum of all
$\pi (v)$ such that $v=(v_1,v_2)$ for a vertex $v_2$ of $G_2$. Analogously define
the profile $\pi_2$ on the vertex set of $G_2$. Since $d_{G}(u,v)=d_{G_1}(u_1,v_1)+d_{G_2}(u_2,v_2)$ if $u=(u_1,u_2)$ and $v=(v_1,v_2)$,
we conclude that
$F_{G,\pi}(v)=F_{G_1,\pi_1}(v_1)+F_{G_2,\pi_2}(v_2)$
for any vertex $v=(v_1,v_2)$ of $G$. Consequently,
$\Med_{G}(\pi)=\Med_{G_1}(\pi_1)\square \Med_{G_2}(\pi_2)$.

We assert that $\Med_{G}(\pi)$ is connected in
$G^p=(G_1\square G_2)^p$.  Let $u=(u_1,u_2),v=(v_1,v_2)$ be two
vertices of $\Med_{G}(\pi)$ at distance at least $p+1$ in $G$. Suppose
first that $u_1 \neq v_1$ and $u_2 \neq v_2$. Then
$w = (u_1,v_2) \in I^{\circ}(u,v)$ and since
$u_1 \in \Med_{G_1}(\pi_1)$ and $v_2 \in \Med_{G_2}(\pi_2)$,
$w = (u_1,v_2) \in \Med_{G_1}(\pi_1)\square \Med_{G_2}(\pi_2)=
\Med_{G}(\pi)$. Suppose now that $u_2 = v_2$ (the case $u_1 = v_1$ is
similar). Since $d_G(u,v) \geq p+1$, we have
$d_{G_1}(u_1, v_1) \geq p+1$ and since $\Med_{G_1}(\pi_1)$ is
$G^p$-connected, there exists
$w_1 \in I^{\circ}(u,v)\cap \Med_{G_1}(\pi_1)$. Then,
$w = (w_1,u_2) \in I^{\circ}(u,v)$ and
$w = (w_1,u_2) \in \Med_{G_1}(\pi_1)\square \Med_{G_2}(\pi_2)=
\Med_{G}(\pi)$. This ends the proof for Cartesian products.

Let $G$ be a gated amalgam of two graphs $G_1$ and $G_2$ with
$G^p$-connected medians and let $\pi$ be a profile on $G$.
For a vertex $v$ of $G_1$ (respectively, of $G_2$) denote by $v'$ its
gate in $G_2$ (respectively, in $G_1$). Recall that $v'$ is the (necessarily unique) vertex
of $G_2$ such that $v'\in I(v,x)$ for any vertex of $G_2$.

\begin{claim}\label{gated1} For any vertex $v$ of $G_2\setminus G_1$,
$F_{G,\pi}(v)-F_{G,\pi}(v')\ge \pi(G_1)-\pi(G_2\setminus G_1)$. Consequently,
if $\pi(G_1)>\pi(G_2)$, then $\Med_{G}(\pi)\subseteq V(G_1)$ and if
$\pi(G_1)=\pi(G_2)$ and $v\in  \Med_{G}(\pi)$, then $v'\in \Med_{G}(\pi)$.
\end{claim}

\begin{proof}
The profile $\pi$ is the disjoint union
of the profiles $\pi'$ and $\pi''$, where $\pi'$ coincides with $\pi$ on $G_1$ and is 0 elsewhere
and $\pi''$ coincides with $\pi$ on $G_2\setminus G_1$ and is 0 elsewhere.
Then $F_{G,\pi}(u)=F_{G,\pi'}(u)+F_{G,\pi''}(u)$ for any vertex $u$ of $G$. Now, pick any $v$ in $G_2\setminus G_1$.
Since $v'$ is the gate of $v$ in $G_1$, for any $x_1\in G_1$, $d_G(v,x_1)=d_G(v,v')+d_G(v',x_1)$, whence
 \begin{align*}
F_{G,\pi'}(v)-F_{G,\pi'}(v')=\pi'(G_1)d(v,v')=\pi(G_1)d(v,v').
\end{align*}
By triangle inequality, for any $x_2\in G_2\setminus G_1$, $d(v,x_2)-d(v',x_2) \geq -d(v,v')$, whence
 \begin{align*}
 F_{G,\pi''}(v)-F_{G,\pi''}(v')\geq -\pi''(G_2\setminus G_1)d(v,v')=-\pi(G_2\setminus G_1)d(v,v').
 \end{align*}
 Since $F_{G,\pi}(v)=F_{G,\pi'}(v)+F_{G,\pi''}(v)$ and
 $F_{G,\pi}(v')=F_{G,\pi'}(v')+F_{G,\pi''}(v')$, we obtain that
 $F_{G,\pi}(v)-F_{G,\pi}(v')\ge (\pi(G_1)-\pi(G_2\setminus
 G_1))d(v,v')$.  If $\pi(G_1)>\pi(G_2)$, then
 $F_{G,\pi}(v)>F_{G,\pi(v')}$, whence $v \notin \Med_G(\pi)$.
 Consequently, $\Med_G(\pi) \subseteq V(G_1)$.  If
 $\pi(G_1)=\pi(G_2)$, then $F_{G,\pi(v')} \leq
 F_{G,\pi}(v)$. Therefore, if $v \in \Med_G(\pi)$, then
 $v' \in \Med_G(\pi)$.
\end{proof}

For each vertex $u$ of $G_1$, let $P(u)=\{x \in V(G): u=x' \text{ where } x' \text{ is the gate of } x \text{ in } G_1\}$ and call $P(u)$
the \emph{fiber} of $u$.  Since $G_1$ is gated and the gates are unique,  the fibers $P(u), u\in V(G_1)$ define a partition of the vertex-set of $G$.
For each vertex $u$ of $G_1$, let $\pi_1(u)=\sum_{v\in P(u)} \pi(v)$. Since $P(u)=\{u\}$ if $u\in V(G_1)\setminus V(G_2)$, in this case $\pi_1(u)=\pi(u)$.

\begin{claim}\label{gated2} If $\pi(G_1)\ge \pi(G_2)$, then
$\Med_{G}(\pi)\cap V(G_1)=\Med_{G_1}(\pi_1)$.
\end{claim}

\begin{proof}
Let $K:=\sum_{x \in V(G_2)} d(x,x') \pi(x)$, where $x'$ is the gate of $x$ in $G_1$.
We assert that for any vertex  $v$ of $G_1$, the equality $F_{G,\pi}(v)=F_{G_1,\pi_1}(v)+K$ holds.
Indeed, pick any vertex $x$ from the profile $\pi$. If $x\in V(G_1)\setminus V(G_2)$, then $\pi_1(x)=\pi(x)$ and
thus $\pi(x)d(v,x)=\pi_1(x)d(v,x)$. Now suppose that $x\in V(G_2)$. Then $x$ belongs to the fiber $P(x')$.
Therefore the vertex $x$ contributes with $\pi(x)$ to the weight $\pi_1(x')$ of $x'$  and
with $\pi(x)d(x',v)$ to $F_{G_1,\pi_1}(v)$. On the other hand, $x$ contributes with $\pi(x)d(x,v)$ to $F_{G,\pi}(v)$.
Since $d(x,v)=d(x,x')+d(x',v)$, the difference of the two contributions equals to $\pi(x)d(x,x')$. Summing over all $x\in V(G)$, we
obtain that the total difference $F_{G,\pi}(v)-F_{G,\pi_1}(v)$ is equal to $K$.  Consequently, any vertex of $G_1$ minimizing $F_{G,\pi}$ minimizes
also $F_{G_1,\pi_1}$ By Claim~\ref{gated1},
$\Med_G(\pi) \cap V(G_1)$ is not empty and thus
$\Med_{G}(\pi)\cap V(G_1)=\Med_{G_1}(\pi_1)$.
\end{proof}

Consider now two vertices $u,v$ of $\Med_{G}(\pi)$ at distance at
least $p+1$. We show that there exists a vertex $w$ belonging to
$I^{\circ}(u,v)\cap \Med_{G}(\pi)$.  First suppose that $u$ and $v$
belong to the same graph $G_1$ or $G_2$. If $u$ and $v$ belong both to $G_1$ and $G_2$, we can suppose that
$\pi(G_1)\ge \pi(G_2)$. Otherwise, we can suppose that $u,v$ belong to
$G_1$ and $u$ does not belong to $G_2$. Since $u,v\in \Med_{G}(\pi)$ and $u$ does not belong to $G_2$, from Claim \ref{gated1} we deduce that
$\pi(G_1)\ge \pi(G_2)$. In both cases,   from Claim \ref{gated2} we deduce that
$\Med_{G}(\pi)\cap V(G_1)=\Med_{G_1}(\pi_1)$. Since
$\Med_{G_1}(\pi_1)$ is $G^p$-connected, there exists a vertex
$w\in I^{\circ}(u,v)\cap \Med_{G_1}(\pi_1)$ which belongs to
$\Med_{G}(\pi)$ by Claim~\ref{gated2}. Now, suppose that $u$ belongs
to $G_1\setminus G_2$ and $v$ belongs to $G_2\setminus G_1$. By Claim
\ref{gated1}, $\pi(G_1)=\pi(G_2)$ and the gate $u'$ of $u$ in
$G_2$ and the gate of $v'$ of $v$ in $G_1$ both belong to
$\Med_{G}(\pi)$. Since $u'$ and $v'$ belong to $G_1\cap G_2$ and to $I(u,v)$,
necessarily $u',v'\in I^{\circ}(u,v)$, and we are done. This
establishes the result for gated amalgams.

Finally, suppose that a graph $G$ is a retract of a graph $G'$ with $G^p$-connected medians.
Let $V(G)\subseteq V(G')$ and $r:V(G')\rightarrow V(G)$ be the retraction map. Then $r$ is idempotent (i.e., $r(u)=u$ for any $u\in V(G)$) and nonexpansive (i.e., $d_{G'}(r(u),r(v))\le d_G(u,v)$ for any two vertices $u,v$ of $G'$).
Let $\pi$ be any profile with finite support on $V(G)$ and pick two vertices $u,v\in \Med_{G}(\pi)$ with $d_{G}(u,v)\ge p+1$. Since $r$ is nonexpansive, for any vertex $x$ of $G'$
we have $F_{G,\pi}(r(x))\le F_{G',\pi}(x)$. Since $r$ is idempotent and the support of $\pi$ is contained in $G$, for any vertex $x$ of $G$ we have  $F_{G,\pi}(r(x))=F_{G',\pi}(x)$.
Consequently, $\Med_{G}(\pi)\subseteq \Med_{G'}(\pi)$. Since $G'$ is a graph with $G^p$-connected medians and
$G$ is an isometric subgraph of $G$, there exists a vertex $w\in I^{\circ}_{G'}(u,v)\cap \Med_{G'}(\pi)$. Since $F_{G,\pi}(r(w))\le F_{G',\pi}(w)$, we conclude that $r(w)\in \Med_{G}(\pi)$. Since $w\in I^{\circ}_{G'}(u,v)$ and
$r(u)=u, r(v)=v$, we obtain that $r(w)\in I^{\circ}_G(u,v)$, consequently $r(w)\in \Med_{G}(\pi)\cap I^{\circ}(u,v)$ in $G$.
\end{proof}

\begin{corollary} \label{cor:amalgams-products} If a finite graph $G$ can be obtained from the graphs $G_1,\ldots,G_m$ via Cartesian products and gated amalgams,
then $p(G)=\max\{ p(G_i): i=1,\ldots,n\}$.
\end{corollary}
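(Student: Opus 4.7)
The plan is to prove both inequalities $p(G)\le p^*$ and $p(G)\ge p^*$, where $p^*:=\max\{p(G_i) : 1\le i\le m\}$, by combining a simple monotonicity observation for $p$-connectedness of medians with the closure properties already established in Proposition~\ref{gated-amalgams-products}.

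For the upper bound, I would first record the monotonicity principle that if a graph $H$ has $H^p$-connected medians then it also has $H^q$-connected medians for every $q\ge p$: indeed $H^p$ is a spanning subgraph of $H^q$ on the same vertex set, so every set that is connected in $H^p$ is a fortiori connected in $H^q$. Consequently each $G_i$ has $G_i^{p^*}$-connected medians. A straightforward induction on the (finite) construction tree of $G$, using the closure of the class of graphs with $G^{p^*}$-connected medians under Cartesian products and gated amalgams supplied by Proposition~\ref{gated-amalgams-products}, then yields $p(G)\le p^*$.

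For the lower bound, my strategy is to exploit the third closure property from Proposition~\ref{gated-amalgams-products}, namely closure under retracts. The auxiliary claim to establish is that each $G_i$ is (isomorphic to) a retract of $G$, proved by induction on the construction. In the Cartesian product case $H_1\square H_2$, for any fixed $v_2\in V(H_2)$ the slice $H_1\times\{v_2\}$ is isomorphic to $H_1$ and is a gated subgraph: the gate map $(u_1,u_2)\mapsto (u_1,v_2)$ is a retraction onto this copy of $H_1$, and symmetrically for $H_2$. In the gated amalgam case both components are gated by hypothesis and their gate maps are retractions. Since compositions of retractions are retractions, every building block $G_i$ is a retract of $G$. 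Applying the retract-closure part of Proposition~\ref{gated-amalgams-products} with $p=p(G)$ forces $p(G_i)\le p(G)$ for all $i$, hence $p^*\le p(G)$. Combining the two inequalities gives $p(G)=p^*$.

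The main obstacle I anticipate is the verification, in the Cartesian product step, that the slice $H_1\times\{v_2\}$ is gated in $H_1\square H_2$. This rests on the additive decomposition $d_{H_1\square H_2}((u_1,u_2),(w_1,w_2))=d_{H_1}(u_1,w_1)+d_{H_2}(u_2,w_2)$, which implies that $(u_1,v_2)$ lies on every geodesic from $(u_1,u_2)$ to any vertex $(w_1,v_2)$ of the slice and is therefore the gate. It is a short computation but it is the linchpin that makes the inductive retract argument go through; once it is in hand, the rest of the argument is essentially bookkeeping against the construction tree.
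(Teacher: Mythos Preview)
Your argument is correct and follows essentially the same path as the paper's proof: both establish the upper bound via Proposition~\ref{gated-amalgams-products} and obtain the lower bound by showing that each $G_i$ sits inside $G$ as a gated subgraph (the inductive argument on the construction tree, including the observation that slices $H_1\times\{v_2\}$ are gated in $H_1\square H_2$, is exactly what the paper does).

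The one genuine difference is how the lower bound is extracted from gatedness. The paper argues directly: extend a bad profile $\pi$ on $G_i$ by zero to a profile $\pi'$ on $G$; gatedness of $G_i$ forces $\Med_G(\pi')=\Med_{G_i}(\pi)$, so a disconnected median set in $G_i^{p-1}$ gives one in $G^{p-1}$. You instead invoke the retract closure from Proposition~\ref{gated-amalgams-products}. This is a nice shortcut, but there is a small wrinkle: with the paper's definition of retraction (every edge maps to an \emph{edge}), the gate map onto a gated subgraph is generally \emph{not} a retraction, because it collapses edges --- already the projection $(u_1,u_2)\mapsto(u_1,v_2)$ sends any edge in the second coordinate to a single vertex. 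The fix is easy: the proof of the retract part of Proposition~\ref{gated-amalgams-products} uses only that the map is idempotent and nonexpansive, both of which gate maps satisfy; alternatively, once you have gatedness of $G_i$ in $G$, you can just run the paper's direct profile argument and avoid the retract statement altogether.
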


\begin{proof} The inequality $p(G)\le \max\{ p(G_i): i=1,\ldots,n\}$ follows from Proposition \ref{gated-amalgams-products}.
To prove the converse inequality  $p(G)\ge \max\{ p(G_i): i=1,\ldots,n\}$, notice that
$G$ contains a copy of any  $G_i$ as a gated subgraph. Indeed, any $G_i$ occurs in $G$ either as a gated subgraph or as a
factor in a Cartesian product $H=G_{j_1}\square G_{j_2}\square \ldots\square G_{j_k}$, say $G_i=G_{j_1}$. Then $H$ is a
gated subgraph of $G$.  Moreover any subgraph of $H$ of the form $G_{i}\square \{ v_{j_2}\} \square \ldots\square \{ v_{j_k}\}$, where
$v_{j_\ell}$ is a vertex of the factor $G_{j_\ell}$, is a gated subgraph of $H$ isomorphic to $G_i$.  Consequently, each $G_i$ occurs
as a gated subgraph of $G$, which we also denote by $G_i$. Let $p=p(G_i)$ and let $\pi$ be a weight function on $G_i$ such that the median set
$\Med_{G_i}(\pi)$ is $G^p$-connected but not $G^{p-1}$-connected. Let $\pi'$ be the weight function on $G$ defined in the following way:
$\pi'(v)=\pi(v)$ if $v$ is a vertex of $G_i$ and $\pi'(v)=0$ otherwise. Since $G_i$ is a gated subgraph of $G$, from the definition of
$\pi'$ we conclude that $\Med_G(\pi')=\Med_{G_i}(\pi)$. Consequently, $\Med_G(\pi')$ is $G^p$-connected but not $G^{p-1}$-connected,
whence $p(G)\ge p=p(G_i)$.
\end{proof}

\section{Classes of graphs with $G^2$ and $G^p$-connected medians}\label{classes}
In this section we show that several important classes of graphs have $G^2$-connected medians or have $G^p$-connected medians with bounded $p$.
Fig. \ref{fig-classes} provides an inclusion diagram containing these classes of graphs (except benzenoids, which are not included
in any class of graphs from the figure). In the resulting ``inclusion poset'' of classes, the classes
of graphs that have connected or $G^2$-connected medians are represented in blue. 

\bigskip
\begin{figure}[h]
	\begin{center}
		\includegraphics[scale=0.65]{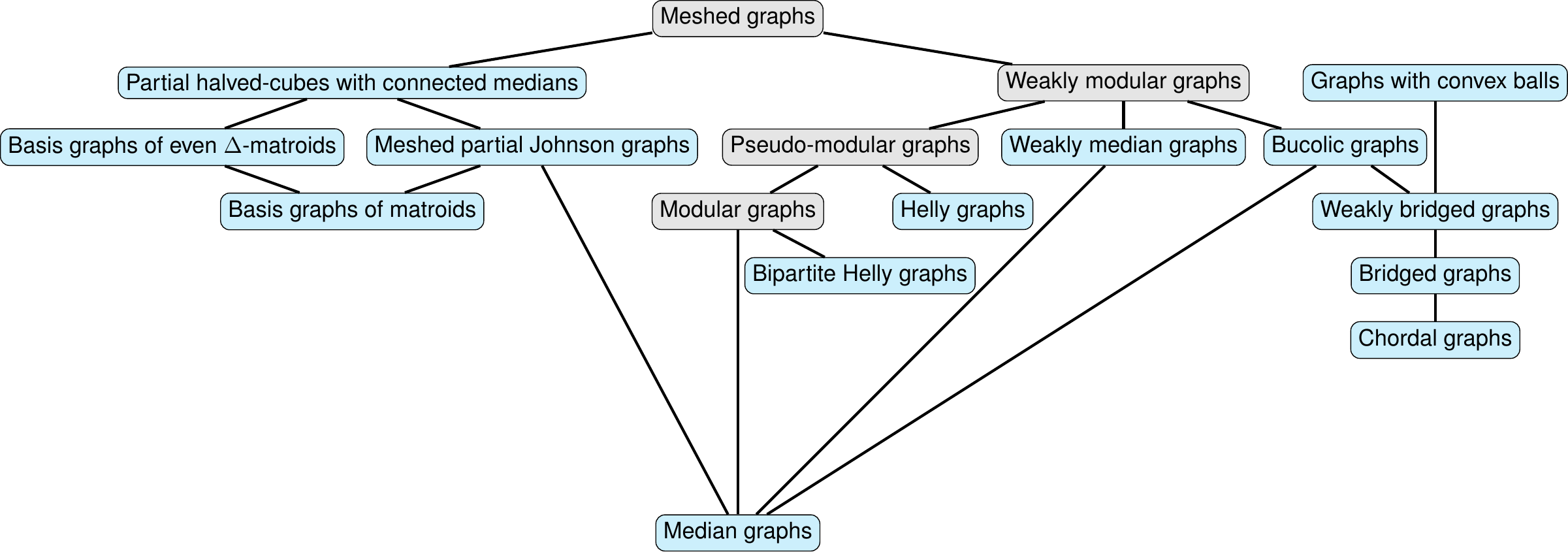}
	\end{center}
	\caption{An inclusion diagram of the graph classes occurring in this
	paper. The classes of graphs with
	connected or $G^2$-connected medians are given in blue.}\label{fig-classes}
\end{figure}

\subsection{Auxiliary results}
In this subsection, we present sufficient conditions, under which  a graph $G$ has $G^2$-connected medians. We specify  these conditions in case of weakly modular and modular graphs.
In subsequent subsections, we use them to prove that bridged and weakly bridged graphs, graphs with convex balls, and bipartite absolute retracts are graphs with $G^2$-connected medians.

For a 3-interval $I(u,v)$ of a graph $G$, consider the following condition:
\begin{itemize}
\item[(a)] there exist $x,y\in I^{\circ}(u,v)$, $x\sim u$ and $y\sim v$, such that $x$ is adjacent to all $z\in I(u,v)\cap N(v), z\ne y$
and $y$ is adjacent to all  $z'\in I(u,v)\cap N(u), z'\ne x$.
\end{itemize}
In condition (a), the vertices $x$ and $y$ may be adjacent and may be nonadjacent. 
For a 4-interval $I(u,v)$ of a graph $G$, consider the following conditions:
\begin{itemize}
\item[(b)] there exist $x,y\in I^{\circ}(u,v)$ with $d(u,x)=d(u,y)=2$ such that $x$ is adjacent to all  $z\in I(u,v)\cap N(u)$ and $y$
is adjacent to all  $z\in I(u,v)\cap N(v)$;
\item[(c)] there exist $x,y\in I^{\circ}(u,v)$, $x\sim u$ and $y\sim v$, such that  $x$ and $y$ are adjacent to all $z\in I^{\circ}(u,v)$
with $d(u,z)=d(v,z)=2$;
\end{itemize}

In condition (b), the vertices $x$ and $y$ may coincide, be adjacent or nonadjacent, however in bipartite graphs they cannot be adjacent.

\begin{lemma}\label{abcd} Let $I(u,v)$ be a 3-interval or a 4-interval of a graph $G$. If $I(u,v)$ satisfies one of the conditions (a),(b),(c), then for every weight function $\pi$ with $\supp(\pi)\subseteq I(u,v)$, the median function $F_{\pi}$ satisfies the lozenge condition $\Loz(u,v)$.
\end{lemma}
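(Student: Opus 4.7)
My plan is to show, in each of the three cases, that the witnessing pair for $\Loz(u,v)$ is precisely the pair $(x,y)$ provided by the hypothesis. Because $\supp(\pi)\subseteq I(u,v)$ and $d(u,z)+d(v,z)=d(u,v)$ for every $z\in I(u,v)$, the inequality $F_{\pi}(x)+F_{\pi}(y)\le F_{\pi}(u)+F_{\pi}(v)$ reduces, by linearity, to the pointwise bound $d(x,z)+d(y,z) \le d(u,v)$ for every $z\in I(u,v)$. So the whole lemma comes down to checking this pointwise inequality layer by layer of $I(u,v)$, sorted by the $u$-distance of $z$.

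In case (a), with $d(u,v)=3$, one has $d(v,x)=d(u,y)=2$, so the endpoints $u,v$ and the vertices $x,y$ themselves are handled directly by the triangle inequality. For $z\in N(u)\cap I(u,v)$ with $z\neq x$, the hypothesis forces $y\sim z$, giving $d(y,z)=1$ and $d(x,z)\le 2$ from $x\sim u\sim z$; the layer $N(v)\cap I(u,v)$ is symmetric. In case (b), with $d(u,v)=4$ and both $x,y$ in the middle layer, the endpoints and the neighbor layers are handled by combining the adjacencies $x\sim z$ (when $z\sim u$) or $y\sim z$ (when $z\sim v$) with the triangle inequality, summing to $1+3$. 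For a vertex $z\in I^{\circ}(u,v)$ at distance $2$ from both $u$ and $v$, I would pick, on a $u$-$v$ geodesic through $z$, two neighbors $z'$ and $z''$ of $z$ lying in $N(u)\cap I(u,v)$ and $N(v)\cap I(u,v)$ respectively; by hypothesis $x\sim z'$ and $y\sim z''$, yielding $d(x,z)\le 2$ and $d(y,z)\le 2$.

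Case (c) is dual. Now $x\sim u$ and $y\sim v$, but the hypothesized adjacencies concern the middle layer. For $z\in N(u)\cap I(u,v)$ I would use the vertex $z'$ following $z$ on a $u$-$v$ geodesic through $z$: it has $d(u,z')=d(v,z')=2$, so $y\sim z'$ by hypothesis and $d(y,z)\le 2$, while $d(x,z)\le 2$ is immediate from $x\sim u\sim z$; the layer $N(v)\cap I(u,v)$ is symmetric, and the middle layer is trivial since $x\sim z$ and $y\sim z$ by hypothesis.

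The main technical obstacle is the middle-distance vertex in case (b) and the edge-layer vertices in case (c): a direct triangle inequality through $u$ or $v$ only yields a bound of $4$, which is too weak on a single summand, so the hypothesized adjacencies with the intermediate geodesic vertices are crucial for cutting each of the two distances down to $2$. Once this pointwise check is in place, the lemma follows by multiplying by $\pi(z)\ge 0$ and summing over $z\in I(u,v)$, which yields $F_{\pi}(x)+F_{\pi}(y)\le F_{\pi}(u)+F_{\pi}(v)$, i.e., $\Loz(u,v)$ with $w=x$, $w'=y$.
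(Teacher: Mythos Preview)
Your proof is correct and follows essentially the same route as the paper: reduce $\Loz(u,v)$ to the pointwise bound $d(x,z)+d(y,z)\le d(u,z)+d(v,z)$ for every $z\in I(u,v)$, then verify this layer by layer using the hypothesized adjacencies together with the triangle inequality through $u$, $v$, or an intermediate geodesic vertex. The case analysis (endpoints, $N(u)$- and $N(v)$-layers, and the middle layer when $d(u,v)=4$) and the specific estimates match the paper's argument almost verbatim.
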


\begin{proof} To establish  $\Loz(u,v)$ we first prove that for any vertex $z\in I(u,v)$ we have
$d(x,z)+d(y,z)\le d(u,z)+d(v,z)$, where $x$ and $y$ are the vertices specified in conditions (a), (b), or (c). First, let $d(u,v)=3$. Then $d(u,z)+d(v,z)=3$. If $z$ coincides with $u$ or $v$, then
obviously $d(x,z)+d(y,z)=3$. Now, suppose
without loss of generality that $z\in I(u,v)\cap N(u)$. If $z=x$, then again $d(x,z)+d(y,z)\le 3$ because $d(y,u)=2$ and $u\sim x$. If $z\ne x$, then
$z\sim y$ and $d(x,z)\le 2$ because $u$ is adjacent to $x$ and $z$. Therefore, $d(x,z)+d(y,z)\le 3$ also in this case.

Now, let $d(u,v)=4$. Then $d(u,z)+d(v,z)=4$. Again, $d(x,z)+d(y,z)=4=d(u,z)+d(v,z)$ if $z\in \{ u,v\}$. First suppose that $z$ has distance 2 to $u$ and $v$.
If $I(u,v)$ satisfies the condition (b),  then $d(x,z)\le 2$ and $d(y,z)\le 2$ because
$x$ is adjacent to any common neighbor of $z$ and $u$ and $y$ is adjacent to any common neighbor of $z$ and $v$. Thus $d(x,z)+d(y,z)\le 4=d(u,z)+d(v,z)$ in this case.
If $I(u,v)$ satisfies condition (c), then $x\sim z$ and $y\sim z$ and thus $d(x,z)+d(y,z)\le 2$. 
Now, suppose that $z$ is adjacent to $u$ or $v$. If $I(u,v)$ satisfies condition (b), then $d(x,z)+d(y,z)\le 4$ because $z$ is adjacent to one of the vertices $x,y$ and has
distance at most 3 to the other one. If $I(u,v)$ satisfies condition (c), then $d(x,z)\le 2$ and $d(y,z)\le 2$ because $x$ and $y$ are adjacent to any neighbor of $z$ having distance 2
from $u$ and $v$. 
Now, let $\pi$ be any weight function $\pi$ with $\supp(\pi)\subseteq I(u,v)$. Applying the conditions (a),(b), or (c) to each we vertex $z$ of $\pi$ and summing up the inequalities $d(x,z)+d(y,z)\le d(u,z)+d(v,z)$
multiplied by $\pi(z)$, we conclude
that $F_{\pi}(x)+F_{\pi}(y)\le F_{\pi}(u)+F_{\pi}(u)$, establishing $\Loz(u,v)$.
\end{proof}

The following condition occurs in the characterization of graphs with convex balls in \cite{CCG,SoCh}:
\begin{itemize}
\item
\emph{Interval Neighborhood Condition (INC)}:  for any two nonadjacent vertices $u,v$, all neighbors of $u$ in $I(u,v)$ induce a complete subgraph.
\end{itemize}

For a pair of nonadjacent vertices $u,v$ of a graph $G$, we denote by $S(u,v)$ the set of all vertices $z$ of $G$ such that the triplet $z,u,v$ has a quasi-median $z'u'v'$, which
is a strongly equilateral metric triangle.

\begin{lemma}\label{abc_bis} Let $G$ be a graph satisfying the condition INC. Let $I(u,v)$ be a 3-interval of $G$ satisfying condition (a) or a 4-interval
satisfying one of the conditions (b) or (c) with vertices $x,y\in I^{\circ}(u,v)$. Then for any vertex $z\in S(u,v)$ we have  $d(z,x)+d(z,y)\le d(z,u)+d(z,v)$.
Furthermore, for any weight function $\pi$ with $\supp(\pi)\subseteq S(u,v)$, the median
function $F_{\pi}$ satisfies the lozenge condition $\Loz(u,v)$.
\end{lemma}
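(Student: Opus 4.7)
The plan is to establish the pointwise inequality $d(z,x) + d(z,y) \le d(z,u) + d(z,v)$ for every $z \in S(u,v)$; multiplying this inequality by $\pi(z)$ and summing over $z \in \supp(\pi) \subseteq S(u,v)$ immediately yields $F_\pi(x) + F_\pi(y) \le F_\pi(u) + F_\pi(v)$, which is the lozenge condition $\Loz(u,v)$ since $x, y \in I^\circ(u,v)$ by hypothesis.

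To prove this pointwise inequality, fix $z \in S(u,v)$ and let $z'u'v'$ be a strongly equilateral quasi-median of $z, u, v$ of size $k$. The metric equalities
\[
d(z,u) = d(z,z') + k + d(u,u'), \qquad d(z,v) = d(z,z') + k + d(v,v')
\]
together with the triangle inequalities $d(z,x) \le d(z,z') + d(z',x)$ and $d(z,y) \le d(z,z') + d(z',y)$ reduce the problem to
\[
d(z',x) + d(z',y) \;\le\; d(z',u) + d(z',v) \;=\; 2k + d(u,u') + d(v,v'). \tag{$\star$}
\]
If $k = 0$, then $z' = u' = v' \in I(u,v)$, and $(\star)$ is exactly the inequality for vertices of $I(u,v)$ already established inside the proof of Lemma~\ref{abcd}.

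For $k \ge 1$, the proof of $(\star)$ rests on two observations. First, by the strongly equilateral property, every vertex $p \in I(u',v')$ satisfies $d(z',p) = k$; in particular, whenever $x \in I(u',v')$ (or $y \in I(u',v')$) one gets $d(z',x) = k$ (respectively $d(z',y) = k$). Second, INC guarantees that $I(u,v) \cap N(u)$ and $I(u,v) \cap N(v)$ are cliques, so that any neighbor of $u$ in $I(u,v)$ is adjacent to $u'$ whenever $u' \in N(u) \cap I(u,v)$, and symmetrically for $v'$. I then case-analyze on the triple $(d(u,u'), k, d(v,v'))$, constrained by $d(u,u') + k + d(v,v') = d(u,v) \in \{3,4\}$, and within each case on whether $u' = x$, $v' = y$, etc. In every branch, the combination of the strongly equilateral property, INC, and the hypothesis (a), (b), or (c) forces either $x \in I(u',v')$ or the adjacency $x \sim u'$ or $x \sim v'$ (and symmetrically for $y$); the resulting bounds on $d(z',x)$ and $d(z',y)$ always sum to at most $2k + d(u,u') + d(v,v')$.

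The main obstacle lies in the 4-interval case under conditions (b) and (c), where $u'$ and $v'$ may belong to the middle layer $\{w \in I^\circ(u,v) : d(u,w) = d(v,w) = 2\}$. In this regime the condition (b) or (c) must be invoked in combination with INC to secure the necessary adjacencies among $x, y, u', v'$, and the degenerate situations (where some of these four vertices coincide) have to be checked separately to confirm that the estimates remain tight enough to preserve $(\star)$.
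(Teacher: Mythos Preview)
Your proposal is correct and follows essentially the same approach as the paper: reduce via the triangle inequality to the inequality $d(z',x)+d(z',y)\le d(z',u)+d(z',v)$ for the apex $z'$ of a strongly equilateral quasi-median, dispose of the size-$0$ case via Lemma~\ref{abcd}, and then case-split on the size $k$ of the metric triangle together with the positions of $u',v'$ in $I(u,v)$. The paper carries out this same case analysis explicitly (organized by $d(u,v)\in\{3,4\}$ and then by $k$), using exactly the ingredients you name---the strongly equilateral property to pin down $d(z',p)=k$ for $p\in I(u',v')$, INC to propagate adjacencies within $N(u)\cap I(u,v)$ and $N(v)\cap I(u,v)$, and conditions (a),(b),(c) to place $x$ or $y$ (or a neighbor thereof) inside $I(u',v')$; your sketch would unfold into the same subcases.
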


\begin{proof} To establish $\Loz(u,v)$ it suffices to show that for any vertex $z\in \supp(\pi)\subseteq S(u,v)$ we have
$d(x,z)+d(y,z)\le d(u,z)+d(v,z)$. Let $z'u'v'$ be a quasi-median of the triplet $z,u,v$ such that $z'u'v'$ is a strongly equilateral metric triangle.
Since $d(z,u)=d(z,z')+d(z',u), d(z,v)=d(z,z')+d(z',v)$ and $d(z,x)\le d(z,z')+d(z',x),d(z,y)\le d(z,z')+d(z',y)$, it suffices to show that
$d(z',x)+d(z',y)\le d(z',u)+d(z',v)$. We distinguish several cases depending of the size of the metric triangle $u'v'z'$, the
location of the vertices  $u'$ and $v'$, and which of the conditions (a),(b), or (c) is satisfied by $I(u,v)$.

\begin{case} \label{case1} $d(u,v)=3$.
\end{case}

Since $G$ satisfies INC and $I(u,v)$ satisfies condition (a), each of $x$ and $y$ is adjacent to all vertices
of $I(u,v)\setminus \{ u,v\}$, except that $x$ and $y$ are not necessarily adjacent. If $u'v'z'$ has size 0, i.e., $u'=v'=z'$, then
$z'\in I(u,v)$ and the inequality $d(z',x)+d(z',y)\le d(z',u)+d(z',v)$ follows from Lemma \ref{abcd}. If $u'v'z'$ has size 3, then $u'=u$ and $v'=v$, thus
$d(u,z')+d(v,z')=3+3=6.$ Since $z'uv$ is a strongly equilateral metric triangle of size 3 and $x,y\in I(u,v)$, thus $d(x,z')=d(y,z')=3$, yielding
$d(x,z')+d(y,z')=6$.

\begin{subcase} The metric triangle $z'u'v'$ has size 2.
\end{subcase}

Then $u'=u$ or $v'=v$, say $u'=u$. In this case $v'\sim v$ and $d(z',u)+d(z',v)=2+3=5$. By condition (a), either $v'=y$ or $x\sim v'$. In the second case,
since $x\sim u,v'$, we have $x\in I(u,v')$.  Since $z'uv'$ is strongly equilateral, we conclude that $d(z',x)=2$. Since by INC $y$ coincides or is adjacent
to $v'$, $d(z',y)\le 3$, yielding $d(z',x)+d(z',y)\le 5$. If $y=v'$, then $d(z',y)=2$. Since $x$ is adjacent to $u$, $d(z',x)\le 3$ and again we deduce that
$d(z',x)+d(z',y)\le 3+2=5$.

\begin{subcase} The metric triangle $z'u'v'$ has size 1.
\end{subcase}

Then $d(z',u)+d(z',v)=4$. First, let $u'=u$ or $v'=v$, say $u'=u$.  By INC and
(a), if $x\ne v'$, then
$x$ and $y$ are adjacent to $v'$, whence $d(z',x)+d(z',y)\le 2+2=4$. If $x=v'$, the $d(z',x)=1$. Since by INC $y$ coincides or is adjacent to any common
neighbor of $v'$ and $v$, $d(z',y)\le 3$. Consequently, $d(z',x)+d(z',y)\le 1+3=4$. If $u'\ne u$ and $v'\ne v$, then $u\sim u'$ and $v\sim v'$.
By INC, $x$ coincides  or is adjacent to $u'$ and $y$ coincides or is adjacent to $v'$, whence $d(z',x)+d(z',y)\le 2+2=4$. This concludes the proof of Case \ref{case1}.

\begin{case} \label{case2} $d(u,v)=4$.
\end{case}

If $u'v'z'$ has size 0, i.e., $u'=v'=z'$, then
$z'\in I(u,v)$ and the inequality $d(z',x)+d(z',y)\le d(z',u)+d(z',v)$ follows from Lemma \ref{abcd}. If $u'v'z'$ has size 4, then $u'=u$ and $v'=v$, thus
$d(u,z')+d(v,z')=4+4=8.$ Since $z'uv$ is a strongly equilateral metric triangle of size 4 and $x,y\in I(u,v)$, we conclude that $d(x,z')=d(y,z')=4$, yielding
$d(x,z')+d(y,z')=8$.

\begin{subcase} The metric triangle $u'v'z'$ has size 3.
\end{subcase}

Then $u'=u$ or $v'=v$, say $u'=u$. In this case $v'\sim v$ and $d(z',u)+d(z',v)=3+4=7$. If $I(u,v)$ satisfies condition (b), then
$x$ is adjacent to any $w\in I^{\circ}(u,v')$ adjacent to $u$ and $y$ is adjacent to $v'$.
Consequently, $y\in I(u,v')$. Since $z'uv'$ is strongly equilateral, we deduce
that $d(z',y)=3$ and $d(z',x)\le d(z',w)+1=4$.
Consequently,  $d(z',x)+d(z',y)\le 3+4=7$. If $I(u,v)$ satisfies condition (c), then $x$ and $y$ are adjacent to any neighbor $w$ of $v'$ in $I(u,v')$. This implies that
$d(z',y)\le d(z',w)+1=4$ and that $x\in I(u,v')$. Since $z'uv'$ is strongly equilateral, this implies
that $d(x,z')=3$. Consequently, $d(z',x)+d(z',y)\le 3+4=7$.  
\begin{subcase} The metric triangle  $u'v'z'$ has size 2.
\end{subcase}

First suppose that $u'=u$ or $v'=v$, say $u'=u$. Then $d(v',v)=2$ and $d(z',u)+d(z',v)=2+4=6$. If $I(u,v)$ satisfies condition (b), then $x$ is adjacent
to any $w\in I^{\circ}(u,v')$. Since $z'uv'$ is strongly equilateral,
$d(z',w)=2$. Consequently, $d(z',x)\le 3$. On the other hand, $y$ is adjacent
to any common neighbor $t$ of $v'$ and $v$.
Thus $v',y\in I(t,u)$ and by INC we conclude that $y\sim v'$. Consequently,
$d(z',y)\le 3$ and we obtain that $d(z',x)+d(z',y)\le 3+3=6$.  If $I(u,v)$
satisfies condition (c), then
$x$ and $y$ are adjacent to $v'$ and again $d(z',x)+d(z',y)\le 3+3=6$. 
Now suppose that $u'\ne u$ and $v'\ne v$. This implies that $u'\sim u$ and $v'\sim v$ and that $d(z',u)+d(z',v)=3+3=6$. If $I(u,v)$ satisfies condition (b), then $x\sim u'$ and $y\sim v'$, yielding
$d(z',x)+d(z',y)\le 3+3=6$. If $I(u,v)$ satisfies condition (c), then $x$ and $y$ are adjacent to any
$w\in I^{\circ}(u',v')$. Again, since  $z'u'v'$ is strongly equilateral, $d(z',w)=2$. Consequently, $d(z',x)+d(z',y)\le 3+3=6$. 
\begin{subcase} The metric triangle  $u'v'z'$ has size 1.
\end{subcase}

First suppose that $u'=u$ or $v'=v$, say $u'=u$. Then $d(v',v)=3$ and $d(z',u)+d(z',v)=1+4=5$. If $I(u,v)$ satisfies condition (b), then $x$ is adjacent to $v'$ and $y$ is adjacent to any neighbor $t$ of $v$ in $I(v,v')$.
Consequently, if $w\in I^{\circ}(t,v')$, then $y,w\in I(t,u)$. By INC, $y\sim w$ and thus $d(z',y)\le d(z',w)+1=3$. Consequently, $d(z',x)+d(z',y)\le 2+3=5$. If $I(u,v)$ satisfies condition (c),
then $y$ is adjacent to any neighbor $t$ of $v'$ in $I(v',v)$, whence $d(z',y)\le d(z',t)+1=3$. Since $x,v'\in I(u,v)\cap N(u)$, by INC $x\sim v'$ or $x=v'$, thus $d(z',x)\le 2$. Consequently, $d(z',x)+d(z',v)\le 2+3=5$.

Now suppose that $u'\sim u$ and $d(v',v)=2$ (the case $v'\sim v$ and $d(u',u)=2$ is analogous). Then $d(u,z')=2,d(v,z')=3$, thus $d(z',u)+d(z',v)=5$. If $I(u,v)$ satisfies condition (b), then $x$ is adjacent to $u'$ and $y$ is adjacent to any common neighbor $t$ of $v'$ and $v$.
Consequently, $d(z',x)\le 2$ and $d(z',y)\le 3$, yielding $d(z',x)+d(z',y)\le 2+3=5$. If $I(u,v)$ satisfies condition (c), then $x$ and $y$ are adjacent to $v'$, thus $d(z',x)\le 2$ and $d(z',y)\le 2$, yielding $d(z',x)+d(z',y)\le 4$.
This conclude the analysis of Case \ref{case2}  and the proof of the lemma.
\end{proof}

\subsection{Meshed, weakly modular, and modular graphs}
Meshed graphs have equilateral metric triangles. In this case, for any two vertices $u,v$ such that $3\le d(u,v)\le 4$, the set $J(u,v)$ is a subset of the 3- or 4-neighborhood of the interval
$I(u,v)$. Therefore, in case of meshed and weakly modular graphs Proposition \ref{J(uv)} can be restated in the following way:

\begin{proposition}\label{J(uv)meshed}
A meshed  graph $G$ is a graph with $G^2$-connected medians if and only if for any vertices $u,v$
with $d(u,v)=k$ with $k\in \{ 3,4\}$ and every weight function $\pi$ with $\supp(\pi)\subseteq B_k(I(u,v))$, the median
function $F_{\pi}$ satisfies the condition $\WP(u,v)$.
\end{proposition}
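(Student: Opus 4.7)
The plan is to derive this proposition as a direct specialization of Proposition \ref{J(uv)} to the case $p=2$, combined with a slightly sharpened form of Lemma \ref{equilateral-triangles} that replaces the uniform bound $B_{2p}(I(u,v))$ by the distance-dependent bound $B_{d(u,v)}(I(u,v))$ available for meshed graphs.

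For the ``only if'' direction, I would simply invoke Theorem \ref{th-cmed-p}: if $G$ has $G^2$-connected medians, then $F_{\pi}$ is $2$-weakly peakless for every profile $\pi$, so $\WP(u,v)$ holds whenever $d(u,v)\ge 3$, independently of any support restriction. In particular, this gives $\WP(u,v)$ for all profiles supported in $B_k(I(u,v))$.

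For the ``if'' direction, I would proceed in two steps. First, by Proposition \ref{J(uv)} applied with $p=2$, to conclude that $G$ has $G^2$-connected medians it suffices to check $\WP(u,v)$ for every pair $u,v$ with $3\le d(u,v)\le 4$ and every weight function $\pi$ with $\supp(\pi)\subseteq J(u,v)$. Second, I would show that for meshed graphs and such a pair $u,v$ at distance $k\in\{3,4\}$, one actually has the sharper inclusion $J(u,v)\subseteq B_{k}(I(u,v))$. The argument is essentially the proof of Lemma \ref{equilateral-triangles} carried out with the exact value $d(u,v)=k$ instead of the bound $2p$: given $z\in J(u,v)$, pick $x$ furthest from $u$ in $I(u,z)\cap I(u,v)$ and $y$ furthest from $v$ in $I(v,x)\cap I(v,z)$. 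Then $(xyz)$ is a metric triangle which is a quasi-median of the triplet $u,v,z$, with side $[x,y]$ contained in a common $(u,v)$-geodesic, hence $d(x,y)\le k$. Since $G$ is meshed, metric triangles are equilateral (as recalled just before the statement of the proposition), so $d(z,x)=d(z,y)=d(x,y)\le k$, whence $z\in B_k(I(u,v))$.

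Combining the two steps, any profile $\pi$ with $\supp(\pi)\subseteq J(u,v)$ automatically satisfies $\supp(\pi)\subseteq B_k(I(u,v))$, so the hypothesis of the proposition supplies $\WP(u,v)$ and Proposition \ref{J(uv)} concludes. No genuine obstacle arises here; the only point that requires a bit of care is that Lemma \ref{equilateral-triangles} is stated with the universal bound $B_{2p}$, and one must observe that its proof actually yields the tighter bound $B_{d(u,v)}$, which is what makes the $3$-interval case go through with $B_3$ rather than $B_4$.
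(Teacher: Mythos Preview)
Your proposal is correct and follows exactly the approach the paper intends: the proposition is presented there as a restatement of Proposition~\ref{J(uv)} for meshed graphs, using that metric triangles in meshed graphs are equilateral so that $J(u,v)\subseteq B_k(I(u,v))$ when $d(u,v)=k\in\{3,4\}$. Your observation that the proof of Lemma~\ref{equilateral-triangles} actually yields the sharper bound $B_{d(u,v)}$ rather than $B_{2p}$ is precisely the point needed, and the paper uses it implicitly in the sentence preceding the proposition.
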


Since weakly modular graphs are exactly the graphs in which the metric triangles are strongly equilateral, from Lemma \ref{abc_bis}  and  Theorems \ref{wp-ltg} and \ref{th-cmed-p},
we obtain the following sufficient condition for $G^2$-connectedness of medians:

\begin{proposition}\label{weaklymodular-abc} If $G$ is a weakly modular graph satisfying INC and in which each 3-interval satisfies the condition (a) and
each 4-interval satisfies  one of the conditions (b) or (c), then $G$ is a graph with  $G^2$-connected medians.
\end{proposition}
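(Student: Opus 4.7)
The plan is to apply Theorem \ref{th-cmed-p} so that it suffices to prove $F_{\pi}$ is $2$-weakly peakless for every profile $\pi$ on $G$. By Theorem \ref{wp-ltg}, this in turn reduces to verifying the local peaklessness condition $\WP(u,v)$ for all pairs of vertices $u,v$ with $3 \le d(u,v)\le 4$. Thus the whole task is to establish $\WP(u,v)$ at every 3- and 4-interval of $G$, for an arbitrary weight function.

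The key observation is that because $G$ is weakly modular, every metric triangle of $G$ is strongly equilateral. Since any triplet $z,u,v$ admits a quasi-median $z'u'v'$ (which is a metric triangle of $G$), this means that the quasi-median of $z,u,v$ is automatically strongly equilateral. Consequently, the set $S(u,v)$ introduced before Lemma \ref{abc_bis} coincides with the whole vertex set $V$ of $G$. Thus every weight function $\pi$ on $G$ satisfies the hypothesis $\supp(\pi)\subseteq S(u,v)$ needed in that lemma.

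Now fix any pair $u,v$ with $3\le d(u,v)\le 4$ and any profile $\pi$. By our hypotheses, the interval $I(u,v)$ satisfies condition (a) if $d(u,v)=3$, and one of the conditions (b) or (c) if $d(u,v)=4$; moreover $G$ satisfies \textup{INC}. Applying Lemma \ref{abc_bis} with $\supp(\pi)\subseteq V=S(u,v)$ yields that $F_\pi$ satisfies the lozenge inequality $\Loz(u,v)$. By Lemma \ref{lozenge}, $\Loz(u,v)$ implies $\WP(u,v)$, so $F_\pi$ is locally $2$-weakly peakless. Theorem \ref{wp-ltg} then gives $p$-weak peaklessness of $F_\pi$, and Theorem \ref{th-cmed-p} concludes that $G$ has $G^2$-connected medians.

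Since the heavy lifting is done in Lemma \ref{abc_bis} (the case analysis on the sizes and locations of the quasi-medians) and in the local-to-global Theorem \ref{wp-ltg}, the only nontrivial point in this proof is the identification $S(u,v)=V$ for weakly modular $G$; everything else is a direct chain of implications. There is no significant obstacle beyond that observation, so the proposition follows by essentially assembling the preceding results.
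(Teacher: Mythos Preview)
Your proof is correct and follows essentially the same approach as the paper: the paper's argument is a one-line appeal to Lemma \ref{abc_bis} together with Theorems \ref{wp-ltg} and \ref{th-cmed-p}, relying on the fact that in weakly modular graphs all metric triangles are strongly equilateral (so $S(u,v)=V$). You have simply spelled out this chain of implications in more detail.
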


In case of modular graphs, for any pair of vertices $u,v$ the set $J(u,v)$ coincides with $I(u,v)$. Therefore, in this case
Proposition \ref{J(uv)} can be restated in the following way:

\begin{proposition}\label{J(uv)modular}
A modular graph $G$ is a graph with $G^2$-connected medians if and only if for any vertices $u,v$
with $3\leq d(u,v)\le 4$ and every weight function $\pi$ with $\supp(\pi)\subseteq I(u,v)$, the median
function $F_{\pi}$ satisfies the condition $\WP(u,v)$.
\end{proposition}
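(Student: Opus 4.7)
The plan is to obtain Proposition \ref{J(uv)modular} as a direct corollary of Proposition \ref{J(uv)} (specialized to $p=2$), the only extra ingredient being the identification $J(u,v)=I(u,v)$ in modular graphs. Once this identity is in hand, the support restriction $\supp(\pi)\subseteq J(u,v)$ appearing in Proposition \ref{J(uv)} becomes literally the restriction $\supp(\pi)\subseteq I(u,v)$ stated here, so the two propositions coincide.

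For necessity, I would simply invoke Theorem \ref{th-cmed-p}: if $G$ has $G^2$-connected medians then every median function $F_{\pi}$ is $2$-weakly peakless, and so by Theorem \ref{wp-ltg} it satisfies $\WP(u,v)$ for every pair of vertices with $d(u,v)\ge 3$ and every profile $\pi$ on $V$. In particular this holds whenever $3\le d(u,v)\le 4$ and $\supp(\pi)\subseteq I(u,v)$, which is the stated condition.

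For sufficiency, I would verify the hypothesis of Proposition \ref{J(uv)} at $p=2$ by reducing the $J(u,v)$ condition to the assumed $I(u,v)$ condition, via the inclusion $J(u,v)\subseteq I(u,v)$ for modular graphs. To see the inclusion, fix $z'\in J(u,v)$ and apply modularity to the triplet $z',u,v$: there exists a median $m\in I(z',u)\cap I(z',v)\cap I(u,v)$. By the very definition of $J(u,v)$ we have $I(z',u)\cap I(z',v)=\{z'\}$, so $m=z'$ and therefore $z'\in I(u,v)$. (The reverse inclusion $I(u,v)\subseteq J(u,v)$ holds in every graph by a short additivity-of-distances computation, which incidentally gives the equality $J(u,v)=I(u,v)$ in the modular case.) Consequently, any profile $\pi$ with $\supp(\pi)\subseteq J(u,v)$ automatically has $\supp(\pi)\subseteq I(u,v)$, so by hypothesis $F_{\pi}$ satisfies $\WP(u,v)$ for every $u,v$ at distance $3$ or $4$; Proposition \ref{J(uv)} then yields that $G$ has $G^2$-connected medians.

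There is essentially no hard step in this argument: the only place modularity is used is in proving $J(u,v)\subseteq I(u,v)$, and this is a one-line consequence of the defining triplet-median property of modular graphs. Everything else is a clean specialization of Proposition \ref{J(uv)} and Theorems \ref{wp-ltg}, \ref{th-cmed-p}.
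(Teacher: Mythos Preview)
Your proposal is correct and follows essentially the same approach as the paper: the paper obtains Proposition~\ref{J(uv)modular} directly from Proposition~\ref{J(uv)} (with $p=2$) via the observation that $J(u,v)=I(u,v)$ in modular graphs, which is exactly what you do. Your justification of the inclusion $J(u,v)\subseteq I(u,v)$ using the median of the triplet $z',u,v$ is the right one-line argument.
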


From Proposition \ref{J(uv)modular} and Lemma \ref{abcd} we obtain the following sufficient condition for $G^2$-connectedness of medians of modular graphs:

\begin{proposition}\label{modular-abcd} If $G$ is a modular graph in which each 3-interval satisfies the condition (a) and  each 4-interval satisfies  one of the
conditions (b) or (c), then $G$ is a graph with  $G^2$-connected medians.
\end{proposition}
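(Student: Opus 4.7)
The plan is to assemble three already-established tools: Proposition \ref{J(uv)modular}, which reduces the task to verifying $\WP(u,v)$ only for vertices $u,v$ at distance $3$ or $4$ with profiles supported in $I(u,v)$; Lemma \ref{abcd}, which upgrades conditions (a), (b), (c) on the interval into the lozenge inequality $\Loz(u,v)$ for any such profile; and Lemma \ref{lozenge}, which tells us that $\Loz(u,v)$ implies $\WP(u,v)$. Chaining these three gives the result almost mechanically.

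More concretely, let $G$ be modular and assume that every $3$-interval of $G$ satisfies (a) while every $4$-interval satisfies (b) or (c). By Proposition \ref{J(uv)modular}, to conclude that $G$ has $G^2$-connected medians it suffices to fix an arbitrary pair $u,v \in V(G)$ with $3 \le d(u,v) \le 4$ and an arbitrary weight function $\pi$ with $\supp(\pi) \subseteq I(u,v)$, and show that the associated median function $F_\pi$ satisfies $\WP(u,v)$. By hypothesis, $I(u,v)$ satisfies (a) if $d(u,v)=3$, and satisfies (b) or (c) if $d(u,v)=4$. Thus, applying Lemma \ref{abcd} to this interval and this profile, we obtain that $F_\pi$ satisfies the lozenge condition $\Loz(u,v)$. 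Finally, Lemma \ref{lozenge} yields $\WP(u,v)$, completing the verification.

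There is no genuine obstacle in this argument, since all the heavy lifting is packaged in the lemmas cited above: Lemma \ref{abcd} already performs the case analysis over (a), (b), (c) for arbitrary $z\in I(u,v)$ (and hence for any $z\in \supp(\pi)$), and modularity is exactly what lets us restrict attention to profiles supported in $I(u,v)$ rather than in the larger set $J(u,v)$. The only thing worth checking carefully is that Proposition \ref{J(uv)modular} indeed applies: this is guaranteed because in a modular graph, for any two vertices $u,v$ one has $J(u,v)=I(u,v)$ (since all metric triangles have size $0$, every quasi-median of a triplet $z,u,v$ reduces to a single vertex of $I(u,v)$), so the hypothesis of the proposition is met whenever we verify $\WP(u,v)$ for profiles supported in $I(u,v)$. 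This gives the stated conclusion.
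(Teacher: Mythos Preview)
Your proof is correct and matches the paper's own argument, which simply states that the result follows from Proposition~\ref{J(uv)modular} and Lemma~\ref{abcd}. You have made the implicit step through Lemma~\ref{lozenge} (that $\Loz(u,v)$ implies $\WP(u,v)$) explicit, which is a helpful clarification but not a departure from the intended route.
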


Bandelt, van de Vel, and Verheul \cite[Fig. 10]{BVV} showed that  the 4-intervals of modular graphs are not necessarily modular graphs. Therefore,
Proposition \ref{J(uv)modular} cannot be restated by requiring that 3- and 4-intervals are graphs with $G^2$-connected medians, but we do not have
an example that this is not true.

Now we show that there exist modular graphs in which medians are not $G^2$-connected.  Let $\PG(2,q)$ be the projective plane of order $q$ (with $q \geq 2$). This is a finite point-line geometry in which any point belongs to $q+1$ lines, any line contains $q+1$ points,
any two points belong to a common line, and the intersection of any two lines is a point. It is well-known that $\PG(2,q)$ contains exactly $q^2+q+1$ points and
$q^2+q+1$ lines.  Let $G_q$ be the graph whose vertices are the points and the lines of $\PG(2,q)$ and two additional vertices $u$ and $v$. There is an edge in $G_q$
from $u$ to every point, an edge from $v$ to every line, and an edge between a point and a line if and only if the point belongs to the line. The graph $G_q$ is modular
as the cover graph of a modular lattice (in fact, the face-lattice of any projective space is a complemented modular lattice \cite{Bir}).

\begin{proposition}\label{modular-->nonG2} The medians of the modular graph $G_q$ are not $G^2$-connected, i.e., $p(G_q)\ge 3$.
\end{proposition}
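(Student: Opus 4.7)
The strategy is to exhibit a single, very symmetric profile whose median set is exactly $\{u,v\}$; since $d_{G_q}(u,v)=3$, these two vertices are not adjacent in $G_q^2$, so the median set is disconnected in $G_q^2$, whence $p(G_q)\ge 3$.

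The plan is to first record the distances in $G_q$. The edges are: $u$--each point, $v$--each line, and point--line iff incident. Hence for any point $p$ and any line $\ell$, $d(u,p)=1$, $d(v,\ell)=1$, $d(u,\ell)=2$ (via any point on $\ell$), $d(v,p)=2$ (via any line through $p$), and $d(u,v)=3$. Any two distinct points $p\neq p'$ satisfy $d(p,p')=2$ (via $u$, or via their common line); dually, $d(\ell,\ell')=2$ for distinct lines. Finally, $d(p,\ell)=1$ if $p\in\ell$ and $d(p,\ell)=3$ otherwise (going via $u$ and a point of $\ell$, or via a line through $p$ meeting $\ell$).

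Next I would take the profile $\pi$ that puts unit weight on every point and every line of $\PG(2,q)$ and weight $0$ on $u$ and $v$; write $n=q^2+q+1$. A direct computation using the distances above gives
\[F_\pi(u)=\sum_p d(p,u)+\sum_\ell d(\ell,u)=n\cdot 1+n\cdot 2=3n,\]
and symmetrically $F_\pi(v)=3n$. For a point $p_0$, the $q+1$ lines through $p_0$ contribute $1$ each and the remaining $q^2$ lines contribute $3$ each, while the $n-1$ other points contribute $2$ each, so
\[F_\pi(p_0)=2(n-1)+(q+1)+3q^2=5q^2+3q+1.\]
By the point--line duality of $G_q$ the same value is obtained for any line $\ell_0$.

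Finally, I would compare: $F_\pi(p_0)-F_\pi(u)=(5q^2+3q+1)-(3q^2+3q+3)=2(q^2-1)>0$ since $q\ge 2$. Thus $F_\pi$ attains its minimum exactly on $\{u,v\}$, so $\Med(\pi)=\{u,v\}$. Because $d_{G_q}(u,v)=3$, the vertices $u$ and $v$ are non-adjacent in $G_q^2$, so $\Med(\pi)$ is disconnected in $G_q^2$; by Theorem~\ref{th-cmed-p} this forces $p(G_q)\ge 3$. The only nontrivial part of the argument is the distance computation (straightforward) and checking the strict inequality $2(q^2-1)>0$; there is no subtle obstacle, since the profile is completely symmetric under the point/line duality that swaps $u$ and $v$.
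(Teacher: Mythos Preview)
Your proof is correct and follows essentially the same approach as the paper: exhibit a symmetric profile for which $\Med(\pi)=\{u,v\}$ with $d(u,v)=3$. The only cosmetic difference is that the paper puts weight~$1$ on every vertex of $G_q$ (including $u$ and $v$), while you put weight~$0$ on $u$ and $v$; this shifts all values of $F_\pi$ by the same amount on each side of the comparison and leaves the key inequality $F_\pi(p_0)-F_\pi(u)=2(q^2-1)>0$ unchanged.
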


\begin{proof} Consider the profile $\pi$ on $V(G_q)$ such that
  $\pi(x)=1$ for any $x\in V(G_q)$. The vertex $u$ is adjacent to
  $q^2+q+1$ points of $\PG(q)$, has distance 2 to all $q^2+q+1$ lines
  of $\PG(q)$, and distance 3 to $v$.  Therefore
  $F_{\pi}(u)=(q^2+q+1)+2(q^2+q+1)+3=3q^2+3q+6$. Analogously,
  $F_{\pi}(v)=3q^2+3q+6$.  On the other hand, any vertex $z\ne u,v$,
  say a point, is adjacent to $u$, to $q+1$ lines containing the
  point, is at distance 2 to all the other $q^2+q$ points (because any
  two points belong to a line) and to $v$, and is at distance 3 to the
  $q^2$ lines not containing $z$. Consequently,
  $F_{\pi}(z) = 1+(q+1) + 2 (q^2 + q + 1) + 3 q^2 = 5q^2 + 3q +4 >
  3q^2+3q+6$ since $q \geq 2$.  This shows that
  $\Med_{\pi}(G_q)=\{ u,v\}$. Since $d(u,v)=3$, $p(G_q)\ge 3$.
\end{proof}

As we mentioned above, the covering graph $G_{d,q}$ of the face-lattice of the finite projective space $\PG(d,q)$ of dimension $d$ and order $q$ is modular. We \emph{conjecture} that
$p(G_{d,q})>d$.

\subsection{Bridged and weakly bridged graphs}
Wittenberg \cite{Wi} presented examples of chordal graphs in which the median sets are not connected. It is shown in \cite{BaCh_median} that chordal graphs with connected medians cannot be
characterized using sets $S$ (in Theorem \ref{th-cmed}) of bounded size. In this subsection, we prove that chordal graphs and, much more generally, weakly bridged graphs are graphs with $G^2$-connected medians.

A graph $G$ is called {\it bridged} \cite{FaJa,SoCh} if it does not contain any isometric cycle
of length greater than $3$. Alternatively, a graph $G$ is bridged if and only if the balls
$B_r(A)=\{ v\in V: d(v,A)\le r\}$ around convex sets $A$ of $G$ are
convex.  Bridged graphs are exactly weakly modular graphs that do not
contain induced $4$-- and $5$--cycles (and therefore do not contain $4$-- and
$5$--wheels) \cite{Ch_metric}. Bridged graphs represent a far-reaching generalization
of chordal graphs; a  graph $G$ is \emph{chordal} if any induced cycle has length 3.
Notice also that bridged graphs are universal
in the following sense: any graph not containing induced 4- and 5-cycles is an induced
subgraph of a bridged graph. Together with median and Helly graphs, bridged graphs
represent one of the most important classes of graphs in metric graph theory \cite{BaCh_survey}.
For numerous applications of bridged graphs (alias systolic complexes) in geometric group theory, see
the paper \cite{ChOs} and the references therein.  A graph $G$ is {\it weakly bridged} \cite{ChOs} if $G$ is a
weakly modular graph with convex balls $B_r(x).$ It was shown in \cite{ChOs} that a weakly
modular graph $G$ is weakly bridged if and only if $G$ does not contain induced $4$-cycles.
Since bridged graphs are weakly modular and have convex balls, bridged graphs are weakly bridged.
Moreover, since weakly bridged graphs are weakly modular, they have strongly equilateral metric triangles.
We summarize all these results as follows:

\begin{lemma}[\cite{ChOs}]\label{convex-balls-chordal}  Weakly bridged graphs are exactly the weakly modular graphs with convex balls.
\end{lemma}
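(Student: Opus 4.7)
The plan is to treat this lemma as a consolidation of the definition of weakly bridged graph recalled immediately above and the characterization established in \cite{ChOs}. By that definition, a graph $G$ is weakly bridged exactly when it is weakly modular and every ball $B_r(x)$ around a single vertex is convex. So the forward direction (weakly bridged $\Rightarrow$ weakly modular with convex balls) is immediate by unrolling the definition, and the reverse direction is the same observation: a weakly modular graph in which every $B_r(x)$ is convex fits the definition of weakly bridged. Under the reading where ``convex balls'' means convex balls around vertices, the proof is therefore essentially a tautology, with the non-trivial content outsourced to the characterization recalled just before the lemma, namely that such graphs coincide with the $C_4$-free weakly modular graphs \cite{ChOs}.

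Should the statement be read in the stronger sense that $B_r(A)$ is convex for every convex subset $A \subseteq V$ (the natural analogue of the corresponding fact for bridged graphs), the non-trivial converse requires an additional argument. The plan in that case is to invoke the $C_4$-free characterization from \cite{ChOs} and argue as follows: given $u, v \in B_r(A)$ with $w \in I(u,v)$, pick $a, b \in A$ witnessing $d(u,a), d(v,b) \le r$, and propagate a ``companion'' vertex in $A$ along a $(u,v)$-geodesic using the triangle and quadrangle conditions of weak modularity. At each step the common neighbour produced by the triangle/quadrangle condition is forced to lie in $A$ by convexity of $A$, and the $C_4$-freeness together with convexity of $B_1$-balls rules out the configurations in which the propagation could fail. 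A standard induction on $r$ then yields $w \in B_r(A)$.

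The main obstacle in the stronger reading is controlling this propagation when no strongly equilateral metric triangle of positive size is available to anchor the argument: one has to choose the right common neighbour among possibly several candidates, and the $C_4$-freeness is precisely what rules out the ambiguous case. This is the technical heart handled in \cite{ChOs}, and the plan is to appeal to that reference rather than redo the argument here, since the present paper only needs the qualitative statement: weakly bridged graphs are weakly modular, they have convex balls, and in particular (by the results recalled before the lemma) they have strongly equilateral metric triangles, which is what is actually used in the subsequent sections.
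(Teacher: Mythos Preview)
Your reading is correct and matches the paper exactly: the paper gives no proof of this lemma, treating it as a summary of the definition of weakly bridged graph (weakly modular with convex balls $B_r(x)$) stated just before, with the non-trivial equivalence to $C_4$-free weakly modular graphs outsourced to \cite{ChOs}. Your second paragraph about the stronger $B_r(A)$ reading is unnecessary here, since the paper only uses convexity of balls around single vertices (via Lemma~\ref{neighbors-interval} and Lemma~\ref{clique-chordal}), but it does no harm.
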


\begin{lemma}\label{neighbors-interval} Weakly bridged graphs satisfy INC.
\end{lemma}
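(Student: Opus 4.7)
The plan is to give a short contradiction argument using the convexity of balls, which by Lemma~\ref{convex-balls-chordal} is available in any weakly bridged graph. Assume INC fails: there exist nonadjacent vertices $u,v$ and two neighbors $x,y$ of $u$ lying in $I(u,v)$ with $x\nsim y$. Set $k:=d(u,v)\ge 2$. Since $x,y\in N(u)\cap I(u,v)$, we have $d(x,v)=d(y,v)=k-1$, so both $x$ and $y$ lie in the ball $B_{k-1}(v)$.

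The key observation is that $u\in I(x,y)$. Indeed, $x$ and $y$ are common neighbors of $u$ and are nonadjacent, so $d(x,y)=2=d(x,u)+d(u,y)$, which places $u$ on a shortest $(x,y)$-path. Now apply the defining property of weakly bridged graphs provided by Lemma~\ref{convex-balls-chordal}: the ball $B_{k-1}(v)$ is convex, so $I(x,y)\subseteq B_{k-1}(v)$. In particular $u\in B_{k-1}(v)$, which forces $d(u,v)\le k-1$, contradicting $d(u,v)=k$. Therefore no such pair $x,y$ exists, and $N(u)\cap I(u,v)$ induces a complete subgraph, establishing INC.

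There is essentially no obstacle here; the whole argument is a one-line application of ball-convexity once one notices that two nonadjacent common neighbors of $u$ in $I(u,v)$ would pull $u$ into the interval $I(x,y)$ and hence into a ball around $v$ that is too small to contain it. The only ingredient beyond the definitions is Lemma~\ref{convex-balls-chordal}, which identifies weakly bridged graphs with weakly modular graphs having convex balls.
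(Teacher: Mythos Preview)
Your proof is correct and follows essentially the same approach as the paper: both argue by contradiction that two nonadjacent neighbors $x,y$ of $u$ in $I(u,v)$ would place $u\in I(x,y)$, violating the convexity of the ball around $v$ of radius $d(u,v)-1$. The only cosmetic difference is that the paper sets $k=d(u,v)-1$ whereas you set $k=d(u,v)$ and work with $B_{k-1}(v)$.
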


\begin{proof} Let $u,v\in V$ and $k=d(u,v)-1$. If two vertices $x,y\in I(u,v)\cap N(u)$ are not adjacent, then $u\in I(x,y)$. Since $x,y\in B_k(v)$ and $u\notin B_k(v)$, this contradicts Lemma \ref{convex-balls-chordal}.
\end{proof}

\begin{lemma}\label{clique-chordal} If $C$ is a clique of a weakly bridged graph $G$ and all vertices of $C$ have the same distance $k$ to a vertex $u$, then there exists
a vertex $x$ at distance $k-1$ from $u$ and adjacent to all vertices of $C$.
\end{lemma}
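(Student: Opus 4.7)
The plan is to induct on $|C|$. The base case $|C|=1$ is trivial: take the penultimate vertex on any $(u,v)$-geodesic. The base case $|C|=2$ is exactly the triangle condition $\TC(u)$: if $C=\{v,w\}$ with $v\sim w$ and $d(u,v)=d(u,w)=k$, weak modularity yields a common neighbor $x$ with $d(u,x)=k-1$.

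For the inductive step, suppose the claim holds for cliques of size $m-1\ge 2$ and let $C=\{v_1,\dots,v_m\}$. Apply the induction hypothesis to the sub-cliques $C\setminus\{v_m\}$ and $C\setminus\{v_1\}$ to obtain vertices $y,z$ at distance $k-1$ from $u$ such that $y$ is adjacent to $v_1,\dots,v_{m-1}$ and $z$ is adjacent to $v_2,\dots,v_m$. If $y=z$, then this common vertex is adjacent to all of $C$ and we are done. Otherwise, since $y,z$ are both neighbors of $v_2$ at distance $k-1=d(u,v_2)-1$ from $u$, they both lie in $N(v_2)\cap I(u,v_2)$. Since weakly bridged graphs satisfy INC by Lemma~\ref{neighbors-interval}, the set $N(v_2)\cap I(u,v_2)$ induces a clique, whence $y\sim z$.

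Now consider the four pairwise distinct vertices $y,z,v_1,v_m$ (distinctness follows from comparing distances to $u$: $d(u,y)=d(u,z)=k-1$ while $d(u,v_1)=d(u,v_m)=k$, and $y\ne z$ by assumption). We have the edges $y\sim z$, $y\sim v_1$, $z\sim v_m$, and $v_1\sim v_m$ (the last because $C$ is a clique), forming a 4-cycle $y-z-v_m-v_1-y$. Since $G$ is weakly bridged, $G$ does not contain any induced 4-cycle (by the characterization recalled just before Lemma~\ref{convex-balls-chordal}). Hence one of the two diagonals $yv_m$ or $zv_1$ must be an edge. In the first case $y$ is adjacent to every vertex of $C$, and in the second case $z$ is adjacent to every vertex of $C$. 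This provides the desired vertex at distance $k-1$ from $u$ and completes the induction.

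The main point, and the only place where a nontrivial geometric fact enters, is the last step: combining INC (which forces $y\sim z$) with the $C_4$-freeness of weakly bridged graphs to collapse the potential induced 4-cycle $y-z-v_m-v_1-y$. Everything else is a straightforward induction bootstrapping the triangle condition $\TC(u)$ provided by weak modularity.
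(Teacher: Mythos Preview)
Your proof is correct and uses the same key ingredients as the paper's: the triangle condition for the base, INC (equivalently, convexity of balls) to force the two candidate vertices at distance $k-1$ from $u$ to be adjacent, and $C_4$-freeness to extend the adjacency across the clique. The paper organizes the argument as a maximality argument rather than an induction: it takes a maximal sub-clique $C'\subseteq C$ admitting a common neighbor $x$ at distance $k-1$, picks any $w\in C\setminus C'$, uses TC to find $y\sim w,w'$ at distance $k-1$ for some $w'\in C'$, uses convexity of $B_{k-1}(u)$ to get $x\sim y$, and then runs the $C_4$-freeness argument on $w'',x,y,w$ to conclude that $y$ is adjacent to \emph{every} $w''\in C'$, contradicting maximality of $C'$. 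Your induction simply peels off one vertex at a time instead of handling all of $C\setminus C'$ at once; the two arguments are essentially the same.
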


\begin{proof} Let $C'$ be a maximal subset of $C$ such that all vertices of $C'$ have a common neighbor $x$ at distance $k-1$ from $u$. Since weakly bridged graphs satisfy the triangle condition,
$|C'|\ge 2$. We assert that $C'=C$. Let $w\in C\setminus C'$. Then $w\nsim x$.
Pick any $w'\in C'$ and let $y$ be a common neighbor of $w$ and $w'$ at
distance $k-1$ from $u$ (the existence of $y$ follows from triangle condition).
Since $w'$ is adjacent to $x$ and $y$, by the convexity of $B_{k-1}(u)$, $x$ and $y$ are adjacent. Let $w''$ be any other vertex of $C'$. Then $w'',x,y,w$ define a 4-cycle of $G$. Since $G$ is weakly bridged, this
$4$-cycle cannot be induced. Since $w\nsim x$, we conclude that $w''\sim y$. Therefore $y$ is adjacent to $w\in C\setminus C'$  and to all vertices of $C'$, contrary to the maximality of $C'$.
\end{proof}

\begin{theorem}\label{bridged->G2} Any weakly bridged graph $G$ has $G^2$-connected medians.
\end{theorem}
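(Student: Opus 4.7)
The plan is to invoke Proposition~\ref{weaklymodular-abc}, so it suffices to verify that any weakly bridged graph $G$ satisfies INC and that each $3$-interval of $G$ satisfies condition~(a) while each $4$-interval satisfies condition~(b). The first point is immediate from Lemma~\ref{neighbors-interval}, so the work is to produce the vertices $x,y\in I^{\circ}(u,v)$ required by (a) and (b). The only nontrivial ingredient will be Lemma~\ref{clique-chordal}: in a weakly bridged graph, any clique whose vertices are equidistant from a base vertex $u$ has a common neighbor one step closer to $u$. This is exactly what condition (a) and condition (b) are asking for.

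For a $3$-interval $I(u,v)$, set $C_u=I(u,v)\cap N(u)$ and $C_v=I(u,v)\cap N(v)$. By Lemma~\ref{neighbors-interval} both $C_u$ and $C_v$ are cliques. All vertices of $C_v$ lie at distance $2$ from $u$, so Lemma~\ref{clique-chordal} yields a vertex $x$ at distance $1$ from $u$ adjacent to every vertex of $C_v$. Since $x\sim u$ and $x$ has a neighbor in $N(v)$, we get $d(x,v)\le 2$, combined with $d(x,v)\ge d(u,v)-d(u,x)=2$ this gives $x\in I^{\circ}(u,v)$ with $x\sim u$. A symmetric application of Lemma~\ref{clique-chordal} to $C_u$ (equidistant from $v$) produces $y\in I^{\circ}(u,v)$ with $y\sim v$ adjacent to every vertex of $C_u$. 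This verifies condition (a), and in fact in the stronger form where $x$ is adjacent to \emph{every} vertex of $C_v$ and $y$ to every vertex of $C_u$ (no need for an exceptional vertex).

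For a $4$-interval $I(u,v)$, again $C_u$ and $C_v$ are cliques by Lemma~\ref{neighbors-interval}. The vertices of $C_u$ all lie at distance $3$ from $v$, so Lemma~\ref{clique-chordal} provides a vertex $x$ at distance $2$ from $v$ adjacent to every vertex of $C_u$. Since $x$ has a neighbor in $N(u)$ we get $d(u,x)\le 2$, and $d(u,x)\ge d(u,v)-d(x,v)=2$, hence $d(u,x)=2$ and $x\in I^{\circ}(u,v)$. Symmetrically, applying the lemma to $C_v$ (at distance $3$ from $u$) yields $y\in I^{\circ}(u,v)$ with $d(u,y)=2$ adjacent to every vertex of $C_v$. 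This is precisely condition (b).

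Combining these verifications with Lemmas~\ref{convex-balls-chordal} and~\ref{neighbors-interval}, Proposition~\ref{weaklymodular-abc} gives that $G$ has $G^2$-connected medians. The argument is essentially routine once Lemma~\ref{clique-chordal} is available; the only potential pitfall is the check that the witness vertices produced by that lemma actually lie in the open interval $I^{\circ}(u,v)$, which is handled by the standard triangle inequality sandwich used above.
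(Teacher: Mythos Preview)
Your proof is correct and follows essentially the same route as the paper: you invoke Proposition~\ref{weaklymodular-abc}, use Lemmas~\ref{convex-balls-chordal} and~\ref{neighbors-interval} to get weak modularity and INC, and then apply Lemma~\ref{clique-chordal} to the cliques $I(u,v)\cap N(u)$ and $I(u,v)\cap N(v)$ to produce the witnesses for conditions (a) and (b). Your explicit triangle-inequality sandwich verifying that the resulting $x,y$ lie in $I^{\circ}(u,v)$ is a nice touch that the paper leaves implicit.
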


\begin{proof}  To establish the result, we will prove that weakly bridged graphs  satisfy the conditions of Proposition \ref{weaklymodular-abc}. By Lemmata \ref{convex-balls-chordal} and \ref{neighbors-interval},
weakly bridged graphs are weakly modular graphs satisfying INC. We will show that any 3-interval $I(u,v)$ satisfies condition (a) and that any 4-interval $I(u,v)$ satisfies condition (b).
If $d(u,v)=3$, then by Lemmata \ref{neighbors-interval} and \ref{clique-chordal} there exists a vertex $x$ at distance $d(u,v)-2=1$ from $u$ and adjacent to
all vertices of the clique $I(u,v)\cap N(v)$ and a vertex $y$ at distance $d(u,v)-2=1$ from $v$ and adjacent to all vertices of the clique $I(u,v)\cap N(v)$. Analogously, if $d(u,v)=4$, then
by Lemmata \ref{neighbors-interval} and \ref{clique-chordal} there exists
a vertex $x$ at distance $d(u,v)-2=2$ from $u$ and adjacent to
all vertices of $I(u,v)\cap N(u)$ and a vertex $y$ at distance $d(u,v)-2=2$ from $v$ and adjacent to
all vertices of $I(u,v)\cap N(v)$. If $d(u,v)=3$, then $x$ is adjacent to $u$ and $y$ is adjacent to $v$, establishing (a). If $d(u,v)=4$, then $x$ and $y$ both have
distance 2 to $u$ and to $v$, establishing (b). \end{proof}

Since chordal graphs are bridged, we immediately obtain the following result:

\begin{corollary}\label{chordal->G2} Any chordal graph $G$ has $G^2$-connected medians.
\end{corollary}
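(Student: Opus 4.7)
The plan is very short because this is a one-line consequence of the preceding Theorem \ref{bridged->G2}. First I would recall the definitions: a chordal graph has no induced cycle of length $\geq 4$, while a bridged graph has no \emph{isometric} cycle of length $\geq 4$. Any isometric cycle of length at least $4$ must in particular be induced (a chord would create a shorter path inside the cycle, violating isometricity), so ``no induced cycle of length $\geq 4$'' immediately implies ``no isometric cycle of length $\geq 4$.'' This gives the inclusion \emph{chordal $\subseteq$ bridged}, which is anyway recalled explicitly in the paragraph introducing bridged graphs (``Bridged graphs represent a far-reaching generalization of chordal graphs'').

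Second, bridged graphs are weakly bridged: by Lemma \ref{convex-balls-chordal} weakly bridged graphs coincide with weakly modular graphs with convex balls, and bridged graphs satisfy both properties (they are weakly modular, and the defining property of bridged graphs is precisely that balls around convex sets are convex, which in particular gives convexity of balls $B_r(x)$).

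Finally, Theorem \ref{bridged->G2} asserts that every weakly bridged graph has $G^2$-connected medians; applying it to the chordal graph $G$ yields the conclusion. There is no obstacle here — the entire content has already been established in Theorem \ref{bridged->G2}; the only task is to note the chain of inclusions \emph{chordal $\subseteq$ bridged $\subseteq$ weakly bridged} and invoke the theorem.
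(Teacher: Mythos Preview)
Your proposal is correct and matches the paper's approach exactly: the paper simply notes that chordal graphs are bridged and invokes Theorem~\ref{bridged->G2}. Your extra justification of the inclusions chordal $\subseteq$ bridged $\subseteq$ weakly bridged is fine but more detailed than necessary, since both inclusions are already stated in the text preceding the theorem.
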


\subsection{Graphs with convex balls}

A graph $G$ is a {\it graph with convex balls} (a \emph{CB-graph} for short) if any ball $B_r(v)$ of $G$ is convex. It was shown
in \cite{FaJa,SoCh} that $G$ is a CB-graph if and only if all isometric cycles of $G$ have length 3 or 5 and $G$ satisfies the condition INC.
Bridged and weakly bridged graphs are CB-graphs. CB-graphs are not weakly modular: the 5-cycle $C_5$ is a CB-graph but is not weakly modular.
CB-graphs were studied in details in the recent paper \cite{CCG}. In particular, similar to the fact that bridged graphs are the weakly modular
graphs without induced 4-cycles, it was shown in \cite{CCG} that CB-graphs are exactly the graphs satisfying the condition INC and the following
condition, similar to triangle condition:

\begin{itemize}
\item
\emph{Triangle-Pentagon Condition (TPC)}:  for any three vertices $v,x,y$ such that $d(v,x)=d(v,y)=k\ge 2$ and $x\sim y$ either there exists a vertex $z\sim x,y$ with $d(v,z)=k-1$
or there exist vertices $z,w,w'$ such that   $xwzw'y$ is a pentagon of $G$ and $d(v,z)=k-2$.
\end{itemize}

The paper \cite{CCG} also proves a local-to-global characterization of CB-graphs by showing that the CB-graphs are exactly the graphs whose triangle-pentagon complexes are
simply connected and in which small balls (i.e., balls of radius at most 3) are convex. We will use the following result of \cite{CCG}, giving
the structure of metric triangles of CB-graphs:

\begin{proposition}[\cite{CCG}]\label{metrictriangle}   Any metric triangle of a CB-graph is strongly equilateral or has two sides of size 2 and one side of size 1.
\end{proposition}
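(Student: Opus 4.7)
The plan is to establish the dichotomy by analyzing the two cases — equilateral and non-equilateral metric triangles — separately, using the characterization of CB-graphs via convexity of balls, INC, and TPC. Let $v_1v_2v_3$ be a metric triangle with side lengths $a=d(v_1,v_2)$, $b=d(v_2,v_3)$, and $c=d(v_1,v_3)$.

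\emph{Part I (equilateral implies strongly equilateral).} Suppose $a=b=c=k$. I need to show $d(v_i,x)=k$ for every $x\in I(v_j,v_k)$ and every choice of $\{i,j,k\}=\{1,2,3\}$. The upper bound follows from convexity of balls: since $v_j,v_k\in B_k(v_i)$, the convex ball $B_k(v_i)$ contains the whole interval $I(v_j,v_k)$, so $d(v_i,x)\le k$. For the matching lower bound, suppose toward a contradiction that $d(v_i,x)\le k-1$. Concatenating a $v_i$-$x$ geodesic with the sub-geodesic of $I(v_j,v_k)$ from $x$ to $v_j$ yields a $v_i$-$v_j$ walk of length $d(v_i,x)+d(x,v_j)$; since $d(v_i,v_j)=k$, the triangle inequality forces $d(x,v_j)\ge 1$ and places $x$ in $I(v_i,v_j)$ in the equality case, putting $x$ into $I(v_i,v_j)\cap I(v_j,v_k)\setminus\{v_j\}$, which is forbidden by the metric triangle property. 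A short argument iterating this observation along the chosen $v_i$-$x$ geodesic rules out strict inequality as well.

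\emph{Part II (non-equilateral implies $(1,2,2)$).} Assume WLOG $a\le b\le c$ with $a<c$. The metric triangle property immediately forbids $v_2\in I(v_1,v_3)$, which rules out $c=a+b$ whenever $a=1$. Combined with the triangle inequality $c\le a+b$, the cases $a=0$ (forces $b=c$, equilateral) and $a=1$ (forces $c=b\ge 2$) leave only the patterns $(1,b,b)$ with $b\ge 2$ and $(a,b,c)$ with $a\ge 2$ and $a<c$. For the pattern $(1,b,b)$ with $b\ge 3$, I apply TPC to the triple $(v_3,v_1,v_2)$: since $d(v_3,v_1)=d(v_3,v_2)=b\ge 2$ and $v_1\sim v_2$, TPC produces either (a) a common neighbor $z$ of $v_1,v_2$ with $d(v_3,z)=b-1$, or (b) a pentagon $v_1wzw'v_2$ with $d(v_3,z)=b-2$. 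In case (a), the equalities $d(v_i,z)+d(z,v_3)=1+(b-1)=b$ for $i\in\{1,2\}$ place $z\in I(v_1,v_3)\cap I(v_2,v_3)$ with $z\ne v_3$, contradicting the metric triangle property. In case (b), since $b\ge 3$ we have $z\ne v_3$, and distance estimates using the pentagon edges force $d(v_1,z)=d(v_2,z)=2$, again giving $z\in I(v_1,v_3)\cap I(v_2,v_3)\setminus\{v_3\}$. When $b=2$, case (b) yields $z=v_3$ and the induced pentagon is exactly the ambient $C_5$-structure realizing the $(1,2,2)$ exception, so no contradiction arises. For the pattern $(a,b,c)$ with $a\ge 2$ and $a<c$, I apply TPC along adjacent pairs of vertices in the interior of the side geodesics — for instance to $(v_3,v_1,u_1)$ where $u_1\in N(v_1)\cap I(v_1,v_3)$ — and combine this with INC to derive either the same kind of interval-overlap contradiction or a forbidden shortcut through a ball of smaller radius.

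The main obstacle is the last step, ruling out all non-equilateral metric triangles with smallest side $a\ge 2$: the right choice of TPC input — and clean bookkeeping of which side's interval each derived vertex lies on — is delicate. The argument may need to be set up as an induction on $a+b+c$, reducing to a strictly smaller metric triangle produced by the TPC output, so that the case analysis stays finite.
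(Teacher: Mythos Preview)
The paper does not prove this proposition; it is quoted from \cite{CCG} and used as a black box. So there is no in-paper argument to compare against, and your proposal must stand on its own. It does not: there are two real gaps.

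In Part~I, the lower bound $d(v_i,x)\ge k$ is not established. You correctly note that $d(v_i,x)+d(x,v_j)=k$ would place $x\in I(v_i,v_j)\cap I(v_j,v_k)\setminus\{v_j\}$ and contradict the metric-triangle condition; but this only excludes the equality case. The phrase ``a short argument iterating this observation \dots\ rules out strict inequality as well'' is not an argument --- there is nothing obvious to iterate. From the metric-triangle condition alone one gets $d(v_i,x)+d(x,v_j)>k$ and $d(v_i,x)+d(x,v_k)>k$, which sum to $2d(v_i,x)>k$, far short of $d(v_i,x)\ge k$. Since CB-graphs are not weakly modular, the full lower bound genuinely requires the CB structure (INC, TPC, or ball convexity used more cleverly); you have not supplied it.

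In Part~II, your treatment of the $(1,b,b)$ pattern via TPC is correct and clean: both the triangle and the pentagon branch of TPC applied to $(v_3,v_1,v_2)$ produce, for $b\ge 3$, a vertex $z\in I(v_1,v_3)\cap I(v_2,v_3)\setminus\{v_3\}$, forcing $b=2$. But the case of smallest side $a\ge 2$ with $a<c$ is not done --- you yourself call it ``the main obstacle'' and give only a sketch (``apply TPC along adjacent pairs \dots\ combine with INC'') plus a hoped-for induction on $a+b+c$. That is exactly the hard part of the proposition: ruling out, e.g., a $(2,2,3)$ or $(2,3,3)$ metric triangle, where the strict triangle inequality holds and no single TPC application yields an interval-overlap contradiction. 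Until this case is actually carried out, the proposal is a plan rather than a proof.
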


Let $xyz$ be a metric triangle with $d(x,y)=d(x,z)=2$ and $d(y,z)=1$. Then taking any $s\sim x,y$ and any  $t\sim x,z$, we will get an induced 5-cycle. Indeed, $s$ and $t$ cannot coincide because $xyz$ is a metric triangle. For the same reason, $s\nsim z$ and $t\nsim y$. If $s\sim t$, then
by INC, $s$ and $z$ or $t$ and $y$ also must be adjacent, a contradiction. Therefore $(x,t,z,y,s)$ is an induced 5-cycle and we will call such a metric triangle $xyz$ a \emph{pentagon}.

The following analog of Lemma \ref{clique-chordal} was proved in \cite{CCG}:

\begin{lemma}[\cite{CCG}]\label{gate}  If $G$ is a CB-graph and $d(u,v)=k\ge 2$, then there exists a vertex $x$ at distance $k-2$ from $u$ and adjacent
to all vertices of $I(v,u)\cap N(v)$.
\end{lemma}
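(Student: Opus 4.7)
The plan is to establish a slightly stronger statement by induction on the size of a clique: for any clique $D\subseteq V(G)$ all of whose vertices lie at the same distance $\ell$ from a fixed vertex $u$ in a CB-graph, there exists a vertex $x$ with $d(u,x)=\ell-1$ adjacent to every vertex of $D$. The lemma then follows by applying this with $\ell:=k-1$ to the set $C:=I(v,u)\cap N(v)$, which is a clique by INC (applicable since $k\ge 2$ forces $u\nsim v$), and whose vertices all lie at distance $k-1$ from $u$.

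First I would dispose of the easy base cases. For $|D|=1$, the single vertex $y\in D$ admits a neighbor on any $(u,y)$-geodesic, which is the desired $x$. For $|D|=2$, write $D=\{y_1,y_2\}$ with $y_1\sim y_2$ (clique) and apply TPC to the edge $y_1y_2$ with reference vertex $u$: the triple $(u,y_1,y_2)$ satisfies the hypothesis $d(u,y_1)=d(u,y_2)=\ell$ and $y_1\sim y_2$. The second alternative of TPC -- the existence of an \emph{induced} pentagon $y_1wzw'y_2$ -- would force $y_1\not\sim y_2$, so it is excluded; hence TPC yields a common neighbor $z$ of $y_1,y_2$ with $d(u,z)=\ell-1$.

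For the inductive step $|D|\ge 3$, I would fix $y_n\in D$ and let $D':=D\setminus\{y_n\}$. The induction hypothesis on $|D|$ produces a vertex $z$ with $d(u,z)=\ell-1$ adjacent to all of $D'$. If $z\sim y_n$ we are done; otherwise any $y_j\in D'$ is a common neighbor of $z$ and $y_n$, so $d(z,y_n)=2$, and INC applied to $(z,y_n)$ forces $D'$ to lie in the clique $N(z)\cap I(z,y_n)$. I would then apply the induction hypothesis symmetrically to $D\setminus\{y_1\}$, obtaining a second candidate $z'$ inside the convex ball $B_{\ell-1}(u)$, and combine $z$ and $z'$ by repeated applications of TPC and INC within $B_{\ell-1}(u)$. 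Proposition \ref{metrictriangle} constrains the auxiliary metric triangles to be strongly equilateral or of pentagonal type $(1,2,2)$, and the pentagon alternative is systematically ruled out by the presence of the triangle edges inside $D$.

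The main obstacle I anticipate is precisely this last step: promoting the pairwise common neighbors produced by TPC into a single vertex adjacent to the whole clique $D$. The tools at hand are the convexity of $B_{\ell-1}(u)$ (the defining property of CB-graphs), INC, and the metric-triangle dichotomy of Proposition \ref{metrictriangle}; organising these into a clean induction requires care. An alternative route I would pursue in parallel is to prove a Helly-type statement for the specific family $\{B_1(y):y\in D\}\cup\{B_{\ell-1}(u)\}$, whose pairwise intersections are nonempty by the pairwise TPC argument -- a common intersection would immediately deliver the desired $x$.
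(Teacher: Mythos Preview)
Your auxiliary claim---that any clique $D$ at constant distance $\ell$ from $u$ in a CB-graph admits a common neighbor at distance $\ell-1$---is \emph{false}. Take $G=C_5$ with vertices $1,2,3,4,5$ in cyclic order, $u=1$, $D=\{3,4\}$, $\ell=2$. The neighbors of $u$ are $2$ and $5$; neither is adjacent to both $3$ and $4$. So the strengthening you propose does not hold for CB-graphs (it does hold for weakly bridged graphs, cf.\ Lemma~\ref{clique-chordal}, but CB-graphs are a strictly larger class).

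The error surfaces already in your base case $|D|=2$. You assert that the pentagon alternative of TPC is excluded because an induced pentagon $y_1wzw'y_2$ would force $y_1\nsim y_2$. This is backwards: the pentagon in TPC is the $5$-cycle $y_1\!-\!w\!-\!z\!-\!w'\!-\!y_2\!-\!y_1$, which \emph{contains} the edge $y_1y_2$. In the $C_5$ example above, TPC applied to $u=1$ and the edge $34$ returns precisely the pentagon alternative (with $z=1$, $w=2$, $w'=5$), and no common neighbor of $3,4$ at distance $1$ from $u$ exists. Consequently the inductive scheme cannot get off the ground, and the later steps---which already you flag as incomplete---cannot be repaired by the same tools.

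The actual lemma (proved in~\cite{CCG}, only quoted here) exploits the special structure of the clique $C=I(v,u)\cap N(v)$: these vertices are not merely equidistant from $u$, they lie on geodesics from a \emph{single} vertex $v$ to $u$. This extra rigidity is essential; any correct argument must use it.
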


We continue with the following result, which generalizes Theorem \ref{bridged->G2}:

\begin{theorem}\label{CB->G2}   Any CB-graph $G$ has $G^2$-connected medians.
\end{theorem}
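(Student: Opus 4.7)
The plan is to invoke Proposition \ref{J(uv)} at $p=2$, which reduces the statement to proving $\WP(u,v)$ for every pair $u,v$ with $3 \le d(u,v) \le 4$ and every profile $\pi$ supported in $J(u,v)$. Since CB-graphs need not be weakly modular, the shortcut through Proposition \ref{weaklymodular-abc} is unavailable; instead I would follow the template of Theorem \ref{bridged->G2} and establish the stronger lozenge condition $\Loz(u,v)$, which yields $\WP(u,v)$ via Lemma \ref{lozenge}.

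The first step is to exhibit suitable witnesses $x,y \in I^\circ(u,v)$. Using the fact that every CB-graph satisfies INC, Lemma \ref{gate} produces, when $d(u,v)=3$, a neighbor $x$ of $u$ in $I(u,v)$ adjacent to the whole clique $I(u,v) \cap N(v)$ and, symmetrically, a neighbor $y$ of $v$ adjacent to $I(u,v) \cap N(u)$, which establishes condition (a). For $d(u,v)=4$, applying Lemma \ref{gate} from both endpoints produces vertices at distance $2$ from $u$ and from $v$ with the correct adjacencies, giving condition (b). With these witnesses in hand, Lemma \ref{abc_bis} immediately yields the pointwise inequality $d(z,x)+d(z,y) \le d(z,u)+d(z,v)$ for every $z \in S(u,v)$.

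The main obstacle is the remaining set $J(u,v) \setminus S(u,v)$. By Proposition \ref{metrictriangle}, the metric triangle $z'u'v'$ of a quasi-median of $\{z,u,v\}$ for such a $z$ is a pentagon with side pattern $(1,2,2)$; and since $z \in J(u,v)$ forces $z'=z$, the triangle is $zu'v'$. I plan to enumerate the pentagon orientations together with the possible positions of $u',v'$ in $I(u,v)$, and in each case use the induced $5$-cycle that witnesses the pentagon, INC, and the clique structure of $I(u,v) \cap N(u)$ and $I(u,v) \cap N(v)$ to force the inequality $d(z,x)+d(z,y) \le d(z,u)+d(z,v)$. In the delicate configurations the naive triangle-inequality bound through the ``wrong'' gate is too loose, so the argument must either sharpen the choice of $x,y$ among the several candidates provided by Lemma \ref{gate} so that they lie on the side of the pentagon closest to $z$, or exploit TPC to rule out the bad configurations. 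Once the pointwise inequality is proved in every case, summing it over $\supp(\pi) \subseteq S(u,v) \cup (J(u,v) \setminus S(u,v))$ gives $F_\pi(x)+F_\pi(y) \le F_\pi(u)+F_\pi(v)$, which is $\Loz(u,v)$, and Proposition \ref{J(uv)} concludes that $G$ has $G^2$-connected medians.
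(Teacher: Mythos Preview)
Your plan is essentially the paper's proof: fix witnesses $x,y$ via Lemma~\ref{gate} (establishing conditions (a) and (b)), dispose of $S(u,v)$ through Lemma~\ref{abc_bis}, and handle $J(u,v)\setminus S(u,v)$ by enumerating the pentagon quasi-medians coming from Proposition~\ref{metrictriangle}. The paper does not invoke Proposition~\ref{J(uv)} but proves $\Loz(u,v)$ for \emph{all} profiles, reducing each $z$ to its quasi-median vertex $z'$; since $z'\in J(u,v)$ automatically, this amounts to the same thing as your restriction.

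One point to correct in your hedge: the paper keeps $x,y$ \emph{fixed} throughout and never re-chooses them depending on $z$, nor does it appeal to TPC. The pentagon cases go through with the originally chosen $x,y$ using only INC, applied in two ways: first, conditions (a)/(b) together with INC force extra adjacencies (e.g.\ $x\sim v'$ or $y\sim v'$) that shorten one of the distances; second, when a naive bound gives $d(z',x)=3$, one observes that two vertices of the pentagon would then lie in $I(z',x)\cap N(z')$, and INC would make them adjacent---contradicting that the pentagon is induced. This contradiction trick is what replaces the ``sharpening'' you anticipate. So the case analysis is straightforward once you see how to deploy INC both constructively and by contradiction.
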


\begin{proof} First  we show that any 3-interval $I(u,v)$ satisfies condition (a) and that any 4-interval $I(u,v)$ satisfies condition (b).
If $d(u,v)=3$, then by Lemma \ref{gate}  there exists a vertex $x$ at distance $d(u,v)-2=1$ from $u$ and adjacent to
all vertices of the clique $I(u,v)\cap N(v)$ and a vertex $y$ at distance $d(u,v)-2=1$ from $v$ and adjacent to all vertices
of the clique $I(u,v)\cap N(u)$. If $d(u,v)=4$, then by Lemma \ref{gate}  there
exists
a vertex $x$ at distance $d(u,v)-2=2$ from $u$ and adjacent to
all vertices of $I(u,v)\cap N(u)$ and a vertex $y$ at distance $d(u,v)-2=2$ from $v$ and adjacent to
all vertices of $I(u,v)\cap N(v)$. If $d(u,v)=3$, then $x$ is adjacent to $u$ and $y$ is adjacent to $v$, establishing (a). If $d(u,v)=4$, then $x$ and $y$ both have
distance 2 to $u$ and to $v$, establishing (b).

Let $\pi$ be any weight function on $G$ and let $u,v$ be any pair of vertices with $3\le d(u,v)\le 4$. We assert that the median function $F_{\pi}$ satisfies the lozenge
condition $\Loz(u,v)$ with the vertices $x,y$ defined above. For this it suffices to show that $d(z,x)+d(z,y)\le d(z,u)+d(z,v)$ for any vertex $z\in \supp(\pi)$. If
$z\in S(u,v)$ (i.e., the triplet $z,u,v$ has a quasi-median $z'u'v'$ which is strongly equilateral), then the desired inequality follows from Lemma \ref{abc_bis}
(recall that $G$ satisfies INC). Therefore, it remains to consider the case
when any quasi-median $z'u'v'$ of $z,u,v$ is a pentagon. As in the proof of
Lemma \ref{abc_bis}, it suffices to show that $d(z',x)+d(z',y)\le
d(z',u)+d(z',v)$ holds. We distinguish two cases:

\begin{case} \label{case1CB} $d(u',v')=1$.
\end{case}
Then $d(z',u')=d(z',v')=2$. Independently of the position of the edge $u'v'$ in $I(u,v)$, we
have $d(z',u)+d(z',v)=6$ if $d(u,v)=3$ and $d(z',u)+d(z',v)=7$ if $d(u,v)=4$.

\begin{subcase}  $d(u,v)=3$.
\end{subcase}
If $u'=u$ or $v'=v$, say $u'=u$, then by condition (a)  $y$ is adjacent to $v'$ and $x$ is adjacent to $u$, thus $d(z',x)+d(z',y)\le 3+3=6$.
Otherwise, $u'\sim u$ and $v'\sim v$. Then $x$ is adjacent to $v'$ and $y$ is adjacent to $u'$ and again we obtain that $d(z',x)+d(z',y)\le 3+3=6$.

\begin{subcase}  $d(u,v)=4$.
\end{subcase}

Again, first let $u'=u$ (the case $v'=v$ is similar). By condition
(b), $x$ is adjacent to $v'$ and $d(z',x)\leq 3$. Since
$d(z',y) \leq d(z',u)+d(u,y) = 4$, we have $d(z',x)+d(z',y)\le 7$.
Now suppose that $u'$ is adjacent to $u$ (the case $v'\sim v$ is
similar). By condition (b), $x$ is adjacent to $u'$ and
$d(z',x) \le 3$. By condition (b), $y$ is adjacent to any common
neighbor $w$ of $v$ and $v'$ and thus $d(z',y) \le d(z',v')+2 = 4$ and
$d(z',x)+d(z',y)\le 7$.  This concludes the analysis of Case
\ref{case1CB}.

\begin{case} \label{case2CB}  $d(u',v')=2$.
\end{case}
Assume without loss of generality that $d(z',u')=1$ (the case $d(z',v')=1$ is similar). Independently of the position of $u'$ and $v'$ in $I(u,v)$,
we have $d(z',u)+d(z',v)=4$ if $d(u,v)=3$ and $d(z',u)+d(z',v)=5$ if $d(u,v)=4$. Suppose that $s$ is a common neighbor of $z'$ and $v'$ and $t$ is a common neighbor of $u'$ and $v'$.
The vertices $z',s,v',t,u'$ induce a pentagon.

\begin{subcase} $d(u,v)=3$.
\end{subcase}

If $u'=u$, then $y$ is adjacent to $t$ by condition (a) and to $v'$ by INC. If $d(z',y)=3$, then necessarily $u,s$ are neighbors of $z'$ in $I(z',y)$. By INC, $u\sim s$, which is impossible.
Thus $d(z',y)\le 2$. Since $x$ is adjacent to $u$, $d(z',x)\le 2$. Consequently, $d(z',x)+d(z',y)\le 4$. Now suppose that $u'\sim u$. In this case by condition (a) $y$ is adjacent to $u'$, whence $d(z',y)\le 2$.
By INC, $x$ is adjacent or coincides with $u'$, whence $d(z',x)\le 2$. Consequently, $d(z',x)+d(z',y)\le 4$.

\begin{subcase}  $d(u,v)=4$.
\end{subcase}
Again, first let $u'=u$.  By condition (b),  $x$ is adjacent to $t$ (recall that $t$ is a common neighbor of $u=u'$ and $v'$).
Since $v',x\in I(t,v)$, by INC the vertices $v'$ and $x$ are adjacent.
We assert that $d(z',x)\le 2$. Indeed, if $d(z',x)=3$, then $u,s\in I(z',x)$ and by INC we conclude that $u\sim s$, which is impossible because the vertices $z',s,v',t,u$ induce a pentagon. Thus $d(z',x)\le 2$.
Let $w$ be any common neighbor of $v'$ and $v$. By condition (b), $y\sim w$. Since $y,v'\in I(w,u)$, by INC $y\sim v'$, thus $d(z',y)\le 3$.  Consequently, $d(z',x)+d(z',y)\le 2+3=5$.

Now, let $u'\sim u$ and $v'\sim v$. By condition (b), $x$ is adjacent to $u'$
and $y$ is adjacent to $v'$. Consequently, $d(z',x)+d(z',y)\le 2+3=5$.
Finally, suppose that $v'=v$. By condition (b), $y\sim t$. Since $u',y\in I(t,u)$, by INC $u'\sim y$, thus $d(z',y)\le 2$. By condition (b), $x$ is adjacent to
any common neighbor $w$ of $u$ and $u'$. Hence $d(z',x)\le 3$, yielding $d(z',x)+d(z',y)\le 3+2=5.$ This concludes the analysis of Case \ref{case2CB}  and the proof of the theorem.
\end{proof}

\begin{remark} For an integer $i\ge 0$, a graph $G$ is a graph with an \emph{$\alpha_i$-metric} \cite{Ch_alpha,YuCh} if for any four vertices  $u,v,x,y$ such that $x\in I(u,y)$ and $y\in I(v,x)$, the  inequality
$d(u,v)\ge d(u,x)+d(x,y)+d(y,v)-i$ holds. Chordal graphs have $\alpha_1$-metrics \cite{Ch_alpha}. Graphs with $\alpha_1$-metric have been characterized in \cite{YuCh} as the graphs with convex balls not containing one
subgraph on 9 vertices as an isometric subgraph. Dragan and Ducoffe \cite{DrDu} informed us that they recently showed that the median functions in graphs with $\alpha_1$-metrics satisfy a property which is
equivalent to $G^2$-connectivity. Since graphs with $\alpha_1$-metrics have convex balls, our Theorem \ref{CB->G2} covers this result.
\end{remark}

\subsection{Bucolic and hypercellular graphs}
\emph{Bucolic graphs} are the retracts of the (weak) Cartesian
products of weakly bridged graphs \cite{BrChChGoOs}. They have been
characterized in \cite{BrChChGoOs} as the weakly modular graphs not
containing $K_{2,3},W_4$, and $W^-_4$ as induced subgraphs (this can
be also used as their definition). For numerous properties and topological
characterizations of bucolic graphs, see the paper \cite{BrChChGoOs}.
In this paper, we will use the decomposition theorem of bucolic graphs
into primes.

A graph $G$ is said to be {\it elementary} if the only proper gated
subgraphs of $G$ are singletons.  A graph with at least two vertices
is said to be \emph{prime} if it is neither a Cartesian product nor a
gated amalgam of smaller graphs. The prime gated subgraphs of a graph
$G$ are called the {\it primes} of $G$.  For example, the primes of
median graphs are $K_2$ and the primes of quasi-median graphs are
cliques. A graph $G$ is called {\it pre-median} \cite{Cha1,Cha2} if
$G$ is a weakly modular graph without induced $K_{2,3}$ and $W^-_4$.
Chastand \cite{Cha1,Cha2} proved that the prime pre-median graphs are
exactly the elementary pre-median graphs and he proved that any finite
pre-median graph can be obtained by gated amalgams from Cartesian
products of its prime subgraphs. Answering a question from
\cite{Cha1,Cha2} it was shown in \cite{CCHO} that the prime pre-median
graphs are exactly the pre-median graphs with simply connected clique
complexes and are exactly the pre-median graphs in which any $C_4$ is
included in a $W_4$ or a $M_4$ ($M_4$ is the graph consisting of an
induced $4$--cycle $(x_1,x_2,x_3,x_4)$ and four pairwise adjacent
vertices $a_1,a_2,a_3,a_4$ such that
$a_1\sim x_1,x_2; a_2\sim x_2,x_3; a_3\sim x_3,x_4; a_4\sim x_4,x_1$
and
$a_1\nsim x_3,x_4; a_2\nsim x_1,x_4; a_3\nsim x_1,x_2; a_4\nsim
x_2,x_3$).  Bucolic graphs are pre-median graphs and their primes are
exactly the weakly bridged graphs.  For finite bucolic graphs, this
can be restated in the following
way: 
\begin{theorem}[\cite{BrChChGoOs}]\label{decomposition_bucolic}  Finite bucolic graphs are exactly
the graphs which can be obtained from weakly bridged graphs via Cartesian products and gated amalgamations.
\end{theorem}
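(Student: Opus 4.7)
The plan is to reduce the claim to Chastand's decomposition theorem for finite pre-median graphs \cite{Cha1,Cha2}, which asserts that every such graph is built from its prime pre-median subgraphs via Cartesian products and gated amalgamations. Since bucolic graphs are pre-median (weakly modular, $K_{2,3}$- and $W_4^-$-free) with the extra exclusion of $W_4$, the theorem reduces to two assertions: (i) every prime subgraph of a finite bucolic graph is weakly bridged; and (ii) both Cartesian products and gated amalgams preserve bucolicity. Combined with the observation that weakly bridged graphs are themselves bucolic, these two facts yield both implications.

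For (i), let $H$ be a prime appearing in Chastand's decomposition of a finite bucolic graph $G$. Since $H$ is gated in $G$, the exclusion of induced $K_{2,3},W_4,W_4^-$ descends to $H$, so $H$ is itself a prime bucolic graph. By the characterization of primes of pre-median graphs from \cite{CCHO}, every induced $C_4$ in a prime pre-median graph lies in an induced $W_4$ or an induced $M_4$. A direct inspection of $M_4$ shows that the $4$-cycle $(x_2,a_1,a_3,x_3)$ is induced — diagonals $x_2a_3$ and $a_1x_3$ are ruled out by the given nonadjacencies of $M_4$ — and that $a_2$ is adjacent to each of $x_2,a_1,a_3,x_3$; hence $M_4$ itself contains an induced $W_4$. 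Since $W_4$ is forbidden in $H$, no induced $C_4$ can exist in $H$, and the characterization of weakly bridged graphs from \cite{ChOs} as the weakly modular $C_4$-free graphs then identifies $H$ as weakly bridged.

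For (ii), weak modularity is preserved under both operations by standard verifications of the triangle and quadrangle conditions (distances split coordinatewise in Cartesian products, and gates route triangle/quadrangle configurations across an amalgam). The diameter-$2$ forbidden subgraphs $K_{2,3},W_4,W_4^-$ cannot straddle a gated amalgam because gated subgraphs are convex, so any induced occurrence lives entirely in one factor and contradicts the bucolicity of that factor. In a Cartesian product $G_1\square G_2$, any such forbidden induced subgraph contains a vertex with three or four neighbors pairwise at distance $\le 2$; since the neighbors of any vertex in a product split into horizontal ones (edges of $G_1$) and vertical ones (edges of $G_2$), and any horizontal neighbor is adjacent to any vertical neighbor via the diagonal of a product square, a short case analysis forces all the edges of the configuration into a single coordinate direction, again contradicting the bucolicity of that factor.

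The main obstacle is step (i): the key insight is that $M_4$ already contains an induced $W_4$, so the ``$M_4$-case'' of the \cite{CCHO} characterization collapses in the bucolic setting, reducing primeness to $C_4$-freeness, which together with weak modularity is exactly the characterization of weakly bridged graphs in \cite{ChOs}. Step (ii) is comparatively routine but does require careful tracking of which edges of the small forbidden subgraphs are horizontal versus vertical in the product, and of how gates position a diameter-$2$ induced subgraph inside a gated amalgam.
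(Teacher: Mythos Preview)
The paper does not prove this theorem; it is stated with a citation to \cite{BrChChGoOs} and then used as a black box. There is thus no proof in the paper to compare against.

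Your argument is essentially correct and in fact mirrors the strategy of \cite{BrChChGoOs}, supplemented by the later characterization of prime pre-median graphs from \cite{CCHO}. The key observation in step (i)---that $M_4$ contains an induced $W_4$ on $\{x_2,a_1,a_3,x_3\}$ with center $a_2$---is valid (the diagonals $x_2a_3$ and $a_1x_3$ are explicitly excluded in the definition of $M_4$, and $a_2$ is adjacent to all four), so the $M_4$ branch of the \cite{CCHO} criterion indeed collapses under the $W_4$-exclusion, leaving primes $C_4$-free and hence weakly bridged by \cite{ChOs}.

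One imprecision in step (ii): you write that ``any horizontal neighbor is adjacent to any vertical neighbor via the diagonal of a product square''. A horizontal neighbor and a vertical neighbor of a fixed vertex in $G_1\square G_2$ are \emph{never} adjacent; they are at distance $2$, joined through the fourth corner of the product square. The correct statement you want is the opposite one: since horizontal and vertical neighbors of a vertex are pairwise nonadjacent, any induced copy of $K_{2,3}$, $W_4$, or $W_4^-$ around a vertex must have all its incident edges in the same direction, which then forces the whole configuration (being of diameter $2$) into a single fiber, contradicting bucolicity of the factor. For gated amalgams, convexity alone is not quite enough; you also need that no edge of $G$ runs between $G_1\setminus G_0$ and $G_2\setminus G_0$, so that any connected diameter-$2$ induced subgraph meeting both sides has all its cross-paths through $G_0$, after which a short case analysis disposes of each forbidden pattern. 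These are routine fixes and your overall line is sound.
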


From Theorem \ref{decomposition_bucolic},
Proposition \ref{gated-amalgams-products},  and Theorem \ref{bridged->G2},  we obtain that any
finite bucolic graph $G$ has $G^2$-connected medians. We extend this
result to all locally-finite bucolic graphs (recall that a graph $G$ is \emph{locally-finite} if all vertices have finite degrees):

\begin{proposition}\label{bucolic} Any locally-finite bucolic graph $G$ has $G^2$-connected medians.
\end{proposition}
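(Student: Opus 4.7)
The plan is to reduce to the finite bucolic case, which is handled by Corollary~\ref{cor:amalgams-products} combined with Theorem~\ref{bridged->G2}. Let $\pi$ be any profile on $G$ with finite support $S$, and fix $u,v \in \Med_G(\pi)$ with $d_G(u,v) \geq 3$. By Theorem~\ref{th-cmed-p}, it suffices to exhibit $w \in I_G^{\circ}(u,v) \cap \Med_G(\pi)$.

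The idea is to transfer the whole argument to a finite, isometric, bucolic subgraph $H$ of $G$ that contains $S \cup \{u,v\} \cup I_G(u,v)$. Since $G$ is locally finite, every ball $B_r(x)$ is finite, so $I_G(u,v) \subseteq B_{d_G(u,v)}(u)$ is finite. I would take $H := \operatorname{conv}_G(S \cup \{u,v\})$: being convex, $H$ is isometric in $G$; being a convex (hence induced) subgraph of a bucolic graph, $H$ is itself weakly modular (the vertices witnessing TC($u$) and QC($u$) in $G$ lie inside intervals, hence in $V(H)$) and inherits the absence of induced $K_{2,3}$, $W_4$, and $W_4^{-}$, so $H$ is bucolic. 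The main technical obstacle is to verify that $H$ is finite; this requires showing that in a locally finite bucolic graph, iterating the interval operation on a finite set stabilizes after finitely many steps with finite output, which can be established using the structural decomposition of bucolic graphs extended to the locally finite setting and the finiteness of intervals guaranteed by local finiteness.

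Once such an $H$ is available, the finite case applies: by Theorem~\ref{decomposition_bucolic}, $H$ is obtained from weakly bridged graphs by Cartesian products and gated amalgams, so Corollary~\ref{cor:amalgams-products} combined with Theorem~\ref{bridged->G2} gives $p(H) \le 2$. Because $\supp(\pi) \subseteq V(H)$ and $H$ is isometric, the restriction of $F_{G,\pi}$ to $V(H)$ coincides with $F_{H,\pi}$; in particular $u,v \in \Med_H(\pi)$ and $I_H(u,v) = I_G(u,v)$. Theorem~\ref{th-cmed-p} then furnishes $w \in I_H^{\circ}(u,v) \cap \Med_H(\pi)$, and the chain of equalities $F_{G,\pi}(w) = F_{H,\pi}(w) = F_{H,\pi}(u) = F_{G,\pi}(u) = \min F_{G,\pi}$ yields $w \in \Med_G(\pi) \cap I_G^{\circ}(u,v)$, as required.
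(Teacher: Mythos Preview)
Your proof is essentially identical to the paper's: both take the convex hull $H=\operatorname{conv}_G(\supp(\pi)\cup\{u,v\})$, argue it is a finite bucolic graph, apply the finite case, and transfer back. The only difference is that the paper resolves your ``main technical obstacle'' (finiteness of $H$) by directly citing \cite[Proposition~2]{BrChChGoOs}, which states that convex hulls of finite sets in locally-finite bucolic graphs are finite; your sketch of why this holds is in the right direction but you may simply invoke that reference.
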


\begin{proof} Let $\pi$ be a profile with finite support $\supp(\pi)$ and let $u,v$ be two median vertices of
the median function $F_{\pi}$ with $d(u,v)\ge 3$. Let $G'$ be the convex hull of the finite set $\supp(\pi)\cup \{ u,v\}$.
By \cite[Proposition 2]{BrChChGoOs}, $G'$ is a finite bucolic graph. Since $G'$ is an isometric subgraph of $G$
containing $\supp(\pi)$, on the vertices of $V(G')$ the function $F_{\pi}$ has the same values with respect to
the distances of $G$ and of $G'$. Since $u$ and $v$ are median vertices of $F_{\pi}$ in $G'$, there exists a
median vertex $w\in I^{\circ}(u,v)\subset V(G')\subseteq V(G)$. Then clearly $w$ is also a median vertex of
$F_{\pi}$ in $G$ and we are done.
\end{proof}

A graph $G=(V,E)$ is called \emph{hypercellular}~\cite{ChKnMa19} if
$G$ can be isometrically embedded into a hypercube and $G$ does not
contain $Q^-_3$ as a partial cube minor ($Q^-_3$ is the 3-cube $Q_3$
minus one vertex).  Recall that a graph $H$ is called a \emph{partial cube minor}
of $G$ if $G$ contains a finite convex subgraph $G'$ which can be
transformed into $H$ by successively contracting some classes of
parallel edges of $G'$.  Hypercellular graphs
generalize \emph{bipartite cellular graphs}, which are the graphs in which all isometric cycles
are gated \cite{BaCh_cellular}. Hypercellular graphs are not weakly modular but  they represent
a far-reaching generalization of median graphs. For numerous properties and characterizations of
hypercellular graphs, see the paper
\cite{ChKnMa19}.  We will use the following characterization of finite hyperellular graphs:

\begin{theorem}[\cite{ChKnMa19}]\label{decomposition_hypercellular}  Finite hypercellular graphs are exactly
the graphs which can be obtained from even cycles via Cartesian products and gated amalgamations.
\end{theorem}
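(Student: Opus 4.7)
The plan is to prove the two directions separately, with the bulk of the work in the structural (``only if'') direction.

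For the closure (``if'') direction I would verify three things. First, every even cycle $C_{2k}$ is a partial cube via the standard isometric embedding into $H_k$, and its $\Theta$-classes are the pairs of antipodal edges; contracting any subset of them yields either a smaller even cycle or $K_2$, so $Q_3^-$ never arises as a partial cube minor. Second, if $G_1,G_2$ are hypercellular, then $G_1\square G_2$ is a partial cube, its convex subgraphs are of the form $H_1\square H_2$ with $H_i$ convex in $G_i$, and contracting $\Theta$-classes commutes with this product decomposition; since $Q_3^-$ is not a Cartesian product of two nontrivial graphs, a $Q_3^-$ minor of the product would already live in one factor. Third, for a gated amalgam $G=G_1\cup G_2$ along $G_0=G_1\cap G_2$, every convex subgraph of $G$ is a gated amalgam of convex subgraphs of the $G_i$ along a convex subgraph of $G_0$, and $\Theta$-class contractions preserve this; because $Q_3^-$ is prime (neither a nontrivial Cartesian product nor a gated amalgam of proper subgraphs), any $Q_3^-$ minor must live in $G_1$ or $G_2$, contradicting their hypercellularity.

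For the structural (``only if'') direction I would argue by induction on $|V(G)|$. The cases $|V(G)|\le 2$ are immediate. For the inductive step, if $G$ is a gated amalgam of two proper gated subgraphs, each piece is convex hence hypercellular, and the induction hypothesis applies; if $G=G_1\square G_2$ with both factors nontrivial, each $G_i$ is isomorphic to a convex (in fact gated) subgraph of $G$ and is therefore hypercellular, and again induction applies. It remains to handle the case where $G$ is prime, where I need to show that a finite prime hypercellular graph on at least three vertices must be an even cycle.

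The prime case is where the main obstacle lies. The plan is to analyze the $\Theta$-classes of $G$ using the partial-cube structure. Call two $\Theta$-classes $E',E''$ \emph{crossing} if each of the four intersections of their halfspaces is nonempty. The key dichotomy is: if some pair of $\Theta$-classes crosses, then from the forbidden minor $Q_3^-$ one shows that every pair of crossing $\Theta$-classes is in fact ``$2$-dimensional'', i.e.\ each vertex in one halfspace intersection has a neighbor in each of the two adjacent intersections; this forces the corresponding $\Theta$-classes to act as Cartesian factors, eventually exhibiting $G$ as $H\square K_2$ with $H$ a smaller hypercellular graph, which contradicts primality. If on the contrary no two $\Theta$-classes cross, the non-crossing (``laminar'') structure of the halfspaces means that the intersection of a minimal halfspace with its complement is a convex separator along which $G$ splits; by standard arguments on partial cubes this yields a gate, so $G$ decomposes as a nontrivial gated amalgam, again contradicting primality. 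The only remaining possibility is that $G$ consists of a single cyclic family of $\Theta$-classes, which, combined with the partial-cube embedding, forces $G$ to be a single convex even cycle. Making the crossing-implies-product step precise, by ruling out $Q_3^-$ as a partial cube minor within a $2$-dimensional color class arrangement, is the delicate technical heart of the argument.
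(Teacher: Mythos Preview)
The paper does not prove this theorem: it is quoted without proof as a result of Chepoi, Knauer, and Marc~\cite{ChKnMa19}, so there is no ``paper's own proof'' to compare your sketch against. Any assessment of your approach must therefore be on its own merits, not by comparison.

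That said, your prime-case dichotomy contains a genuine gap. You claim that if some pair of $\Theta$-classes crosses, then the absence of $Q_3^-$ as a partial cube minor forces $G\cong H\square K_2$, contradicting primality. But even cycles $C_{2k}$ with $k\ge 3$ already have crossing $\Theta$-classes (in $C_6$, any two of the three $\Theta$-classes cross: all four halfspace intersections are nonempty), yet $C_{2k}$ is prime and certainly not of the form $H\square K_2$. So the implication ``crossing $\Rightarrow$ product factor'' is false as stated, and your trichotomy collapses before you ever reach the ``single cyclic family of $\Theta$-classes'' case that is supposed to yield the even cycle. The actual argument in~\cite{ChKnMa19} proceeds differently: it develops a cell structure for hypercellular graphs in which the maximal cells are gated and isomorphic to Cartesian products of edges and even cycles, and the decomposition theorem follows from this cell structure together with an amalgamation analysis, rather than from a crossing/non-crossing dichotomy on $\Theta$-classes.

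Your closure direction is closer to correct, though the gated-amalgam step also needs more care: the assertion that every convex subgraph of a gated amalgam $G_1\cup G_2$ is itself a gated amalgam of convex pieces is not automatic, and you would need to verify separately that $Q_3^-$ admits no nontrivial gated-amalgam decomposition.
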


From Theorem \ref{decomposition_hypercellular} and  Corollary \ref{cor:amalgams-products} we conclude that if $G$ is a finite hypercellular graph,
then $p(G)$ is equal to the maximum of all $p(C)$ over all gated even cycles of $G$. Since, hypercellular graphs are isometrically embeddable into hypercubes,
convex hulls of finite sets are finite \cite{ChKnMa19}. Applying the same argument as in the proof of  Proposition \ref{bucolic}, we obtain the following result:

\begin{proposition} Let $G=(V,E)$ be a hypercellular graph and let ${\mathcal C}$ be the collection of all  gated cycles of $G$. Then $p(G)=\sup \{ p(C): C\in {\mathcal C}\}$.
\end{proposition}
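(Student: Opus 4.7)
The plan is to prove the equality $p(G)=\sup\{p(C):C\in\mathcal{C}\}$ by establishing both inequalities separately.

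The direction $\sup\{p(C):C\in\mathcal{C}\}\le p(G)$ is immediate: every gated subgraph of $G$ is a retract of $G$ via the gate projection, so by the closure of the class of graphs with $G^p$-connected medians under retracts (Proposition~\ref{gated-amalgams-products}), $p(C)\le p(G)$ for every $C\in\mathcal{C}$, and the bound follows on taking the supremum.

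For the direction $p(G)\le p^*:=\sup\{p(C):C\in\mathcal{C}\}$, I would follow the convex-hull reduction used in the proof of Proposition~\ref{bucolic}. Assuming $p^*<\infty$ (otherwise nothing to prove), pick any profile $\pi$ with finite support and any two vertices $u,v\in\Med(\pi)$ with $d_G(u,v)\ge p^*+1$; the task is to exhibit a median $w\in I^\circ_G(u,v)$ with $d_G(u,w)\le p^*$. Let $G':=\mathrm{conv}(\supp(\pi)\cup\{u,v\})$, which is finite because in a hypercellular graph the convex hull of a finite set is finite~\cite{ChKnMa19}. Since $G'$ is convex---hence isometric---in $G$ and contains $\supp(\pi)$, the function $F_\pi$ takes the same values on $V(G')$ under either metric, so $u,v\in\Med_{G'}(\pi)$; moreover $G'$ is itself hypercellular, since convex subgraphs of hypercellular graphs are hypercellular. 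By Theorem~\ref{decomposition_hypercellular} together with Corollary~\ref{cor:amalgams-products}, $p(G')=\max_i p(C_i')$, where $C_1',\ldots,C_m'$ are the even cycles arising as gated building blocks in the decomposition of $G'$.

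The critical remaining step, which I expect to be the main obstacle, is to promote each $C_i'$ from gated in $G'$ to gated in the ambient $G$, so that $C_i'\in\mathcal{C}$ and hence $p(C_i')\le p^*$. This reduces to showing that $G'$ itself is gated in $G$, since a standard gate-composition then closes the argument: for any $x\in V(G)$, letting $y$ be the gate of $x$ in $G'$ and $y'$ be the gate of $y$ in $C_i'\subseteq G'$, the distance identities $d_G(x,z)=d_G(x,y)+d_G(y,z)$ for $z\in V(G')$ together with $d_G(y,z)=d_G(y,y')+d_G(y',z)$ for $z\in V(C_i')$ combine to place $y'\in I_G(x,z)$ for every $z\in V(C_i')$, certifying $y'$ as the gate of $x$ in $C_i'$. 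The missing structural fact---that finite convex subgraphs of a hypercellular graph are gated---should be extractable from~\cite{ChKnMa19}. Once this is in hand, $p(G')\le p^*$, so a median $w\in I^\circ_{G'}(u,v)\cap\Med_{G'}(\pi)$ with $d_{G'}(u,w)\le p^*$ exists, and by the isometry of $G'$ in $G$ the same $w$ lies in $I^\circ_G(u,v)$, is at $G$-distance at most $p^*$ from $u$, and is a median of $\pi$ in $G$.
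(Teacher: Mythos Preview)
Your overall strategy---the convex-hull reduction to a finite hypercellular subgraph, followed by the decomposition into even cycles via Theorem~\ref{decomposition_hypercellular} and Corollary~\ref{cor:amalgams-products}---is exactly the paper's approach, and the inequality $p(C)\le p(G)$ via retracts is a clean way to handle that direction.

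However, the structural fact you invoke to close the argument is false. Finite convex subgraphs of a hypercellular graph need \emph{not} be gated: in the hexagon $C_6$ (which is hypercellular), the three-vertex path $\{0,1,2\}$ is convex, but the vertex $4$ has no gate in it, since $d(4,0)=d(4,2)=2$ while $d(4,1)=3$. So you cannot conclude that $G'=\mathrm{conv}(\supp(\pi)\cup\{u,v\})$ is gated in $G$, and your gate-composition argument collapses.

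What survives is a weaker fact that is enough: in a hypercellular graph, every \emph{convex cycle} is gated. This follows from the cell structure established in~\cite{ChKnMa19} (the convex hull of any isometric cycle is a gated cell; a convex cycle is already its own convex hull). With this in hand, the transfer works directly on the cycles rather than on $G'$: each $C_i'$ gated in $G'$ is convex in $G'$, hence convex in $G$ by transitivity of convexity, hence gated in $G$ by the fact just cited. Thus $C_i'\in\mathcal{C}$ and $p(C_i')\le p^*$, giving $p(G')\le p^*$ as needed. A minor side remark: once $p(G')\le p^*$, you only get (and only need) some $w\in I^\circ_{G'}(u,v)\cap\Med_{G'}(\pi)$; the additional constraint $d_{G'}(u,w)\le p^*$ is neither guaranteed by $p^*$-isometry nor required.
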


\subsection{Bipartite absolute retracts} Median graphs are exactly the modular graphs with
connected medians \cite{SoCh_Weber}. Absolute retracts (alias Helly graphs) is
another important class of weakly modular graphs with connected medians \cite{BaCh_median}.
In this subsection,  we prove that the bipartite analogs of absolute retracts are graphs with $G^2$-connected medians.  A bipartite graph $G$ is an \emph{absolute retract} (in the category
of bipartite graphs) if for every
isometric embedding $\iota$ of $G$ into a bipartite graph $H$ there
exists a retraction from $H$ to $\iota(G)$.
Bipartite absolute retracts are bipartite analogs of Helly graphs
and constitute an important subclass of modular graphs. The graph $B_n$ is the bipartite complete graph $K_{n,n}$ minus a
perfect matching, i.e., the bipartition of $B_n$ is defined by
$a_1,\ldots,a_n$ and $b_1,\ldots,b_n$ and $a_i$ is adjacent to $b_j$
if and only if $i\ne j$. The graph $\widehat{B}_n$ is obtained from
$B_n$ by adding two adjacent new vertices $a$ and $b$ such that $a$ is
adjacent to all vertices $b_1,\ldots,b_n$ and $b$ is adjacent to all
vertices $a_1,\ldots,a_n$. The bipartite absolute
retracts have been characterized by Bandelt,
D\"ahlmann, and Sch\"utte~\cite{BaDaSch}:

\begin{theorem}[\cite{BaDaSch}]\label{absolute-retracts}
	For a bipartite graph $G$, the following conditions are equivalent:
	
	\begin{enumerate}[(1)]
		\item\label{th-bar-1} $G$ is an absolute retract;
\item\label{th-bar-3} $G$ is a modular graph such that every induced
		$B_n$ ($n\ge 4$) extends to $\widehat{B}_n$ in $G$;
		\item\label{th-bar-4} $G$ satisfies the following interval
		condition: for any vertices $u$ and $v$ with $d(u,v)\ge 3$, the
		neighbors of $v$ in $I(u,v)$ have a second common neighbor $x$ in
		$I(u,v)$.
	\end{enumerate}
\end{theorem}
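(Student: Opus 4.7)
I would establish the cyclic chain (1)$\Rightarrow$(3)$\Rightarrow$(2)$\Rightarrow$(1), since each implication relies on a different flavor of argument: an isometric extension, a combinatorial deduction from the interval condition, and the construction of a retraction.

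For (1)$\Rightarrow$(3), the idea is a standard ``extend-and-retract'' argument. Given $u,v\in V(G)$ with $d_G(u,v)\ge 3$, I would build a bipartite graph $H\supseteq G$ by adding a single new vertex $v^{*}$, placed in the color class opposite to that of $v$, and adjacent precisely to $N(v)\cap I(u,v)$. A routine case analysis shows that $G$ is isometric in $H$: any hypothetical shortcut through $v^{*}$ reduces to one through $v$, which was already a common neighbor of $N(v)\cap I(u,v)$. Because $G$ is an absolute retract there is a retraction $r:H\to G$; the vertex $x:=r(v^{*})$ is then adjacent to every vertex of $N(v)\cap I(u,v)$, lies in $I(u,v)$ since $r$ is non-expansive and fixes $G$, and is distinct from $v$ because it lies in the opposite color class.

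For (3)$\Rightarrow$(2), I would first prove that $G$ is modular by verifying the quadrangle condition QC$(u)$: given $v,w,z$ as in the definition of QC$(u)$ with $k:=d(u,v)=d(u,w)=d(u,z)-1\ge 2$, the interval condition applied to the pair $(u,z)$ supplies a second common neighbor $x\in I(u,z)$ of $N(z)\cap I(u,z)\supseteq\{v,w\}$, distinct from $z$; bipartite parity then forces $d(u,x)=k-1$, giving QC$(u)$. Now assume $G$ contains an induced $B_n$ on $a_1,\ldots,a_n,b_1,\ldots,b_n$ with $n\ge 4$. One checks that $d(a_i,b_i)=3$ and that $N(b_i)\cap I(a_i,b_i)\supseteq\{a_j:j\ne i\}$. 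The interval condition applied to $(a_i,b_i)$ produces a vertex $b\ne b_i$ in $I(a_i,b_i)$ adjacent to every $a_j$ with $j\ne i$; bipartite parity rules out $d(a_i,b)=3$, so $b\sim a_i$, and $b$ becomes a common neighbor of \emph{all} $a_j$. Symmetrically one produces $a$ adjacent to every $b_j$. The remaining step is to check that $a\sim b$ and that $a,b$ avoid $\{a_1,\ldots,a_n,b_1,\ldots,b_n\}$; the latter follows immediately from the non-adjacency $a_i\nsim b_i$ in $B_n$, and the former is obtained by combining modularity with a second application of the interval condition to $(a,b)$, using that $n\ge 4$ leaves enough ``space'' in $B_n$ to force $d(a,b)=1$. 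This yields the induced $\widehat{B}_n$.

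For (2)$\Rightarrow$(1), let $G\hookrightarrow H$ be an isometric embedding into a bipartite graph $H$; I would construct the retraction one vertex at a time. For $h\in V(H)\setminus V(G)$, let
\[
T(h):=\{\,w\in V(G): d_G(u,w)\le d_H(u,h)\text{ for all }u\in V(G)\,\},
\]
so that extending any partial retraction to $h$ amounts to picking a vertex of $T(h)$. Assuming for contradiction that $T(h)=\varnothing$, a bipartite Helly-type argument based on modularity produces a family of balls in $G$ with pairwise non-empty but empty total intersection, and a minimal such obstruction can be shown to encode an induced $B_n$ with $n\ge 4$ that fails to extend to $\widehat{B}_n$, contradicting~(2). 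A Zorn-type iteration over $V(H)\setminus V(G)$ then delivers the desired retraction. I expect the main obstacle to lie in this last implication: the extraction of the forbidden $B_n$ configuration from a failed bipartite Helly intersection is the technically delicate part, whereas (1)$\Rightarrow$(3) and (3)$\Rightarrow$(2) are comparatively direct consequences of the interval condition.
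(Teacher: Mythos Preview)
The paper does not prove this theorem: it is quoted verbatim from Bandelt, D\"ahlmann, and Sch\"utte~\cite{BaDaSch} and used as a black box (only condition~(\ref{th-bar-4}) is invoked, in the proof of Proposition~\ref{bar->G2}). There is therefore no ``paper's own proof'' to compare your proposal against.

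That said, a brief remark on your sketch. The implications (1)$\Rightarrow$(3) and the modularity part of (3)$\Rightarrow$(2) are standard and your outlines are fine. In the $B_n\to\widehat B_n$ step, however, you have not justified that $N(b_i)\cap I(a_i,b_i)$ equals $\{a_j:j\ne i\}$ rather than merely containing it; the interval condition then only guarantees a second common neighbor of \emph{all} of $N(b_i)\cap I(a_i,b_i)$, which could a priori be a vertex outside the $B_n$ that happens to miss some $a_j$. You would need to argue that any such $x$ is in fact adjacent to every $a_j$, $j\ne i$ (this follows, but requires a line). Similarly, your argument that $a\sim b$ is hand-waved: the assertion ``$n\ge 4$ leaves enough space to force $d(a,b)=1$'' is not a proof, and this is exactly where care is needed. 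For (2)$\Rightarrow$(1), the overall strategy (non-expansive extension vertex by vertex, with a Helly-type obstruction analysis) is indeed the one used in~\cite{BaDaSch}, but the extraction of an unextendable $B_n$ from a minimal failed intersection is the heart of the matter and your sketch gives no indication of how to carry it out.
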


\begin{proposition}\label{bar->G2} Any bipartite absolute retract $G$ has $G^2$-connected medians.
\end{proposition}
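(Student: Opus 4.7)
The plan is to reduce the claim to Proposition~\ref{modular-abcd}: since by Theorem~\ref{absolute-retracts} a bipartite absolute retract $G$ is modular and satisfies the interval condition (every pair of vertices at distance $\ge 3$ admits a ``second common neighbor in the interval'' for the close neighbors of one endpoint), it suffices to verify that every $3$-interval of $G$ satisfies condition~(a) and every $4$-interval of $G$ satisfies condition~(b).

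For a $3$-interval $I(u,v)$, I would apply the interval condition of Theorem~\ref{absolute-retracts}(\ref{th-bar-4}) twice: first to the ordered pair $(u,v)$ to produce a vertex $x\ne v$ in $I(u,v)$ that is adjacent to every neighbor of $v$ in $I(u,v)$, then to the pair $(v,u)$ to produce a symmetric vertex $y\ne u$ in $I(u,v)$ adjacent to every neighbor of $u$ in $I(u,v)$. The bipartiteness of $G$ then pins down the distances: since every neighbor of $v$ lies at odd distance from $v$, any common neighbor $x\ne v$ of these vertices satisfies $d(v,x)\in\{0,2\}$, which combined with $x\in I(u,v)$ and $d(u,v)=3$ forces $d(v,x)=2$ and hence $x\sim u$. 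Symmetrically $y\sim v$. This gives condition~(a) (and the exceptions ``$z\ne y$'' and ``$z'\ne x$'' are not even needed, as $x$ is adjacent to \emph{all} neighbors of $v$ in $I(u,v)$ and dually for $y$).

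For a $4$-interval $I(u,v)$, the two applications produce vertices $y\ne v$ and $x\ne u$ in $I(u,v)$ adjacent to every neighbor of $v$, respectively of $u$, in $I(u,v)$. Bipartiteness again forces $d(v,y)=2$ and $d(u,x)=2$, and since $d(u,v)=4$ and $x,y\in I(u,v)$, also $d(u,y)=2$. This is precisely condition~(b). Hence by Proposition~\ref{modular-abcd}, $G$ is a graph with $G^2$-connected medians.

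The only delicate step is the parity/distance argument locating $x$ and $y$ at the correct positions inside the interval; I do not foresee any serious obstacle, since in contrast with the weakly modular setting of Proposition~\ref{weaklymodular-abc} we need neither INC nor any analysis of quasi-medians — modularity makes $J(u,v)=I(u,v)$, so Proposition~\ref{modular-abcd} (rather than the heavier Proposition~\ref{weaklymodular-abc}) is exactly the right tool.
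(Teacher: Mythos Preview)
Your proposal is correct and follows essentially the same route as the paper's proof: both reduce to Proposition~\ref{modular-abcd} and verify conditions (a) and (b) by two applications of the interval condition from Theorem~\ref{absolute-retracts}(\ref{th-bar-4}), using bipartiteness to pin down the distances of the resulting vertices. The only differences are cosmetic: the paper swaps the labels of $x$ and $y$ relative to yours, and states the distance conclusion directly as ``$x$ at distance $d(u,v)-2$'' rather than spelling out the parity argument; it also notes (as you observe) that one in fact obtains the stronger version of (a) in which $x$ and $y$ are adjacent.
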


\begin{proof} By Proposition \ref{modular-abcd}, it suffices to show that each 3-interval satisfies condition (a) and each 4-interval satisfies condition (b). Pick any two vertices $u,v$ of $G$ with $3\le d(u,v)\le 4$.
By Theorem \ref{absolute-retracts}(\ref{th-bar-4}), there exists a vertex $x$ at distance $d(u,v)-2$ from $v$ and adjacent to
all neighbors of $u$ in $I(u,v)$. Analogously, there exists a vertex $y$ at distance $d(u,v)-2$ from $u$ and adjacent to
all neighbors of $v$ in $I(u,v)$. If $d(u,v)=3$, then $x$ is adjacent to $v$ and $y$ is adjacent to $u$, establishing that $G$ satisfies the stronger version of
condition (a) where $x$ and $y$ are adjacent.  If $d(u,v)=4$, then $x$ and $y$ both have distance 2 to $u$ and to $v$, thus $I(u,v)$ satisfies condition (b).
This concludes the proof.
\end{proof}

The class of bipartite absolute retracts is closed by taking gated amalgams and retractions but it is not closed by taking Cartesian products.
We will say that a graph $G$ is a {\it weak bipartite absolute retract} if $G$ can be obtained from a set of bipartite absolute retracts using Cartesian products and gated amalgams.
 Since modular graphs are closed by taking Cartesian products and gated amalgams,
weak bipartite absolute retracts are modular. Median graphs are not bipartite absolute retracts but are weak bipartite absolute retracts.
From Propositions \ref{gated-amalgams-products} and \ref{bar->G2} we immediately obtain:

\begin{proposition}\label{weak_bar} Any weak bipartite absolute retract $G$ has $G^2$-connected medians.
\end{proposition}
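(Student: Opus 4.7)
The plan is to derive Proposition~\ref{weak_bar} by structural induction on the construction of $G$ as a weak bipartite absolute retract, leveraging the two ingredients already at hand: Proposition~\ref{bar->G2} for the base case, and Proposition~\ref{gated-amalgams-products} (applied with $p=2$) for the inductive step.

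More concretely, I would argue as follows. By definition, there exists a finite sequence of graphs $H_1,H_2,\ldots,H_n=G$ such that each $H_i$ is either a bipartite absolute retract, or obtained as a Cartesian product $H_j \square H_k$ of two earlier graphs in the sequence, or obtained as a gated amalgam of two earlier graphs in the sequence. I would then prove by induction on $i$ that every $H_i$ has $G^2$-connected medians. The base case $H_i$ a bipartite absolute retract is exactly the content of Proposition~\ref{bar->G2}. For the inductive step, if $H_i = H_j \square H_k$ with $H_j, H_k$ having $G^2$-connected medians by induction hypothesis, the first assertion of Proposition~\ref{gated-amalgams-products} shows that $H_i$ also has $G^2$-connected medians; similarly, if $H_i$ is a gated amalgam of $H_j$ and $H_k$, the second assertion of Proposition~\ref{gated-amalgams-products} applies. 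In particular, $G = H_n$ has $G^2$-connected medians.

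There is essentially no obstacle here: the work has already been done in Propositions~\ref{bar->G2} and \ref{gated-amalgams-products}, and the argument is a one-line induction. The only thing to take care of is that the notion of ``obtained from a set of bipartite absolute retracts using Cartesian products and gated amalgams'' is naturally indexed by such a finite construction sequence, which makes the induction straightforward. Thus the proof can be given in a couple of sentences, simply citing the two prior results.
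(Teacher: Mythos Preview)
Your proposal is correct and matches the paper's own approach exactly: the paper's proof consists of the single sentence ``From Propositions~\ref{gated-amalgams-products} and~\ref{bar->G2} we immediately obtain,'' and your structural induction is precisely what underlies that sentence.
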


\subsection{Basis graphs} Bandelt and Chepoi \cite[Corollary 4]{BaCh_median} proved that basis graphs of matroids
have connected medians and used this result to design a simple greedy algorithm to compute the median of a profile
in such graphs. The proof follows from the fact that basis graphs of matroids are meshed, satisfy the positioning
condition (PC), and are thick. In this subsection, we revisit  this result of \cite{BaCh_median} and extend it
in several directions.

\subsubsection{Basis graphs of matroids and even $\Delta$-matroids} A {\it matroid} on a finite set $I$ is a collection
$\mathcal B$ of subsets of $I,$ called {\it bases,}  which satisfy the
following exchange property: for all $A,B\in {\mathcal B}$ and $a\in A\setminus B$
there exists $b\in B\setminus A$ such that $A\setminus
\{ a\}\cup \{ b\}\in {\mathcal B}$.  It is
well-known that all the bases of a matroid have the same
cardinality. The {\it basis graph} $G=G({\mathcal B})$ of a matroid $\mathcal B$ is the
graph whose vertices are the bases of $\mathcal B$ and edges are the
pairs  $A,B$ of bases differing by a single exchange (i.e., $\vert
A\triangle B\vert=2$).

A {\it $\triangle$--matroid} is a collection $\mathcal B$
of subsets of a finite set $I,$  called {\it bases} (not
necessarily equicardinal) satisfying the  symmetric exchange
property: for any  $A,B\in {\mathcal B}$ and $a\in A\triangle B,$ there
exists $b\in B\triangle A$ such that $A\triangle\{ a,b\}\in {\mathcal
B}.$ A $\triangle$--matroid whose bases all have the same cardinality
modulo 2 is called an {\it even $\triangle$--matroid}. The {\it basis
graph} $G=G({\mathcal B})$ of an even $\triangle$--matroid $\mathcal B$ is the
graph whose vertices are the bases of $\mathcal B$ and edges are the
pairs $A,B$ of bases differing by a single exchange, i.e., $\vert
A\triangle B\vert=2$ (the basis graphs of arbitrary collections
of subsets of even size of $I$ can be defined in a similar way).

Basis graphs of matroids and even $\triangle$--matroids
constitute a subclass of  meshed graphs  \cite{Che_bas}. Basis graphs
of matroids of rank $k$ on a set of size $n$ are isometric subgraphs
of the Johnson graph $J(n,k)$ and basis graphs of even $\triangle$--matroids on a set of size $n$
are isometric subgraphs of the half-cube $\frac{1}{2}H_n$.
Johnson graphs $J(n,k)$ are the basis graphs of uniform matroids of rank $k$, analogously
the half-cubes $\frac{1}{2}H_n$ are the basis graphs of uniform even $\triangle$--matroids.
We recall now the  characterization of basis graphs of matroids and $\triangle$--matroids.  For this purpose,
we introduce the following conditions:
\begin{itemize}
\item
{\it Positioning condition} (PC): for each vertex $u$ and each
square $v_1v_2v_3v_4$ of $G$ the equality $d(u,v_1)+d(u,v_3)=d(u,v_2)+d(u,v_4)$
holds.
\item
{\it 2-Interval condition} (IC$m$):  each 2-interval $I(u,v)$ is an induced subgraph of the $m$--hyperoctahedron $K_{m\times 2}$.
\end{itemize}

It is known that basis graphs of matroids and even $\triangle$--matroids  satisfy
(PC) \cite{Che_bas,Mau}. It is well-known \cite{Mau}
that the 2-intervals of basis graphs of matroids are either squares, or pyramids, or 3-octahedra, thus basis graphs of matroids
are thick and satisfy the 2-interval condition (IC3); analogously, the 2-intervals of even $\triangle$--matroids are thick and satisfy  (IC4).
Notice also that (PC) together with (IC3) or (IC4) imply the meshedness of basis graphs.
Maurer \cite{Mau}  presented a full characterization of finite graphs which
are basis graphs of matroids. Recently, answering a question of \cite{Mau}, this characterization
was refined in \cite{ChChOs_matroid}.  Extending Maurer's result, a characterization of basis graphs of even
$\triangle$--matroids was given in \cite{Che_bas}.   These characterizations
can be formulated in the following way:

\begin{theorem}[\cite{ChChOs_matroid,Che_bas,Mau}]\label{Th_Mau}
A finite graph $G$ is the basis graph of a
matroid if and only if $G$ is a connected thick graph
satisfying  (IC3) and (PC). A finite graph $G$ is a basis
graph of an even $\triangle$--matroid if and only if $G$ is a connected thick graph
satisfying (IC4), (PC),  and no neighborhood $N(v)$  contains an
induced 5-wheel $W_5$ or 6-wheel $W_6$.
\end{theorem}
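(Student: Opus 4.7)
The plan is to prove both characterizations by a common two-step strategy: first, verify that genuine basis graphs satisfy the listed axioms, and second, reconstruct the (even $\Delta$-)matroid from an abstract graph satisfying them. For the easy direction, connectedness is the (symmetric) exchange axiom, thickness and the interval conditions (IC3)/(IC4) follow because 2-intervals in $J(n,k)$ and in $\tfrac{1}{2}H_n$ are induced subgraphs of $K_{3\times 2}$ and $K_{4\times 2}$ respectively, and the positioning condition (PC) is a short calculation using the identification of $d_G$ with $\tfrac{1}{2}|\cdot\triangle\cdot|$ on the embedded bases. In the even $\Delta$-matroid case one additionally checks, by enumerating the possible $2$-intervals around a vertex, that no local neighborhood can contain an induced $W_5$ or $W_6$.

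The substantive work is the reverse direction. Fix a reference vertex $v_0$, which will become a distinguished basis $B_0$, and attempt to label each edge $vv'$ of $G$ by an unordered pair $\{a,b\}$ so that $B(v')=B(v)\triangle\{a,b\}$. The labels are introduced locally using squares: by thickness and (IC3) (respectively (IC4)), opposite edges of a square at $v_0$ must receive the same pair of labels, and iterating this in the 2-intervals at $v_0$ determines a partial ground set $I$ together with subsets $B(v)\subseteq I$ for every neighbor $v$ of $v_0$. To extend the labeling to all of $G$, I would propagate it along shortest paths from $v_0$; here (PC) is the crucial tool, because it forces opposite edges of \emph{any} square of $G$ to carry identical labels, so the labeling is invariant when a shortest path is slid across a 2-cell. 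Once the map $v\mapsto B(v)$ is well-defined on $V(G)$, the set system $\mathcal{B}=\{B(v):v\in V(G)\}$ is read off, and condition (IC3) (respectively (IC4) together with the wheel exclusions) is used to derive the single-exchange (respectively symmetric-exchange) axiom by inspecting $I(u,v)$ for $d(u,v)=2$ jointly with shortest $(u,v)$-paths.

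The main obstacle is the global consistency of the labeling. The local square rule produces only a pseudo-labeling: transporting labels from $v_0$ to the same vertex $v$ along two different shortest paths could, a priori, yield different pairs, and an obstruction around a single cycle would wreck the construction of $I$. This is exactly what (PC) is designed to rule out, since it forces the ``holonomy'' of the label-transport around every 4-cycle to be trivial; combined with thickness, which supplies enough squares to connect any two shortest paths by a sequence of square-flips, this triviality then spreads to arbitrary cycles. For the even $\Delta$-matroid half of the theorem the same obstruction must be overcome with $K_{4\times 2}$'s in place of $K_{3\times 2}$'s, and the exclusion of induced $W_5$ and $W_6$ inside neighborhoods is precisely what prevents the ``twisted'' local configurations that would otherwise block a coherent labeling and, ultimately, the symmetric exchange axiom.
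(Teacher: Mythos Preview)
The paper does not prove this theorem. Theorem~\ref{Th_Mau} is stated as a citation of results from \cite{Mau}, \cite{Che_bas}, and \cite{ChChOs_matroid}; no proof is given or even sketched in the paper, which only uses the statement (and in fact only mildly, to motivate the study of partial Johnson graphs and partial halved-cubes). So there is no ``paper's own proof'' to compare against.

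As for your sketch on its own merits: the outline is broadly faithful to the strategy in the cited sources (fix a base vertex, label edges by exchange pairs, propagate along squares, invoke (PC) for coherence), but it papers over the genuinely hard step. Your claim that ``thickness, which supplies enough squares to connect any two shortest paths by a sequence of square-flips, [makes] this triviality spread to arbitrary cycles'' is exactly where the real work lies. Thickness alone does not imply that any two geodesics with common endpoints are homotopic through squares; one needs a simple-connectedness statement for the square (or triangle--square) complex of $G$, and establishing that from the axioms is the core of \cite{ChChOs_matroid}. Maurer's original argument in \cite{Mau} used an additional link condition (neighborhoods are line graphs of bipartite graphs) precisely to get around this, and the point of \cite{ChChOs_matroid} was to show that condition is redundant. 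Your proposal asserts the conclusion of that nontrivial result without indicating how to obtain it. Similarly, in the even $\Delta$-matroid case, the role of the $W_5,W_6$ exclusions is more delicate than ``preventing twisted local configurations''; see \cite{Che_bas} for how these enter the reconstruction.
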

For a characterization of isometric subgraphs of hypercubes, see \cite{Dj} and for a characterization of isometric
subgraphs of Johnson graphs, see \cite{Ch_Johnson}. Since the hypercubes can be isometrically embedded in Johnson graphs,
the second result generalizes the first result. Notice also that the question of the characterization of isometric subgraphs of
halved-cubes is still open.

\subsubsection{Isometric subgraphs of Johnson graphs and halved-cubes with connected medians}
In this subsection, we characterize the isometric subgraphs of Johnson graphs and of halved-cubes having connected medians.
In analogy with isometric subgraphs of hypercubes,
which are usually called \emph{partial cubes}, we call \emph{partial Johnson graphs} and \emph{partial halved-cubes} the
isometric subgraphs of Johnson graphs and halved-cubes, respectively. We start by extending the proof of \cite{BaCh_median} that basis graphs of matroids have connected medians to basis graphs
of even $\triangle$-matroid.

\begin{proposition}\label{thick} Let $G=(V,E)$ be a graph satisfying the positioning condition (PC) and let $\pi$ be any profile
on $V$. For any $u,v\in V$ with $d(u,v)=2$ included in a square $(u,s,v,t)$ the following version of $\Loz(u,v)$
holds: $F_{\pi}(s)+F_{\pi}(t)=F_{\pi}(u)+F_{\pi}(v)$. Consequently, if
$G$ satisfies (PC) and is thick (in particular, if $G$ is the basis graph
of an even $\triangle$-matroid), then $G$ has connected medians.
\end{proposition}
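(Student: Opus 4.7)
The statement splits into two pieces, matching its two sentences. For the first piece, the idea is to apply the positioning condition (PC) pointwise to each vertex $w$ in the support of $\pi$. Given the square $(u,s,v,t)$, condition (PC) applied with base vertex $w$ yields $d(w,u)+d(w,v)=d(w,s)+d(w,t)$. I will multiply this equality by $\pi(w)$ and sum over $w\in\supp(\pi)$; since summation respects equalities, this gives $F_{\pi}(u)+F_{\pi}(v)=F_{\pi}(s)+F_{\pi}(t)$, which is exactly the sharpened form of $\Loz(u,v)$ claimed.

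For the second piece, I will promote this equality to all pairs $u,v$ at distance $2$ using thickness. Thickness says that every such pair lies in some induced $4$-cycle $(u,s,v,t)$; the two remaining corners $s,t$ are common neighbors of $u$ and $v$, so $s,t\in I^{\circ}(u,v)$. The first piece then gives $F_{\pi}(s)+F_{\pi}(t)=F_{\pi}(u)+F_{\pi}(v)$, which is an instance of $\Loz(u,v)$ (with witnesses $s$ and $t$). Thus every median function $F_{\pi}$ is a lozenge function at distance $2$.

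I will then chain together the already-established implications: by Lemma \ref{lozenge}, $\Loz(u,v)$ implies $\WP(u,v)$ for every $u,v$ with $d(u,v)=2$; by Lemma \ref{weakly-peakless} (equivalence of conditions (i) and (iii)), this local distance-$2$ property forces $F_{\pi}$ to be weakly peakless; and finally, by Proposition \ref{prop-cmed} (equivalence of conditions (\ref{th-cmed-3}) and (\ref{th-cmed-6})), the weak peaklessness of all median functions is precisely the statement that $G$ has connected medians. The ``in particular'' clause follows because basis graphs of even $\triangle$-matroids satisfy (PC) and are thick (their $2$-intervals are induced in the hyperoctahedron $K_{4\times 2}$, so each pair at distance $2$ lies in a square). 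There is no real obstacle here: the only thing to be careful about is that in the square $(u,s,v,t)$ through $u,v$ the vertices $s,t$ indeed belong to $I^{\circ}(u,v)$, which is immediate since $s,t\sim u,v$ and $d(u,v)=2$.
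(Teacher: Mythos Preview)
Your proposal is correct and follows essentially the same route as the paper: apply (PC) pointwise to each $x\in\supp(\pi)$ to get $d(x,s)+d(x,t)=d(x,u)+d(x,v)$, sum with weights to obtain the equality, then use thickness to cover all distance-$2$ pairs. The only cosmetic difference is that the paper observes directly that this equality yields $\WC(u,v)$ (taking the smaller of $F_\pi(s),F_\pi(t)$), whence $F_\pi$ is weakly convex, whereas you pass through $\Loz(u,v)\Rightarrow\WP(u,v)$ and weak peaklessness; both routes land on Proposition~\ref{prop-cmed} in the same way.
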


\begin{proof} Let $u,v$ be two vertices of $G$ with $d(u,v)=2$ included in a square $(u,s,v,t)$. By (PC), $d(s,x)+d(t,x)=d(u,x)+d(v,x)$ for any vertex $x$.
Taking the sum of such equalities multiplied by the weights of the vertices $x$ from $\supp(\pi)$, we obtain the required equality   $F_{\pi}(s)+F_{\pi}(t)=F_{\pi}(u)+F_{\pi}(v)$.
If $G$ is thick, then any pair of vertices $u,v$ with $d(u,v)=2$ is included in a square $(u,s,v,t)$ satisfying the previous equality, thus the median function $F_{\pi}$
is weakly convex. Finally, basis graphs of even $\triangle$-matroids are thick and satisfy the positioning condition (PC) \cite{Che_bas}.
\end{proof}

Median graphs are exactly the meshed isometric subgraphs of hypercubes and are exactly the
isometric subgraphs of hypercubes with connected medians. We show that this kind of characterization
can be extended to isometric subgraphs of Johnson graphs.

\begin{figure}[t]
{\includegraphics[scale=0.7]{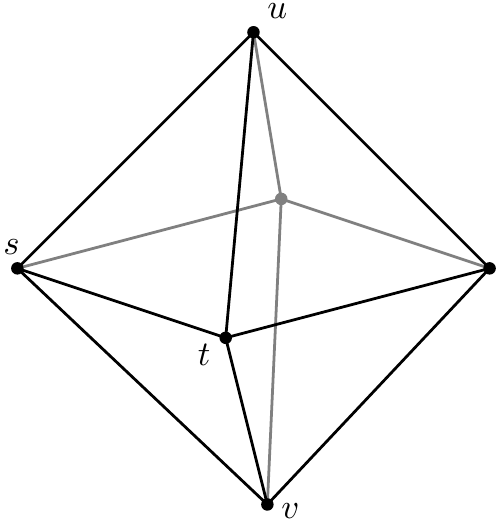}}
\caption{A 3-octahedron}\label{3octahedron}
\end{figure}

\begin{theorem}\label{meshed_Johnson} A partial Johnson graph  $G$  has connected medians if and only if
$G$ is meshed.
\end{theorem}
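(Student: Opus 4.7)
The forward direction ($G$ has connected medians $\Rightarrow$ $G$ is meshed) is immediate from Remark~\ref{connected->meshed}. For the converse, I will verify the characterization of Theorem~\ref{th-cmed}: for each pair $u,v$ of vertices of $G$ at distance $2$, I will exhibit a nonempty $S\subseteq I^\circ_G(u,v)$ and a positive weight $\eta$ on $S$ satisfying conditions $(\alpha)$ and $(\beta)$. The workhorse is the rigid structure of $2$-intervals in $J(n,k)$: writing $u=A\cup\{a,b\}$ and $v=A\cup\{c,d\}$, the four common neighbors of $u,v$ in $J$ are $w_{ac},w_{ad},w_{bc},w_{bd}$, forming a $4$-cycle with opposite (non-adjacent) pairs $\{w_{ac},w_{bd}\}$ and $\{w_{ad},w_{bc}\}$, and together with $u,v$ they induce a $3$-octahedron $K_{3\times 2}$. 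Since $G$ is isometric in $J$, $I^\circ_G(u,v)$ is an induced subgraph of that $4$-cycle, and moreover the positioning condition of $J$ transfers to $G$: for each opposite pair $\{s,t\}$ in the cycle,
\begin{equation*}
d(u,y)+d(v,y)=d(s,y)+d(t,y)\qquad\text{for all }y\in V(G).
\end{equation*}

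I will split on the structure of $I^\circ_G(u,v)$. If it contains an opposite pair $\{s,t\}$, I set $S=\{s,t\}$ with $\eta\equiv 1$ and declare $s,t$ to be each other's companion; then $(\alpha)$ holds with equality by the PC identity above. If $|I^\circ_G(u,v)|=1$, say $\{w\}$, I set $S=\{w\}$ with $\eta(w)=1$ and $w$ its own companion; since $w$ is the only common neighbor of $u,v$ in $G$, meshedness of $G$ gives $2d(w,y)\le d(u,y)+d(v,y)$ for every $y\in V(G)$, in particular on $M(u,v)$, yielding $(\alpha)$. If $I^\circ_G(u,v)=\{w,w'\}$ with $w,w'$ adjacent on the $4$-cycle, I again take $S=\{w,w'\}$, $\eta\equiv 1$, each vertex its own companion.

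Condition $(\beta)$ is handled uniformly from the Johnson structure: for any $x\in J^\circ_G(u,v)$, the definition forces $x$ to have a $G$-neighbor in $I^\circ_G(u,v)$, and a direct calculation in $J(n,k)$ (using $|x\triangle u|=2$, $|x\triangle v|=4$ or symmetrically) shows that the neighbors of $x$ inside $\{w_{ac},w_{ad},w_{bc},w_{bd}\}$ form an \emph{edge} of the $4$-cycle. Such an edge meets every opposite pair in exactly one vertex, so in the first case $x$ has a neighbor in $S$; and the two smaller cases are even easier since every neighbor of $x$ in the cycle lies in $V(G)$, hence in $S$.

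Main obstacle: the third case, where $I^\circ_G(u,v)=\{w,w'\}$ with $w\sim w'$. Here meshedness only supplies the disjunction ``$2d(w,y)\le d(u,y)+d(v,y)$ or $2d(w',y)\le d(u,y)+d(v,y)$'' at each $y\in V(G)$, whereas $(\alpha)$ with self-companioning requires the conjunction on $M(u,v)$. The key is to rule out a vertex $x\in M(u,v)$ at which exactly one of these two inequalities fails. Such a failure would mean, by the PC identities $d(w,x)+d(\tilde w,x)=d(w',x)+d(\tilde{w'},x)=4$ (with $\tilde w=w_{bd},\tilde{w'}=w_{bc}$), that $d_J(w,x)=3$ or $d_J(w',x)=3$; but then $x$ is $J$-adjacent to $\tilde w$ or $\tilde{w'}$, neither of which is in $V(G)$ (otherwise we would already be in the first case). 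I will exploit this together with the requirement $d_G(u,x)=d_G(v,x)=2$ in the isometric embedding: realising those $2$-paths inside $G$ produces common neighbors of $u$ and $v$ that, combined with meshedness applied at appropriate auxiliary pairs and the fact that $x\in M(u,v)$ has no $G$-neighbor in $\{w,w'\}$, contradicts the assumption that $I^\circ_G(u,v)$ contains no opposite pair. This forces $d_J(w,x)=d_J(w',x)=2$, whence both inequalities hold and $(\alpha)$ is verified.
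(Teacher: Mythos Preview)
Your overall strategy via Theorem~\ref{th-cmed} is sound, and the first two cases (an opposite pair in $I^\circ_G(u,v)$, respectively a single vertex) are handled correctly, as is condition~$(\beta)$ throughout. The paper, by contrast, bypasses Theorem~\ref{th-cmed} and verifies the lozenge condition $\Loz(u,v)$ directly for every vertex of the profile; but both routes converge on precisely the same obstacle in the third case.

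The gap is in your third case, $I^\circ_G(u,v)=\{w,w'\}$ with $w\sim w'$. You correctly isolate what has to be excluded: a vertex $x\in M(u,v)$ with $d(u,x)=d(v,x)=2$ and $d(w,x)=3$ (say). But the closing paragraph is not a proof. The sentence ``realising those $2$-paths inside $G$ produces common neighbors of $u$ and $v$ that \ldots\ contradicts the assumption that $I^\circ_G(u,v)$ contains no opposite pair'' does not go through as stated: the common neighbors of $u$ and $v$ in $G$ are, by hypothesis, exactly $w$ and $w'$, so no new ones can appear; and knowing that $x$ is $J$-adjacent to some $\tilde w\notin V(G)$ places no constraint on $V(G)$ by itself. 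Some genuine structural work remains.

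The paper's resolution of this exact point shows what is missing. Writing $s,t$ for your $w,w'$ and assuming $d(s,x)=2$, $d(t,x)=3$, it applies the triangle condition (available since meshed graphs have equilateral metric triangles) twice: first to the triple $x,s,v$ to obtain $z\sim x,s,v$ with $z\notin\{s,t\}$; then, since $d(u,x)=d(u,z)=2$ and $x\sim z$, to obtain $w_0\sim u,x,z$ with $w_0\notin\{s,t\}$. Meshedness forces $w_0$ to be adjacent to a vertex of $I^\circ(u,v)$, necessarily $s$ (not $t$, because $w_0\sim x$ and $d(t,x)=3$), and $d(w_0,v)=2$. Then $u,t,v,z,w_0$ form an induced $5$-cycle all adjacent to $s$, i.e.\ an induced $W_5$ centered at $s$. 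This is impossible in a partial Johnson graph, since the neighborhood of any vertex of $J(n,k)$ is the line graph of a bipartite graph and hence $W_5$-free. Inserting this argument completes condition~$(\alpha)$ in your third case and finishes your proof.
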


\begin{proof} One direction follows from the fact that all graphs with
connected medians are meshed (Remark \ref{connected->meshed}).
Conversely, let $G$ be a meshed partial Johnson
graph and $\pi$ be a profile on $G$. We show  that any pair of
vertices $u,v$ at distance 2
satisfies the lozenge condition $\Loz(u,v)$. Johnson graphs are
basis graphs of matroids, thus they satisfy the positioning condition (PC) \cite{Mau}.
Since $G$ is an isometric subgraph of a Johnson graph,  $G$ also satisfies
(PC). Since Johnson graphs satisfy the interval condition
(IC3), $G$ also satisfies (IC3). Thus, the interval $I(u,v)$ is included in a 3-octahedron; see Fig.~\ref{3octahedron}.
If the pair $u,v$ is included in a square $(u,s,v,t)$, then
$F_{\pi}(s)+F_{\pi}(t)=F_{\pi}(u)+F_{\pi}(v)$ by the first assertion
of Proposition \ref{thick} and we are done. Thus suppose that the
pair $u,v$ is not included in a square. Since $I(u,v)$ is a square-free subgraph of a 3-octahedron,
$I^{\circ}(u,v)$ consists
of a single vertex $s$ or of two adjacent vertices $s,t$. First,
let $I^{\circ}(u,v)=\{ s\}$.  Since $G$ is meshed, for any vertex
$x$, the radius function $r_x$ is weakly convex. This implies that
$d(s,x)\le \frac{1}{2}(d(u,x)+d(v,x))$ and therefore that
$F_{\pi}(s)\le \frac{1}{2}(F_{\pi}(u)+F_{\pi}(v))$.

Now suppose that $I^{\circ}(u,v)=\{ s,t\}$, where $s\sim t$. Pick any $x$ from the profile $\pi$.
We assert that $d(s,x)+d(t,x)\le d(u,x)+d(v,x)$. Suppose without loss of generality that
$d(u,x)\le d(v,x)$. The required inequality is obviously true if $d(v,x)=d(u,x)+2$.
Now, let $d(v,x)=d(u,x)+1$. By meshedness of $G$, we then have $\min\{ d(s,x),d(t,x)\}\le d(u,x)$,  say
$d(s,x)\le d(u,x)$. Since $s$ and $t$ are adjacent,   $d(t,x)\le d(s,x)+1\le d(u,x)+1=d(v,x)$,
yielding the required inequality $d(s,x)+d(t,x)\le d(u,x)+d(v,x)$.

Finally, let $d(u,x)=d(v,x)=\ell$. By meshedness of $G$, $\min\{ d(s,x),d(t,x)\}\le \ell$,
say $d(s,x)\le \ell$. Since $s\sim t$, if $d(s,x)<\ell$, then $d(t,x)\le \ell$ and
we  are done. Therefore, further
suppose that $d(s,x)=\ell$ and $d(t,x)=\ell+1$. Let $x'u'v'$ be a quasi-median of the triplet
$x,u,v$ in $G$. Since $G$ is meshed, $x'u'v'$ is an equilateral metric triangle.
Since $d(x,u)=d(x,v)$ and $d(u,v)=2$, either
$x'=u'=v'$ or $u'=u,v'=v$ and $x'uv$ is a metric triangle of size 2.
If $x'=u'=v'$ we conclude that $x'\in I^{\circ}(u,v)=\{ s,t\}$, yielding $x'=s$.
Since  $d(x,x')=\ell-1$ and $d(x,s)=\ell$, this is impossible.
Therefore, $x'uv$ is a metric triangle of size 2 and $d(x',x)=\ell-2$.
Since $G$ is meshed, $x'$ has distance 2 to $s$ or to $t$. If $d(x',t)=2$,
then $d(t,x)\le 2+d(x',x)=\ell$, which is impossible.
Therefore, $d(x',s)=2$ and $d(x',t)=3$.

Since metric triangles of $G$ are equilateral, $G$ satisfies the
triangle condition (TC). Applying (TC) to the triplet $x',s,v$, we
conclude that there exists a vertex $z\sim x',s,v$. If $z \sim u$, then
$z \in I^{\circ}(u,v)=\{ s,t\}$, contrary to the assumption that
$d(x',s)=2$ and $d(x',t)=3$.  Since $d(u,x')=d(u,z)=2$ and $x'\sim z$,
by (TC) there exists a vertex $w\sim u,x',z$ and $w \notin
\{s,t\}$. Since $w\sim u$ and $d(w,v)\le 2$ and since $G$ is meshed,
$w$ is adjacent to a vertex in $I^{\circ}(u,v)=\{s,t\}$. Since
$d(t,x')=3$, necessarily $w$ is adjacent to $s$ and is at distance 2
from $v$.  Then we obtain an induced $W_5$ defined by the vertices
$w,z,v,t,u,s$ and centered at $s$, which is impossible since $G$ is an
isometric subgraph of a Johnson graph (in fact, the neighborhood of
any vertex must be the line graph of a bipartite graph
\cite{Ch_Johnson}).
\end{proof}

\begin{figure}[t]
{\includegraphics[scale=0.6,page=4]{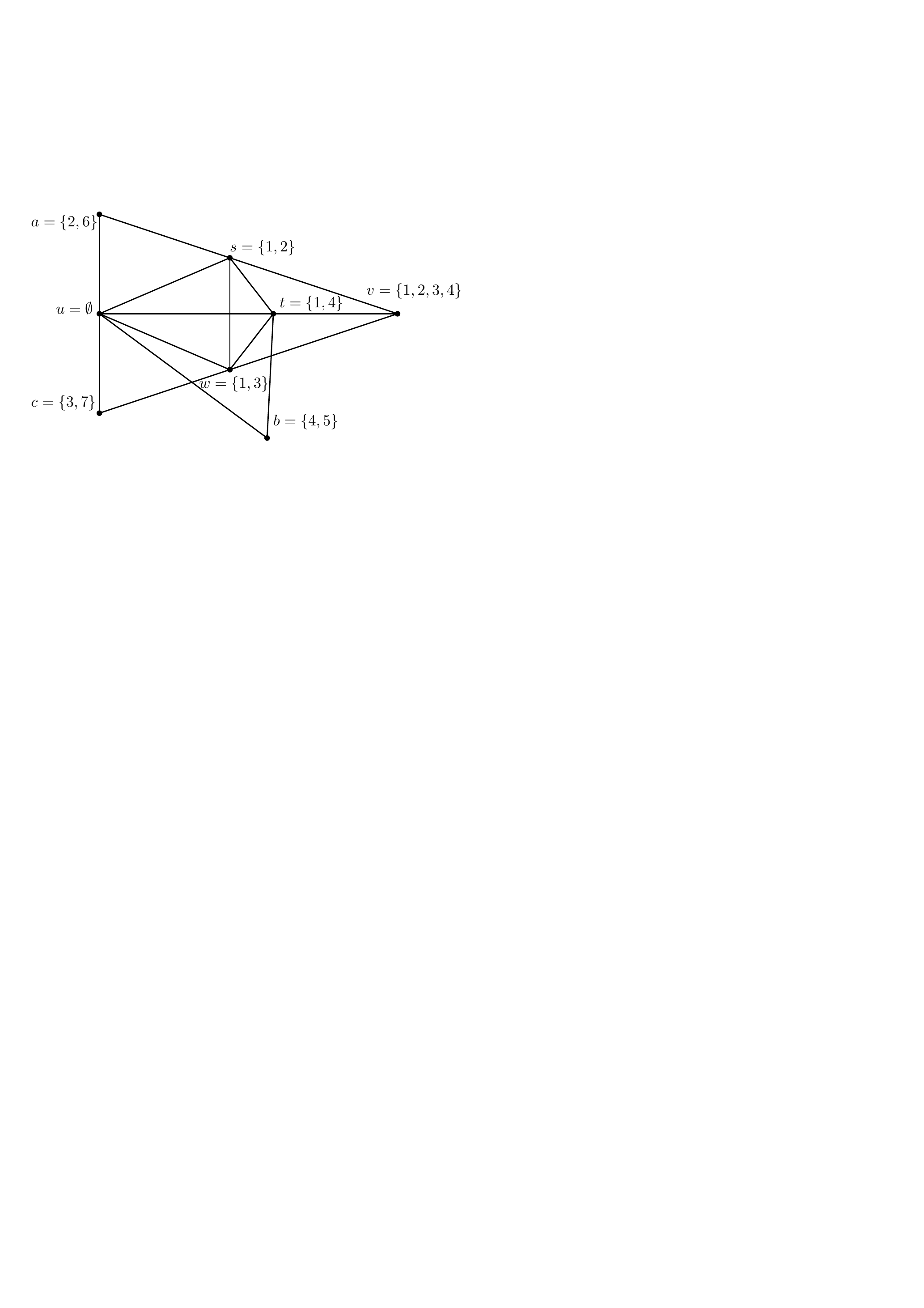}}
\caption{Left: a $\beta$-configuration $R_0$ and its isometric embedding into the halved-cube $\frac{1}{2}H_7$. Right: a profile $\pi=\{ a,b,c,v\}$  on $R_0$ for which
all values of $F_{\pi}$ are 6, only $F_{\pi}(u)=5$.}\label{beta-config}
\end{figure}

For partial halved-cubes, connectedness of medians cannot be longer characterized by  meshedness.
In Fig. \ref{beta-config} left, we present a weakly modular (in fact, a chordal) isometric subgraph $R_0$ of the halved-cube $\frac{1}{2}H_7$ and a profile for which not all local medians are global.
Right, a profile $\pi$ (which is 1 on the vertices $a,b,c,v$ and 0 elsewhere) for which $\Med(\pi)=\{ v\}$ and $\lMed(\pi)=\{ u,v\}$ is given.
Other examples of weakly modular partial halved-cubes with non-connected medians can be
obtained from $R_0$ by adding the edge $ac$ (in this case, $a=\{ 2,6\}$ and $c=\{ 3,6\}$)
or by adding the edges $ac, bc$, and $ab$ (in this case, $a=\{ 2,5\}, b=\{ 4,5\}$, and $c=\{ 3,5\}$). In both cases, $v$ is still a local median which is not a global median.
Now, we present a general method of constructing such examples. Let $u,v,s,t,w$ be five vertices as in the graph $R_0$, i.e., $d(u,v)=2, I^{\circ}(u,v)=\{ s,t,w\}$ and the vertices $s,t,w$ are pairwise
adjacent. We add three new vertices $a,b,c$, such that $a$ is adjacent to $s$ and  one of the vertices $u$ or $v$, the vertex $b$ is adjacent to $t$ and one of the vertices $u$ or $v$, and the vertex $c$ is adjacent to $w$ and one of the
vertices $u$ or $v$. There maybe also some edges between $a$, $b$, and $c$. We call such a graph a \emph{$\beta$-configuration} (since it violates condition $(\beta)$ of Theorem \ref{th-cmed}).

\begin{proposition}\label{prop:wm_hc} A weakly modular partial halved-cube $G$
has connected medians if and only if whenever $d(u,v)=2$ and $I^{\circ}(u,v)$ consists of three pairwise adjacent
vertices $s,t,w$,  the interval $I(u,v)$ is not included in a $\beta$-configuration.
\end{proposition}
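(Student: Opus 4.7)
The plan is to apply Theorem~\ref{th-cmed} and to perform a case analysis on the structure of $I^{\circ}(u,v)$ for pairs $u,v$ with $d(u,v)=2$. Since $G$ is weakly modular it is meshed, so the meshedness requirement of Theorem~\ref{th-cmed} is automatic; the entire content lies in exhibiting, for every such pair $(u,v)$, a set $S\subseteq I^{\circ}(u,v)$ and a weight function $\eta$ on $S$ satisfying the positioning property $(\alpha)$ and the majority property $(\beta)$. A key structural observation is that, since $G$ is an isometric subgraph of $\frac{1}{2}H(X)$, the interval $I(u,v)$ for $d(u,v)=2$ embeds as an induced subgraph of the corresponding $2$-interval in the halved-cube, which is a $3$-octahedron $K_{2\times 3}$; hence $I^{\circ}(u,v)$ has at most six vertices and only a short list of induced subgraphs of $K_{2\times 3}$ can arise.

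For the necessity direction, I would assume that $G$ contains a $\beta$-configuration on vertices $u,v,s,t,w,a,b,c$ and construct an explicit profile, assigning weight~$1$ to each of $a,b,c$ together with a suitably chosen weight on the one of $u,v$ that makes $F_{\pi}(u)=F_{\pi}(v)$. A direct computation, using that each of $a,b,c$ lies at distance~$1$ from one of $u,v$ and at distance~$2$ from the other and is adjacent to exactly one of $s,t,w$, shows that both $u$ and $v$ minimize $F_{\pi}$ while every vertex of $I^{\circ}(u,v)=\{s,t,w\}$ has strictly larger value; since $u\nsim v$, the median set is then disconnected.

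For the sufficiency, I would verify $(\alpha)$ and $(\beta)$ for every pair $(u,v)$ with $d(u,v)=2$. In every case, property $(\alpha)$ with the companion $t=s$ reduces to $2d(s,x)\le d(u,x)+d(v,x)$ for $x\in M(u,v)$, which holds by meshedness (weak convexity of radius functions, Remark~\ref{connected->meshed}); hence the whole question reduces to $(\beta)$. When $|I^{\circ}(u,v)|\le 2$ or when $I^{\circ}(u,v)$ is not a triangle of three pairwise adjacent vertices, choosing $S=I^{\circ}(u,v)$ with $\eta\equiv 1$ is shown to satisfy $(\beta)$ by inspecting, case by case, which vertices can lie in $J^{\circ}(u,v)$, using the $3$-octahedron structure of $I(u,v)$ in the halved-cube and the triangle and quadrangle conditions of $G$. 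The critical case is $I^{\circ}(u,v)=\{s,t,w\}$ pairwise adjacent: with $S=\{s,t,w\}$ and $\eta\equiv 1$, condition $(\beta)$ at a vertex $x\in J^{\circ}(u,v)$ demands $|S\cap N(x)|\ge 2$, and the hypothesis that $I(u,v)$ is not contained in a $\beta$-configuration is precisely the statement that no $x\in J^{\circ}(u,v)$ is adjacent to exactly one of $s,t,w$.

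The main obstacle is the case analysis in the sufficiency direction: one must determine, in the ambient halved-cube and using weak modularity, which induced subgraphs of $K_{2\times 3}$ actually arise as $I^{\circ}(u,v)$, and for each describe the vertices of $J^{\circ}(u,v)$ and their adjacencies to $I^{\circ}(u,v)$. Once this enumeration is in hand, $(\beta)$ becomes a local verification in each case and the $\beta$-configuration captures the unique obstruction.
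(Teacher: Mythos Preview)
Your overall strategy (verify conditions $(\alpha)$ and $(\beta)$ of Theorem~\ref{th-cmed} for every pair $u,v$ at distance~$2$) matches the paper's, but two steps of your sufficiency argument contain genuine gaps.

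\medskip
\noindent\textbf{The $(\alpha)$ step.} You claim that taking each $s\in S$ as its own companion works ``by meshedness''. Meshedness only says that for each fixed $x$ there is \emph{some} common neighbor $s$ with $2d(s,x)\le d(u,x)+d(v,x)$; it does not say this holds for \emph{every} $s\in I^{\circ}(u,v)$. In fact, if $u,v$ lie in a square $(u,s',v,t')$, then $t'\in M(u,v)$ and $2d(s',t')=4>2=d(u,t')+d(v,t')$, so $s'$ cannot be its own companion. The paper treats the square case separately: it takes $S=\{s',t'\}$ with mutual companions and uses the positioning condition (PC), which $G$ inherits as an isometric subgraph of a halved-cube, to get the exact equality $d(s',x)+d(t',x)=d(u,x)+d(v,x)$. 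Only when $u,v$ are not in a square (so $I^{\circ}(u,v)$ is a clique of size at most~$3$) does companion${}={}$self work, and the reason is not meshedness but the fact that weakly modular metric triangles are strongly equilateral (for $x\in M(u,v)$ at distance~$2$ from $u,v$) together with $I^{\circ}(u,v)$ being a clique (for $x\in I^{\circ}(u,v)$).

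\medskip
\noindent\textbf{The $(\beta)$ step in the triangle case.} Here is the more serious gap. You assert that ``$I(u,v)$ is not contained in a $\beta$-configuration is precisely the statement that no $x\in J^{\circ}(u,v)$ is adjacent to exactly one of $s,t,w$''. This is a misreading: a $\beta$-configuration requires \emph{three} vertices $a,b,c\in J^{\circ}(u,v)$, each having a \emph{different} one of $s,t,w$ as its unique (``personal'') neighbor in $I^{\circ}(u,v)$. Absence of a $\beta$-configuration therefore only guarantees that at least one of $s,t,w$ --- say $w$ --- is \emph{nobody's} personal neighbor. It does \emph{not} rule out some $a\in J^{\circ}(u,v)$ with $N(a)\cap\{s,t,w\}=\{s\}$. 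For such an $a$, your choice $\eta\equiv 1$ on $\{s,t,w\}$ gives $\sum_{S\cap N(a)}\eta=1<\tfrac{3}{2}$, so $(\beta)$ fails. The correct move (and the paper's) is to set $\eta(w)=0$ and $\eta(s)=\eta(t)=1$: then every $x\in J^{\circ}(u,v)$ is adjacent to $s$ or $t$ (otherwise $w$ would be its personal neighbor), and $(\beta)$ holds. Similar non-uniform $\eta$'s are needed when $|I^{\circ}(u,v)|=2$.

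\medskip
\noindent A minor correction: halved-cubes satisfy (IC4), so the $2$-interval $I(u,v)$ sits in the $4$-octahedron $K_{4\times 2}$, not the $3$-octahedron; it is $I^{\circ}(u,v)$ that embeds in $K_{3\times 2}$.
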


\begin{proof} Let $G$ be a  weakly modular isometric subgraph $G$ of a halved-cube. Let $u,v$
be any two vertices of $G$ with $d(u,v)=2$. We asserts that the pair $u,v$ satisfies the conditions $(\alpha)$ and $(\beta)$
of Theorem \ref{th-cmed} if and only if $I(u,v)$ is not included in a $\beta$-configuration.

First suppose that the pair $u,v$ is included in a square $(u,s',v,t')$. Then clearly $I(u,v)$ cannot be included in a $\beta$-configuration.
Conversely, we show that in this case the pair $u,v$ satisfies the conditions $(\alpha)$ and $(\beta)$ of Theorem \ref{th-cmed}.
As  the set $S$ we can take $\{ s',t'\}$ and set $\eta(s')=\eta(t')=1$.
Since halved-cubes are basis graphs of even $\triangle$-matroids,
they satisfy the positioning condition (PC). Since $G$ is an isometric subgraph of a halved-cube,
$G$ also satisfies (PC).  Therefore,  $d(s',x)+d(t',x)=d(u,x)+d(v,x)$ for any vertex $x$ of $G$, implying  the condition $(\alpha)$ of
Theorem \ref{th-cmed}. To verify the condition $(\beta)$ of the same theorem, pick any vertex $x$ of
$J^{\circ}(u,v)$. Since $G$ satisfies the triangle condition, from the definition of $J^{\circ}(u,v)$ it follows that $x$ is adjacent to
one of the vertices $u,v$ and has distance 2 to the second vertex, i.e.,
$d(x,u)+d(x,v)=3$ holds. By (PC), we obtain that $d(x,s')+d(x,t')=3$. Thus $x$ is adjacent to exactly one of the vertices $s',t'$, establishing
$(\beta)$.

Now, suppose that the pair $u,v$ is not included in a square. Since any halved-cube satisfies
the interval condition (IC4), $G$ also satisfies (IC4). This implies that $I^{\circ}(u,v)$
consists of at most three pairwise adjacent vertices $\{ s,t,w\}$.
In all next cases, we set $S=I^{\circ}(u,v)$. We assert that $S$ satisfies the
condition $(\alpha)$ of Theorem \ref{th-cmed}, where the companion of each vertex of $S$ is
the vertex itself. Indeed, pick any $x\in M(u,v)$. If $d(u,x)=d(v,x)=2$, since $x\in J(u,v)$,
we conclude that the triplet $xuv$ is a metric triangle of size 2.
By the characterization of metric triangles  of weakly modular graphs \cite{Ch_metric},
we conclude that $d(z,x) = 2$ and
thus $d(z,x)+d(z,x)=d(u,x)+d(v,x)$ for any $z\in I^{\circ}(u,v)$. If $d(u,x)=d(v,x)=1$, then
$x\in I^{\circ}(u,v)$. Since all vertices of $I^{\circ}(u,v)$ are pairwise adjacent, we
obtain $d(z,x)+d(z,x)\le d(u,x)+d(v,x)$ for any $z\in I^{\circ}(u,v)$ also in this case.

Let $x\in J^{\circ}(u,v)=J(u,v)\setminus M(u,v)$.  Then either $x\sim u$ and $d(x,v)=2$ or $d(x,u)=2$ and $x\sim v$.
Since $G$ has equilateral triangles, any quasi-median of the triplet $x,u,v$ has the form $xuv'$ in the first
case and $xu'v$ in the second case, where $u'$ and $v'$ are vertices of $I^{\circ}(u,v)$.
If $x\in J^{\circ}(u,v)$ has a unique neighbor in $S=I^{\circ}(u,v)$, then we call it a \emph{personal neighbor} of $x$.

Now, we define a weight function $\eta$ on $S$ such that $\eta$ satisfies the condition $(\beta)$ of Theorem \ref{th-cmed}
if and only if the pair $u,v$ is not included in a $\beta$-configuration.  If $S=I^{\circ}(u,v)=\{ s\}$, we set $\eta(s)=1$.
Then $s$ is the personal neighbor of any vertex  $x\in J^{\circ}(u,v)$ and we are done.  If $S=I^{\circ}(u,v)=\{ s,t\}$,
then we set $\eta(s)=\eta(t)=1$ if $s$ and $t$ are personal neighbors of some vertices of
$J^{\circ}(u,v)$ and $\eta(s)=1, \eta(t)=0$ if $t$ is not a personal neighbor of
any vertex of $J^{\circ}(u,v)$. Then any vertex  $x\in J^{\circ}(u,v)$ is adjacent to at least one of the vertices $s$ and $t$ in the
first case and to vertex $s$ in the second case, thus again the condition $(\beta)$ is satisfied. Since $I(u,v)$ cannot be included in
a $\beta$-configuration, we are also done in this case.

Finally, let $S=I^{\circ}(u,v)=\{ s,t,w\}$. If each of the vertices $s,t,w$ is a
personal neighbor of some vertex of $J(u,v)\setminus M(u,v)$, then $u,v,s,t,w$ together with these three vertices  $a,b,c$ of $J(u,v)\setminus M(u,v)$
will define a forbidden $\beta$-configuration. In this case, no function $\eta$ on $S$ will satisfy the condition $(\beta)$.
Indeed, $(\beta)$ will be violated  by that vertex among $a,b,c$ whose personal neighbor realizes
$\min \{ \eta(s),\eta(t),\eta(w)\}$).  Thus $G$ does not have connected medians. Conversely, if $u,v,s,t,w$ is not included in a forbidden $\beta$-configuration,
then at least one of the vertices from $S$, say $w$, is not a personal neighbor of any vertex of $J^{\circ}(u,v)$. Then we set $\eta(s)=\eta(t)=1$ and $\eta(w)=0$.
Since any vertex $x$ of $J^{\circ}(u,v)$ is adjacent to at least one of the vertices $s,t$, we conclude that the pair $u,v$ satisfies the condition $(\beta)$.
\end{proof}

In case of meshed partial halved-cubes, the condition $(\alpha)$ of Theorem \ref{th-cmed} is no longer satisfied.
An \emph{$\alpha$-configuration} containing a 2-interval $I(u,v)$ is of one of the following three types:
\begin{itemize}
\item[\emph{Type 1}:] two or three pairwise adjacent vertices $s,t$ or $s,t,w$ of $I^{\circ}(u,v)$, a vertex $a$ such that $d(a,u)=d(a,v)=d(a,s)=d(a,w)=2, d(a,t)=3$
and a vertex $b$ such that $b\sim t, b\nsim s,w$, and $b$ is adjacent to $u$ or to $v$.
\item[\emph{Type 2}:] three pairwise adjacent vertices $s,t,w$ of  $I^{\circ}(u,v)$, two vertices $a_1,a_2$ such that $d(a_1,u)=d(a_1,v)=d(a_1,s)=d(a_1,w)=2, d(a_1,t)=3$, $d(a_2,u)=d(a_2,v)=d(a_2,s)=d(a_2,t)=2, d(a_2,w)=3$
and a vertex $b$ such that $b\sim t,w$, $b\nsim s$, and $b$ is adjacent to $u$ or  to $v$.
\item[\emph{Type 3}:] three pairwise adjacent vertices $s,t,w$ of  $I^{\circ}(u,v)$, and three vertices $a_1,a_2,a_3$ such that $d(a_1,u)=d(a_1,v)=d(a_1,s)=d(a_1,w)=2, d(a_1,t)=3$, $d(a_2,u)=d(a_2,v)=d(a_2,s)=d(a_2,t)=2, d(a_2,w)=3$, and $d(a_3,u)=d(a_3,v)=d(a_3,t)=d(a_3,w)=2, d(a_1,s)=3$.
\end{itemize}
In Fig. \ref{alpha-config} we present isometric subgraphs of halved-cubes realizing $\alpha$-configurations of Type 1. Notice that there exists a finite number of $(\alpha)$- and $(\beta)$-configurations
and their realizations as isometric subgraphs of halved-cubes. However we do not know if all $\alpha$-configurations of Types 2 and 3 can be realized in isometric subgraphs of halved-cubes with the condition $I^{\circ}(u,v)=\{ s,t,w\}$.

\begin{figure}[t]
{\includegraphics[scale=0.6,page=4]{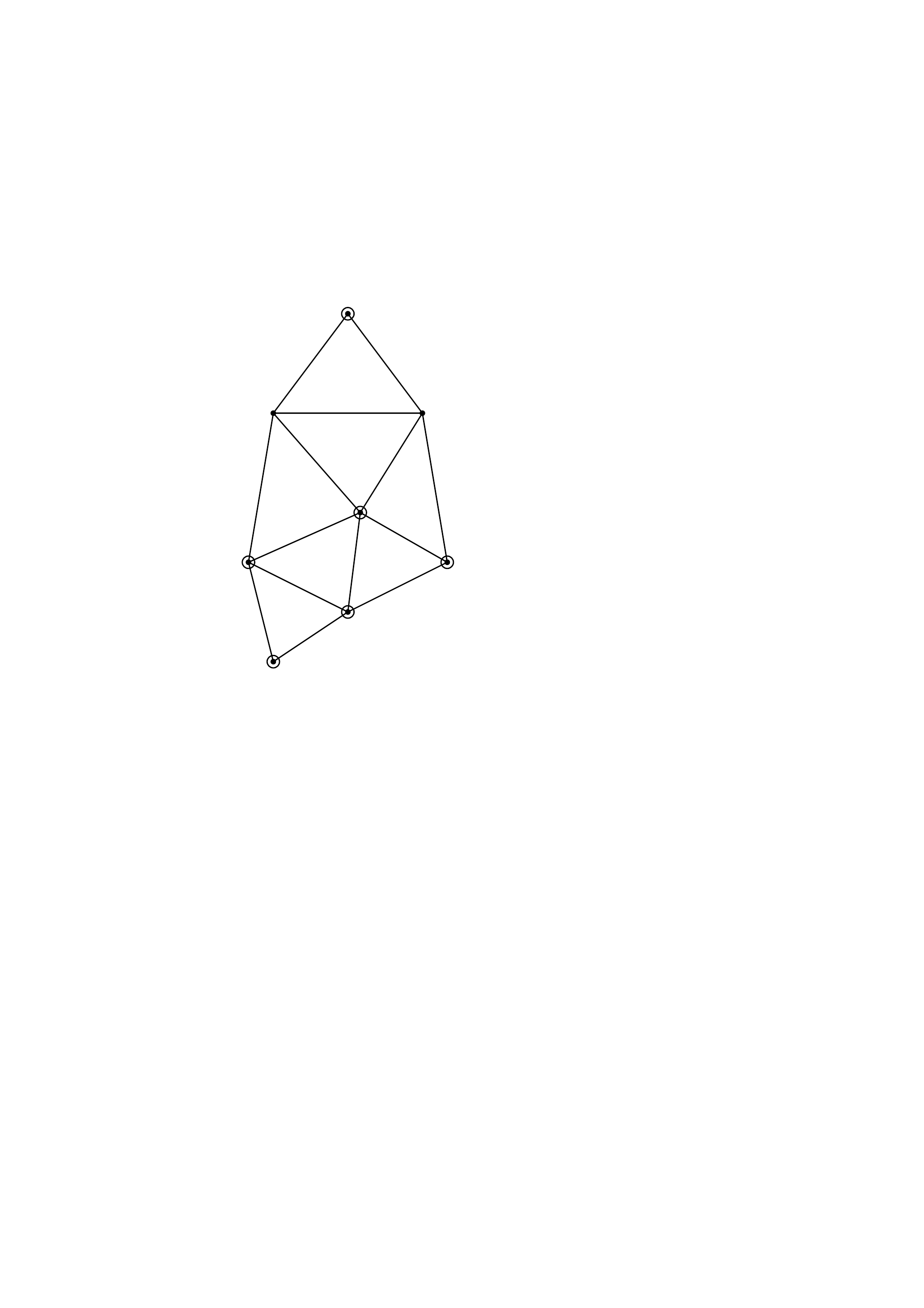}}
\caption{Two $\alpha$-configurations of Type 1 and their realizations as partial halved-cubes.}\label{alpha-config}
\end{figure}

\begin{theorem}\label{prop:meshed_hc} A partial halved-cube $G$ has connected medians if and only if $G$ is meshed and  whenever $d(u,v)=2$ and $I^{\circ}(u,v)$ consists of two or three
pairwise adjacent vertices,  then  $I(u,v)$ is not included in an $\alpha$- or $\beta$-configuration.
\end{theorem}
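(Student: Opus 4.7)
The plan is to apply Theorem~\ref{th-cmed}, which for each pair $u,v$ with $d(u,v)=2$ requires a set $S \subseteq I^{\circ}(u,v)$ and weights $\eta$ satisfying conditions $(\alpha)$ and $(\beta)$.

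For necessity, meshedness comes from Remark~\ref{connected->meshed}. For each forbidden configuration I would exhibit an explicit profile producing a disconnected median set. A $\beta$-configuration supplies, around a $2$-interval $I(u,v)$ with clique interior $\{s,t,w\}$, three vertices $a,b,c$ that are personal neighbours of $s,t,w$ respectively; the profile of Fig.~\ref{beta-config} (weight $1$ on $a,b,c,v$) then defeats every candidate weighting on $S$ at the neighbour of the vertex of $\{s,t,w\}$ receiving the least weight, violating $(\beta)$. An $\alpha$-configuration of Type $1$, $2$, or $3$ additionally provides vertices $a$ (or $a_1,a_2,a_3$) in $M(u,v)$ at distance $3$ from designated vertices of $I^{\circ}(u,v)$; a direct computation shows that these witnesses block every admissible $(S,\eta)$ from satisfying $(\alpha)$, yielding a profile with $\Med(\pi)=\{u,v\}$.

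For sufficiency, fix $u,v$ with $d(u,v)=2$. Since $\frac{1}{2}H_n$ is a basis graph of an even $\triangle$-matroid it satisfies the positioning condition (PC) and the interval condition (IC4), both of which $G$ inherits as an isometric subgraph. Hence $I_G^{\circ}(u,v)$ embeds into the $3$-octahedron. Either $I^{\circ}(u,v)$ contains two non-adjacent vertices $s,t$ (so $(u,s,v,t)$ is a square), in which case $S:=\{s,t\}$ with $\eta(s)=\eta(t)=1$ satisfies $(\alpha)$ by (PC) and $(\beta)$ because every $x\in J^{\circ}(u,v)$ has $d(u,x)+d(v,x)=3$ by meshedness, hence $d(s,x)+d(t,x)=3$ by (PC) (and the required equality $\eta(s)=\eta(t)$ is trivially met); or $I^{\circ}(u,v)$ is a clique of size $1$, $2$, or $3$, which I handle by the recipe from Proposition~\ref{prop:wm_hc}. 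In the size-$3$ case the absence of a $\beta$-configuration yields a vertex in $\{s,t,w\}$, say $w$, with no personal neighbour in $J^{\circ}(u,v)$, so I take $S=\{s,t\}$ with unit weights; in each subcase $(\beta)$ follows verbatim as in Proposition~\ref{prop:wm_hc}.

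The main obstacle is verifying $(\alpha)$ when $I^{\circ}(u,v)$ is a clique. In the weakly modular setting of Proposition~\ref{prop:wm_hc} this was automatic, since strong equilaterality forced $d(z,x)=2$ for every $x\in M(u,v)$ with $d(u,x)=d(v,x)=2$ and every $z\in I^{\circ}(u,v)$; here only meshedness is available, so some $z\in S$ can have $d(z,x)=3$. I would split according to how many of the vertices of the clique $I^{\circ}(u,v)$ lie at distance $3$ from a given $x\in M(u,v)$---one, two, or three of them---and in each case extract, using meshedness, the $3$-octahedral local structure of $I(u,v)$, and any additional personal-neighbour vertex $b$ needed to rule out a re-weighting fix, an $\alpha$-configuration of Type $1$, $2$, or $3$ respectively. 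Ruling out all three types thus forces $(\alpha)$, and Theorem~\ref{th-cmed} completes the proof.
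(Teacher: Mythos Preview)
Your overall architecture is right and matches the paper: reduce to Theorem~\ref{th-cmed}, handle the square case via (PC), handle the clique interior via the recipe of Proposition~\ref{prop:wm_hc}, and for the residual difficulty with condition $(\alpha)$ argue that failure forces an $\alpha$-configuration. But the way you propose to carry out this last step contains a genuine error.

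You write that you would split according to how many vertices of the clique $I^{\circ}(u,v)$ lie at distance $3$ from a \emph{given} $x\in M(u,v)$, matching the cases ``one, two, three'' to Types~1,~2,~3. That is not what the Types encode. In each $\alpha$-configuration every witness $a_i\in M(u,v)$ is at distance $3$ from \emph{exactly one} vertex of the clique and at distance $2$ from the others; the Types are distinguished by how many \emph{different} vertices of the clique are compromised by \emph{different} witnesses $a_1,a_2,a_3$. So your cases ``two'' and ``three'' (for a single $x$) never occur, and the actual case analysis is: how many vertices of the clique admit \emph{some} witness in $M(u,v)$ at distance $3$.

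More importantly, you are missing the key tool that makes this work. Meshedness and the $3$-octahedral shape of $I^{\circ}(u,v)$ do not by themselves show that a single $x\in M(u,v)$ with $d(x,u)=d(x,v)=2$ is at distance $3$ from at most one vertex of the clique. The paper uses that intervals of halved-cubes are \emph{convex} (hence intervals of the isometric subgraph $G$ are convex): if $d(t,x)=3$ then $u,v\in I(t,x)$, so by convexity $I(u,v)\subseteq I(t,x)$, whence every other $s,w\in I^{\circ}(u,v)$, being adjacent to $t$, has $d(s,x)=d(w,x)=2$. This interval-convexity step is what lets you remove the offending vertex $t$ from $S$ without disturbing $(\alpha)$ for the remaining vertices, and then the absence of a Type~1 (resp.\ Type~2, Type~3) configuration is exactly what you need to keep $(\beta)$ alive as you peel off one (resp.\ two, three) compromised vertices. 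Without invoking interval convexity you have no control over how many clique vertices a single $x$ can push to distance $3$, and your case split collapses.
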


\begin{proof} The case when $u,v$ is included in a square is the same as in Proposition \ref{prop:wm_hc}. Now suppose that $I^{\circ}(u,v)$ consists of at most three vertices, which are pairwise adjacent.
Again, the proof of condition $(\beta)$ of Theorem \ref{th-cmed} is the same as in Proposition \ref{prop:wm_hc}. Therefore, it remains to show that if a meshed partial halved-cube $G$  satisfies the
conditions $(\alpha)$ and $(\beta)$, then
$G$ does not contains an  $\alpha$-configuration and that if $G$ does not contain   $\alpha$- and $\beta$-configurations, then $G$ satisfies condition $(\alpha)$.

Let $S=I^{\circ}(u,v)$ and $\eta$ be the weight function on $S$ satisfying the conditions $(\alpha)$ and $(\beta)$ of Theorem \ref{th-cmed}. Suppose by way of contradiction that  $G$ contains an $\alpha$-configuration.
If this configuration is of Type 3, then any vertex of $I^{\circ}(u,v)$ is at distance 3 from some vertex of $M(u,v)$ having distance 2 to $u,v$ and to all other vertices of $I^{\circ}(u,v)$, showing that condition $(\alpha)$ is violated.
If $I(u,v)$ is included in a configuration of Type 2, then condition $(\alpha)$ is violated for vertices $t$ and $w$ of $I^{\circ}(u,v)$. Thus $S$ may include only the vertex $s$. But the vertex $s$ is not adjacent to the vertex $b$ from $J^{\circ}(u,v)$,
thus condition $(\beta)$ cannot be satisfied. Finally suppose that $I(u,v)$ is included in a configuration of Type 1.
Then $a\in M(u,v)$ and $b\in J^{\circ}(u,v)$. Since $d(a,t)=3$ and $a$ has distance 2 to all other vertices of $I^{\circ}(u,v)$, the vertex $t$ cannot be included in $S$ (otherwise, for $t$ and any its companion in $S$
the condition $(\alpha)$ will be violated). But $t$ is the personal neighbor in $I^{\circ}(u,v)$ of the vertex $b$ from $J^{\circ}(u,v)$ and since $t\notin S$, the condition $(\beta)$ is violated. This shows that $G$ does not
contain $\alpha$-configurations.

Conversely, suppose that for any pair of vertices $u,v$ at distance 2, the interval $I(u,v)$  is not included in an $\alpha$-configuration and satisfies condition $(\beta)$. If all vertices of $I^{\circ}(u,v)$
have distance 2 to all vertices $x$ of $M(u,v)$ with $d(u,x)=d(v,x)=2$, then condition $(\alpha)$ will be satisfied with $S=I^{\circ}(u,v)$ and the weight function $\eta$ defined as in the proof of Proposition \ref{prop:wm_hc}.
In particular, this holds if $|I^{\circ}(u,v)|=1$. Thus suppose that $2\le |I^{\circ}(u,v)|\le 3$ and that there exists a vertex $t\in I^{\circ}(u,v)$ at distance 3 from a vertex $a_1\in M(u,v)$.
Since intervals of halved-cubes are convex \cite[(3.4)]{Che_bas} and $G$ is an isometric subgraph of a halved-cube, the intervals of $G$ are convex. Since $u,v\in I(t,a_1)$, we conclude that all
vertices of $I^{\circ}(u,v)$ different from $t$  also belong to $I(t,a_1)$. Since they are all adjacent to $t$, they all have distance 2 to $a_1$. If $t$ is the personal neighbor of a vertex $b\in J^{\circ}(u,v)$,
then the set $I(u,v)\cup \{ a_1,b\}$ defines a forbidden $\alpha$-configuration of Type 1. Therefore, $t$ is not a personal neighbor of any vertex of $J^{\circ}(u,v)$.

First, let $I^{\circ}(u,v)=\{ s,t\}$. If $d(s,x)=2$ for any vertex $x\in M(u,v)$ with $d(x,u)=d(x,v)=2$, then we can set $S=\{ s\}$ and $\eta(s)=1$ and the condition $(\alpha)$ will be satisfied. Since
any vertex of $J^{\circ}(u,v)$ is adjacent to $s$, the condition $(\beta)$ also is satisfied. Therefore, suppose that $d(s,x)=3$ for a vertex $x\in M(u,v)$. Since $G$ is meshed, $d(x,t)=2$. By (TC), there exists a
vertex $b$ adjacent to $x,u,$ and $t$. Since $b\in J^{\circ}(u,v)$ and $t$ is not a personal neighbor of $b$, $b$ must be adjacent to $s$, contrary to the assumption that $d(s,x)=3$.

Finally, let $I^{\circ}(u,v)=\{ s,t,w\}$. If $d(s,x)=d(w,x)=2$ for any vertex $x\in M(u,v)$ with $d(x,u)=d(x,v)=2$, then we can set $S=\{ s,w\}$ and $\eta(s)=\eta(w)=1$ and the condition $(\alpha)$ will be satisfied. Since
any vertex of $J^{\circ}(u,v)$ is adjacent to at least one of the vertices $s$ or $w$, the condition $(\beta)$ also is satisfied. Therefore, suppose that $d(w,a_2)=3$ for a vertex $a_2\in M(u,v)$. Then from convexity of intervals
of $G$ we conclude that $d(a_2,s)=d(a_2,t)=2$. Also, as in the case of $t$, we conclude that $w$ is not a personal neighbor of any vertex of $J^{\circ}(u,v)$ (otherwise, we get a forbidden $\alpha$-configuration of Type 1).
Now, if there exists a vertex $b\in J^{\circ}(u,v)$ adjacent to $t$ and $w$ and not adjacent to $s$, then we will get a forbidden $\alpha$-configuration of Type 2. Therefore all vertices of $J^{\circ}(u,v)$ are adjacent to $s$. If $d(s,x)=2$ for any
vertex $x\in M(u,v)$ with $d(x,u)=d(x,v)=2$, then we can set $S=\{ s\}$ and $\eta(s)=1$ and conditions $(\alpha)$ and $(\beta)$ of Theorem \ref{th-cmed} will be satisfied. Therefore, there exists a vertex $a_3\in M(u,v)$ with
$d(s,a_3)=3$. Then again, we conclude that $d(t,a_3)=d(w,a_3)=2$ and we obtain a forbidden $\alpha$-configuration of Type 3. This final contradiction shows that if $G$ does not contains $\alpha$- and $\beta$-configurations, then
$G$ satisfies the conditions $(\alpha)$ and $(\beta)$ of Theorem \ref{th-cmed}, thus $G$ is a graph with connected medians.
\end{proof}

\begin{corollary}\label{meshed_hc-median} A triangle-free partial halved-cube  $G$  has connected medians if and only if $G$ is median.
\end{corollary}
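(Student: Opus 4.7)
The implication that median graphs have connected medians is standard and recalled in the introduction. For the converse, let $G$ be a triangle-free partial halved-cube with connected medians. First, since $G$ is triangle-free, any two adjacent vertices in $I^{\circ}(u,v)$ together with $u$ would form a triangle; hence no $2$-interval of $G$ has two or three pairwise adjacent vertices in its interior, and the $\alpha$- and $\beta$-configuration hypotheses of Theorem~\ref{prop:meshed_hc} are satisfied vacuously. That theorem then gives directly that $G$ is meshed.

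Next I would show that $G$ is bipartite. If $G$ had an odd cycle, pick a shortest one $C = v_0 v_1 \cdots v_{2k}$; triangle-freeness forces $k \ge 2$, and $C$ is isometric in $G$ (any shortcut between two vertices of $C$ would, combined with the appropriate arc of $C$, produce a strictly shorter closed odd walk, hence a shorter odd cycle). Applying meshedness to the triple $(v_{2k}, v_1, v_{k+1})$, using that $d(v_{2k}, v_1) = 2$ via $v_0$, one obtains a common neighbour $x$ of $v_{2k}$ and $v_1$ with $2 d(v_{k+1}, x) \le d(v_{k+1}, v_{2k}) + d(v_{k+1}, v_1) = (k-1) + k = 2k - 1$, so $d(v_{k+1}, x) \le k - 1$; since $d(v_{k+1}, v_0) = k$ we must have $x \ne v_0$. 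Concatenating a shortest $v_{k+1}$-to-$x$ path, the edge $x v_{2k}$, and the $C$-arc $v_{2k} v_{2k-1} \cdots v_{k+1}$ of length $k-1$ yields a closed walk of odd length at most $2k-1$, which contains an odd cycle of length strictly less than $|C|$, contradicting the choice of $C$.

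Finally, $G$ is a bipartite graph that embeds isometrically at scale $2$ into a hypercube via its halved-cube embedding, so by the classical Assouad--Deza $\ell^1$-embedding theorem $G$ is in fact a partial cube. As recalled in the paper just before Theorem~\ref{meshed_Johnson}, median graphs are exactly the meshed isometric subgraphs of hypercubes, so the meshed partial cube $G$ is median. The main obstacle is this last step---invoking the Assouad--Deza result to pass from partial halved-cube to partial cube---while the bipartiteness step reduces to a concrete isometric-odd-cycle argument driven by meshedness.
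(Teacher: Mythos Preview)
Your argument is essentially correct and takes a genuinely different route from the paper. One small lacuna in the bipartiteness step: you assert the closed walk has odd length without justification. It does, but you should note that since $x \sim v_1$ and $d(v_{k+1}, v_1) = k$, the triangle inequality gives $d(v_{k+1}, x) \geq k - 1$; combined with your upper bound this forces $d(v_{k+1}, x) = k - 1$, whence the walk has length exactly $(k-1)+1+(k-1)=2k - 1$, which is odd. Also, your opening remark about $\alpha$- and $\beta$-configurations being vacuous is correct but unnecessary for this direction: meshedness follows immediately from connected medians (Remark~\ref{connected->meshed}) without invoking Theorem~\ref{prop:meshed_hc}.

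The paper's proof avoids both the odd-cycle argument and the Assouad--Deza machinery. It uses instead two lighter ingredients: first, the known fact that triangle-free meshed graphs are exactly the modular graphs \cite{Ch_metric}, so $G$ is modular outright; second, a four-line verification that halved-cubes contain no induced $K_{2,3}$ (encode one side of the $K_{2,3}$ as $\varnothing$, the other as a $4$-set; three pairwise non-adjacent common neighbours would be three pairwise disjoint $2$-subsets of a $4$-set, which is impossible). Since median graphs are precisely the $K_{2,3}$-free modular graphs \cite{Mu}, this finishes. Your approach trades these citations for a self-contained bipartiteness argument plus a heavier $\ell_1$-embedding result; the paper's is shorter and more elementary, but leans on the structural characterisation of modular graphs rather than proving bipartiteness by hand.
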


\begin{proof} The graph $G$ satisfies the conditions of Theorem \ref{prop:meshed_hc}, in particular $G$ is meshed.  Modular graphs are exactly the triangle-free meshed graphs
and are exactly the triangle-free weakly modular graphs \cite{Ch_metric}. On the other hand, median graphs are exactly the
modular graphs not containing induced $K_{2,3}$ \cite{Mu}. Finally,  halved-cubes do not contain induced $K_{2,3}$. Indeed, suppose by way of contradiction that $\frac{1}{2}H(X)$
contains an induced $K_{2,3}$ and suppose without loss of generality that a vertex $u$ of degree 3 of $K_{2,3}$ is encoded as $\varnothing$. Since the other vertex $v$ of degree 3 of $K_{2,3}$ has
distance 2 to $u$, $v$ must be encoded by a subset of size 4 of  $\frac{1}{2}H(X)$, say by the set $\{ 1,2,3,4\}$. Then the three common neighbors of $u$ and $v$ in $K_{2,3}$ must be encoded
by pairwise disjoint subsets of size 2 of $\{ 1,2,3,4\}$. Obviously, this is impossible.
\end{proof}

\subsection{Benzenoid systems}
A \emph{hexagonal grid} is a grid formed by a tessellation of the plane ${\mathbb R}^2$
into regular hexagons.  Let $Z$ be a cycle of the hexagonal grid. A \emph{benzenoid system} is a subgraph of the
hexagonal grid induced by the vertices located in the closed region of the plane
bounded by $Z$.  Equivalently, a benzenoid system is a finite connected plane graph in which every interior face is a regular
hexagon of side length 1. Benzenoid systems play an important role in chemical graph theory  \cite{GuCy}.

It was shown in \cite{Ch_benzen} that any benzenoid system $G=(V,E)$ can be isometrically embedded into the Cartesian product $T_1\square T_2\square T_3$
of three trees $T_1,T_2,T_3$, which are defined in the following way.
Let $E_1, E_2,$ and $E_3$ be the edges of $G$  on the three directions
of the hexagonal grid and let $G_i=(V,E\setminus E_i)$ be the graph obtained from $G$ by removing the edges of $E_i, i=1,2,3$. The tree $T_i$
has the connected components of $G_i$ as its set of vertices and two such connected components $P$ and $P'$ are adjacent in $T_i$ if and only if
there is an edge $uv\in E_i$ with one end in $P$ and another end in $P'$. The isometric embedding $\varphi: V\rightarrow T_1\square T_2\square T_3$
maps any vertex $v$ of $G$ to a triplet $(v_1,v_2,v_3)$, where $v_i$ is the connected component of $G_i$ containing $v$, $i=1,2,3$ \cite{Ch_benzen}.
Since trees isometrically embed into  hypercubes, every benzenoid $G$ also isometrically embeds into a hypercube, i.e., benzenoids are partial cubes \cite{KlGuMo}.
As in the case of Cartesian products, two edges $uv,xy$ of a benzenoid $G$ are  called \emph{parallel} if they belong to the same
class $E_i$ and $u_i = x_i, v_i = y_i$, i.e., if they come from the same edge of the tree $T_i$. The parallelism relation is symmetric and transitive.
Therefore the edges of each $E_i, i=1,2,3$ are partitioned into \emph{parallelism classes}. The parallelism classes of a benzenoid $G$ correspond to the coordinates
of the isometric embedding of $G$ into the smallest hypercube \cite{KlGuMo}. Removing the edges of each parallelism class, the vertex-set of the
benzenoid $G$ is partitioned into two convex sets \cite{Dj}. This implies the following two properties of parallelism  classes  of $G$, which we will use next:
\begin{itemize}
\item[(1)] any shortest path $P$ of $G$ contains at most one edge from each parallelism class.
\item[(2)] if $C$ is a cycle of $G$ and $xy$ is an edge of $C$, then the parallelism class of the edge $xy$ contains at least one other edge of $C$.
\end{itemize}
Finally notice that a parallelism class of the hexagonal grid may define several parallelism classes of a benzenoid $G$.
The inner faces of $G$ are called \emph{hexagons}. They correspond to the hexagons of the underlying hexagonal grid.

A path $P$ of a benzenoid $G$ is an \emph{incomplete hexagon} if $P$
is a path of length 3 that contains an edge from each class
$E_1,E_2,E_3$ and $P$ is not included in a hexagon of $G$. Since each
inner vertex of $G$ has degree three and belongs to three hexagons,
from the definition it follows that all vertices of an incomplete
hexagon belong to the external cycle $Z$ of $G$. Rephrasing this, a
path $P$ of length 3 is an incomplete hexagon if and only if $P$ is
included in a unique hexagon $H$ of the hexagonal grid and the
intersection of $H$ with $G$ equals
$P$. For an illustration of all these notions, see Fig. \ref{traversing-path}.

\begin{figure}[t]
{\includegraphics[scale=0.5]{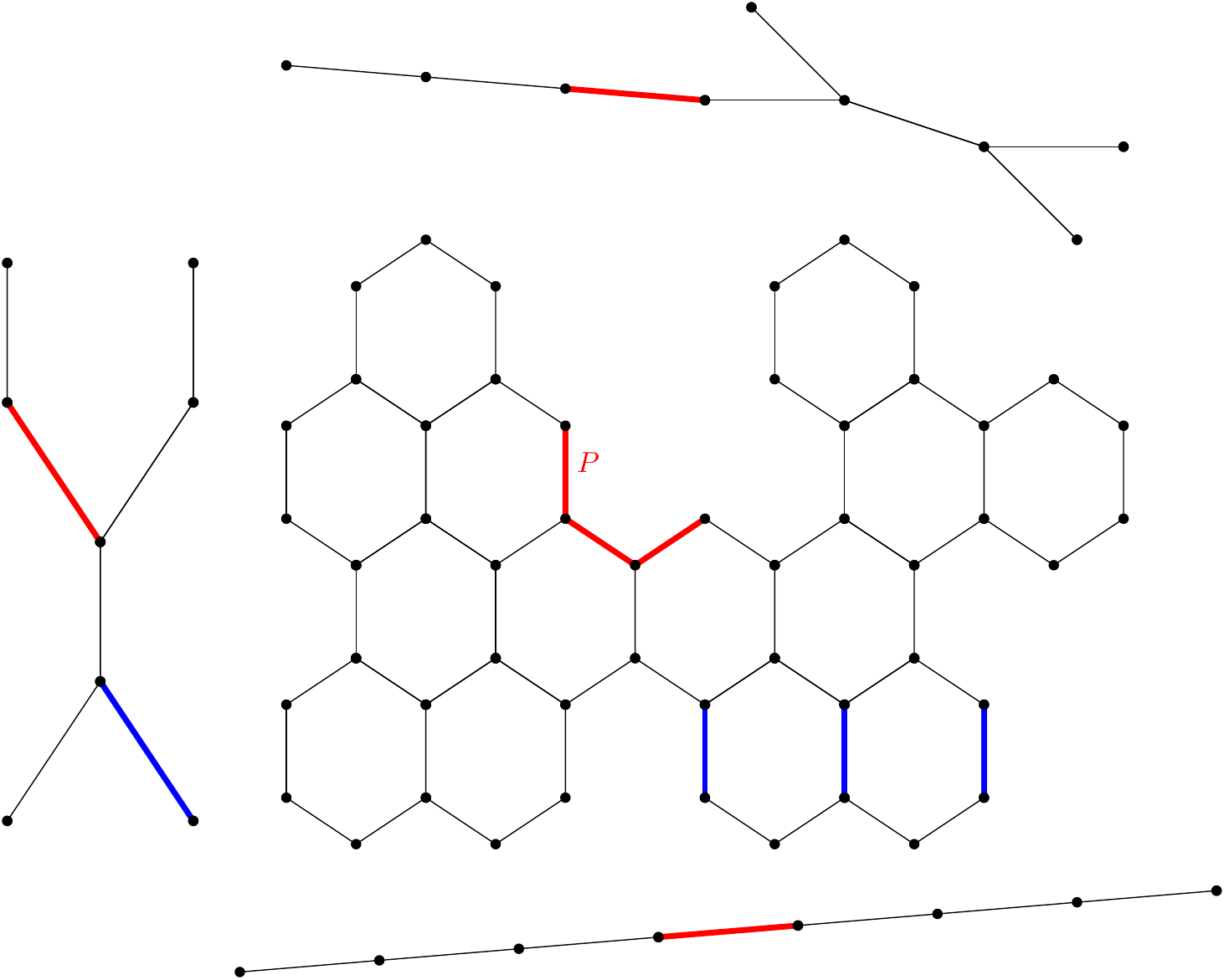}}
\caption{A benzenoid system $G$, its tree factors $T_1,T_2,T_3$, a parallelism
class (in blue), and an incomplete hexagon (in red) of
$G$.}\label{traversing-path}
\end{figure}

We proved above that $p(G)$ of a hypercellular graph $G$ is the supremum of $p(C)$ over all
gated cycles of $G$. All hexagons $C$ of any benzenoid $G$ are gated and for them $p(C)=2$, however
$G$ cannot be obtained from its hexagons via gated amalgams. Nevertheless, it is still true that
benzenoid systems $G$ have $G^2$-connected medians.

\begin{proposition}\label{benz->G2}
Any benzenoid system $G$ has $G^2$-connected medians.
\end{proposition}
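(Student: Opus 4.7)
The plan is to invoke Theorem~\ref{th-cmed-p} together with the isometric embedding of $G$ into $T_1\square T_2\square T_3$ recalled at the start of this subsection. Because each tree $T_i$ has $G^1$-connected (in fact, convex) median functions, the median function $F_{G,\pi}$ decomposes additively as
\[
F_{G,\pi}(v) = F_{T_1,\pi_1}(v_1) + F_{T_2,\pi_2}(v_2) + F_{T_3,\pi_3}(v_3),
\]
where $\pi_i$ is the projection of $\pi$ onto $V(T_i)$. By Theorem~\ref{wc-ltg} and Theorem~\ref{th-cmed-p}, it suffices to show that for every pair $u,v\in V(G)$ with $3\le d_G(u,v)\le 4$ one can exhibit a vertex $w\in I^{\circ}_G(u,v)$ satisfying $F_{G,\pi}(w)\le\tfrac{1}{2}(F_{G,\pi}(u)+F_{G,\pi}(v))$, which yields $\WC(u,v)$ and hence $2$-weak convexity of $F_{G,\pi}$.

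The key geometric input is the alternation constraint: at each vertex of the hexagonal grid the three incident edges bear the three distinct direction labels in $\{E_1,E_2,E_3\}$, so consecutive edges on any geodesic of $G$ carry distinct labels. Writing $(\delta_1,\delta_2,\delta_3):=(d_{T_i}(u_i,v_i))_{i=1}^{3}$, this forces $(\delta_1,\delta_2,\delta_3)$, up to permutation, to be $(1,1,1)$ or $(2,1,0)$ when $d_G(u,v)=3$ and to be $(2,2,0)$ or $(2,1,1)$ when $d_G(u,v)=4$, the types $(3,0,0),(4,0,0)$, and $(3,1,0)$ being ruled out by the alternation. For each of these four types I plan to produce the candidate vertex $w$ by selecting its $T_i$-coordinate to be the middle vertex of the $(u_i,v_i)$-path in $T_i$ when $\delta_i=2$, and to be one of $u_i,v_i$ when $\delta_i\in\{0,1\}$; summing the one-variable convexity inequalities for each $F_{T_i,\pi_i}$ along the tree-paths then gives the required midpoint inequality, with the endpoint choices selected so that the sign-contributions from $\Delta_i:=F_{T_i,\pi_i}(v_i)-F_{T_i,\pi_i}(u_i)$ balance correctly (they sum to $F_{G,\pi}(v)-F_{G,\pi}(u)$, and a short sign-case analysis shows that at least one admissible choice always succeeds).

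The main obstacle is to verify that the witness $w$ actually lies in $V(G)$ rather than merely in $V(T_1\square T_2\square T_3)$: the boundary cycle $Z$ of the benzenoid may exclude certain interior vertices of $I_{\mathrm{grid}}(u,v)$ (for instance, the two ``absent'' antipodes of a hexagon that sits on the boundary). The remedy is that in each of the four types the interior of $I(u,v)$ in $G$ offers several candidate witnesses, and the medianness of $u$ and $v$ imposes extra constraints on the tree-edge differences $\Delta_i$ (via the inequality $F_{G,\pi}(y)\ge F_{G,\pi}(u)$ at every vertex $y$ adjacent in $G$ to $u$ or $v$), which together eliminate the sign-patterns for which none of the interior candidates available in $V(G)$ satisfies the midpoint inequality. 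Combining this midpoint inequality with Theorems~\ref{wc-ltg} and~\ref{th-cmed-p} then concludes that $G$ has $G^2$-connected medians.
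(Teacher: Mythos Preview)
Your additive decomposition $F_{G,\pi}(v)=\sum_i F_{T_i,\pi_i}(v_i)$ and the classification of the vectors $(\delta_1,\delta_2,\delta_3)$ are correct and agree with what the paper exploits. However, two points in your plan are genuine gaps.

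First, the inequality $F_{G,\pi}(w)\le\tfrac12(F_{G,\pi}(u)+F_{G,\pi}(v))$ does \emph{not} in general yield $\WC(u,v)$ when $d(u,v)$ is odd: $\WC$ demands the weighted bound $F(w)\le\tfrac{d(v,w)}{d(u,v)}F(u)+\tfrac{d(u,w)}{d(u,v)}F(v)$, which at a neighbour of $u$ in a $3$-interval is $\tfrac23F(u)+\tfrac13F(v)$, not $\tfrac12(F(u)+F(v))$. What your midpoint bound does give is $\Loz(u,v)$ with $w=w'$, hence $\WP(u,v)$, and that is enough via Theorems~\ref{wp-ltg} and~\ref{th-cmed-p}. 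This is easily repaired, but you should state it correctly.

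Second, and more seriously, you cannot invoke ``the medianness of $u$ and $v$''. By Theorem~\ref{th-cmed-p} (and your own opening sentence) you must verify $\WP(u,v)$ for \emph{every} pair with $3\le d(u,v)\le 4$ and \emph{every} profile~$\pi$; nothing entitles you to assume $F_{G,\pi}(y)\ge F_{G,\pi}(u)$ for the neighbours $y$ of $u$. Without that assumption your ``short sign-case analysis'' does not close. Concretely, in the label pattern $E_1,E_2,E_3,E_2$ (your type $(2,1,1)$) the only coordinate-midpoint candidates that actually lie in $G$ are the two middle vertices $r,t$ of the $4$-path $(u,s,r,t,v)$, and a direct tree computation of $F(r)+F(t)-F(u)-F(v)$ picks up a term $\sum_x\pi(x)\bigl(d_1(s_1,x_1)-d_1(u_1,x_1)\bigr)$ of uncontrolled sign; eliminating the bad sign for a given $x$ requires a geometric fact about $G$, not an optimality hypothesis on $u,v$. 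The paper handles exactly these cases by first proving that hexagons and incomplete hexagons (length-$3$ subpaths meeting all three directions) are \emph{gated} in $G$, and then, for each $x$ in the profile, using its gate in the appropriate (incomplete) hexagon to obtain $d(s,x)+d(t,x)\le d(u,x)+d(v,x)$ pointwise. That gatedness lemma is the missing structural ingredient in your plan; once you have it, the two-witness lozenge inequality goes through without any appeal to $u,v$ being medians.
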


\begin{proof}
By Theorem \ref{th-cmed-p} and Theorem \ref{wp-ltg} it suffices to
prove that any median function $F_{\pi}$ is locally $2$-weakly peakless.
Let $\varphi: V\rightarrow T_1\square T_2\square T_3$ be the isometric
embedding of $G$ into the Cartesian product of 3 trees $T_1,T_2,T_3$
defined above.  For a vertex $x$ of $G$, let $(x_1,x_2,x_3)$ be the
coordinates of $\varphi(x)$ in $T_1\square T_2\square T_3$.  We denote
the distance function of $T_i$ by $d_i$ instead of $d_{T_i}$,
$i=1,2,3$.  We also denote by $\pi_i$ the profile on $T_i$ obtained
from $\pi$ as we did for Cartesian products of graphs.  Benzenoids are
bipartite graphs. Therefore, if we pick any three vertices $x,y,u$ of
$G$ such that $x\sim y$, then $|d(u,x)-d(u,y)|=1$ and either
$x\in I(u,y)$ or $y\in I(u,x)$.  Indeed, otherwise consider a vertex
$u' \in I(u,x) \cap I(u,y)$ at maximum distance from $u$ and observe
that the edge $xy$ together with any shortest $(u',x)$- and
$(u',y)$-paths define an odd cycle.

We start with the following property of benzenoids.

\begin{claim}\label{6-cycles} Any hexagon or incomplete hexagon $H$ of $G$ is gated.
\end{claim}

  \begin{proof}
    Pick a vertex $u \in V$ and let $x$ be a vertex of $H$ minimizing $d(u,x)$.
    If $x$ is not the gate of $u$ in
    $H$, then there exists a vertex $y$ in $H$
    such that $d(u,y)<d(u,x)+d(x,y)$. Let $y$ be a closest to $u$
    such vertex. If $x\sim y$, then by minimality of $d(u,x)$, we have
    $d(u,y)=d(u,x)$, which is impossible since $G$ is bipartite.  Thus,
    necessarily $d(x,y)=2$ or $d(x,y)=3$.
    First, let  $d(x,y)=2$. Let $s$ be the unique
    common neighbor of $x$ and $y$. Since $G$ is bipartite, from the choice of $x$ and $y$ we conclude that
    $d(u,x)=d(u,y)=d(u,s)-1$.  Without loss of generality, assume that
    $xs \in E_1$ and $sy \in E_2$. If $x,y$ belong to a hexagon $H$, then
    both $x,y$ have a second neighbor in $H$. If $x,y$ belong to an incomplete hexagon $H$,
    then either $x$ or $y$ has a second neighbor in $H$. Suppose without loss of generality that $x$ has a
    neighbor $z$ in $H$ and note that $xz\in E_3$. By the choice
    of $x$, $d(u,z)=d(u,x)+1=d(u,s)$. Let $p$ be a neighbor of
    $x$ in $I(x,u)$. Since $xs\in E_1$ and $xz\in E_3$, necessarily $xp$ belongs to
    $E_2$. Consequently, in the tree $T_2$ we have two paths connecting the vertices $s_2=x_2=z_2$ and $u_2$:
    one starts with the edge $x_2p_2$ and another starts with the edge $s_2y_2$.  This is possible only if the
    edges $xp$ and $sy$ are parallel. Since $x$ and $s$ are adjacent in $G$, this is impossible. This concludes the
    analysis of the case $d(x,y)=2$.

    Suppose now that $d(x,y)=3$ and consider two vertices $s,t$ such
    that $(x,s,t,y)$ is a path of length $3$.  Let $xs \in E_1$,
    $st \in E_2$, and $ty \in E_3$. Since $G$ is bipartite, by the
    choice of $x$ and $y$ we have $d(u,s)=d(u,x)+1$,
    $d(u,t)=d(u,y)+1$, and $|d(u,s)-d(u,t)|=1$.  Since
    $d(u,x)\le d(u,y)$, we conclude that
    $d(u,t)=d(u,s)+1=d(u,y)+1=d(u,x)+2$.  Let $P_x$ and $P_y$ be
    shortest $(u,x)$- and $(u,y)$-paths of $G$. The concatenation of
    $P_x$, the path $(x,s,t,y)$, and of $P_y$ (traversed from $y$ to
    $u$) includes a simple cycle $C$ containing the vertices
    $x,s,t,y$.  From property (2) of parallelism classes, the edges
    $xs$, $st$, and $ty$ of $C$ are parallel to some edges
    $e_1, e_2,e_3$, respectively, of the cycle $C$.  Since $P_x$
    together with the edge $sx$ is a shortest $(s,u)$-path of $G$, by
    property (1) of parallelism classes, the edge $e_1$ cannot belong
    to $P_x$, thus $e_1\in P_y$.  Analogously, since $P_x$ together
    with the path $(x,s,t)$ is a shortest $(t,u)$-path $P'$ and $P_y$
    together with the edge $ty$ is a shortest $(t,u)$-path $P''$, we
    conclude that $e_2\notin P_x$ and $e_3\notin P_y$, whence
    $e_2\in P_y$ and $e_3\in P_x$. If $H$ is a hexagon and $z$ is the
    second neighbor of $x$ in $H$, then the edge $xz$ is parallel to
    $ty$ and thus to $e_3$. From the choice of $x$ as a closest to $u$
    vertex of $H$ and since $G$ is bipartite, we conclude that $P_x$
    plus the edge $xz$ is a shortest $(z,u)$-path. Since the edges
    $xz$ and $e_3$ are parallel and both belong to this path, we
    obtain a contradiction with property (1) of parallelism
    classes. Thus suppose that $H$ is an incomplete hexagon. Let $H'$
    be the hexagon of the hexagonal grid containing $H$ and let $z$
    and $z'$ be the neighbors of $x$ and $y$ in $H'$,
    respectively. Let also $p$ be the neighbor of $x$ in $P_x$ and $q$
    be a neighbor of $y$ in $P_y$. Then $xz\in E_3$ and $z'y\in
    E_1$. Since $H$ is an incomplete hexagon, either $z \notin V(G)$
    and $z \neq p$ or $z' \notin V(G)$ and $z' \neq q$. If
    $z' \neq q$, then necessarily $yq \in E_2$. Consequently, in the
    tree $T_2$ we have two paths between the vertices $t_2=y_2$ and
    $u_2$: one starts with the edge $t_2s_2$ and the other starts with
    the edge $y_2q_2$. This is possible only if the edges $st$ and
    $yq$ are parallel.  Since $t$ and $y$ are adjacent in $G$, this is
    impossible. If $z \neq p$, then in the tree $T_2$, we have two
    paths between the vertices $s_2 = x_2$ and $u_2$: one starts with
    the edge $x_2p_2$ and the other starts with the edge
    $s_2t_2$. This is possible only if the edges $st$ and $xp$ are
    parallel.  Since $s$ and $x$ are adjacent in $G$, this is
    impossible.  This contradiction finishes the analysis of the case
    $d(x,y)=3$ and concludes the proof.
\end{proof}

As in previous proofs, we will show that $F_{\pi}$ satisfies the lozenge
property $\Loz(u,v)$ for any $u,v$ such that $3 \leq d(u,v) \leq 4$. Namely, that $I^{\circ}(u,v)$
contains two vertices
$s$ and $t$ (not necessarily different) such that
$F_{\pi}(s)+F_{\pi}(t)\leq F_{\pi}(u)+F_{\pi}(v)$.  We distinguish two cases:
$d(u,v)=3$ and $d(u,v)=4$.
\begin{case}
	$d(u,v)=3$.
\end{case}

By definition of benzenoids, any two consecutive edges on a $(u,v)$-geodesic
belong to different classes of the triplet $E_1,E_2,E_3$. Then $I(u,v)$ is either
a hexagon or a  3-path. If $I(u,v)$ is a hexagon $C$, then we choose $s$ and $t$
such that $s\sim u$ and $t\sim v$ are opposite vertices of $C$. For any vertex $x$
from the profile $\pi$, let $x'$ be its gate in $C$. Then independently of the location
of $x'$ on $C$, we have $d(x',u)+d(x',v)=d(x',s)+d(x',t)=3$ and thus $d(x,u)+d(x,v)=d(x,s)+d(x,t)$.
Consequently, $F_{\pi}(u)+F_{\pi}(v)=F_{\pi}(s)+F_{\pi}(t)$. Analogously, if $I(u,v)$ is an incomplete hexagon $P$,
we choose $s,t\in P$ with $s\sim u$ and $t\sim v$. Let $x'$ be the gate of $x$ in $P$. Then independently
of the location of $x'$ on $P$, $d(u,x')+d(v,x')=3$ and $d(s,x')+d(t,x')\le 3$. Consequently, $d(x,s)+d(x,t) \leq d(x,u)+d(x,v)$,
yielding $F_{\pi}(s)+F_{\pi}(t)\le F_{\pi}(u)+F_{\pi}(v)$.

Finally, suppose that  $I(u,v)$ is a 3-path $P$ not included in a hexagon.
Then the first and the third
edge of $P$ belong to the same class, say to $E_1$, and the middle edge belongs to
another class, say to $E_2$. Again, we choose $s\sim u$ and $t\sim v$.
Note that the coordinates of $u,v,s,t$ satisfy the equalities
$u_3=s_3=t_3=u_3$, $u_2=s_2$ and $t_2=v_2$.  Since in
$T_1$ we have $s_1=t_1 \in I(u_1, v_1)$, for any $x\in \supp(\pi)$ and any $i=1,2,3$ we have
$d_i(s_i,x_i)+d_i(t_i,x_i) \leq d_i(u_i,x_i)+d_i(v_i,x_i)$ for each $i$, establishing $F_{\pi}(s)+F_{\pi}(t)\le F_{\pi}(u)+F_{\pi}(v)$.

\begin{case}
	$d(u,v)=4$.
\end{case}

Analogously to the case above, $I(u,v)$ is either a $4$-path $P$ or a hexagon
with a pending edge. First suppose that $I(u,v)$ is a hexagon $C$ with a pending edge $e=zv$.
Then $u$ is the vertex of $C$ opposite to $z$. Suppose that the edge
$e$ belongs to $E_1$ and  notice that $C$ contains two
other edges from $E_1$.  Let $s,t$ be the two neighbors of $z$ in $C$. Then $s,t$ have distance 2 to $u$ and to $v$.
Observe that $s_1=t_1 \in I(u_1,v_1)$
and that we can assume that $s_2=u_2$, $t_2=v_2$, $s_3=v_3$, and
$t_3=u_3$. Consequently, for any $x \in \supp(\pi)$, we have
$d_i(s,x) + d_i(t,x) \leq d_i(u,x) + d_i(v,x)$ and thus
$F_{\pi}(s)+F_{\pi}(t)\le F_{\pi}(u)+F_{\pi}(v)$.

Now suppose that $I(u,v)$ is $4$-path $P=(u,s,r,t,v)$. Then (up to relabeling) the
classes of edges of $P$ occur in one of the following orders: (a) $E_1,E_2,E_1,E_2$, (b) $E_1,E_2,E_3,E_1$,
or (c) $E_1,E_2,E_3,E_2$. In the last two cases, $P$ consists of a (gated) incomplete hexagon  $P'=(u,s,r,t)$ plus one edge $tv$.
For a vertex $x\in \supp(\pi)$, we denote by $x'$ the gate of $x$ in $P'$.

In case (a), let $r$ be the middle vertex of $P$. Then $d_1(u_1,v_1)=d_1(u_1,r_1)+d_1(r_1,v_1)$, $d_2(u_2,v_2)=d_2(u_2,r_2)+d_2(r_2,v_2)$, and
$u_3=r_3=v_3$. Since the distance function of trees is convex, for any $x=(x_1,x_2,x_3) \in \supp(\pi)$, we have
$2d_i(r_i,x_i)\leq d_i(u_i,x_i)+d_i(v_i,x_i)$ and  thus $2F_{\pi}(r)\le F_{\pi}(u)+F_{\pi}(v)$. Thus in this case we
can suppose that both vertices occurring in $\Loz(u,v)$ coincide with $r$.

Now, consider the case (b), i.e.,  $us,tv\in E_1$, $sr\in E_2$, and $rt\in E_3$. In this case, the 3-path $P''=(s,r,t,v)$
is an incomplete hexagon and thus gated.
Pick any vertex $x \in \supp(\pi)$. If its gate $x'$ in $P'$
is different from $u$, then we have $d(s,x)=d(u,x)-1$. Since $d(t,x)\le d(v,x)+1$,  we obtain
$d(s,x)+d(t,x)\leq d(u,x)+ d(v,x)$. Similarly, if the gate $x''$ of $x$ in $P''$
is different from $v$, then we obtain $d(s,x)+d(t,x)\leq d(u,x)+d(v,x)$. Finally, if
$u$ is the gate of $x$ in $P'$ and $v$ is the gate of $x$ in $P''$, then
we must have $d(x,t)=d(x,s)+2$ and $d(x,s)=d(x,t)+2$, which is impossible.

Finally, consider the case (c), i.e.,  $us\in E_1$, $sr, tv\in E_2$, and $rt\in E_3$. As in case (b),
we can assume that $u$ is the gate of $x$ in the
incomplete hexagon $P'$ and that $d(x,v)=d(x,t)-1$. Since $u$ is
the gate of $x$ in $P'$, we have
$d_2(x_2,u_2)=d_2(x_2,s_2) = d_2(x_2,r_2)-1 = d_2(x_2,t_2)-1$. Since $tv \in E_2$
and $d(x,v)=d(x,t)-1$, we have $d_2(x_2,v_2) =
d_2(x_2,t_2)-1$. Consequently, in  $T_2$ the vertex $r_2=t_2$ has two
distinct neighbors $s_2=u_2$ and $v_2$ on a shortest path between $r_2$ and $x_2$, contradicting that
$T_2$ is a tree.
\end{proof}

\section{Conclusion}
In this paper, we characterized the graphs $G$ in which median sets always induce connected subgraphs in the $p$th power $G^p$ of $G$. We proved that those are
the graphs in which the median functions are $p$-weakly peakless/convex (such functions  are unimodal on $G^p$).
We showed that bridged and weakly bridged graphs (and thus chordal graphs), graphs with convex balls, bucolic graphs, hypercellular, bipartite absolute retracts, and benzenoids have $G^2$-connected medians.
We provided sufficient local conditions for a modular graph to  have $G^2$-connected medians. We also characterized the isometric subgraphs of Johnson
graphs and of halved-cubes with connected medians.

In the theory of discrete convexity \cite{Hirai,Murota}, submodular functions on Boolean cubes, $L^{\#}$-convex and $N$-convex functions
on ${\mathbb Z}^d$, products of trees, and median graphs are investigated. Lozenge functions, introduced in \cite{BaCh_median} and used in most of our proofs, can be viewed as a generalization of such discrete convex functions to general graphs.

As we already mentioned in the Introduction, the unimodality/convexity of the median function in a geodesic space is a fundamental property behind the existence of fast and convergent algorithms of its minimization.
In the setting of graphs with $n$ vertices and $m$ edges,  the median problem can be trivially solved in $O(nm)$ time by solving the All Pairs Shortest
Paths (APSP) problem.  One may ask if APSP is necessary to compute the
median. However, by~\cite{AbGrVa} the APSP and median problems are equivalent under subcubic reductions.
It was also shown in~\cite{AbVaWa} that computing the medians of sparse graphs in
subquadratic time refutes the HS (Hitting Set) conjecture. We believe that classes of graphs with connected or $G^2$-connected medians are good candidates
in which the median problem can be solved faster than in $O(nm)$ time. Our belief is based on the fact that all known such algorithms
(for median graphs \cite{BeChChVa}, for planar bridged triangulations \cite{ChFaVa}, for Helly graphs \cite{Du_Helly}, and for basis graphs of matroids \cite{BaCh_median}) use the unimodality of median functions.
However, designing such minimization algorithms is a not so easy problem because
they cannot use the entire distance matrix of the graph. To start, one may ask
if \emph{the medians of chordal graphs can be computed faster than $O(nm)$ time?}

A nice axiomatic characterization of medians of median graphs via three basic axioms has been obtained by Mulder and Novik~\cite{MuNo}.
Recently, in \cite{BeChChVa_axiom} we considerably extended this result, by providing a similar axiomatic characterization of medians in all graphs
with connected medians and in modular graphs with $G^2$-connected medians (in particular, in bipartite absolute retracts). Our proof uses  the characterization of graphs with connected medians given in \cite{BaCh_median} and with $G^2$-connected medians given in this paper.
We do not know if medians of all graphs with $G^2$-connected medians (in particular, of chordal graphs) admit such axiomatic characterizations, but we hope that some of them do.

Recently, Puppe and Slinko \cite{PuSl} characterized median graphs as closed Condorcet domains. A \emph{domain} is any set $\mathcal D$ of linear
orders (complete, transitive, and antisymmetric binary relations) on a finite set $X$. The set of all linear orders on a set $X$, viewed as a graph, is the
permutahedron on the set $X$.  The \emph{majority relation} associated with a profile $\pi$ of a domain $\mathcal D$ is the binary relation $\prec_{\pi}$ on $X$
such that $x\prec_{\pi} y$ if and only if more than half of the voters rank $x$
above $y$.  A \emph{Condorcet domain}  is a domain $\mathcal D$ with the
property that,
whenever $\pi$ is a subset of $\mathcal D$, their majority relation has no cycles (i.e., has no Condorcet paradox).
Finally, a \emph{closed domain}  is a domain $\mathcal D$ with the property that, whenever an odd profile $\pi$ is a subset of $\mathcal D$, their majority
relation belongs to $\mathcal D$. All permutohedra admit isometric embeddings into a hypercube \cite{EpFaOv}. Conversely, all hypercubes are isometrically embeddable
into permutohedra. Thus, general domains can be viewed as arbitrary subsets of vertices of permutohedra or of hypercubes. \emph{Median domains} are the
domains which induce median graphs \cite{PuSl}.

Since median graphs are the bipartite graphs with connected medians \cite{SoCh_Weber} and hypercubes are bipartite, median graphs are exactly the isometric
subgraphs of hypercubes with connected medians. Moreover, it is
well-known \cite{BaBa,SoCh_Weber} and easily follows from the unimodality of the median function that  the medians in median graphs $G$ can be
computed by the majority rule: for any profile $\pi$, the median set $\Med_G(\pi)$ is the intersection of all halfspaces of $G$ containing at
least one half  of the profile $\pi$. It is shown in \cite{SoCh_Weber} that if the median graph $G$ is isometrically embedded into a hypercube $H(X)$, then
$\Med_{\pi}(G)=\Med_{\pi}(H(X))\cap V(G)$. If the profile is odd, then $\Med_{\pi}(H(X))$
is a single vertex, which necessarily belongs to $G$ by previous equality. This
explains why median domains are the closed Condorcet domains.

Halved-cubes are exactly the squares of hypercubes
and Johnson graphs are the isometric subgraphs of halved-cubes induced by the levels of hypercubes. Therefore, it will be interesting to investigate those domains of linear orders (or of other types of partial orders), which induce
particular partial halved-cubes or partial Johnson graphs. In this case, such domains are viewed not as subgraphs of  permutahedra (or hypercubes) but as induced subgraphs of halved-cubes or of Johnson graphs. Notice that
such domains are encoded by the vertices  of the hosting hypercube (viewed as $(0,1)$-vectors) which have an even number of 1's for halved-cubes (respectively, the same number of 1's for Johnson graphs).  We call a domain $\mathcal D$
inducing a partial halved-cube or a partial Johnson graph   \emph{median-closed} if for any profile $\pi$ included in $\mathcal D$ we have $\Med_{\pi}(G)=\Med_{\pi}(Q)\cap V(G)$, where $Q$ is either $\frac{1}{2}H(X)$ or $J(X,k)$
(equivalently, if $\Med_{\pi}(Q)\cap V(G)\ne\varnothing$).

From the results of \cite{BaCh_median} and this paper, it follows that basis graphs of matroids, of even $\Delta$-matroids, and, more generally, the meshed partial Johnson graphs are graphs with connected medians.
We also characterized  the partial halved-cubes with connected medians (they are also meshed). Notice that meshed partial Johnson graphs
present an interesting common generalization of median graphs and basis graphs of matroids. In fact, by Corollary \ref{meshed_hc-median}, the median graphs are exactly the
partial halved-cubes with connected  medians not containing triangles. It will be interesting to investigate which partial halved-cubes and partial Johnson graphs
with connected medians define median-closed domains.  In particular, \emph{are the domains defined by basis graphs of matroids or even $\Delta$-matroids median-closed?}
All such domains are not Condorcet domains unless they are median (because they contain triangles).  However one can ask if \emph{the median sets in such classes of graphs can be computed
by a relaxation of the majority rule?} For this, a relaxed majority rule should
be designed for Johnson graphs and halved-cubes.

\subsection*{Acknowledgements.} We would like to acknowledge the referee of the original submission for many useful suggestions, in particular about the presentation and the organisation of the paper.
We would like to acknowledge the referee of the revision for a very careful reading, numerous corrections, and useful suggestions. Finally, we would like to acknowledge the editor of the journal and the editor
of the special issue for their guidance. The work on this paper was supported by ANR project DISTANCIA (ANR-17-CE40-0015).


\end{document}